\documentclass[11pt]{article}

%%%%%%%%%%%%%%%%%%%%%%%%
%%%%%%%%%%%%%%%%%%%%%%%%
%%%%%%Packages
%%%%%%%%%%%%%%%%%%%%%%%%
%%%%%%%%%%%%%%%%%%%%%%%%

\usepackage{amsthm}
\usepackage{amsmath}
\usepackage{amssymb}
\usepackage[margin=1in]{geometry}
\usepackage{enumerate}
\usepackage{hyperref}
\usepackage{mathrsfs}
\usepackage{color}
\usepackage{bm}
\usepackage{mathtools}
\mathtoolsset{showonlyrefs=true}

%%%%%%%%%%%%%%%%%%%%%%%%
%%%%%%%%%%%%%%%%%%%%%%%%
%%%%%%Header
%%%%%%%%%%%%%%%%%%%%%%%%
%%%%%%%%%%%%%%%%%%%%%%%%

\title{On the norm equivalence of Lyapunov exponents for regularizing linear evolution equations}
\author{Alex Blumenthal and Sam Punshon-Smith}
\date{\today}

%%%%%%%%%%%%%%%%%%%%%%%%%%%%%%%%%%%%%%%%%%%%%%%%%%%%%%%%%%%%%
%% Macros                                                  %%
%%%%%%%%%%%%%%%%%%%%%%%%%%%%%%%%%%%%%%%%%%%%%%%%%%%%%%%%%%%%%

% Bold-face %

\newcommand{\C}{\mathbb{C}}

\newcommand{\E}{\mathbb{E}}

\newcommand{\N}{\mathbb{N}}
\newcommand{\R}{\mathbb{R}}
\newcommand{\T}{\mathbb{T}}

\newcommand{\Z}{\mathbb{Z}}
\newcommand{\Hbf}{\mathbf{H}}
\newcommand{\Sb}{\mathbb{S}}

% Differentials
\newcommand{\dee}{\mathrm{d}}
\newcommand{\ds}{\dee s}
\newcommand{\dt}{\dee t}

%%%%%%%%%%%%%%%%%%%%%%%%
%%%%%%%%%%%%%%%%%%%%%%%%
%%%%%%Custom commands: mathcal
%%%%%%%%%%%%%%%%%%%%%%%%
%%%%%%%%%%%%%%%%%%%%%%%%

\newcommand{\Bc}{\mathcal{B}}
\newcommand{\Fc}{\mathcal{F}}

\newcommand{\Ac}{\mathcal{A}}
\newcommand{\Sc}{\mathcal{S}}

%%%%%%%%%%%%%%%%%%%%%%%%
%%%%%%%%%%%%%%%%%%%%%%%%
%%%%%%Custom commands: mathscr
%%%%%%%%%%%%%%%%%%%%%%%%
%%%%%%%%%%%%%%%%%%%%%%%%

%%%%%%%%%%%%%%%%%%%%%%%%
%%%%%%%%%%%%%%%%%%%%%%%%
%%%%%%Custom commands: greek
%%%%%%%%%%%%%%%%%%%%%%%%
%%%%%%%%%%%%%%%%%%%%%%%%

\renewcommand{\a}{\alpha}

\newcommand{\e}{\epsilon}

%%%%%%%%%%%%%%%%%%%%%%%%
%%%%%%%%%%%%%%%%%%%%%%%%
%%%%%%Custom commands: Notation
%%%%%%%%%%%%%%%%%%%%%%%%
%%%%%%%%%%%%%%%%%%%%%%%%

\DeclareMathOperator{\Div}{\mathrm{div}}

\usepackage{bbm}

\newcommand{\ep}{\epsilon}

\renewcommand{\P}{\mathbf{P}}

\newcommand{\embeds}{\hookrightarrow}
\newcommand{\leqc}{\lesssim}

\newcommand{\pd}{\partial}

\newcommand{\dist}{\operatorname{dist}}
\newcommand{\codim}{\operatorname{codim}}

\newcommand{\Leb}{\operatorname{Leb}}

\newcommand{\Gr}{\operatorname{Gr}}

\DeclareMathOperator{\Vol}{\mathrm{Vol}}

%theorems
\newtheorem{theorem}{Theorem}[section]
\newtheorem{atheorem}{Theorem}

\newtheorem*{theorem*}{Informal Theorem}
\newtheorem{proposition}[theorem]{Proposition}
\newtheorem{corollary}[theorem]{Corollary}
\newtheorem*{corollary*}{Informal Corollary}
\newtheorem{acorollary}[atheorem]{Corollary}
\newtheorem{lemma}[theorem]{Lemma}
\newtheorem*{lemma*}{Lemma}
\newtheorem{claim}[theorem]{Claim}

\theoremstyle{definition}
\newtheorem{definition}[theorem]{Definition}

\newtheorem*{question*}{Question}

\theoremstyle{remark}
\newtheorem{remark}[theorem]{Remark}
\newtheorem{example}[theorem]{Example}

\begin{document}
\maketitle

\begin{abstract}
We consider the top Lyapunov exponent associated to a dissipative linear evolution equation posed on a separable Hilbert or Banach space. In many applications in partial differential equations, such equations are often posed on a scale of nonequivalent  spaces mitigating, e.g., integrability ($L^p$) or differentiability ($W^{s, p}$). In contrast to finite dimensions, the Lyapunov exponent could apriori depend on the choice of norm used. In this paper we show that under quite general conditions, the Lyapunov exponent of a cocycle of compact linear operators is independent of the norm used. We apply this result to two important problems from fluid mechanics: the enhanced dissipation rate for the advection diffusion equation with ergodic velocity field; and the Lyapunov exponent for the 2d Navier-Stokes equations with stochastic or periodic forcing.
\end{abstract}

\section{Introduction}\label{sec:Intro}
%!TEX root = main.tex

Consider the linear evolution equation 
\begin{align}\label{eq:abstractLinearPDE}
\frac{\dee}{\dt} v(t) = L(t) v(t) \, , \quad v(0) = v_0 \, , 
\end{align}
posed on a Banach space $(B, \| \cdot\|_B)$, where $L(t)$ is a time-varying, closed linear operator (potentially unbounded). Let us assume \eqref{eq:abstractLinearPDE} is globally well-posed and it gives rise to an evolution semigroup $S(t)$, namely, a bounded family of solution operators $S(t) : B \to B$, $t \geq 0$, such that $v(t) = S(t) v_0$ is the unique solution to \eqref{eq:abstractLinearPDE} for all fixed initial $v_0 \in B$. 

The Multiplicative Ergodic Theorem (MET) is a powerful tool for characterizing the asymptotic behavior of systems such as \eqref{eq:abstractLinearPDE} in the 
case when $L(t)$ depends on the value of some auxiliary \emph{stationary process}, e.g., when $L(t)$ is random with probabilistic law independent of $t$. In this setting and under some mild 
conditions on the operators $S(t)$, the MET asserts that for `typical' realizations
of $t \mapsto L(t)$, there exists a value $\lambda_1 \in [-\infty, \infty)$ and a finite-codimensional subspace $F \subset B$ such that for all $v_0 \in B \setminus F$
\begin{align}\label{eq:lyapdef}
\lambda_1 = \lim_{t \to \infty} \frac1t \log \|S(t) v_0\|_B \, .
\end{align}
 As $B \setminus F$ is open and dense in $B$, it follows that the growth rate $\lambda_1$ is experienced by `typical' initial $v_0$. For further details and a review of the MET in this setting, see Section \ref{subsec:backgroundMET2} below.

Linear evolution equations such as \eqref{eq:abstractLinearPDE} cover a broad variety of time-dependent dissipative linear PDE. In this paper, we consider the following example settings: 
\begin{itemize}
\item[(i)] the advection diffusion equation for a passive scalar advected by a velocity field evolving according to a ``statistically stationary'' evolution equation, e.g., the 2d Navier-Stokes equations with either time-periodic or stochastic forcing; and 
\item[(ii)] the first variation (linearization) of the 2d Navier-Stokes equations with either time-periodic or stochastic forcing. 
\end{itemize}
We will discuss both of these examples in more detail in Section \ref{subsec:appsIntro} below. Other relevant examples that can be treated (though not discussed) in this setting are: the kinematic dynamo equation governing the advection and diffusion of a magnetic field in a flow, as well as the first variation equation of a wide class of forced dissipative semilinear parabolic problems, including reaction diffusion equations, magnetohydrodynamics (MHD), and various dissipative wave equations (see \cite{Temam1997-xm} and \cite{Chicone1999-mi} for examples and descriptions of these models and more.)

\subsubsection*{Lyapunov exponents along scales of norms}

For globally well-posed linear evolution equations as above, it is common to have global well-posedness on a \emph{scale} of Banach spaces $(B_\alpha, \| \cdot\|_{B_\alpha})$ for $\alpha \in [a,b]\subset \R$ such that $B_\beta$ is embedded in $B_\alpha$ for all $\alpha < \beta$. Such scales of spaces might capture varying degrees of integrability, e.g., $L^p$ spaces, or spatial regularity, e.g., Sobolev spaces $W^{r,p}$ or Besov spaces $B^r_{p,q}$. Each $B_\alpha$ comes equipped with its own norm $\|\cdot\|_{B_\alpha}$ with respect to which one can compute Lyapunov exponents, and so apriori, the same evolution equation \eqref{eq:abstractLinearPDE} might possess an entire range of Lyapunov exponents 
\begin{equation*}
\lambda_1(B_\alpha) = \lim_{t \to \infty} \frac1t \log \|v(t)\|_{B_\alpha}
\end{equation*}
 depending on the scale parameter $\alpha$. 

It is evident from \eqref{eq:lyapdef} that for a {\em finite-dimensional} Banach space, the choice of norm has no effect on the value of $\lambda_1$: in this case, local compactness implies that all norms on $B$ are equivalent up to a multiplicative constant which vanishes under the limit of $\frac1t \log$. Thus it is often said that in the finite dimensional setting, Lyapunov exponents are \emph{intrinsic to the underlying system} in that they do not depend on the choice of norm.
 However, local compactness is false in infinite dimensions, and so 
 it is possible that $\lambda_1(B_\alpha)$ could depend nontrivially on the scale parameter $\alpha$.  
{\it This casts doubt on the `intrinsic' nature of Lyapunov exponents in the infinite dimensional setting}, especially when there is no natural or otherwise physically relevant choice for the space $B_\alpha$. 

The following is an informal statement of the main result of this paper. 
\begin{theorem*}%[Informal]
Let $(B_\alpha)_{\alpha \in [a,b]}$ be a nested family of Banach spaces, each with separable dual. Assume that
\begin{itemize}
	\item[(a)] $B_\beta \subset B_\alpha$ is dense for all $a \leq \alpha < \beta \leq b$;  
	\item[(b)] $S(t) : B_\alpha \to B_\alpha$ is a compact (hence bounded) linear operator for all $\alpha \in [a,b]$; and
	\item[(c)] $\sup_{t \in [0,1]} \| S(t)\|_\alpha$ satisfies a logarithmic moment condition with respect to the stationary law governing $L(t)$. 
\end{itemize}
Then, for all $\beta \in [a,b]$ and $v_0 \in B_{\beta} \setminus \{ 0 \}$, the limit
\[
\lambda(v_0) = \lim_{t \to \infty} \frac1t \log \|S(t) v_0\|_{B_\alpha}
\]
exists and does not depend on $\alpha$. 
\end{theorem*}
This result affirms the idea that {\it for such systems, the Lyapunov exponent is an intrinsic feature of the system}, independent of the choice of norm $\| \cdot\|_{B_\alpha}$. 
For full statements, see Section \ref{subsec:statementOfResults}. See Section \ref{subsec:prevWork} for a literature review of prior work on this topic.

\subsubsection*{Rates at which exponents are realized}

It is also desirable to understand the rate at which Lyapunov exponents are realized between various Banach spaces. In general, there is a period of transient growth or decay before the one sees the exponential rate given by $\lambda_1$. To capture this, we consider the {\em Lyapunov regularity functions} for a given Banach space $B$ and $\ep >0$
\[
    \overline{D}_\ep^B := \sup_{t \geq 0}\frac{\|S(t)\|_B}{e^{t(\lambda_1 +\ep)}},\quad\text{and}\quad \underline{D}_\ep^B := \sup_{t\geq 0}\sup_{v\in B \backslash F\atop \|v\|_B = 1} \frac{e^{t(\lambda_1 -\ep)}\sin\angle^B(v,F)}{\|S(t)v\|_B},
\]
where the angle $\angle^B(v,F)$ is defined via \eqref{defn:angleHilbSpace}, then we have the following upper and lower bounds on $\|S(t)v\|_B$ for $\|v\|_B = 1$,
\[
    (\underline{D}^B_\ep)^{-1}\sin\angle^B(v,F)e^{t(\lambda_1 -\ep )} \leq \|S(t)v\|_B \leq \overline{D}^B_\ep e^{t(\lambda_1 +\ep )}.
\]
By the MET (c.f. Section \ref{subsubsec:MET} below), the quantity $D_\ep^B := \max\{\overline{D}_\ep^B,\underline{D}_\ep^B\}$ is finite almost surely, so the above upper and lower bounds are non-trivial. It follows that if $\sin\angle^B(v,F) = 1$, then
\[
     \left|\frac{1}{t}\log{\|S(t)v\|_B} - \lambda_1\right|\leq  \frac{\log D_\ep^B}{t} +\ep,
\]
so that when $t \sim \ep^{-1}\log D^B_\ep$ we have that $\frac{1}{t}\log{\|S(t)v\|_B}$ is $\ep$ close to $\lambda_1$. This means that $\log{D^B_\ep}$ can be interpreted as the (random) amount of time it takes for the exponent $\lambda_1$ to be realized within an $\epsilon$ error. In other words $\log{D^B_\ep}$ is is roughly the duration of the transient phase. 

It is natural to wonder how $ D^B_\ep$ changes between different Banach spaces. As a corollary of our main theorem, given a dense subspace $V\subset B$ we are able to relate $\overline D^V_{\ep}, \underline D^V_\ep$ to $\overline D^B_\ep, \underline D^B_\ep$ under a slightly stronger regularization condition on $S(t)$. See Section \ref{subsubsec:rate-realized} and Corollary \ref{cor:reguarlityFunctions} in Section \ref{subsubsec:uniformity-sets} for more precise statements on these results.

\subsection{Applications}\label{subsec:appsIntro}

This paper contains several applications of the main result to systems of interest in fluid dynamics in Sobolev spaces. We will discuss these applications and their physical relevance below, deferring detailed statements to Section \ref{sec:Applications}. While several simplifying assumptions are made, e.g., working with periodic domains without boundary, we note that many of the main ideas discussed below remain valid in broader generality. Moreover, while we work with Hilbert regularity spaces $H^s$, much of what we show can also be done in Banach regularity spaces like $W^{s,p}$ and $B^s_{p,q}$, which often carry more precise regularity information.

\subsubsection{Lyapunov exponents for passive scalar advection}\label{subsubsec:passiveScalarIntro}

Let $d > 1$ and let $u:  [0,\infty) \times \T^d \to \R^d$ be a time dependent, incompressible velocity field on the torus $\T^d$, which for the purposes of this discussion will be assumed to be $C^\infty$ in $x$ locally uniformly in $t$. Let $f(t, x)$ be a solution to the passive scalar advection equation
    \begin{align}\label{eq:passiveScalar}
    \partial_t f + u \cdot \nabla f = \kappa \Delta f \, , \quad f(0, x)= f_0(x)
    \end{align}
for a given mean-zero initial scalar $f_0 : \T^d \to \R$. This equation models the advection of the scalar density $f_0$ (e.g., a dilute chemical concentration or small temperature variation) by a fluid with velocity field $u(t, x)$ taking into account molecular diffusivity $\kappa > 0$. Equation \eqref{eq:passiveScalar}  is globally well-posed on the Sobolev space $H^r$ for any $r \in \R$, and so gives rise to a linear (nonautonomous) semiflow $S(t) : H^r \to H^r$ of bounded linear operators. 
Here, $H^r$ is viewed as a Hilbert space of mean-zero functions (or distributions, if $r < 0$) $g : \T^d \to \R$ with the homogeneous Sobolev norm $\|f\|_{H^r} = \|(-\Delta)^{r/2}f\|_{L^2}$ and corresponding inner product $(\cdot, \cdot)_{H^r}$. 

    When $t \mapsto u(t, \cdot)$ evolves according to some ergodic, stationary process, e.g., the Navier-Stokes equations with spatially regular, time-periodic or stochastic forcing, the Lyapunov exponent 
    \[
    \lambda_1(H^r) = \lim_{t \to \infty} \frac1t \log \|f(t)\|_{H^r}
    \]
    exists in $[-\infty,\infty)$ with probability 1 for all sufficiently regular initial velocity profiles $u_0$ and for an open and dense set of initial scalars $f_0$. 
    
    In the mathematical literature on advection diffusion, special interest is often taken in interpretations of the growth or decay of $\| f(t)\|_{H^r}$
    for various values of $r$. When $\kappa =0$, the $H^1$ norm is naturally connected to the strength of shear-straining in the fluid (see \eqref{eq:shearStrain} below), while the $H^{-1}$ norm and other negative Sobolev norms measure the degree to which the scalar $f_t$ has been uniformly ``mixed'' into the fluid -- see, e.g., \cite{mathew2005multiscale, doering2006multiscale, lin2011optimal, shaw2007stirring} -- and are related to the decay of correlations of the Lagrangian flow associated to the velocity $u$. The addition of diffusion $(\kappa >0)$ somewhat complicates these interpretations: when advection generates small scales, diffusion can effect decay of the $L^2$ norm on time scales faster than the diffusive one. This is known in the mathematics literature as {\em enhanced dissipation}; this effect has been studied in 
    both the physics \cite{bernoff1994rapid, dubrulle1994scaling, latini2001transient, lundgren1982strained, rhines1983rapidly} and (somewhat more recent) mathematics literature \cite{BBPS19II, BCZ2017enhanced, beck2013metastability, bedrossian2016invariant, constantin2008diffusion, vukadinovic2015averaging, zlatovs2010diffusion, zelati2020relation, Feng_2019}.

Despite such varied interpretations of the measurement of various norms, in this manuscript we prove the following (see Theorem \ref{thm:allEqualLE} below for a precise statement). 

\begin{theorem*}%[Informal]
Assume $\kappa > 0$ and that $u(t, \cdot)$ is an ergodic, stationary process and $\int_0^1 \| u(t, \cdot) \|_{H^\gamma}\, dt$ satisfies a moment condition for some $\gamma > \frac{d}{2} + 1$. Then, 
$\lambda(H^s)$ exists for all $s \in [-\gamma, \gamma]$ and does not depend on $s$. In particular, 
\begin{align}\label{eq:equalSubspaceLEPSAIntro}
\lambda_1(H^1) = \lambda_1(L^2) = \lambda_1(H^{-1}) \,. 
\end{align}
\end{theorem*}
For a full statement, see Theorem \ref{thm:ratesTheSamePSA} in Section \ref{subsec:passivescalar} below. 
To our knowledge ours is the first proof of this fact for passive scalar advection, although we note that
it has been predicted before, e.g., by numerical evidence in the recent paper \cite{oakley2021mix} as well as in \cite{MC18}. 

\subsubsection*{Additional discussion and context}

\noindent {\bf The case $\kappa = 0$. }Key to the validity of \eqref{eq:equalSubspaceLEPSAIntro} is compactness of the solution linear operators $S^t : H^r \to H^r$ for \eqref{eq:passiveScalar} when $\kappa > 0$. A clear example is provided
in the case $\kappa = 0$; in this case, \eqref{eq:passiveScalar} is still globally well-posed on $H^r$ for all $r$, and by the method of characteristics one has
\[
f(t) = S(t) f_0 = f_0 \circ ( \varphi^t)^{-1}
\]
for all $f_0 \in L^2$, where $\varphi^t:\T^d \circlearrowleft$ is the Lagrangian flow associated to the velocity field $u$. 
Note that in this case, $S(t) : L^2 \to L^2$ cannot be compact, as it is unitary: $(S(t) f, S(t) g)_{L^2} = (f, g)_{L^2}$ for all $f, g \in L^2$ by incompressibility. In particular, 
for the $L^2$ Lyapunov exponent, $\| f(t)\|_{L^2} = \| f_0\|_{L^2}$ for all $t \geq 0$, hence $\lambda_1(L^2) = 0$. On the other hand, 
\begin{align}\label{eq:shearStrain}
\| f(t)\|_{H^1} = \| (D \varphi^t)^{- T} \nabla f\|_{L^2} \, 
\end{align}
by incompressibility. When the Lagrangian flow $\varphi^t$ associated to $u$ has a positive Lyapunov exponent on a positive volume, i.e., 
\begin{align}\label{posLELagrangian}
\Leb\left\{ x \in \T^d : \limsup_{t \to \infty} \frac{1}{t} \log \| D_x \varphi^t\| > 0 \right\} > 0 \, , 
\end{align}
 then $\| f(t)\|_{H^1}$ can grow exponentially fast, hence $\lambda_1(H^1) > 0$. 
It was recently shown\footnote{Despite a wealth of numerical evidence, in the absence of noise it is a notoriously challenging open problem to prove positivity of Lyapunov exponents for incompressible systems of practical interest. 
This is already the case for low-dimensional discrete-time toy models \cite{CrisantiEtAl1991} of Lagrangian flow such as the Chirikov standard map \cite{chirikov1979universal}, for which the analogue of \eqref{posLELagrangian} is a wide-open problem -- see, e.g., the discussion in \cite{crovisier2019problem, blumenthal2017lyapunov}.} by the authors and J. Bedrossian in \cite{bedrossian2018lagrangian} that when $u$ solves the stochastic Navier-Stokes equations with nondegenerate, white-in-time forcing, the LHS of \eqref{posLELagrangian} has full Lebesgue measure with probability 1. 

\medskip

\noindent {\bf $L^2$ and $H^1$ decay rates. }When $\kappa > 0$, standard heat equation energy estimates for the $L^2$ norm immediately imply
\[
\lambda_1(L^2) \leq - \kappa < 0\,,
\]
and therefore our result implies the same holds true for $\lambda_1(H^1)$. At first glance, this might be surprising in light of the tendency of \eqref{eq:passiveScalarEq} to form large gradients. It is however consistent with the energy estimate 
\[
    \int_0^\infty \kappa \|\nabla f(s)\|_{L^2}^2\ds <\infty,
\]
requiring time integrability of $\|f_t\|_{H^1}$ over $[0,\infty)$ for $\kappa >0$. 
We emphasize, though, that $\lambda_1(H^1) < 0$ refers only to \emph{time asymptotic behavior}, and does not rule out transient growth in $H^1$ on some ($\kappa$-dependent, potentially quite long) time scale after which the diffusion dominates (see Sections \ref{subsubsec:rate-realized} and \ref{subsubsec:uniformity-sets} for more discuss on this)

\medskip

\noindent {\bf $H^{-1}$ decay rates. } In
\cite{bedrossian2018lagrangian, bedrossian2019almost, BBPS19II}, the authors proved the following exponential decay estimate for \eqref{eq:passiveScalarEq} when $u$ solves the 2d stochastic Navier-Stokes equation (or any of a large class of noisy evolution equations): 
\[
    \|f(t)\|_{H^{-1}} \leq D_\kappa e^{-\gamma t}\|f_0\|_{H^1},
\]
 where the deterministic constant $\gamma > 0$ is {\em independent} of $\kappa$ and the random variable $D_\kappa \geq 1$ has $\kappa$-independent expectation. Using that $S(t)$ instantly regularizes $H^{-1}$ to $H^1$ for $t>0$, this readily implies that
 \[
 \lambda_1(H^{-1}) \leq -\gamma < 0 \,. 
 \]
 In light of our main result and the results of \cite{BBPS19II}, we conclude that, for the stochastic Navier-Stokes equations and related models, {\em all} Sobolev norms (including $L^2$) eventually decay no slower than the uniform-in-$\kappa$ exponential decay rate $\gamma>0$ (perhaps after an initial $\kappa$-dependent period of  transient growth). 
This $\kappa \to 0$ singular limit bears a striking similarity to the stochastic stability of the so-called Ruelle-Pollicott resonances associated to stationary hyperbolic flows \cite{Dyatlov2015-uj}.

\begin{remark}
We emphasize that we are not the first to apply the MET and related ideas to passive scalar advection. 
Froyland et al. have developed data-driven algorithms for identifying coherent structures in incompressible fluids \cite{froyland2010coherent}, with applications in the forecasting of oceanic features such as persistent gyres in the Atlantic ocean \cite{van2012origin}. 
Justifying the use of these algorithms required extending the MET for compositions of possibly noninjective linear operators, addressed in \cite{froyland2010semi} in finite dimensions and, e.g., \cite{gonzalez2014semi} in infinite dimensions. Additional 
applications of the MET in this vein include the exploration of almost-sure statistical
properties for random compositions of mappings \cite{dragivcevic2018spectral, dragivcevic2020spectral}. 
\end{remark}

\subsubsection{Lyapunov exponents for the Navier-Stokes equations}

Let $u(t, x)$ be a mean-zero divergence free velocity field solving the Navier-Stokes equations on the periodic box $\mathbb T^2$, 
    \[
    \partial_t u + (u \cdot \nabla) u = \nu \Delta u - \nabla p  + F \, , \quad \Div u = 0 \, ,
    \]
    where $F$ is some spatially smooth, white-in-time or time-periodic forcing term, $\nu > 0$ is fixed, and $p$ denotes the pressure that enforces the divergence-free condition. Under appropriate conditions on the forcing, for all $r \geq 0$ this nonlinear evolution equation gives rise to a stochastic semiflow of $C^1$ Frechet-differentiable mappings $\Phi^t_\omega : H^r \to H^r$, where $H^r$ denotes the Sobolev space of $H^r$ (weakly) divergence-free fields ($r=0$ corresponding to $L^2$). Here, $\omega$ denotes the history of the driving path. Given an initial $u_0 = u(0, \cdot)$ and an initial divergence-free $v_0 \in H^r$, the $v_0$ derivative 
    $v_t = (D_{u_0} \Phi^t)v_0$ solves the linearized Navier-Stokes equations
    \begin{equation}\label{eq:linearized-ns}
    \partial_t v + (u\cdot\nabla) v + (v\cdot\nabla) u =  \nu \Delta v -\nabla q\,, \quad \Div v = 0\,,
    \end{equation}
    with initial data $v_0$. When $F$ is deterministic and time-periodic or when $F$ is stochastic and white-in-time, the MET applies: under mild additional conditions, the $H^r$ Lyapunov exponent\footnote{When 
$F$ is white-in-time and satisfies mild nondegeneracy conditions (e.g., those in \cite{HM06}), 
the value $\lambda_1(H^r)$ does not depend on $u_0$. When $F$ is 
time-periodic it is possible that $\lambda_1(H^r)$ depends on $u_0$. For more details and discussion, see {Section \ref{sec:Applications}} below. }
\[
\lambda_1(H^r) = \lim_{t \to \infty} \frac1t \log \| v_t\|_{H^r}
\]
exists with probability 1 and for `typical' initial velocity fields $u_0$, where $v_0$ is drawn from an open and dense subset of $H^r$. Since we are working with a first variation equation, 
the value $\lambda_1(H^r)$ represents the asymptotic exponential
rate at which nearby trajectories converge ($\lambda_1 < 0$) or diverge ($\lambda_1 > 0$) in the $H^r$ norm as time progresses.

In the study of the 2d Navier-Stokes equations it is often useful to formulate the equation in terms of vorticity $w = \mathrm{curl}\,u$, 
\[
    \partial_t w + (u\cdot\nabla) w = \nu \Delta w + \mathrm{curl}\, F \, , 
\]
or in terms of the stream function $\psi = \Delta^{-1}\mathrm{curl}\,u$, which is the Hamiltonian for the velocity field $u = \nabla^\perp \psi = (-\partial_y\psi,\partial_x\psi)$. Depending on the variable considered, it is natural to study the associated growth of the perturbation in $L^2$ of the associated variable. Hence, measuring the linearization in $H^{1},L^2$ or $H^{-1}$ corresponds to measuring the linearization in $L^2$ for the vorticity, velocity, or stream function formulations of the equation. 

In the inviscid case ($\nu =0$), it is known that the stability of the equation is strongly dependent on the whether one is considering $L^2$ of vorticity, velocity or the stream function with some perturbation being stable in $L^2$ of the stream function, but not in $L^2$ of velocity or vorticity due to the generation of high-frequencies due to mixing effects.
For the viscid problem $\nu > 0$, we prove in this paper the following: 

\begin{theorem*}
Assume that $u(t, \cdot)$ solve the Navier-Stokes equations with forcing $F$ (either time-periodic or white-in-time) and that the resulting process on velocity fields is (statistically) stationary and ergodic. Assume $\int_0^1 \| u(t, \cdot) \|_{H^{\gamma + 2}}\, \dt$ has finite moments for some $\gamma > 2$. Then, $\lambda(H^s)$ exists for all $s \in [- \gamma + 1, \gamma + 1]$ and does not depend on $s$. In particular, 
\[
\lambda_1(H^1) = \lambda_1(L^2) = \lambda_1(H^{-1}) \,.
\]
\end{theorem*}
For full details, see Theorem \ref{thm:ratesTheSameNSE} in Section \ref{subsec:NavierStokes}. 

There is a long and extensive literature on the linear stability or instability of stationary (time independent) solutions to the Euler and Navier-Stokes equations; see, e.g., the textbooks \cite{chandrasekhar2013hydrodynamic, drazin2004hydrodynamic, schmid2002stability, yaglom2012hydrodynamic, I_Udovich1989-jm}. Lyapunov exponents, which can be viewed as analogous to spectra for nonstationary flows, have been employed extensively in the study of semilinear parabolic problems such as Navier-Stokes, for instance in providing upper bounds on the dimension of the global attractors-- see, e.g., \cite{temam2012infinite, foias2001navier, constantin1983global}. 
Ruelle and Takens proposed dynamical chaos, of which a positive Lyapunov exponent is a natural hallmark, as a mechanism involved in the 
transition to turbulence \cite{ruelle1971nature, mackay1991appraisal}. To this end, Ruelle established an extension of smooth ergodic theory to dissipative parabolic problems such as Navier-Stokes \cite{ruelle1982characteristic} (see also, e.g., \cite{lian2010lyapunov, blumenthal2017entropy, lu2013strange}). For numerical studies of Lyapunov exponents in turbulent regimes, see, e.g., \cite{crisanti1993intermittency, yamada1988lyapunov}. 

\subsubsection*{Plan for the paper}

Section \ref{sec:Results} covers necessary background from ergodic theory and a full statement of our main abstract result, Theorem \ref{thm:allEqualLE}, the full proof of which is given in Section \ref{sec:METsameExp}. Full statements and proofs of the assertions in Section \ref{subsec:appsIntro} above are given Section \ref{sec:Applications}. 

\subsubsection*{Acknowledgments} AB was supported by National Science Foundation grant DMS-2009431. SPS was supported by National Science Foundation grant DMS-2205953. SPS is also grateful to the Institute for Advanced Study for their generous support and hospitality during 2021-2022 academic year when this paper was being written.

\section{Abstract setting and statement of results}\label{sec:Results}
%!TEX root = main.tex

\subsection{Background on the Multiplicative Ergodic Theorem (MET)}\label{subsec:backgroundMET2}

The MET is a theorem in ergodic theory, the study of measure-preserving 
transformations (mpt's) of a probability space. Here we briefly 
recall a few basic definitions and the statement of the MET itself, and will
afterwards provide the full statement of our main result. Additional context and a brief review of literature is given at the end of Section \ref{subsec:backgroundMET2}. 

\subsubsection{Setting}\label{subsubsec:ergTheoryBackground}

Let $(X, \mathscr F, m)$ be a probability space: here $X$ is a set, $\mathscr F$ a $\sigma$-algebra of subsets of $X$, and $m$ a probability measure. We say that a measurable 
transformation $T : X \to X$ (possible noninvertible) is an {\em mpt}
if $m \circ T^{-1} = m$, i.e., $m(T^{-1} A) = m(A)$ for all $A \in \mathscr F$. 
We can interpret the \emph{invariant measure} $m$ as characterizing ``equilibrium statistics'' for the 
dynamics described by $T$: if $x_0$ is an $X$-valued random variable with law $m$, and given any observable $\varphi : X \to \R$, then the random variables
\[
\varphi(x_0), \, \varphi \circ T(x_0), \, \cdots, \, \varphi \circ T^k(x_0), \,\cdots
\]
all have the same law, i.e., $\{ \varphi \circ T^k(x_0)\}_{k \geq 0}$ a stationary sequence. 

We say that $T : (X , \mathscr F, m) \circlearrowleft$ is \emph{ergodic} if, for any $A \in \mathscr F$, the invariance relation $T^{-1} A = A$ implies $m(A) = 0$ or $1$. 
Ergodicity is a form of irreducibility: 
the phase space $X$ cannot be partitioned into two
pieces of positive $m$-mass which never exchange trajectories.

\begin{example}\label{exam:simpleInvMeasSetup}
Let $B$ be a separable Banach space and let $T : B \to B$ be a continuous mapping, e.g., the time-1 solution mapping to a possibly nonlinear, well-posed evolution equation on $B$. If $\mathcal A \subset B$ is a compact, $T$-invariant subset\footnote{ We call $\mathcal A$ a $T$-invariant set if $T^{-1} \mathcal A \supset \mathcal A$.}, e.g., a global attractor for $T$, then there exists at least one $T$-invariant, Borel probability measure supported on $\mathcal A$. Indeed, for any fixed $x_0 \in \mathcal A$, any subsequential weak$^*$ limit of the sequence\footnote{That such weak$^*$ limits exist follows by compactness of $\mathcal A$. That such limiting measures are $T$-invariant is straightforward to check: see, e.g., Lemma 2.2.4 of \cite{viana2016foundations}. The above procedure is often referred to as the \emph{Krylov-Bogolyubov argument} for the existence of $T$-invariant measures \cite{kryloff1937theorie}. }
\[
\frac1n \sum_{i = 0}^{n-1} \delta_{T^i x_0} 
\]
is $T$-invariant. By standard arguments\footnote{E.g., Proposition 4.3.2 of \cite{viana2016foundations} and the Krein-Milman Theorem, paragraph I.A.22 in \cite{wojtaszczyk1996banach}.}, it follows that there also exist \emph{ergodic} $T$-invariant probability measures supported on $\mathcal A$. 
\end{example}

\subsubsection{The MET}\label{subsubsec:MET}

Let $B$ be a separable Banach space with norm $\| \cdot\|_B$. The MET concerns \emph{cocycles} of operators, which for our purposes
are compositions of the form 
\[
A^n_x = A(T^{n-1} x) A(T^{n-2} x) \cdots  A(T x)  A(x) \,, \quad n \geq 1, x \in X \, , 
\]
where $A : X \to L(B)$, $L(B)$ the space of bounded operators on $B$, is the \emph{generator} of the cocycle. We view
the composition $A^n_x$ as being ``driven'' by the dynamics $T : X \to X$. 
The MET describes the asymptotic exponential growth rates
\begin{align}\label{eq:defnFTLE}
\lambda(x, v) := \lim_{t \to \infty} \frac1t \log\|A^n_x v\|_B \, ,
\end{align}
where they exist, 
as $x$ ranges over $m$-typical initial conditions in $X$ and $v \in B \setminus \{ 0 \}$. 
For simplicity, we assume below that $A_x$ is compact for all $x \in X$; otherwise, we make no additional assumptions, e.g., on the injectivity of $A_x$ (we follow the convention that 
$\log 0 = -\infty$). 
Many proofs of the MET in this setting exist; the following is taken from \cite{lian2010lyapunov}; see also \cite{schaumloffel1991multiplicative}. 

\begin{theorem}[MET for compact cocycles]\label{thm:nonInvMET}
Let $T : (X, \mathscr F, m) \circlearrowleft$ be an mpt. Assume that $A : X \to L(B)$ is 
strongly measurable\footnote{When $B$ is separable, we say that $x \mapsto A_x$ is strongly measurable if it is Borel measurable w.r.t. the strong operator topology on $L(B)$, or equivalently, when 
$x \mapsto A_x v$ is a Borel measurable mapping for each fixed $v \in B$. For a summary of alternative measurability requirements for the MET, see, e.g., \cite{varzaneh2021oseledets}. } and that $A(x)$ is a compact linear operator on $B$ for all $x \in X$. Lastly, assume the log-integrability condition
\begin{equation}\label{eq:log-int-cond}
\int \log^+ \| A(x)\|_B \,\dee m(x) < \infty \,. 
\end{equation}
Then, for every $\lambda_c > -\infty$, there exists a (i) function $r_{\lambda_c} : X \to \Z_{\geq 0}$; (ii) for each $i \geq 1$, a function $\lambda_i : \{x\,:\, r_{\lambda_c}(x) \geq i\} \to \R$ satisfying
\[
\lambda_1(x) > \cdots > \lambda_{r_{\lambda_c} (x)} (x) \geq  \lambda_c \, ; 
\]
and (iii) at $m$-a.e. $x \in X$ a filtration 
\[
B =: F_1(x) \supsetneq F_2(x) \supsetneq \cdots \supsetneq F_{r_{\lambda_c}(x) }(x) \supsetneq  \bar F_{\lambda_c}(x) 
\]
by closed, finite-codimensional, measurably varying\footnote{Throughout we consider the space of closed subspaces of $B$ with the Hausdorff metric $d_{Haus}$ of unit spheres; see \eqref{eq:defnHausDist} for details. Here, we are asserting that $x \mapsto F_i(x)$ is Borel measurable w.r.t. the topology induced by $d_H$.} subspaces $F_i(x), \bar F_{\lambda_c}(x)$ 
such that 
\begin{align*}
\lambda(x, v) &= \lambda_i(x) \quad \text{ for all } 1 \leq i \leq r_{\lambda_c}(x)-1 \text{ and } v \in F_i(x) \setminus F_{i + 1}(x) \, , \\
\lambda(x, v) &= \lambda_{r_{\lambda_c}(x)}(x) \quad \text{ for all } v \in F_{r_{\lambda_c}(x)}(x) \setminus \bar F_{\lambda_c}(x) \, , 
\end{align*}
and
\[
\lim_n \frac1n \log \| A^n_x|_{\bar F_{\lambda_c}} \|_B \leq \lambda_c 
\]
for $m$-a.e. $x \in X$. 

The functions $r_{\lambda_c}(x), \lambda_i(x)$ are constant along $m$-a.e. trajectory, as are the codimensions $M_i(x) := \codim F_{i+1}(x) < \infty$. Moreover, when $T : (X, \mathscr F, m) \circlearrowleft$ is ergodic, $r_{\lambda_c}$ and the values 
$\lambda_1, \cdots, \lambda_{r_{\lambda_c}}$ are constant over $m$-a.e. $x \in X$. 
\end{theorem}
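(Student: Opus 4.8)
The statement is the Multiplicative Ergodic Theorem for compact cocycles, and I would follow the standard subadditive / singular-value route (in the lineage of Ruelle, Mañé, Thieullen, Schaumlöffel, Lian--Lu). \textbf{Step 1 (Lyapunov spectrum via Kingman).} For each $k \geq 1$ let $\alpha_1(A^n_x) \geq \alpha_2(A^n_x) \geq \cdots$ be the approximation numbers of the compact operator $A^n_x$, and let $g^{(k)}_n(x)$ denote the logarithm of the maximal $k$-dimensional volume growth of $A^n_x$ (equal to $\log(\alpha_1\cdots\alpha_k)(A^n_x) = \log\|\wedge^k A^n_x\|$ in a Hilbert space). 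Using $A^{n+m}_x = A^m_{T^n x}A^n_x$ one has $g^{(k)}_{n+m}(x) \leq g^{(k)}_n(x) + g^{(k)}_m(T^n x)$, and $g^{(k)\,+}_1 \leq k\log^+\|A(x)\|_B \in L^1(m)$ by \eqref{eq:log-int-cond}. Hence Kingman's subadditive ergodic theorem gives $m$-a.e. convergence $\tfrac1n g^{(k)}_n(x) \to \Lambda_k(x) \in [-\infty,\infty)$, with $\Lambda_k$ $T$-invariant and, when $T$ is ergodic, constant. Setting $\Lambda_0 \equiv 0$ and $\lambda_k(x) := \Lambda_k(x) - \Lambda_{k-1}(x)$, concavity of $k \mapsto \Lambda_k$ gives $\lambda_1(x) \geq \lambda_2(x) \geq \cdots$: these are the Lyapunov exponents listed with multiplicity.

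\textbf{Step 2 (Discreteness from compactness).} Compactness of $A(x)$ forces $\alpha_k(A(x)) \to 0$ as $k \to \infty$; combined with \eqref{eq:log-int-cond} and a Borel--Cantelli estimate along the subadditive limit, this yields $\lambda_k(x) \to -\infty$ for $m$-a.e. $x$. Therefore, for each fixed $\lambda_c > -\infty$, only finitely many $k$ satisfy $\lambda_k(x) \geq \lambda_c$; collating equal values produces the strictly decreasing list $\lambda_1(x) > \cdots > \lambda_{r_{\lambda_c}(x)}(x) \geq \lambda_c$ with finite multiplicities $M_i(x)$. It is exactly here that compactness is indispensable: without it the $M_i$ or $r_{\lambda_c}$ need not be finite, and the theorem fails.

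\textbf{Step 3 (The measurable flag).} This is the heart of the argument. Work first in a Hilbert space: for each $n$ let $P^{(i)}_n(x)$ be the orthogonal projection onto the span of the top $\sum_{l<i}M_l$ right singular vectors of $A^n_x$ (so $P^{(1)}_n = 0$). Comparing the self-adjoint operators $(A^n_x)^*A^n_x$ and $(A^{n+1}_x)^*A^{n+1}_x = (A^n_x)^*A(T^n x)^*A(T^n x)A^n_x$ via the spectral gap --- the ratio $\alpha_{1+\sum_{l<i}M_l}(A^n_x)/\alpha_{\sum_{l<i}M_l}(A^n_x)$ decays like $e^{-n(\lambda_{i-1}(x)-\lambda_i(x))}$ with $\lambda_{i-1}-\lambda_i > 0$ --- a Raghunathan-type telescoping estimate gives $\sum_n \|P^{(i)}_{n+1}(x) - P^{(i)}_n(x)\| < \infty$ $m$-a.e. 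Hence $P^{(i)}_n(x) \to P^{(i)}_\infty(x)$, and one sets $F_i(x) := \ker P^{(i)}_\infty(x)$, closed of codimension $\sum_{l<i}M_l$, and $\bar F_{\lambda_c}(x) := \ker P^{(r_{\lambda_c}(x)+1)}_\infty(x)$, of finite codimension $\sum_l M_l$. The same singular-value/gap estimates identify $\lambda(x,v) = \lambda_i(x)$ precisely for $v \in F_i(x)\setminus F_{i+1}(x)$ and give $\limsup_n \tfrac1n\log\|A^n_x|_{\bar F_{\lambda_c}(x)}\|_B \leq \lambda_c$. Measurability of $x \mapsto F_i(x)$ in the Hausdorff metric follows since each $P^{(i)}_n(x)$ is built measurably from $A^n_x$ (by functional calculus and measurable selection, using separability of $B$), hence from $x$, and pointwise limits of measurable maps are measurable. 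For a general separable Banach $B$ one replaces orthogonal projections by near-minimal-norm projections onto the dominant subspaces coming from the approximation-number decomposition; the telescoping estimate persists, but controlling the norms of these substitute projections and their limits is where the real work lies.

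\textbf{Step 4 (Invariance and ergodic constancy).} From $A^{n+1}_x = A^n_{Tx}A(x)$ one obtains $\lambda(Tx, A(x)v) = \lambda(x,v)$ whenever $A(x)v \neq 0$, which upgrades to $A(x)F_{i+1}(x) \subseteq F_{i+1}(Tx)$ and to equality of $r_{\lambda_c}$, $\lambda_i$, $M_i$ along $m$-a.e. orbit; the $m$-a.e. constancy of these quantities when $T$ is ergodic is then immediate, since a $T$-invariant measurable function on an ergodic system is a.e. constant. I expect the main obstacle to be Step 3: in a Hilbert space the Oseledets/Raghunathan summability argument is clean, but in a general separable Banach space the absence of orthogonality forces a delicate quantitative treatment of the substitute projections and of the limiting subspaces --- and it is compactness of $A(x)$ (equivalently, finiteness of $r_{\lambda_c}$ and the $M_i$ from Step 2) that guarantees those limiting subspaces are finite-codimensional.
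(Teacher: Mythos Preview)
The paper does not actually prove this theorem: it is stated as background, with the sentence ``Many proofs of the MET in this setting exist; the following is taken from \cite{lian2010lyapunov}; see also \cite{schaumloffel1991multiplicative}'' immediately preceding the statement. So there is no in-paper proof to compare against.

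That said, your sketch is a faithful outline of the standard route taken in the cited references (Ruelle, Ma\~n\'e, Thieullen, Schaumlöffel, Lian--Lu): Kingman for the volume-growth exponents $\Lambda_k$, compactness to force $\lambda_k \to -\infty$ and hence finiteness of $r_{\lambda_c}$, and a telescoping/gap argument to get convergence of the dominant projections and the flag. Your Step 3 correctly flags the Banach-space difficulty, and indeed this is exactly where Lian--Lu and Thieullen do the hard work with volume functions and measurable selections in place of singular-value decompositions. One small correction: in Step 2 the implication $\lambda_k(x) \to -\infty$ does not follow from $\alpha_k(A(x)) \to 0$ via a Borel--Cantelli argument alone; rather, one uses that the index of compactness (essential norm) of $A^n_x$ is submultiplicative together with the Kingman limit for $\tfrac1n\log\alpha_k(A^n_x)$ to conclude the exponents accumulate only at $-\infty$ (cf.\ the discussion after Corollary 2.2 in Ruelle \cite{ruelle1982characteristic}, which the paper also cites). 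Otherwise your outline matches the literature the paper defers to.
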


It is immediate that the $F_i(x)$ are invariant\footnote{Some authors refer to the property \eqref{eq:equivariant} as \emph{equivariance}.} in the sense that
\begin{align}\label{eq:equivariant}
A_x (F_i(x)) \subset F_i(T x) \quad \text{ for } m-\text{a.e. x.}
\end{align}
Note that we allow the inclusion to be strict. Observe also that 
\begin{align}\label{eq:multDefn}
d_i(x) := M_i(x) - M_{i - 1}(x) = \codim F_{i+1}(x) - \codim F_i(x)
\end{align}
 is the codimension of $F_{i + 1}(x)$ in $F_i(x)$; we refer to $d_i(x)$ as the \emph{multiplicity} of $\lambda_i(x)$. 

\subsubsection{Lyapunov exponents}\label{subsubsec:LE2}

The values $\{\lambda_i\}$ are called \emph{Lyapunov exponents}, while the collection of them
is referred to as the \emph{Lyapunov spectrum}, in analogy with the spectrum of a single closed operator. The value $\lambda_c$ is a cutoff, past which we do not resolve the spectrum further, while adjusting the value $\lambda_c$ lower can potentially `uncover' additional Lyapunov spectrum (i.e., $r_{\lambda_c}$ increases as $\lambda_c$ decreases). Define
\begin{align}\label{eq:defineR2}
r(x) = \lim_{\lambda_c \to -\infty} r_{\lambda_c}(x) = \sup_{\lambda_c \in \R} r_{\lambda_c}(x) \in \Z_{\geq 0} \cup \{ \infty\} \, . 
\end{align}

To simplify the discussion below, assume $T : (X, \mathscr F, m) \circlearrowleft$ is ergodic, so that $r$ and the $\{ \lambda_i\}$ are constants. We distinguish three scenarios: 
\begin{itemize}
	\item[(a)] No Lyapunov exponents are uncovered ($r_{\lambda_c} = 0$ for all values of cutoff $\lambda_c$). In this case, 
	\[
	\lambda(x, v) = -\infty
	\]
	for a.e. $x \in X$ and all $v \in B$. When this occurs, we follow the convention 
	$\lambda_1 = -\infty$, $F_1(x) :=B$, $r = 0$. 
	\item[(b)] Finitely many Lyapunov exponents $\lambda_1 > \cdots > \lambda_r > -\infty, r \in \Z_{\geq 1}$ are uncovered. Each exponent corresponds to a member of the filtration
	\[
	B =: F_1(x) \supsetneq F_2(x) \supsetneq \cdots \supsetneq F_r(x) \supsetneq  F_{r + 1}(x) \supset \{ 0 \}
	\]
	such that $\lambda(x, v) = \lambda_i$ for all $i \leq r, v \in F_{i}(x) \setminus F_{i + 1}(x)$, while $\lim_n \frac1n \log \| A^n_x|_{F_{r + 1}(x)}\| = -\infty$. 
	In this case, we follow the convention $\lambda_{r + 1} = -\infty$. 
	\item[(c)] Infinitely many Lyapunov exponents $\lambda_1 > \lambda_2 > \cdots$ are uncovered. In this case, compactness of $A(x), x \in X$ (see, e.g., discussion after Corollary 2.2 in \cite{ruelle1982characteristic}) implies that $\lim_i \lambda_i = -\infty$, and each exponent corresponds to a member of the filtration
	\[
	B =: F_1(x) \supsetneq F_2(x) \supsetneq \cdots \supsetneq F_i(x) \supsetneq \cdots 
	\]
	for which $\lambda(x, v) = \lambda_i$ for all $i \geq 1, v \in F_i(x) \setminus F_{i + 1}(x)$. The (possibly trivial) closed space $F_\infty(x) := \cap_i F_i(x)$ has the property that $\lim_n \frac1n \log \| A^n_x|_{F_\infty(x)}\| =-\infty$. In this case we follow the convention $r = \infty$. 
\end{itemize}
We note that in all three scenarios, the codimension $\codim F_i$ is constant along trajectories $\{ T^k x\}_{k \geq 0}$, while if $(T, m)$ is ergodic, $\codim F_i(x)$ is constant $m$-almost surely. 

\begin{remark}\label{rmk:nonergodic2}
When $T : (X, \mathscr F, m) \circlearrowleft$ is nonergodic, the limiting value $r(x)$ in \eqref{eq:defineR2} depends on $x \in X$. In particular, $X$ can be subdivided into 
the $T$-invariant (possibly empty) sets $\{ r(x) = 0\}, \{ 1 \leq r(x) <\infty\}$ and $\{ r(x) = \infty\}$ along which each of scenarios (a) -- (c) holds, respectively. 
\end{remark}

\begin{remark}
These scenarios are analogous to the situation for the spectrum $\sigma(K)$ of a 
compact linear operator $K$ on $B$: (a) when $\sigma(K) = \{ 0\}$ (e.g., $K$ is a compact shift operator); (b) when $\sigma(K)$ is a finite set containing $\{0 \}$ (e.g., $K$ is finite rank); and (c) when $\sigma(K)$ is countable
and accumulates only at $\{ 0 \}$ (e.g., $K = \Delta^{-1}$ is the inverse Laplacian on $L^2([0,1])$ with Dirichlet boundary conditions). 
Indeed, when $A_x \equiv K$ is a fixed compact operator not depending on $x$, the
$\lambda_i$ are precisely the logarithms of the absolute values of the elements of $\sigma(K)$, while the $F_i$ are direct sums of the corresponding generalized eigenspaces. 
\end{remark}

\begin{example}\label{exam:simpleDiffSetup}
Let $T : B \to B$ be a continuous mapping as in Example \ref{exam:simpleInvMeasSetup} admitting a compact invariant set $\mathcal A \subset B$ and an invariant Borel probability $m$. Assume in addition that $T$ is $C^1$ Frechet differentiable, and that the derivative $D_x T$ is a compact linear operator (as is the case for a broad class of dissipative parabolic evolution equations \cite{temam2012infinite}). Theorem \ref{thm:nonInvMET} applies to the cocycle 
generated by $A(x) = D_x T \in L(B)$ (note that by our assumptions, $x \mapsto \log^+ |D_x T|$ is a continuous function and $\mathcal A$ is compact, so \eqref{eq:log-int-cond} holds automatically). It follows that for $m$-a.e. $x \in X$ and for all $v \in B$, the limit
\[
\lambda(x, v) = \lim_{n \to \infty} \frac1n \log \| D_x T^n v\|_B \in [-\infty, \infty)
\]
exists, and if finite, equals one of the values $\lambda_i(x)$. 
\end{example}

\subsubsection{Rate at which Lyapunov exponents are ``realized''}\label{subsubsec:rate-realized}

The MET guarantees convergence of the exponential rates $\lambda(x, v)$ for $x \in X, v \in B$ as in equation \eqref{eq:defnFTLE}, but this convergence can be badly nonuniform in $x \in X$. 
While little can be said at this level of generality, we can at least quantify this nonuniformity as we show below. 

To fix ideas, assume $(T, m)$ is ergodic and $r > 0$ (scenarios (b) or (c) in Section \ref{subsubsec:LE2}). Fix an $i \in \{ 1, \dots, r\}$, so by the statement of the MET we have that $\lambda(x, v) = \lambda_i$ for all $v \in F_{i}(x) \setminus F_{i + 1}(x)$. 
With additional work, it is possible to show (see \cite{blumenthal2017entropy}) that for any $\epsilon > 0$, one has
\begin{align}\label{eq:overlineD}
\| A^n_x v\|_B \leq \overline{D}_\e(x) e^{n (\lambda_i + \e)} \| v \|_B \, , 
\end{align}
where
\[
\overline{D}_\epsilon(x) : = \sup_{n \geq 0} \frac{\| A^n_x|_{F_i(x)}\|_B}{e^{n (\lambda_i + \epsilon)}} 
\]
is finite for $m$-a.e. $x \in X$.

For corresponding lower bound, note that the convergence of $\lambda(x, v)$ to $\lambda_i$ should be slower as $v$ approaches $F_{i + 1}(x)$. To account for this, given $v \in B \setminus \{0\}$ and a closed subspace $F \subset V$, write $\angle^B(v, F) $ for the unique ``angle'' in $[0,\pi/2]$ such that 
\begin{align}\label{defn:angleHilbSpace}
\sin \angle^B(v, F) = \inf_{w \in F}\frac{\|v - w\|_B}{\|v\|_B}\,. 
\end{align}
Then, 
\begin{align}\label{underlineD}
\frac{\| A_x^n v\|_B}{\| v \|_B} \geq (\underline D_\e(x))^{-1} e^{n (\lambda_i - \epsilon)} \sin \angle^B(v, F_{i + 1}(x)) \, , 
\end{align}
where
\[
	\underline{D}_\e(x) : = \sup_{n \geq 0} \sup_{\substack{v \in B \setminus F_{i+1}(x) \\ \| v \|_B = 1}} \frac{e^{n (\lambda_i - \epsilon)} \sin\angle^B(v, F_{i+1}(x)) }{\| A^n_x v\|_B} 
\]
is again $m$-almost surely finite. 

Define 
\[
D_\e := \max\{ \overline{D}_\e, \underline{D}_\e\} \, , \quad \text{ and } \quad \Gamma_\ell := \{ D_\e \leq \ell\}, \, \ell > 1 \,. 
\]
The sets $\Gamma_\ell \subset X$ are sometimes referred to as \emph{uniformity sets} or \emph{Pesin sets}: when $\epsilon$ is chosen sufficiently small, for any fixed $\ell > 1$ we have 
\[
\| A^n_x v\|_B \approx_\ell e^{n \lambda_i} \| v \|_B
\]
uniformly over all $x \in \Gamma_\ell$ and $v \in F_i(x) \setminus F_{i + 1}(x)$ with $\angle^B(v, F_{i + 1}(x))$ bounded away from $0$, up to the multiplicative 
constant $\ell$ and ignoring the slowly-growing factors $e^{n \epsilon}$. 
This can be very useful, e.g., in smooth ergodic theory where the exponential expansion/contraction along various directions of $B$ is used to construct stable/unstable manifolds of smooth systems (see references below). Unfortunately, despite their importance, little else can be said about $D_\e$ without additional assumptions. 

\subsubsection{Additional background and context for the MET}

The MET for stationary compositions of $d \times d$ matrices was first proved by Oseledets \cite{oseledets1968multiplicative} in the late 60's, although investigations on the properties of IID products of $d \times d$ matrices
date from the early 60's \cite{furstenberg1960products, furstenberg1963noncommuting}.  
There are now many proofs available-- see, e.g., \cite{raghunathan1979proof, ruelle1979ergodic, walters1993dynamical} and \cite{froyland2010semi}. Since then the MET has been extended 
in several directions, e.g., to the asymptotic behavior of random walks on semisimple Lie groups \cite{kaimanovich1989lyapunov} and on spaces of nonpositive
curvature \cite{karlsson1999multiplicative}. 

One of the most significant impacts of the MET has been in smooth ergodic theory, 
the study of the ergodic properties of differentiable mappings. 
For such systems, the MET implies
the existence of stable and unstable subspaces in the moving frames along 
``typical'' trajectories of the dynamics. Pesin discovered \cite{pesin1977characteristic, ruelle1979ergodic} soon after that these could be used in the construction of 
stable and unstable manifolds, generalizing directly from the classical
theory of stable/unstable manifolds for equilibria and periodic orbits.
This development is at the core of our contemporary understanding of 
chaotic dynamical systems and the fractal geometry of strange attractors
\cite{eckmann1985ergodic}. For more discussion, see, e.g., the the textbook \cite{barreira2002lyapunov}
or the surveys \cite{wilkinson2017lyapunov, young2002srb, pesin2010open}. 

A part of Ruelle's work in \cite{ruelle1982characteristic} was a version of the MET for 
stationary compositions of Hilbert space operators. 
By now, there are many works extending the MET to various infinite-dimensional settings. 
Highlights include extensions to stationary products of compact linear operators on a Banach space \cite{mane1983lyapounov}, dropping the compactness assumption \cite{thieullen1987fibres}, versions of the MET suited to the first variation equations of SPDE \cite{schaumloffel1991multiplicative, lian2010lyapunov},
and a version of the MET for compositions of operators drawn from a vonNeumann algebra \cite{bowen2021multiplicative}. See also, e.g., \cite{gonzalez2014concise, blumenthal2015volume, varzaneh2021oseledets}.

\subsection{Statement of main results}\label{subsec:statementOfResults}

To start, we will assume
\begin{itemize}
\item[(a)] $(B, \| \cdot\|_B)$ is a Banach space and $V \subset B$ is a dense subspace; 
\item[(b)] The space $V$ is equipped with its own norm $\| \cdot \|_V$ such that
\[
\| \cdot\|_B \leq \| \cdot\|_V \, ; 
\]
\item[(c)] Both $(B, \| \cdot\|_B)$ and $(V, \| \cdot\|_V)$ have separable duals. In particular, $(B, \| \cdot\|_B)$ and $(V, \| \cdot\|_V)$ are separable by a standard argument. 
\end{itemize}
Additionally, 
\begin{itemize}
	\item[(1)] $T : (X, \mathscr F, m) \circlearrowleft$ is an mpt; 
	\item[(2)] $A : X \to L(B)$ is strongly measurable and $A(x) \in L(B)$ is compact for all $x \in X$; 
	\item[(3)] The restriction $A(x)|_V$ has range contained in $V$ and is a compact linear operator $V \to V$, and moreover, $x \mapsto A(x)|_V$ is strongly measurable $X \to L(V)$; and finally, 
	\item[(4)] The operator $A(x)$ satisfies the log-integrability condition regarded on both $(B, \| \cdot\|_B)$ and $(V, \| \cdot \|_V)$, i.e., 
	\begin{equation}\label{eq:regularizingCond}
	\begin{aligned}
		&\int \log^+\|A(x)\|_B \dee m(x) < \infty \\
		&\int \log^+ \| A(x)|_V \|_V \dee m(x) < \infty
	\end{aligned}
	\end{equation}
\end{itemize}
Under (1) -- (4), the MET as in Theorem \ref{thm:nonInvMET} applies to $A^n_x$
regarded as a cocycle on either $B$ or $V$. Below, for either of $W = B$ or $V$ 
we write $\lambda_i^W, r^W$ and $F_i^W$ for the objects in Theorem \ref{thm:nonInvMET}
applied to $A^n_x$ regarded as a cocycle on $W$. 

\begin{atheorem}\label{thm:allEqualLE}
Under (1) -- (4), we have that $r^V(x) = r^B(x)$ for $m$-a.e. $x \in X$. Writing $r(x)$ for this common value, the following holds for all $i \geq 1$ and $m$-a.e. $x \in \{ r \geq i\}$: 
 \[
\lambda_i^V (x) = \lambda_i^B (x)  \quad \text{ and } \quad F_i^V(x) = F_i^B(x) \cap V \,. 
\]

\end{atheorem}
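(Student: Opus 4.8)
The plan is to reduce the whole statement to a single exponential comparison of the two norms along cocycle orbits: for $m$-a.e.\ $x$ and every $v\in V\setminus\{0\}$ the two exponents
\[
\lambda^B(x,v):=\lim_{n}\tfrac1n\log\|A^n_xv\|_B,\qquad \lambda^V(x,v):=\lim_{n}\tfrac1n\log\|A^n_xv\|_V
\]
(both existing in $[-\infty,\infty)$ by the MET, Theorem~\ref{thm:nonInvMET}, applied on $B$ and on $V$) coincide. Since $\|\cdot\|_B\le\|\cdot\|_V$ on $V$, the inequality ``$\le$'' is automatic, so only ``$\ge$'' requires work. Granting this, the theorem follows by soft arguments: the inclusion $(V,\|\cdot\|_V)\hookrightarrow(B,\|\cdot\|_B)$ is bounded, injective and has dense range, so for any closed finite-codimensional $F\subset B$ the subspace $F\cap V$ is closed in $V$ with $\codim_V(F\cap V)=\codim_B F$ (lift a frame complementary to $F$ into the dense subspace $V$). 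By the MET one has $F_i^W(x)=\{v\in W:\lambda^W(x,v)\le\lambda_i^W(x)\}$ for $W\in\{B,V\}$; the comparison identity then forces $\{\lambda_i^B(x)\}=\{\lambda_i^V(x)\}$ --- each $\lambda_i^B(x)$ is attained on $(F_i^B(x)\cap V)\setminus(F_{i+1}^B(x)\cap V)$, which is nonempty because these $V$-codimensions are finite and strictly increasing --- whence $r^V(x)=r^B(x)$, $\lambda_i^V(x)=\lambda_i^B(x)$, and $F_i^V(x)=\{v\in V:\lambda^B(x,v)\le\lambda_i^B(x)\}=F_i^B(x)\cap V$.

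The core estimate $\limsup_n\tfrac1n\log\|A^n_xv\|_V\le\lambda^B(x,v)$ rests on two ingredients. The first is a pointwise Ehrling-type interpolation inequality: for every $y\in X$ and $\epsilon>0$ there is $C_\epsilon(y)\in[0,\infty)$ with
\[
\|A(y)w\|_V\le\epsilon\|w\|_V+C_\epsilon(y)\|w\|_B\qquad\text{for all }w\in V .
\]
This is a standard compactness contradiction: were it false, one could take $w_j\in V$ with $\|w_j\|_V=1$, $\|A(y)w_j\|_V\ge\epsilon$, $\|w_j\|_B\to0$; compactness of $A(y):V\to V$ yields $A(y)w_j\to z$ in $V$ with $\|z\|_V\ge\epsilon$, while $w_j\to0$ in $B$ and boundedness of $A(y):B\to B$ give $A(y)w_j\to0$ in $B$, so $z=0$. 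Separability of $V$ makes $y\mapsto C_\epsilon(y)$ measurable (take the defining supremum over a countable dense set).

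The second ingredient addresses the fact that $\log^+C_\epsilon$ need \emph{not} be $m$-integrable --- the ``modulus of compactness'' of $A(y)$ is not controlled by its operator norm --- so one cannot simply iterate the inequality along the orbit. Instead, fix a small $\epsilon$; since $C_\epsilon<\infty$ a.e., for a.e.\ $x$ and any $\delta>0$ one can choose $M$ so large that, with $G:=\{C_\epsilon\le M\}$, the Birkhoff averages of $\1_{G^c}\log^+\|A(\cdot)|_V\|_V$ along the orbit of $x$ are $<\delta$ (using \eqref{eq:regularizingCond} and $m(G^c)\downarrow0$). Writing $c_n=\|A^n_xv\|_V$ and $b_n=\|A^n_xv\|_B$, one has the recursion $c_{n+1}\le\gamma_nc_n+Mb_n\1_G(T^nx)$ with $\gamma_n=\epsilon$ when $T^nx\in G$ and $\gamma_n=\max(1,\|A(T^nx)|_V\|_V)$ otherwise; unrolling it, bounding $b_j\le Ke^{j(\lambda^B(x,v)+\delta)}$, and computing the growth rate of the products $\prod_i\gamma_i$ via the ergodic theorem gives, after $\delta\to0$ then $\epsilon\to0$, the claim (the case $\lambda^B(x,v)=-\infty$ being identical with $-L$, $L\to\infty$, in place of $\lambda^B(x,v)$).

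The main obstacle is precisely this last step: because $C_\epsilon(y)$ is genuinely not $\log$-integrable in general, the naive iteration of the interpolation inequality fails, and the good/bad--step decomposition is what rescues it --- on the infrequent ``bad'' steps one pays only the integrable cost $\log^+\|A(T^nx)|_V\|_V$ rather than $\log C_\epsilon(T^nx)$. Making the ergodic averaging over the tail windows $(j,n)$ uniform in $j/n$ --- so that $\tfrac1n\log\prod_{i=j+1}^{n-1}\gamma_i$ has asymptotic value $(1-j/n)$ times the full rate, and hence the dominant term in the unrolled sum sits at $j\approx n$ --- requires some care but is routine once the ordinary ergodic theorem is in hand.
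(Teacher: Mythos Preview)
Your approach is correct and genuinely different from the paper's. The paper works at the level of volume growth: it introduces Banach-space determinants $\det_W(A|E)$ on finite-dimensional $E$, builds quotient cocycles $\hat A^n_x=\pi^\perp_{T^nx}A^n_x|_{E(x)}$ along measurably selected complements $E(x)$ to the Oseledets spaces, and compares the sums $\Sigma_k^B,\Sigma_k^V$ via the norm-comparison constant $\alpha(E)=\sup_{v\in E}\|v\|_V/\|v\|_B$, invoking Poincar\'e recurrence to find infinitely many times at which $\alpha(E(T^nx))$ stays bounded; equality of individual exponents and multiplicities is then extracted algebraically from the identities $\Sigma^V_{M_i^W}=\Sigma^B_{M_i^W}$. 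You instead work vector by vector via the Ehrling-type inequality $\|A(y)w\|_V\le\epsilon\|w\|_V+C_\epsilon(y)\|w\|_B$ --- a direct encoding of compactness of $A(y)|_V$ against the coarser $B$-norm --- together with the good/bad step decomposition to sidestep the nonintegrability of $\log C_\epsilon$, after which the identification of spectra and filtrations is soft as you describe. Your route is shorter, avoids the Grassmannian and measurable-selection machinery entirely, and makes transparent that only compactness of a \emph{single} step $A(y)|_V$ is used; the paper's determinant framework is more structural, delivers multiplicities simultaneously with the exponents, and is the natural vehicle for the quasi-compact generalization the authors allude to (where the $\epsilon$ in your Ehrling inequality cannot be pushed below the measure of noncompactness). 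One point worth making explicit in your write-up: to obtain $\lambda^V(x,v)=\lambda^B(x,v)$ for \emph{all} $v\in V$ on a single full-measure set of $x$, restrict $\epsilon$ and $M$ to countable sets so that the needed Birkhoff limits (and their uniform-in-$j$ refinement $\sup_{j\le n}|S_j-jL|/n\to0$) hold simultaneously.
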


In particular, for $m$-a.e. $x \in X$ and all $v \in V$, we have that
\[
\lim_{n \to \infty} \frac1n \log \| A^n_x v\|_B = \lim_{n \to \infty} \frac1n \log \| A^n_x v\|_V \, . 
\]
 Note also that scenarios (a), (b) and (c) above are carried over from $B$ to $V$. For instance, in the ergodic case, $\lambda_1^B = -\infty, r^B = 0$ (our convention for scenario (a)) holds if and only if $\lambda_1^V = -\infty, r^V = 0$. 

\begin{example} \ 
\begin{itemize}
\item[(i)] Let $T : B \to B$ be a $C^1$ Frechet differentiable mapping with compact invariant set $\mathcal A \subset B$ as in Examples \ref{exam:simpleInvMeasSetup} and \ref{exam:simpleDiffSetup}. Assume $V \subset B$ is a dense embedded subspace, $\| \cdot \|_V \geq \| \cdot\|_B$. If $A(x) = D_x T$ satisfies assumptions (1) -- (4) above, then Theorem \ref{thm:allEqualLE} applies: writing $\lambda^W(x, v) = \lim_{n \to \infty} \frac1n \log \| D_x T^n v\|_W$ for $x \in \Ac, v \in B$ and $W = V$ or $B$, it follows that 
\[
\lambda^V(x,v) = \lambda^B(x,v) \quad \text{ for } m\text{ a.e. } x \in \Ac, \text{ and all } v \in B \,. 
\]
\item[(ii)] When $T$ is the time-1 mapping for a dissipative semilinear parabolic problem, e.g., the 2d Navier-Stokes equations, one typically works with a scale of Banach spaces $B_\alpha, \alpha \in [a,b] \subset \R, B_\beta \subset B_\alpha$ for $\alpha< \beta$, e.g., the Sobolev spaces $B_\alpha = W^{\alpha, 2} = H^\alpha$. Fixing $B = B_\alpha$ and $V = B_\beta$, it is often the case that $D_x T : B \to B$ is compact, and that $D_x T$ restricted to $V$ maps into $V$ and is similarly compact;  this is a consequence of \emph{parabolic regularity} for the first variation (linearization) equation. In particular, Theorem \ref{thm:allEqualLE} applies. See Section \ref{sec:Applications} for more details in the case of 2d Navier-Stokes. 
\end{itemize}
\end{example}

\subsubsection{Comparison of uniformity sets}\label{subsubsec:uniformity-sets}

It is natural to attempt to compare the rate at which Lyapunov exponents are realized
between the norms of $B$ and $V$. 
Fix $i \geq 1$ and assume $m\{ r \geq i\} > 0$. Fix $\e > 0$ and let $\overline{D}_\e^W, \underline{D}_\e^W, W = B, V$ be as in \eqref{eq:overlineD}, \eqref{underlineD}, respectively (note that we have not assumed $(T, m)$ is ergodic, so $\lambda_i(x)$ can depend on $x$). 
Since $\| \cdot\|_B \leq \| \cdot \|_V$, it is of interest to bound $\overline{D}_\e^V$ from above by $\overline{D}_\e^B$ and $\underline{D}^B_\e$ from above by $\underline{D}^V_\e$. We prove such a comparison under the following additional assumption: 
\begin{itemize}
	\item[(5)] For all $x \in X$, the range of $A_x : B \to B$ is contained in $V$ and is bounded as a linear operator $(B, \| \cdot\|_B) \to (V, \| \cdot \|_V)$. Moreover, assume that for some $p > 3$ we have that
	\begin{equation}\label{eq:moment-regularizing}
\log^+ \| A_x\|_{B \to V} \in L^p(m) \, . 
\end{equation}
\end{itemize}

\begin{acorollary}\label{cor:reguarlityFunctions}
Assume the setting of Theorem \ref{thm:allEqualLE} and additionally that \eqref{eq:moment-regularizing} above holds.  For any $\delta > 0$, there exists a function $K_\delta : \{ r \geq i\} \to \R_{\geq 1}$ such that
	\[
	\overline{D}_\e^V(x) \leq K_\delta(x) \overline{D}_{\e + \delta}^B(x) \quad \text{ and } \quad 
	\underline{D}_\e^B(x) \leq K_\delta(x) \underline{D}_{\e + \delta}^V(x)
	\]
hold for any $\e > 0$ and $m$-a.e. $x \in \{ r \geq i\}$. The function $K_\delta$ satisfies the moment estimate
\begin{align}
\int_{\{ r \geq i \}} (\log^+ K_\delta)^{q} \, \dee m \lesssim_{p, q} \delta^{- (p-q)} (1 + \| \log^+ \varphi\|_{L^p(m)}^{p})  \quad \text{ for all } \quad q < \frac{p(p - 3)}{p -1} \, , 
\end{align}
where $\varphi (x) := \| A_x\|_{B \to V}$. 
\end{acorollary}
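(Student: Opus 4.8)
\emph{Proof sketch.} By Theorem~\ref{thm:allEqualLE} we may work on the full-measure set where both cocycles share the exponent $\lambda_i:=\lambda_i^B(x)=\lambda_i^V(x)$ and satisfy $F_i^V(x)=F_i^B(x)\cap V$, $F_{i+1}^V(x)=F_{i+1}^B(x)\cap V$; the two families $\overline D^B_\e,\underline D^B_\e$ and $\overline D^V_\e,\underline D^V_\e$ are then assembled from the \emph{same} invariant data, and what remains is a quantitative comparison of the norms $\|\cdot\|_B$ and $\|\cdot\|_V$ along orbits of the cocycle $(A_x^n)_{n\ge0}$. The engine of this comparison is the \emph{regularizing step}: peeling the last factor of $A_x^n=A(T^{n-1}x)\circ A_x^{n-1}$ and using hypothesis~$(5)$ gives, for every $v$ and $n\ge1$,
\[
\|A_x^nv\|_V\le\varphi(T^{n-1}x)\,\|A_x^{n-1}v\|_B,\qquad \|A_x^nv\|_B\ge\varphi(T^nx)^{-1}\,\|A_x^{n+1}v\|_V,\qquad \varphi(y):=\|A_y\|_{B\to V}.
\]
One application of the cocycle thus converts control of $A_x^{\,\cdot}v$ in the weak norm into control in the strong norm, and conversely, at the cost of one factor $\varphi$ evaluated along the base orbit; the only obstruction is that $\varphi(T^ky)$ is merely \emph{tempered}, $\tfrac1k\log\varphi(T^ky)\to0$ (because $\log^+\varphi\in L^1(m)$), not bounded --- which is what forces an arbitrarily small slack $\delta>0$ in the exponential rate.

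\textbf{Upper bound.} Fix $v\in F_i^V(x)$ with $\|v\|_V=1$, so $\|v\|_B\le1$ and $v\in F_i^B(x)$. For $n\ge1$, insert $\|A_x^{n-1}v\|_B\le\overline D^B_{\e-\delta}(x)\,e^{(n-1)(\lambda_i+\e-\delta)}$ into the first inequality and divide by $e^{n(\lambda_i+\e)}$; for $\e>\delta$ the exponential weights recombine to
\[
\frac{\|A_x^nv\|_V}{e^{n(\lambda_i+\e)}}\;\le\;e^{|\lambda_i|}\,\big(\varphi(T^{n-1}x)\,e^{-n\delta}\big)\,\overline D^B_{\e-\delta}(x).
\]
Taking the supremum over $n\ge0$ (the $n=0$ term equals $1\le\overline D^B_{\e-\delta}(x)$) yields $\overline D^V_\e(x)\le K_\delta(x)\,\overline D^B_{\e-\delta}(x)$, with $K_\delta(x)$ a fixed multiple of $\sup_{n\ge1}\varphi(T^{n-1}x)e^{-n\delta}$; this supremum is $m$-a.s.\ finite by Borel--Cantelli, since $\sum_n m\{\log^+\varphi(T^{n-1}x)>\tfrac{n\delta}2\}=\sum_n m\{\log^+\varphi>\tfrac{n\delta}2\}<\infty$ by $T$-invariance and $\log^+\varphi\in L^p(m)$, $p>1$. (The shift must fall on $\overline D^B$ with the sign written here, not $+\delta$: as $\e\downarrow0$, $\overline D^V_\e$ typically diverges while $\overline D^B_{\e+\delta}$ stays bounded.)

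\textbf{Lower bound.} The expression $g(v):=\sup_{n\ge0}\|v\|_B\,e^{n(\lambda_i-\e)}\sin\angle^B(v,F_{i+1}^B(x))/\|A_x^nv\|_B$ is lower semicontinuous on $B\setminus F_{i+1}^B(x)$ (note $\ker A_x^n\subset F_{i+1}^B(x)$), so its supremum over the $\|\cdot\|_B$-dense set $V\setminus F_{i+1}^V(x)$ already equals $\underline D^B_\e(x)$; it thus suffices to bound $g(v)$ for $v\in V\setminus F_{i+1}^V(x)$. For such $v$ and $n\ge1$, combine the second regularizing inequality with the defining lower bound $\|A_x^{n+1}v\|_V\ge\|v\|_V\,e^{(n+1)(\lambda_i-\e+\delta)}\sin\angle^V(v,F_{i+1}^V(x))/\underline D^V_{\e-\delta}(x)$. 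The $e^{n(\cdot)}$ weights again leave a geometrically decaying factor $e^{-n\delta}$ absorbing $\varphi(T^nx)$, and the residual angle-and-normalization factor $\tfrac{\|v\|_B}{\|v\|_V}\cdot\tfrac{\sin\angle^B(v,F_{i+1}^B(x))}{\sin\angle^V(v,F_{i+1}^V(x))}$ is bounded by an $x$-dependent constant $C(x)$ --- using $F_{i+1}^V(x)=F_{i+1}^B(x)\cap V$, $\|\cdot\|_B\le\|\cdot\|_V$, and the mutual equivalence of the two norms on a fixed finite-dimensional complement of $F_{i+1}(x)$ in $F_i(x)$ that may be chosen inside $V$. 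Taking the supremum over $n,v$ gives $\underline D^B_\e(x)\le K_\delta(x)\,\underline D^V_{\e-\delta}(x)$ for a $K_\delta$ of the same shape (now carrying the extra factor $C(x)$), the regime of large $\e$ --- where the bound is immediate from monotonicity of $\underline D^B_\e$ in $\e$ --- being treated separately. Replacing $K_\delta$ by the maximum of the two choices gives one $K_\delta$ valid in both inequalities.

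\textbf{The moment bound --- the main obstacle.} The substance of the corollary is the $L^q$ bound for $\log^+K_\delta$. Schematically $K_\delta\asymp\big(\sup_{n\ge1}\varphi(T^{n-1}x)e^{-n\delta}\big)\times(\text{the }\lambda_i,\ C(x),\text{ and related corrections})$, so $\{\log^+K_\delta>t\}$ is contained in a union over $n$ of events $\{\log^+\varphi(T^{n-1}x)>t+\tfrac{n\delta}2\}$ plus a finite family of analogous events from the corrections; Markov's inequality with $\log^+\varphi\in L^p(m)$ and $T$-invariance give, for $t$ large relative to $\delta$,
\[
m\{\log^+K_\delta>t\}\;\lesssim\;\|\log^+\varphi\|_{L^p(m)}^{p}\sum_{n\ge0}\big(t+\tfrac{n\delta}2\big)^{-p}\;\lesssim\;\frac{\|\log^+\varphi\|_{L^p(m)}^{p}}{\delta}\,t^{-(p-1)},
\]
and $\le1$ otherwise. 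Integrating $q\int_0^\infty t^{q-1}\min\{1,\delta^{-1}t^{-(p-1)}\}\,\dt$, optimizing over the crossover point, and carefully tracking the extra polynomial-in-$t$ losses created by the angle comparison in the lower-bound step and by truncating the $m$-a.s.-finite but possibly non-integrable quantities $\overline D^B_{\e-\delta},\underline D^V_{\e-\delta},C(x)$ where they appear, produces the stated estimate with threshold $q<\tfrac{p(p-3)}{p-1}$ and $\delta\to0$ blowup rate $\delta^{-(p-q)}$. Verifying that the slack $\delta$ can be split among these uses so that the moment loss is no worse than $\delta^{-(p-q)}$, and that the attendant losses keep $q$ strictly below $\tfrac{p(p-3)}{p-1}$, is where essentially all the real work lies; the structural steps above are comparatively soft.
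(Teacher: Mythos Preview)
Your structural skeleton is right --- peel one factor of the cocycle and use the regularizing hypothesis to trade norms --- but you have missed the key simplification and, as you yourself flag, you have not actually carried out the moment bound.

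\textbf{The angle comparison is free.} For the lower bound you introduce an $x$-dependent constant $C(x)$ bounding $\frac{\|v\|_B}{\|v\|_V}\cdot\frac{\sin\angle^B(v,F_{i+1}^B)}{\sin\angle^V(v,F_{i+1}^V)}$, appealing to norm equivalence on a finite-dimensional complement. This is unnecessary: directly from the definition \eqref{defn:angleHilbSpace} and $\|\cdot\|_B\le\|\cdot\|_V$,
\[
\|v\|_V\,\sin\angle^V(v,F_{i+1}^V(x))=\inf_{w\in F_{i+1}^V(x)}\|v-w\|_V\;\ge\;\inf_{w\in F_{i+1}^V(x)}\|v-w\|_B\;=\;\|v\|_B\,\sin\angle^B(v,F_{i+1}^B(x)),
\]
the last equality because $F_{i+1}^V(x)$ is $\|\cdot\|_B$-dense in $F_{i+1}^B(x)$. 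So the ratio you bound by $C(x)$ is in fact $\le1$. With this, $K_\delta$ depends on $x$ \emph{only} through $\varphi$ along the orbit --- there are no ``possibly non-integrable'' extras $C(x)$, $\overline D^B_{\e-\delta}$, $\underline D^V_{\e-\delta}$ to truncate, and the moment bound becomes a clean computation rather than the open-ended bookkeeping you describe.

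\textbf{The moment computation.} The paper's $K_\delta$ is concrete: set $N_\delta(x):=\max\{n\ge0:\varphi(T^nx)>e^{n\delta}\}$ (finite a.s.\ by Borel--Cantelli since $p>2$) and $K_\delta(x):=1\vee\max_{0\le j\le N_\delta}\varphi(T^jx)$, so $\varphi(T^nx)\le e^{n\delta}K_\delta(x)$ for all $n$. Then
\[
(\log^+K_\delta)^q\le\sum_{j=0}^{N_\delta}(\log^+\varphi\circ T^j)^q,\qquad
\int(\log^+K_\delta)^q\,\dee m\le\sum_{n\ge0}\sum_{j=0}^n\int_{\{N_\delta=n\}}(\log^+\varphi\circ T^j)^q\,\dee m.
\]
Apply H\"older on each summand and $T$-invariance, and use the tail bound $m\{N_\delta>n\}\le\sum_{\ell>n}m(G_\ell)\lesssim_p\delta^{-p}n^{-(p-1)}\|\log^+\varphi\|_{L^p}^p$. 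The resulting series is $\sum_n(n+1)\,n^{(1-p)(1-q/p)}$, which converges precisely when $q<\tfrac{p(p-3)}{p-1}$, with prefactor $\delta^{-(p-q)}$. This is the ``real work'' you left undone; once $C(x)$ is eliminated it is a two-line estimate, and both the threshold on $q$ and the $\delta$-exponent fall out of this specific organization rather than from a generic tail bound on $\sup_n\varphi(T^{n-1}x)e^{-n\delta}$.
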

That is, while usually one has little control over $\overline{D}_\e^W, \underline{D}_\e^W$, these terms are comparable between $W = B$ and $V$, in a way that 
can be made explicit in terms of the $L^p$-norm of $\log^+ \| A_\cdot\|_{B \to V}$. 
Viewing $V$ as a ``higher regularity'' subspace of $B$ (c.f. the discussion in Section \ref{subsec:appsIntro}), 
condition (5) has the connotation that $A_x$ \emph{regularizes} initial data from $B$ into $V$. 
This condition is natural for linear cocycles derived from dissipative parabolic PDE, and holds for all the applications covered in Section \ref{sec:Applications} below. 

\begin{proof}[Proof of Corollary \ref{cor:reguarlityFunctions} assuming Theorem \ref{thm:allEqualLE}]
We restrict attention to the lower bound for $\underline{D}_\e^B$; the upper bound on $\overline{D}_\e^V$ is easier and omitted for brevity. Moreover, we assume below that $(T, m)$ is ergodic, so the value $r \geq i$ and the Lyapunov exponents $\lambda_i$ are  almost-surely constant (the non-ergodic case is treated similarly and is omitted for brevity).

To start, observe that for $v \in F_i^V(x)$, we have
\begin{align*}
\| A^n_x v\|_B & \geq \| A^{n + 1}_x v\|_V \| A_{T^n x}\|_{B \to V}^{-1} \\
& \geq \left(\underline{D}^V_\e(x) \| A_{T^n x} \|_{B \to V} \right)^{-1} e^{n (\lambda_i - \e)} \| v \|_V \sin \angle^V(v, F_{i + 1}^V(x)) \,. 
\end{align*}
Since $\| \cdot \|_B \leq \| \cdot\|_V$, it holds directly from  \eqref{defn:angleHilbSpace} that
\[
\| v \|_V \angle^V(v, F_{i + 1}^V(x)) \geq \| v \|_B \angle^B(v, F_{i + 1}^V(x)) \,. 
\]
Since $V$ is dense in $B$ and $F_{i + 1}^V = V \cap F_{i + 1}^B$, the closure of $F_{i + 1}^V(x)$ in $B$ coincides with $F_{i + 1}^B(x)$, hence $\angle^B(v, F_{i + 1}^V(x)) = \angle^B(v, F_{i + 1}^B(x))$. We conclude
\[
\| A^n_x v\|_B \geq \left( \underline{D}^V_\e(x)\| A_{T^n x} \|_{B \to V}  \right)^{-1} e^{n (\lambda_i - \e)} \cdot \| v \|_B \sin \angle^B(v, F_{i + 1}^B(x)) \, . 
\]
The above estimate holds uniformly over $v \in F_i^V(x)$, and so by density of $F_i^V(x)$ in $F_i^B(x)$ we conclude the same holds for $v \in F_i^B(x)$.

It remains to bound $\| A_{T^n x} \|_{B \to V}$ from above. Below, we write ``$\lesssim_p, \lesssim_{p,q}$'' for bounds up to a universal multiplicative constant depending only on $p, $ and/or $q$ and independent of all other parameters, e.g., $\delta$. 
With $G_n := \{ x \in X : \| A_{T^n x} \|_{B \to V} > e^{n \delta}\}$, we have 
\[
m(G_n) = m \{ \log^+ \| A_{T^n x} \|_{B \to V} > n \delta\} \leq \frac{\|\log^+ \varphi\|_{L^p(m)}^p}{(n \delta)^p}
\]
by Chebyshev's inequality, where $\varphi(x):= \| A_{T^n x} \|_{B \to V}$. Since $p > 3 > 2$, it holds that $\sum_n m(G_n) < \infty$, hence 
\[
N_\delta (x) := \max\{ n \geq 0 : \|A_{T^n x}\|_{B \to V} > e^{n \delta} \}\, 
\]
is almost-surely finite by the Borel-Cantelli Lemma, with the tail estimate
\[
m \{ N_\delta > n \} \leq \sum_{\ell = n+1}^\infty m(G_\ell) \lesssim_p \delta^{-p} n^{- p + 1} \| \log^+ \varphi \|_{L^p(m)}^p \,. 
\]
For $m$-a.e. $x$, we now have
\[
\| A_{T^n x} \|_{B \to V} \leq e^{n \delta} \cdot \left( 1 \vee \max_{0 \leq i \leq N_\delta}\| A_{T^i x} \|_{B \to V} \right)  =: e^{n \delta} K_\delta(x) \, .
\]
Plugging $K_\delta$ into our previous estimate, we conclude $\underline{D}^B_\e \leq K_\delta \underline{D}^V_{\e + \delta}$, as desired. 

It remains to estimate the $q$-th moment of $\log^+ K_\delta$, where from here on $q < p$ is fixed. 
We have
\begin{gather*}
(\log^+ K_\delta(x))^q \leq \sum_{i \leq N_\delta} (\log^+ \| A_{T^i x} \|_{B \to V})^q \, , \quad \text{ hence} \\
\int (\log^+ K_\delta)^q \,\dee m  \leq \sum_{n =0}^\infty \sum_{i = 0}^n \int_{\{ N_\delta = n \}} (\log^+ \| A_{T^i x} \|_{B \to V})^q \,\dee m (x)\, . 
\end{gather*}
Using H\"older's inequality on each summand and that $m \circ T^{-1} = m$, we obtain
\begin{align*}
\int (\log^+ K_\delta)^q \dee m & \leq 
 \| \log^+ \varphi \|_{L^p}^{q} \sum_{n =0}^\infty (n + 1) (m\{ N_\delta = n \})^{1-\frac{q}{p}} \\
& \lesssim_{p, q} \delta^{- (p - q)} (1 + \| \log^+ \varphi \|_{L^p}^{p}) \sum_{n = 1}^\infty n^{1 + (1 - p)(\frac{p-q}{p}) } \, . 
\end{align*}
The sum in the RHS is finite iff $q < \frac{p(p-3)}{p -1}$ (note the right-hand quantity is $> 0$ iff $p > 3$). 
\qedhere

\end{proof}

\subsection{Comments on existing results}\label{subsec:prevWork}

To the authors' knowledge, the first result on the dependence of Lyapunov exponents on the norm was given in \cite{froyland2013metastability}, which considered two potentially nonequivalent norms on the same Banach space. 

During the preparation of this manuscript, the authors discovered that Theorem 37 in Appendix A of \cite{gonzalez2021stability} is a version of the main result Theorem \ref{thm:allEqualLE} of this manuscript. On the other hand, the (short and elegant) proof given in \cite{gonzalez2021stability} relies on the invertibility of the base mpt $T : (X, \Bc, m) \circlearrowleft$, while the proof given here, although longer, is inherently ``one-sided'' and does not rely at all on invertibility of $T$. We also note that the setting of \cite{gonzalez2021stability} requires only a ``quasi-compactness'' assumption on the cocycle, not compactness as we assume here. However, in view of our intended applications to dissipative parabolic PDE, we have opted for the sake of simplicity to limit the proof of Theorem \ref{thm:allEqualLE} to the compact case. While not all details have been checked, the authors are confident an approach analogous to that 
given here will work in the quasi-compact setting. 
Corollary \ref{cor:reguarlityFunctions} appears to be new.

Lastly, we note that a version Theorem \ref{thm:allEqualLE} for a class of linear delay-differential equations appears in the paper \cite{mierczynski2020lyapunov}.
\section{Proof of Theorem \ref{thm:allEqualLE}} \label{sec:METsameExp}
%!TEX root = main.tex

In Section \ref{subsec:Grassmanian} 
we collect some preliminary regarding the Grassmanian of closed subspaces
and a notion of determinant on finite-dimensional subspaces of Banach spaces. In Section \ref{subse:cmainProp2} we prove an intermediate result. We complete the proof of Theorem \ref{thm:allEqualLE} in Section \ref{subsec:proofOfEqualLE}. 

\subsection{Preliminaries}\label{subsec:Grassmanian}

Let $(B, \| \cdot\|_B)$ be a Banach space. Let $\Gr(B)$ denote the Grassmanian of $B$, i.e., the set of closed subspaces of $B$. For $k \in \N$, write $\Gr_k(B)$ for the set of $k$-dimensional subspaces of $B$ and $\Gr^k(B)$ for the set of closed, $k$-codimensional subspaces. 

Throughout, $\Gr(B)$ is endowed with the metric topology coming from the Hausdorff distance
\begin{align}\label{eq:defnHausDist}
d_{Haus}^B(E, E') = \max \bigg\{ \sup_{e \in E, \|e\|_B = 1} \dist^B(e, S_{E'}) , \sup_{e' \in E', \|e'\|_B = 1} \dist^B(e', S_{E}) \bigg\}
\end{align}
Equation \eqref{eq:defnHausDist} is the usual Hausdorff distance between two closed subsets of a metric space. In this case, we are taking the usual Hausdorff distance of the unit spheres
$S_E := \{ e \in E : \|e\|_B = 1\}$. Note that 
for $v \in B$ and $S \subset B$ we write $\dist^B(v, S) = \inf_{s \in S} \|v - s\|_B$ for the minimal distance between $v$ and $S$ in the $\|\cdot\|_B$ norm. We note that in $d_{Haus}^B$, the sets $\Gr_k(B), \Gr^k(B)$ are clopen in $\Gr(B)$ for all $k \geq 1$. 
For additional background, see Section IV.2 of \cite{kato2013perturbation}. 

\subsubsection{Norm comparison}

Let $(V, \| \cdot\|_V)$ be another Banach space such that $V \subset B$ and $\|\cdot\|_B \leq \| \cdot\|_V$. 
\begin{definition}
For $E \in \Gr(V)$, define
\begin{align}\label{eq:defineAlpha}
\a(E) = \sup_{v \in E \setminus \{ 0 \}} \frac{\|v\|_V}{\|v\|_B} \, ,
\end{align}
noting $1 \leq \a(E) < \infty$ automatically by compactness of $S_E$ when $\dim E < \infty$ 
(in particular, the $\sup$ is a $\max$). 
\end{definition}
For our purposes, we will require 
some degree of control over $\a(E)$ as $E$ varies: 

\begin{lemma}\label{lem:equivNorm}
Let $k \geq 1$ and $E_0 \in \Gr_k(V)$. Then, $E \mapsto \alpha(E)$ is upper semi-continuous at $E = E_0$: for any $\epsilon > 0$ there exists $\delta = \delta(E_0, \epsilon) > 0$ such that if $d_{Haus}^V(E, E_0) < \delta$, then 
\[
\a(E) \leq \a(E_0) + \epsilon\,. 
\]
\end{lemma}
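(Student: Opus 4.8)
The plan is to argue by contradiction, exploiting compactness of the unit sphere $S_{E_0}$ of the finite-dimensional space $E_0$. Suppose the conclusion fails: there exists $\epsilon_0 > 0$ and a sequence $E_n \in \Gr_k(V)$ with $d_{Haus}^V(E_n, E_0) \to 0$ but $\alpha(E_n) \geq \alpha(E_0) + \epsilon_0$ for all $n$. Since $\dim E_n = k$ and the $\sup$ defining $\alpha(E_n)$ is attained, pick unit vectors $v_n \in E_n$ with $\|v_n\|_V = 1$ and $\|v_n\|_B = \alpha(E_n)^{-1} \leq (\alpha(E_0)+\epsilon_0)^{-1}$. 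By the definition of $d_{Haus}^V$, there exist $w_n \in E_0$ with $\|w_n\|_V = 1$ and $\|v_n - w_n\|_V \to 0$; in particular $\|v_n - w_n\|_B \to 0$ as well since $\|\cdot\|_B \leq \|\cdot\|_V$.

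Next I would use compactness of $S_{E_0}$: passing to a subsequence, $w_n \to w \in S_{E_0}$ in the $\|\cdot\|_V$-norm (hence also in $\|\cdot\|_B$). Combining with $\|v_n - w_n\|_V \to 0$, we get $v_n \to w$ in $\|\cdot\|_V$ and in $\|\cdot\|_B$. Taking limits in the inequality $\|v_n\|_B \leq (\alpha(E_0)+\epsilon_0)^{-1}$ yields $\|w\|_B \leq (\alpha(E_0)+\epsilon_0)^{-1}$, while $\|w\|_V = 1$. But then
\[
\alpha(E_0) \geq \frac{\|w\|_V}{\|w\|_B} \geq \alpha(E_0) + \epsilon_0 \, ,
\]
a contradiction. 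Hence for every $\epsilon > 0$ there is a $\delta$ as claimed.

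I do not expect a serious obstacle here; the only point requiring a little care is that the bound $\|v_n\|_B \geq c > 0$ is uniform in $n$ (which follows from $\|v_n\|_B \geq \|v_n\|_V/\alpha(E_n)$ together with an \emph{upper} bound on $\alpha(E_n)$ — but we only have a lower bound on $\alpha(E_n)$). In fact this is not needed: we directly have $\|v_n\|_B = \alpha(E_n)^{-1} \leq (\alpha(E_0)+\epsilon_0)^{-1}$, an upper bound, which is exactly what the contradiction argument uses, so there is no circularity. One should also note that the approximating vectors $w_n$ can be taken on the unit sphere of $E_0$ in the $V$-norm (rescale if necessary), which is the compact set being used. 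The statement is genuinely one-sided: lower semicontinuity can fail, since a sequence of subspaces converging to $E_0$ may contain directions along which $\|\cdot\|_V/\|\cdot\|_B$ is much smaller than on $E_0$, so only the $\sup$ (i.e.\ $\alpha$) is upper semicontinuous, consistent with the statement.
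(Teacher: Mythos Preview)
Your proof is correct. The paper takes a different, more direct route: rather than arguing by contradiction and invoking compactness of $S_{E_0}$, it fixes $E$ with $d_{Haus}^V(E,E_0) < \delta$, picks the maximizer $v \in S_E$ with $\alpha(E) = 1/\|v\|_B$, chooses $v_0 \in S_{E_0}$ with $\|v-v_0\|_V < \delta$, and then estimates
\[
\alpha(E) = \frac{\|v_0\|_B}{\|v\|_B}\cdot \frac{1}{\|v_0\|_B} \leq \frac{\|v_0\|_B}{\|v_0\|_B - \delta}\,\alpha(E_0) \leq \frac{\alpha(E_0)}{1 - \delta\,\alpha(E_0)} \, ,
\]
using $\|v_0\|_B \geq \alpha(E_0)^{-1}$. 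This gives an explicit choice of $\delta$ depending only on $\epsilon$ and $\alpha(E_0)$, which your compactness argument does not provide; on the other hand, your approach is perhaps more conceptually transparent and avoids the algebraic manipulation. Both proofs hinge on the same key step of approximating the maximizer in $E$ by a unit vector in $E_0$ via the Hausdorff distance.
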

It is not hard to check that if $d_{Haus}^V(E, E_0) < 1$, then $\dim E = \dim E_0$ (Corollary 2.6 of Section IV.2 in \cite{kato2013perturbation}), 
hence $\a(E) < \infty$ automatically. Lemma \ref{lem:equivNorm} goes further, asserting 
that the $B$ and $V$ norms are \emph{uniformly equivalent} for all $E$ close enough to $E_0$ in $d_{Haus}$. 
\begin{proof}[Proof of Lemma \ref{lem:equivNorm}]
Let $E \in \Gr(V)$ and assume $d_{Haus}^V(E, E_0) < \delta$ for some $\delta > 0$ to be specified. Let $v \in E, \|v\|_V = 1$ be so that
\[
\a(E) = \frac{1}{\|v\|_B} \, .
\]
Let $v_0 \in E_0, \|v_0\|_V = 1$ be such that $\|v - v_0\|_V \leq \delta$. 
We see that 
\[
\a(E) =  \frac{\|v_0\|_B}{\|v\|_B} \frac{1}{\|v_0\|_B}\leq  \frac{\|v_0\|_B}{\|v\|_B} \a(E_0) \, 
\]
by definition of $\a(E_0)$. Now, $\|v - v_0\|_B \leq \|v - v_0\|_V < \delta$ and so 
 $\|v\|_B \geq \|v_0\|_B - \delta$. Using that $\|v_0\|_B \geq \alpha(E_0)^{-1}$, we see that 
\[
\frac{\|v_0\|_B}{\|v\|_B} \leq \frac{\|v_0\|_B}{\|v_0\|_B - \delta} 
= \frac{1}{1 - \delta \|v_0\|_B^{-1}} \leq \frac{1}{1 - \delta \alpha(E_0)} \,. 
\]
On taking $\delta > 0$ sufficiently small (depending only on $\alpha(E_0)$ and $\epsilon > 0$), we conclude $\a(E) \leq  \a(E_0) + \epsilon$ as desired. 
\end{proof}

It is also useful to compare the Hausdorff distances $d_{Haus}^B, d_{Haus}^V$: 

\begin{lemma}\label{lem:distComparison}
For any $E, E' \in \Gr_k(V), k < \infty$, we have that 
\[
d^B_{Haus}(E, E') \leq 2 \max\{ \alpha(E), \alpha(E')\} d^V_{Haus}(E, E') \,. 
\]
\end{lemma}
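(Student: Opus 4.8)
The plan is to unwind the definition of $d^B_{Haus}(E,E')$ in terms of unit spheres and reduce each one-sided supremum to a distance computation in the $V$-norm, where we have control via $d^V_{Haus}(E,E')$. By symmetry it suffices to bound $\sup_{e\in E,\,\|e\|_B=1}\dist^B(e,S_{E'})$ by $2\max\{\alpha(E),\alpha(E')\}\,d^V_{Haus}(E,E')$; the other term is handled identically with the roles of $E$ and $E'$ swapped, and the stated bound is the max of the two.

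So fix $e\in E$ with $\|e\|_B=1$. First I would pass to the $V$-normalized vector $\tilde e := e/\|e\|_V$, which lies in $S_E$ (in the $V$-sphere sense), noting $\|e\|_V\le\alpha(E)$ since $\|e\|_B=1$. By definition of $d^V_{Haus}$ there is a $V$-unit vector $f\in E'$ with $\|\tilde e-f\|_V\le d^V_{Haus}(E,E')=:d$. The natural candidate in $S_{E'}$ (the $B$-unit sphere of $E'$) is $g:=f/\|f\|_B$, and we estimate $\|e-g\|_B$ by the triangle inequality through the intermediate vectors $\|e\|_V\,f$ and $e/\|e\|_V\cdot\|e\|_V = e$. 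Concretely, $\|e - \|e\|_V f\|_B \le \|e\|_V\|\tilde e - f\|_B \le \|e\|_V\|\tilde e - f\|_V \le \alpha(E)\,d$, using $\|\cdot\|_B\le\|\cdot\|_V$. Then I would compare $\|e\|_V f$ to $g=f/\|f\|_B$: since $\|f\|_B$ is within $\|\tilde e - f\|_B\le d$ of $\|\tilde e\|_B = \|e\|_B/\|e\|_V = 1/\|e\|_V$, and $\|e\|_V f$ has $B$-norm $\|e\|_V\|f\|_B$, one gets that $\|e\|_V f$ and $g$ differ by a scalar multiple of $f$ whose size is controlled by $\alpha(E)$, $\alpha(E')$ and $d$; combining gives $\dist^B(e,S_{E'})\le \|e-g\|_B \le 2\max\{\alpha(E),\alpha(E')\}\,d$ after bookkeeping the constants.

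The one genuinely delicate point — and the step I expect to be the main obstacle — is the renormalization from the $V$-unit vector $f$ to the $B$-unit vector $g$: one must show $\big|\,\|e\|_V\|f\|_B - 1\,\big|$ (or equivalently $\big|\,\|f\|_B - 1/\|e\|_V\,\big|$ scaled appropriately) is bounded by a constant multiple of $d$, and track that the relevant constant is $\max\{\alpha(E),\alpha(E')\}$ rather than something larger. This uses $\|f\|_B\ge \|f\|_V/\alpha(E')=1/\alpha(E')$ to keep $g$ well-defined and its norm from below, together with $\|\tilde e\|_B\ge 1/\alpha(E)$ (from $\|\tilde e\|_V=1$). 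A careful triangle-inequality chain, choosing whether to route through $\|e\|_V f$ or directly through $f/\|\tilde e\|_B$, yields the factor $2$; I would not expect to need any compactness beyond what is already implicit in $\alpha(E),\alpha(E')<\infty$ for $E,E'\in\Gr_k(V)$ with $k<\infty$. Once the one-sided bound is in hand, taking the supremum over $e\in S_E$ and then the max over the two symmetric terms completes the proof.
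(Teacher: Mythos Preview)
Your approach is correct and, once the renormalization computation is actually carried out, yields exactly the stated bound. In fact the ``delicate'' step you flag is routine: with $g=f/\|f\|_B$ one has $\|\|e\|_V f - g\|_B = \big|\,\|e\|_V\|f\|_B - 1\,\big|$, and since $\|\tilde e\|_B = 1/\|e\|_V$ and $\big|\|f\|_B - \|\tilde e\|_B\big|\le\|\tilde e - f\|_B\le d$, this equals $\|e\|_V\big|\|f\|_B-\|\tilde e\|_B\big|\le \alpha(E)\,d$. Combined with your first term this gives the one-sided bound $2\alpha(E)\,d$; note $\alpha(E')$ is not needed here (it enters only through the symmetric term).

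The paper takes a different, somewhat cleaner route. Rather than working with unit spheres, it uses the \emph{gap} $\delta^B(E,E') = \max\big\{\sup_{\|e\|_B=1}\dist^B(e,E'),\ \sup_{\|e'\|_B=1}\dist^B(e',E)\big\}$, where the distance is to the whole subspace rather than its sphere. Since $\dist^B(\,\cdot\,,E')$ is positively homogeneous, renormalization is trivial: one immediately gets $\sup_{\|e\|_B=1}\dist^B(e,E')\le\alpha(E)\sup_{\|e\|_V=1}\dist^V(e,E')$, hence $\delta^B(E,E')\le\max\{\alpha(E),\alpha(E')\}\,\delta^V(E,E')$. The factor~$2$ then comes from the standard comparison $\delta^B\le d^B_{Haus}\le 2\delta^B$ (Kato, Section IV.2). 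Your direct argument avoids invoking this auxiliary quantity at the cost of doing the sphere renormalization by hand; the paper's route isolates the factor~$2$ as a general fact about $d_{Haus}$ versus $\delta$ and keeps the norm-comparison step entirely linear.
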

\begin{proof}
It is straightforward to check that $\dist^B(e', E) \leq \dist^V(e', E)$ for all $e' \in E'$. 
Since $\dist^B(a e', E) = |a| \dist^B(e', E)$ holds for all $a \in \R$, we see that
\begin{align*}
\max_{\substack{e' \in E', \| e'\|_B = 1}} \dist^B(e', E) 
& \leq \max_{\substack{e' \in E', \| e'\|_B = 1}} \| e'\|_V \dist^V(\| e'\|_V^{-1} e' , E) \\
& \leq \alpha(E') \max_{\substack{e' \in E', \| e'\|_V = 1}} \dist^V(e', E) \, .
\end{align*}
Reversing the roles of $E, E'$, we conclude $\delta^B(E, E') \leq \max\{ \alpha(E), \alpha(E')\} \delta^V(E, E')$, where 
\[
\delta^B(E, E') = \max \bigg\{ \sup_{e \in E, \|e\|_B = 1} \dist^B(e, E') , \sup_{e' \in E', \|e'\|_B = 1} \dist^B(e', E) \bigg\} \, .
\]
The desired conclusion now follows from the following standard inequality (c.f. Section IV.2 of \cite{kato2013perturbation}): 
\[
\delta^B(E, E') \leq d_{Haus}^B(E, E') \leq 2 \delta^B(E, E') \,.  \qedhere
\]
\end{proof}

\subsubsection{Determinants in Banach spaces}

The following is an assignment to each $E \in \Gr_k(B)$ a ``volume element'' along $E$. 

\begin{definition} \
\begin{itemize}
\item[(a)] For $E \in \Gr_k(B), k \geq 1$, we define $\Vol_E^B$ to be the Lebesgue measure on $E$ 
normalized so that 
\begin{align}\label{3normalize}
\Vol_E^B\{ v \in E : \| v\|_B \leq 1\} = 1 \,. 
\end{align}
\item[(b)] For a linear operator $A : B \to B$ and $E \in \Gr_k(B), k \geq 1$, we define the \emph{determinant} $\det_B(A|E)$ of $A|_E : E \to B$ by
\[
\det{}_B(A|E) = 
\begin{cases}
\frac{\Vol_{A(E)}^B(A (S))}{\Vol_E^B(S)} & A|_E \text{ injects} \, , \\
0 & \text{else,}
\end{cases}
\]
where $S \subset E$ is any Borel set of positive, finite $m_E^B$-measure.
\end{itemize}
\end{definition}

We note that the measure $\Vol_E^B$ on $E$ is characterized\footnote{The measure $\Vol_E^B$ is sometimes called the Busemann-Hausdorff measure \cite{busemann1947intrinsic} and appears naturally in Finsler geometry.} uniquely by (i) translation invariance and (ii) the normalization \eqref{3normalize}. It follows from this unique characterization that $\det_B(A | E)$ is well-defined irrespective of the choice $S \subset E$. Below we recall some basic properties of $\det_B(\cdot | \cdot)$; for proofs and additional background, see \cite{blumenthal2015volume}. 

In what follows, we define the \emph{minimum-norm} of $A|_E$ by
\begin{equation}\label{eq:min-norm}
\mathfrak m_B(A|_E) := \inf\{ \| A v \|_B : v \in E, \| v \|_B = 1\}. 
\end{equation}
Note that if $A|_E$ is injective, then finite-dimensionality of $E$ implies that $A|_E : E \to A(E)$ is invertible, and therefore we have $\mathfrak m_B(A|_E) = \| (A|_E)^{-1}\|^{-1}$.

\begin{lemma}\label{lem:detProps}
Let $E \in \Gr_k(B), k \geq 1$. 
\begin{itemize}
	\item[(a)] For bounded linear operators $A_1, A_2 : B \to B$, we have that
	\[
	\det{}_B(A_1 A_2 | E) = \det{}_B(A_1|A_2(E)) \det{}_B(A_2|E) \,. 
	\]
	\item[(b)] If $A : B \to B$ is a bounded linear operator and $A|_E$ injects, then
	\[
	\mathfrak m_B(A|_E)^k \leq \det{}_B(A|E) \leq \| A\|_B^k.
	\]
\end{itemize}
\end{lemma}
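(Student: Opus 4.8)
The plan is to read off both parts directly from the defining ratio formula for $\det{}_B(\cdot\,|\,\cdot)$, using only two elementary properties of the $k$-dimensional Lebesgue measure $\Vol^B_W$ on a finite-dimensional subspace $W$: translation invariance (which, together with the normalization $\Vol^B_W\{w:\|w\|_B\le 1\}=1$, characterizes it) and homogeneity, $\Vol^B_W(\lambda S)=|\lambda|^k\,\Vol^B_W(S)$ for $\lambda\in\R$. I will also use freely, as noted after the definition, that $\det{}_B(A|E)$ does not depend on the choice of Borel test set $S\subset E$ of positive, finite $\Vol^B_E$-measure.

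For part (a) I would first dispose of the degenerate cases. If $A_2|_E$ fails to inject then $\det{}_B(A_2|E)=0$ and $A_1A_2|_E$ also fails to inject, so both sides vanish. If $A_2|_E$ injects but $A_1|_{A_2(E)}$ does not, then $\det{}_B(A_1|A_2(E))=0$, and pulling a nonzero element of $\ker A_1|_{A_2(E)}$ back through $A_2|_E$ shows $A_1A_2|_E$ does not inject either, so again both sides are $0$. When $A_2|_E$ and $A_1|_{A_2(E)}$ both inject, so does $A_1A_2|_E$; fix a test set $S\subset E$. Since $E$ is finite-dimensional, $A_2|_E$ is a linear homeomorphism of $E$ onto the closed, finite-dimensional subspace $A_2(E)$, so $A_2(S)$ is a Borel subset of $A_2(E)$ of positive, finite $\Vol^B_{A_2(E)}$-measure, i.e.\ an admissible test set there. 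Then
\[
\det{}_B(A_1A_2\,|\,E)=\frac{\Vol^B_{(A_1A_2)(E)}\big((A_1A_2)(S)\big)}{\Vol^B_{A_2(E)}\big(A_2(S)\big)}\cdot\frac{\Vol^B_{A_2(E)}\big(A_2(S)\big)}{\Vol^B_E(S)},
\]
where the first fraction equals $\det{}_B(A_1\,|\,A_2(E))$ (using $(A_1A_2)(E)=A_1(A_2(E))$ and $(A_1A_2)(S)=A_1(A_2(S))$) and the second equals $\det{}_B(A_2\,|\,E)$.

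For part (b) I would take $S=\{v\in E:\|v\|_B\le 1\}$, so $\Vol^B_E(S)=1$ and hence $\det{}_B(A|E)=\Vol^B_{A(E)}(A(S))$. For the upper bound, $A(S)\subset\{w\in A(E):\|w\|_B\le\|A\|_B\}$, whose $\Vol^B_{A(E)}$-measure is $\|A\|_B^k$ by homogeneity and the normalization. For the lower bound, injectivity of $A|_E$ together with finite-dimensionality of $E$ gives $\mathfrak m:=\mathfrak m_B(A|_E)>0$ and a linear inverse $(A|_E)^{-1}:A(E)\to E$; for any $w\in A(E)$ with $\|w\|_B\le\mathfrak m$ the vector $v:=(A|_E)^{-1}w$ satisfies $\mathfrak m\|v\|_B\le\|Av\|_B=\|w\|_B\le\mathfrak m$, so $v\in S$ and $w=Av\in A(S)$; hence $\{w\in A(E):\|w\|_B\le\mathfrak m\}\subset A(S)$ and $\Vol^B_{A(E)}(A(S))\ge\mathfrak m^k$.

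None of these steps is genuinely hard; the only point needing a little care is the measure-theoretic bookkeeping in (a) — namely that $A_2(S)$ is an admissible test set for computing $\det{}_B(A_1\,|\,A_2(E))$, which is where finite-dimensionality of $E$ enters (making $A_2|_E$ a homeomorphism onto the closed subspace $A_2(E)$, so that Borelness, positivity, and finiteness of the measure are preserved). Everything else reduces to the scaling behavior of $k$-dimensional Lebesgue measure. All of these facts are recorded in \cite{blumenthal2015volume}, to which one could alternatively appeal.
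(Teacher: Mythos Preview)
Your proof is correct and self-contained. The paper itself does not prove this lemma; it simply cites \cite{blumenthal2015volume} for the proofs, which you also reference at the end of your proposal.
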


The following compares $\det_B$ and $\det_V$. 

\begin{proposition}\label{prop:detComparisonBV}
Let $V \subset B$ be an embedded Banach space satisfying $\| \cdot\|_B \leq \| \cdot\|_V$. Let $E \in \Gr_k(V), k \geq 1$. Let $A : B \to B$ be a bounded linear operator for which $A(E) \subset V$ and $A|_E$ injects.  Then, 
\begin{align*}
\det{}_B(A|E) & \leq \alpha(E)^k \det{}_V(A|E) \, ,  \quad \text{ and} \\
\det{}_V(A|E) & \leq \alpha(A(E))^k \det{}_B(A|E) \,. 
\end{align*}
\end{proposition}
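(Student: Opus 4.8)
The plan is to reduce the statement to a comparison of the two translation-invariant (Lebesgue) measures $\Vol^B_F$ and $\Vol^V_F$ on a single finite-dimensional space $F$, applied to $F = E$ and $F = A(E)$. Recall $\det_B(A|E) = \Vol^B_{A(E)}(A(S))/\Vol^B_E(S)$ for any Borel $S \subset E$ of positive finite measure, and likewise for $\det_V$; since $A|_E$ injects and $\dim E = k < \infty$, the map $A$ restricts to a linear isomorphism $E \to A(E)$, so $A(E) \in \Gr_k(V)$ and $A(S)$ is a Borel subset of $A(E)$ of positive finite measure whenever $S \subset E$ is. Fix one such $S$ (e.g.\ the $B$-unit ball of $E$, which has finite positive Haar measure on $E$ since all norms on the finite-dimensional space $E$ are equivalent) and use it to evaluate all four determinants. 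Note also $\a(E), \a(A(E)) < \infty$ automatically, so all constants below are finite.

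The one substantive step is the measure comparison: for any $F \in \Gr_k(V)$ and any Borel $S \subset F$,
\[
\Vol^B_F(S) \;\le\; \Vol^V_F(S) \;\le\; \a(F)^k\, \Vol^B_F(S).
\]
To see this, observe that $\Vol^B_F$ and $\Vol^V_F$ are both translation-invariant Borel measures on the $k$-dimensional vector space $F$, hence proportional by uniqueness of Haar measure (implicit in the uniqueness characterization of $\Vol^B_F$ recorded after its definition): write $\Vol^V_F = c_F \Vol^B_F$ with $c_F > 0$. Evaluating on the $V$-unit ball $K_V := \{v \in F : \|v\|_V \le 1\}$ gives $c_F = 1/\Vol^B_F(K_V)$. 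Since $\|\cdot\|_B \le \|\cdot\|_V$ we have $K_V \subseteq K_B := \{v \in F : \|v\|_B \le 1\}$, so $\Vol^B_F(K_V) \le \Vol^B_F(K_B) = 1$; and since $\|v\|_V \le \a(F)\|v\|_B$ on $F$, we have $\a(F)^{-1}K_B \subseteq K_V$, so by the scaling identity $\Vol^B_F(cK) = |c|^k\Vol^B_F(K)$ (again from uniqueness of Haar measure) we get $\Vol^B_F(K_V) \ge \a(F)^{-k}$. Hence $1 \le c_F \le \a(F)^k$, which is the displayed chain.

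Plugging this in finishes the proof. For the first inequality, $\Vol^B_{A(E)}(A(S)) \le \Vol^V_{A(E)}(A(S))$ and $\Vol^B_E(S) \ge \a(E)^{-k}\Vol^V_E(S)$, so
\[
\det{}_B(A|E) = \frac{\Vol^B_{A(E)}(A(S))}{\Vol^B_E(S)} \le \a(E)^k\,\frac{\Vol^V_{A(E)}(A(S))}{\Vol^V_E(S)} = \a(E)^k \det{}_V(A|E).
\]
For the second, $\Vol^V_{A(E)}(A(S)) \le \a(A(E))^k \Vol^B_{A(E)}(A(S))$ and $\Vol^V_E(S) \ge \Vol^B_E(S)$, which gives $\det_V(A|E) \le \a(A(E))^k \det_B(A|E)$ in exactly the same way.

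I do not anticipate a serious obstacle; the only point needing care is the justification that $\Vol^B_F$ and $\Vol^V_F$ are genuinely proportional and transform correctly under scaling, which is just uniqueness of Haar measure on the group $(F,+)$ together with the normalization conventions, all already available in the excerpt.
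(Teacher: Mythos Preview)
Your proof is correct and is essentially the same as the paper's: both establish the comparison $\Vol^B_F \leq \Vol^V_F \leq \alpha(F)^k \Vol^B_F$ via uniqueness of Haar measure and the unit-ball inclusions, then read off the determinant bounds. The only cosmetic difference is that the paper picks specific test sets (the $V$- and $B$-unit balls of $E$) for each inequality rather than fixing one $S$ throughout, but the substance is identical.
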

\begin{proof}
Let $D^B_E$ denote the unit ball of $E$ in the $\| \cdot\|_B$ norm. 
First, we show that for all Borel $K \subset E$ we have
\[
	\Vol_E^B(K) \leq \Vol^V_E(K)\quad \text{and}\quad \Vol^V_E(K) \leq \alpha^{k}(E)\Vol^B_E(K).
\]
To see this, observe that by uniqueness of Haar measure, there exists $c > 0$ such that
that $\Vol_E^V = c \, \Vol_E^B$. Let $D_E^W$ denote the unit ball of $E$ in the $W$ norm for $W = V, B$. To estimate $c$, we have
\[
1 = \Vol_E^V(D^V_E) = c \, \Vol_E^B(D_E^V) \leq c \, \Vol_E^B(D_E^B) = c \, , 
\]
hence $c \geq 1$, while
\[
1 = \Vol_E^B(D^B_E) = c^{-1} \Vol_E^V(D^B_E) \leq c^{-1} \alpha(E)^k \Vol_E^V(D^V_E) 
= c^{-1} \alpha(E)^k \, , 
\]
hence $c \leq \alpha(E)^k$.

To obtain the first inequality, observe that by definition of $\alpha(E)$, $\Vol^B_E(D^E_V) \geq \alpha(E)^{-k}$ and therefore
\begin{align*}
\det{}_B(A| E) &= \frac{\Vol_{A(E)}^B(A (D^V_E))}{\Vol_E^B(D^V_E)} \leq \Vol_{A(E)}^B(A (D_E^V)) \alpha(E)^k \\
& \leq \alpha(E)^k \Vol_{A(E)}^V(A (D_E^V)) = \alpha(E)^k \det{}_V(A | E) \, .
\end{align*}
For the second inequality, since $\Vol^V(D^B_E) \geq \Vol^V(D^V_E) = 1$, we find
\begin{align*}
\det{}_V(A| E) &= \frac{\Vol_{A(E)}^V(A (D^B_E))}{\Vol_E^V(D^B_E)} \leq \Vol_{A(E)}^V(A (D_E^B)) \\
&  \leq \alpha(A(E))^k \Vol_{A(E)}^B(A (D_E^B)) = \alpha(A(E))^k \det{}_B(A | E) \, .
\end{align*}
\end{proof}

\subsubsection{Determinants and Lyapunov exponents}

Our proof below uses the following characterization of Lyapunov exponents
in terms of asymptotic growth rates of determinants. 
Assume $(B, \| \cdot\|_B)$ is a separable Banach space, $T:(X, \mathscr F, m) \circlearrowleft$ is an mpt, and $A : X \to L(B)$ is a strongly measurable such that $A(x)$ is compact for all $x \in X$ (the setting of Theorem \ref{thm:nonInvMET}). 
Let $x$ be an $m$-generic point and let $r(x)$ be as in \eqref{eq:defineR2}. 
Let $\lambda_i(x), d_i(x)$ denote the Lyapunov exponents and corresponding multiplicities at $x$, and $B =: F_1 \supset F_2(x) \supset \cdots$ denote the corresponding filtration. Recall (see \eqref{eq:multDefn}) that 
$d_i(x)$ is the codimension of $F_{i + 1}(x)$ in $F_i(x)$, and $M_i(x) = d_1(x) + \cdots + d_i(x)$ is the codimension of $F_{i+1}(x)$ in $B$. 

Let us define $\chi_1(x) \geq \chi_2(x) \geq \cdots$ to be the
Lyapunov exponents counted \emph{with multiplicity}, i.e., 
\begin{gather}\label{eq:charExpWMult}
\chi_{M_{i-1}(x) + 1} = \cdots = \chi_{M_{i}(x)} = \lambda_i(x) 
\end{gather}
for all $i \leq r(x)$ (here $M_0(x) := \codim F_1 = 0$ by convention). If $r(x) = 0$, we adopt the convention that $\chi_j (x) = -\infty$
for all $j \geq 1$, while if $0 < r(x) < \infty$, we define 
$\chi_j(x) = -\infty$ for all $j > M_{r(x)}(x)$. 
For $k \geq 1$ we define
\[
\Sigma_k(x) := \chi_1(x) + \cdots + \chi_k(x) \, , 
\]
with the convention that $\Sigma_k(x) = -\infty$ if $r(x) < \infty$ and $k > M_{r(x)}(x)$. 

\newcommand{\Vc}{\mathcal V}

Below, for a linear operator $A : B \to B$ and $k \geq 1$ we define 
\[
\Vc_k^B(A) = \sup\{ \det{}_B(A |E) : \dim E = k\} \,. 
\]
The following characterizes the sums $\Sigma_k(x)$ in terms of the maximal $k$-dimensional volume growth $\Vc^B_k(x)$. 
\begin{proposition}[\cite{blumenthal2015volume}]\label{prop:volGrowAndLE}
Assume the setting of Theorem \ref{thm:nonInvMET}. 
\begin{itemize}
\item[(a)]
For $m$-a.e. $x \in X$ and for all $k \geq 1$
\[
\Sigma_k(x) = \lim_{n \to \infty} \frac1n\log \Vc_k^B(A_x^n) \, . 
\]
\item[(b)] For all $i, k \geq 1$ and for $m$-a.e. $x \in \{ r \geq i\} \cap \{M_{i-1} < k \leq M_{i}\}$, it holds that if $E \in \Gr_k(B)$ and $E \cap F_{i + 1}(x) = \{ 0\}$, then 
\begin{align}\label{eq:topGrowRate3}
\Sigma_{k}(x) = \lim_{n \to \infty} \frac1n \log \det{}_B (A^n_x| E) \,. 
\end{align}
Equation \eqref{eq:topGrowRate3} also holds for $m$-a.e. $x \in \{ r = i\} \cap \{ M_r < k\}$ and all $E \in \Gr_k(B)$. 
\end{itemize}
\end{proposition}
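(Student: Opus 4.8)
The plan is to prove part (a) first — it carries all the weight — and then read off part (b).

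\textbf{Existence of the limit in (a).} Put $g_k(n,x):=\log\Vc_k^B(A_x^n)$. Lemma \ref{lem:detProps}(a) gives $\det{}_B(A_x^{n+m}|E)=\det{}_B(A_{T^nx}^m|A_x^nE)\,\det{}_B(A_x^n|E)$ for every $E\in\Gr_k(B)$ (both sides vanish when $A_x^n|_E$ fails to inject), hence $\Vc_k^B(A_x^{n+m})\le\Vc_k^B(A_{T^nx}^m)\,\Vc_k^B(A_x^n)$, i.e. $g_k(n+m,x)\le g_k(n,x)+g_k(m,T^nx)$, so $(g_k(n,\cdot))_n$ is a subadditive cocycle over $(T,m)$. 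By Lemma \ref{lem:detProps}(b), $g_k(1,\cdot)^+\le k\log^+\|A_\cdot\|_B\in L^1(m)$ by \eqref{eq:log-int-cond}; Kingman's subadditive ergodic theorem then produces a $T$-invariant $\bar g_k:X\to[-\infty,\infty)$ with $\tfrac1n g_k(n,x)\to\bar g_k(x)$ for $m$-a.e. $x$ (constant a.e. when $(T,m)$ is ergodic).

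\textbf{Identifying $\bar g_k=\Sigma_k$.} I would sandwich $\Vc_k^B(A_x^n)$ between products of the approximation numbers $a_j(A):=\inf\{\|A-R\|_B:\operatorname{rank}R<j\}$, $a_1(A_x^n)\ge a_2(A_x^n)\ge\cdots$: directly from the definition of the Busemann determinant there are constants $c_k,C_k>0$ depending on $k$ alone with $c_k\prod_{j\le k}a_j(A_x^n)\le\Vc_k^B(A_x^n)\le C_k\prod_{j\le k}a_j(A_x^n)$ — the upper bound being the standard fact that a $k$-dimensional volume factor cannot exceed the product of the $k$ largest approximation numbers, and the lower bound coming from testing $\det{}_B(A_x^n|\cdot)$ on a near-optimal $k$-plane, with $c_k,C_k$ absorbing the John-ellipsoid distortion of the Busemann volume. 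Since these constants are fixed, $\bar g_k(x)=\lim_n\tfrac1n\sum_{j\le k}\log a_j(A_x^n)$. A standard refinement of the MET for compact cocycles (the $j$-th approximation number of $A_x^n$ grows at the $j$-th exponent $\chi_j(x)$ counted with multiplicity — read off from the filtration $F_\ell$ and \eqref{eq:log-int-cond}) gives $\tfrac1n\log a_j(A_x^n)\to\chi_j(x)$ for each $j$, whence $\bar g_k(x)=\sum_{j\le k}\chi_j(x)=\Sigma_k(x)$. Equivalently one can route this through the compact cocycle $\wedge^kA$, whose operator norm on a suitable exterior product of $B$ is comparable to $\Vc_k^B$ and whose top Lyapunov exponent is $\Sigma_k$; I would choose whichever fits the scaffolding already in place in \cite{blumenthal2015volume}.

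\textbf{Part (b).} The bound $\limsup_n\tfrac1n\log\det{}_B(A_x^n|E)\le\Sigma_k(x)$ holds for every $E\in\Gr_k(B)$ since $\det{}_B(A_x^n|E)\le\Vc_k^B(A_x^n)$; on $\{r=i\}\cap\{M_r<k\}$ one has $\Sigma_k=-\infty$ and every $k$-plane meets the super-exponentially contracted part of $B$, so the limit is $-\infty=\Sigma_k$, settling the second clause. For the matching lower bound on $\{M_{i-1}<k\le M_i\}$ I would take $E$ in general position with respect to $(F_\ell(x))_\ell$ (so in particular $E\cap F_{i+1}(x)=\{0\}$), choose a basis $v_1,\dots,v_k$ adapted to the filtration — $d_\ell$ vectors in $(E\cap F_\ell(x))\setminus F_{\ell+1}(x)$ for $\ell<i$ and $k-M_{i-1}$ vectors in $(E\cap F_i(x))\setminus F_{i+1}(x)$ — and expand $\det{}_B(A_x^n|E)$ as a product of successive heights $\dist^B\!\big(A_x^nv_j,\operatorname{span}(A_x^nv_1,\dots,A_x^nv_{j-1})\big)$. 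Using equivariance $A_x^n(F_\ell(x))\subseteq F_\ell(T^nx)$ together with the fact that the forward images $A_x^nE$ do not asymptotically collapse onto slower members of the filtration, each height is $\gtrsim e^{n(\lambda_{\ell(j)}-\eta)}$; multiplying the $k$ of them yields $\liminf_n\tfrac1n\log\det{}_B(A_x^n|E)\ge\Sigma_k(x)-k\eta$, and $\eta\downarrow0$ finishes.

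\textbf{Main obstacle.} The whole difficulty sits in the lower bounds: $\bar g_k\ge\Sigma_k$ in (a) (equivalently $\tfrac1n\log a_j(A_x^n)\ge\chi_j$) and the lower bound in (b) both amount to the statement that a generic $k$-plane retains, up to subexponential error, the full $k$-dimensional volume the Lyapunov spectrum entitles it to — i.e. that the relevant angles between forward images of subspaces shrink at most subexponentially along $m$-a.e. orbit. This is the familiar technical heart of MET-type arguments; it is handled by a Borel--Cantelli estimate controlling the rate of angle shrinkage, leveraging \eqref{eq:log-int-cond} and the measurable dependence $x\mapsto F_\ell(x)$. Everything else — subadditivity, Kingman, and the determinant bookkeeping via Lemma \ref{lem:detProps} — is soft.
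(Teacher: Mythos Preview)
The paper does not prove this proposition: it is quoted from \cite{blumenthal2015volume} and used as a black box, so there is no argument in the paper to compare yours against.

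Your sketch for part (a) is the standard route and is essentially what the cited reference does: subadditivity of $n\mapsto\log\mathcal V_k^B(A_x^n)$ from Lemma \ref{lem:detProps}(a), Kingman for existence of the limit, and identification with $\Sigma_k$ via the equivalence of $\mathcal V_k^B$ with a product of $k$ approximation numbers (up to $k$-dependent constants coming from John's theorem).

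For part (b) there is a real issue. You restrict to $E$ in general position with respect to the \emph{entire} filtration $(F_\ell(x))_{\ell\le i+1}$, so that your adapted basis has exactly $d_\ell$ vectors in each layer $F_\ell\setminus F_{\ell+1}$, $\ell<i$. The hypothesis in the proposition, however, is only $E\cap F_{i+1}(x)=\{0\}$, and this does not force general position. Whenever $i\ge 2$ one can take $E$ with $\dim(E\cap F_2(x))>k-M_1$; in the autonomous diagonal model (eigenvalues $e^{\lambda_1},e^{\lambda_1},e^{\lambda_2},e^{\lambda_2},\dots$, so $d_1=d_2=2$) the choice $E=\operatorname{span}(e_1,e_3,e_4)$ has $k=3$, $M_1<k\le M_2$, $E\cap F_3=\{0\}$, yet $\det{}_B(A^n|E)=e^{n(\lambda_1+2\lambda_2)}$, whose rate is strictly below $\Sigma_3=2\lambda_1+\lambda_2$. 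So the lower bound you aim for is simply false for such $E$: the condition $E\cap F_{i+1}=\{0\}$ alone is too weak, and the statement as recorded here is slightly imprecise relative to what \cite{blumenthal2015volume} proves. Your instinct to pass to general position is therefore not a shortcut but the correct hypothesis; and it is exactly what the paper arranges each time it invokes part (b) --- see the construction of $\tilde E(x)$ in the Claim inside the proof of Proposition \ref{prop:mainProof}(b), where simultaneous transversality to both $F_{j+1}^V(x)$ and $F_{i+1}^B(x)$ is enforced via Lemma \ref{lem:simultComplement}.
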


\subsection{Main Proposition}\label{subse:cmainProp2}

Let us assume the setting of Theorem \ref{thm:allEqualLE}: 
$(B, \| \cdot\|_B)$ and $(V, \| \cdot\|_V)$ are Banach spaces with separable
duals, where $V \subset B$ is dense and $\| \cdot\|_B \leq \| \cdot \|_V$. 
We are given an mpt $T : (X, \mathscr F, m) \circlearrowleft$ and 
a strongly measurable mapping $A : X \to L(B)$ satisfying (1) -- (4) in 
Section \ref{subsec:statementOfResults}. For simplicity and to spare heavy notation, we will assume below that $(T, m)$ is ergodic, so that Lyapunov exponents are constant in $x$. The nonergodic case requires superficial changes, which we comment on in Remark \ref{rmk:nonergCase3} below.

For $W = B$ or $V$, let $r^W$ denote the number of distinct Lyapunov exponents $\lambda_i^W$ with multiplicities $d_i^W, M_i^W := d_1^W + \cdots + d_i^W$. Let 
$\chi_j^W$ denote
the Lyapunov exponents counted with multiplicity as in \eqref{eq:charExpWMult}, and let $\Sigma_k^W = \sum_{j = 1}^k \chi_j^W$. 

The following is the main step in the proof of Theorem \ref{thm:allEqualLE}. 
\begin{proposition}[Main Proposition]\label{prop:mainProof} Let $i \geq 1$. 
\begin{itemize}
\item[(a)] If $i\leq r^V $, then $\Sigma_{M_i^V}^V = \Sigma_{M_i^V}^B$
\item[(b)] If $i\leq r^B$, then $\Sigma_{M_i^B}^V = \Sigma_{M_i^B}^B$
\end{itemize}
\end{proposition}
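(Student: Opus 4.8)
The plan is to transfer the statement from Lyapunov exponents to the maximal‑volume‑growth quantities $\Vc_k^W(A^n_x)$ via Proposition \ref{prop:volGrowAndLE}, so that the Main Proposition becomes a comparison of the partial sums $\Sigma_k^V$ and $\Sigma_k^B$, and then to play the determinant comparison of Proposition \ref{prop:detComparisonBV} against the asymptotic determinant growth along carefully chosen $k$‑dimensional subspaces. Throughout I assume $(T,m)$ is ergodic (the general case following as in Remark \ref{rmk:nonergCase3}), so that the $\Sigma_k^W$ are constants. The argument has three steps: the easy inequality $\Sigma_k^B\le\Sigma_k^V$ for all $k$; the reverse inequality at $V$‑breakpoints (which with the first step is part (a)); and a concavity argument deducing part (b) from part (a).

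\emph{Step 1: $\Sigma_k^B\le\Sigma_k^V$ for every $k\ge 1$.} If $k$ exceeds the codimension of the last resolved $B$‑subspace this is vacuous, so assume $M_{i'-1}^B<k\le M_{i'}^B$ for some $i'\le r^B$. Since $F_{i'+1}^B(x)$ is closed of codimension $M_{i'}^B\ge k$ in $B$ and $V$ is dense in $B$, we may fix $E\in\Gr_k(V)$ with $E\cap F_{i'+1}^B(x)=\{0\}$; then $A^n_x|_E$ is injective for every $n$ (every nonzero $v\in E$ has $B$‑exponent $\ge\lambda_{i'}^B>-\infty$) and $A^n_x(E)\subset V$, so Proposition \ref{prop:detComparisonBV} applies and, together with Proposition \ref{prop:volGrowAndLE}(b),
\[
\frac1n\log\det{}_B(A^n_x|E)\le\frac{k}{n}\log\alpha(E)+\frac1n\log\Vc_k^V(A^n_x).
\]
Letting $n\to\infty$, the left side tends to $\Sigma_k^B$, the first term on the right vanishes since $\alpha(E)<\infty$ is fixed, and the second tends to $\Sigma_k^V$ by Proposition \ref{prop:volGrowAndLE}(a); hence $\Sigma_k^B\le\Sigma_k^V$.

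\emph{Step 2: $\Sigma_{M_i^V}^V\le\Sigma_{M_i^V}^B$ for $i\le r^V$ (this with Step 1 is part (a)).} Write $k=M_i^V$ and let $U_k^V(x)$ be the $k$‑dimensional equivariant ``fast'' subspace furnished by the MET for compact cocycles with cutoff $\lambda_c\in(\lambda_{i+1}^V,\lambda_i^V)$, so $\dim U_k^V(x)=k$, $U_k^V(x)\cap F_{i+1}^V(x)=\{0\}$, and $A^n_x$ maps $U_k^V(x)$ isomorphically onto $U_k^V(T^nx)$. Applying Proposition \ref{prop:detComparisonBV} to $A^n_x$ on $U_k^V(x)$ and Proposition \ref{prop:volGrowAndLE}(b),
\[
\frac1n\log\det{}_V(A^n_x|U_k^V(x))\le\frac{k}{n}\log\alpha\big(U_k^V(T^nx)\big)+\frac1n\log\Vc_k^B(A^n_x),
\]
and the left side converges to $\Sigma_k^V$. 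Now $g(y):=\log\alpha(U_k^V(y))=\log^+\alpha(U_k^V(y))$ is Borel measurable (using upper semicontinuity of $\alpha$, Lemma \ref{lem:equivNorm}, and measurability of $y\mapsto U_k^V(y)$) and finite $m$‑a.e.\ (since $\alpha$ is finite on finite‑dimensional subspaces of $V$), so $m\{g\le C\}>0$ for some $C$; by Poincar\'e recurrence, for $m$‑a.e.\ $x$ there is a subsequence $n_\ell\to\infty$ with $T^{n_\ell}x\in\{g\le C\}$. Evaluating the last display along $n_\ell$ and sending $\ell\to\infty$, the middle term tends to $0$, so $\Sigma_k^V\le\Sigma_k^B$. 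This is the crux of the proof: one cannot expect $\tfrac1n\log\alpha(U_k^V(T^nx))\to0$ along the full sequence, since $\alpha\circ U_k^V$ need not be log‑integrable, and it is precisely because the determinant‑growth limit exists along the full sequence (so a single good subsequence suffices) that recurrence into a sublevel set closes the argument — and this is what makes it essential to run the step with the equivariant subspace $U_k^V(x)$ rather than an arbitrary transversal to $F_{i+1}^V(x)$.

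\emph{Step 3: part (b) from part (a) by concavity.} Each sequence $k\mapsto\Sigma_k^W$ is concave (its increments $\chi_{k+1}^W$ are non‑increasing); Step 1 gives $\Sigma_k^B\le\Sigma_k^V$ for all $k$, and Steps 1--2 give equality at $k=0$ and at every $V$‑breakpoint $M_j^V$, $j\le r^V$. On each interval $[M_{j-1}^V,M_j^V]$ the sequence $\Sigma^V$ is affine while $\Sigma^B$ is concave and agrees with it at both endpoints; were $\Sigma^B<\Sigma^V$ at some interior point $m_0$, the mean increment of $\Sigma^B$ over $[m_0,M_j^V]$ would strictly exceed that over $[M_{j-1}^V,m_0]$, contradicting monotonicity of the increments $\chi^B$. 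Hence $\Sigma^B=\Sigma^V$ on every such interval, while beyond the last $V$‑breakpoint $\Sigma^V=-\infty$ forces $\Sigma^B=-\infty$ by Step 1; therefore $\Sigma_k^B=\Sigma_k^V$ for all $k\ge1$, in particular at $k=M_i^B$, which is part (b).
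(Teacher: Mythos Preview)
Your Step 1 is correct and matches the paper's argument. Your Step 3 is a clean concavity argument that, \emph{given} part (a) and Step 1, deduces $\Sigma_k^B=\Sigma_k^V$ for all $k$ at once; this is in fact tidier than the paper's route to (b), which repeats the quotient-cocycle construction with the roles of $V$ and $B$ swapped.

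The gap is in Step 2. You invoke a $k$-dimensional \emph{equivariant} fast subspace $U_k^V(x)$ with $A^n_x\big(U_k^V(x)\big)=U_k^V(T^nx)$, ``furnished by the MET for compact cocycles with cutoff $\lambda_c$''. But the MET as stated in the paper (Theorem \ref{thm:nonInvMET}) is the one-sided, non-invertible version: it yields only the flag $F_1\supsetneq F_2\supsetneq\cdots$, not an Oseledets splitting. An equivariant fast subspace depending measurably on $x\in X$ requires the base $T$ to be invertible (this is the semi-invertible MET); for non-invertible $T$ the fast direction depends on the entire pre-history and is not a function of $x$ alone. The paper explicitly flags this point in Section \ref{subsec:prevWork}: the short proof in \cite{gonzalez2021stability} --- which is essentially the argument you give --- relies on invertibility of $T$, whereas the paper's proof is designed to avoid it.

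The paper's substitute for your $U_k^V(x)$ is a measurable but \emph{non-equivariant} family of complements $E(x)$ to $F_{i+1}^V(x)$ (Lemma \ref{lem:measSelectCompl}), together with the quotient cocycle $\hat A^n_x=\pi^\perp_{T^nx}A^n_x|_{E(x)}:E(x)\to E(T^nx)$. The projection manufactures the target space $E(T^nx)$, so your recurrence argument can be applied to $\alpha(E(T^nx))$; the price is Lemma \ref{lem:quotientProjVoLGrowh} (quotient has the correct volume growth), which in turn needs the slow angle degeneration of Corollary \ref{cor:anglesDegenerateSlowly}. If you either assume $T$ invertible or replace $U_k^V$ by this quotient construction, your argument goes through --- and your Step 3 then gives (b) without the parallel construction the paper carries out.
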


The proof of Proposition \ref{prop:mainProof} occupies the remainder of Section \ref{subse:cmainProp2}; we complete the proof of Theorem \ref{thm:allEqualLE} in 
Section \ref{subsec:proofOfEqualLE}. 

\subsubsection{Measurable selection of projectors}

Next, we will need the following Lemmas on the geometry of Banach spaces. 
Below, given a splitting $B = E \oplus F$ into closed subspaces $E, F \subset V$, 
we define $\pi_{E // F}$ to be the unique oblique projection onto $E$ with kernel $F$. 

\begin{lemma}\label{lem:measSelectCompl} Let $(X, \mathscr F)$ be a measurable space and let $F : X \to \Gr^k(B)$ be a measurable family. 

Then, there exists a measurable family $E : X \to \Gr_k(B)$ such that for all $x \in X$, (i) $V = E(x) \oplus F(x)$, (ii) the mapping $x \mapsto \pi_{E(x) // F(x)}$ is strongly measurable; and (iii) we have that $\| \pi_{E(x) // F(x)}\|_B \leq C_k := 3\sqrt{k} + 2$ for all $x \in X$.
\end{lemma}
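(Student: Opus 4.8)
The plan is to construct the complement $E(x)$ by a measurable selection argument, obtaining at each $x$ a $k$-dimensional subspace $E(x) \subset V$ transverse to $F(x)$ whose associated oblique projection has controlled norm, and then to verify that the selection can be made measurably using a Kuratowski--Ryll-Nardzewski type theorem. The key quantitative input is that any $k$-codimensional closed subspace $F \subset B$ admits a complement $E$ with $\|\pi_{E // F}\|_B \le 3\sqrt{k} + 2$; this is a standard fact about Banach spaces, provable by choosing $k$ functionals $f_1, \dots, f_k \in B^*$ cutting out $F$ (i.e.\ $F = \cap_j \ker f_j$) that are close to a biorthogonal system for some basis $e_1, \dots, e_k$ of a candidate complement, and using an Auerbach-type normalization to bound the norm of $v \mapsto \sum_j f_j(v) e_j$. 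Since $B$ has separable dual, $B^*$ (with the norm topology) is a Polish space, and the set of admissible data $(F, e_1, \dots, e_k, f_1, \dots, f_k)$ satisfying these transversality and norm constraints is a Borel subset of the relevant product; projecting onto the $F$-coordinate and invoking measurable selection gives the measurable assignment $x \mapsto (e_1(x), \dots, e_k(x))$, hence $x \mapsto E(x) := \operatorname{span}\{e_1(x), \dots, e_k(x)\}$.

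More concretely, I would proceed as follows. First I would fix a countable dense set $\{v_m\} \subset B$ and a countable dense set $\{g_\ell\} \subset B^*$ (available since $B^*$ is separable); measurability of $x \mapsto F(x)$ in the Hausdorff metric is, by a standard argument, equivalent to measurability of the functions $x \mapsto \dist^B(v_m, F(x))$ for each $m$, and also lets one choose measurably a sequence of functionals $x \mapsto f_1(x), \dots, f_k(x) \in B^*$ with $\|f_j(x)\| = 1$ annihilating $F(x)$ and linearly independent in a uniformly quantitative way (this uses that $F(x)$ is exactly $k$-codimensional). Second, I would set up the correspondence $x \rightrightarrows \{(e_1, \dots, e_k) \in B^k : \text{the } e_j \text{ normalize against } f_j(x) \text{ and the resulting projection has norm} \le C_k\}$ and check it has nonempty closed values and is measurable (preimages of open sets are measurable), then apply the Kuratowski--Ryll-Nardzewski selection theorem to extract a measurable selector. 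Third, with $E(x)$ in hand and $B = E(x) \oplus F(x)$, the oblique projection is $\pi_{E(x) // F(x)} v = \sum_{j=1}^k f_j(x)(v)\, e_j(x)$ once the $e_j$ are rescaled to be biorthogonal to the $f_j$; its strong measurability follows because $x \mapsto f_j(x)(v)$ and $x \mapsto e_j(x)$ are measurable, and the norm bound $\|\pi_{E(x)//F(x)}\|_B \le 3\sqrt{k}+2$ is built into the selection. Since the $e_j(x) \in V$ (we select within $V$, or note $E(x) \subset V$ because it is finite-dimensional and spanned by vectors we may take in the dense subspace $V$ — actually one needs $E(x) \subset V$ with $V = E(x) \oplus F(x)$, so the selection must be performed inside $V$, using density of $V$ in $B$ to approximate), the splitting is of $V$ rather than merely $B$.

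The main obstacle I anticipate is the interplay between the two roles of $V$: on one hand the claimed decomposition is $V = E(x) \oplus F(x)$ with $F(x) \subset V$ closed and $k$-codimensional \emph{in} $B$ (hence, via $F^V(x) = V \cap F^B(x)$ and density, also describing the situation in $V$), and on the other hand the clean norm bound $3\sqrt{k}+2$ is a $B$-norm statement. One must be careful that the finite-dimensional $E(x)$ chosen to complement $F(x)$ in $B$ actually lies in $V$ and still complements $F(x) = V \cap F(x)$ within $V$; this is where density of $V$ in $B$ enters, allowing the Auerbach/biorthogonal vectors $e_j$ to be perturbed slightly into $V$ without destroying transversality or inflating the projection norm beyond the stated constant (the constant $3\sqrt{k}+2$ presumably already absorbs such a perturbation, starting from a cleaner bound like $3\sqrt k$ or $\sqrt k + 1$). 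The measurable-selection bookkeeping — verifying Borel measurability of the relevant set-valued maps in the Hausdorff topology on $\Gr(B)$ — is routine but needs the separability hypotheses, which is exactly why they were assumed.
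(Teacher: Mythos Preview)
Your proposal outlines a workable but substantially heavier approach than the paper's, and part of your difficulty stems from a typo in the statement: the ``$V$'' in item (i) should read ``$B$''. The lemma is a purely $B$-statement (the proof in the paper never mentions $V$), and it is later \emph{applied} with the ambient Banach space taken to be either $B$ or $V$. So your entire final paragraph about the interplay between $V$ and $B$, perturbing the $e_j$ into $V$, etc., is addressing a non-issue.

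With that cleared up, the paper's argument is much more elementary than yours. Rather than invoking Kuratowski--Ryll-Nardzewski, the paper uses a \emph{piecewise-constant} selection: separability of $B^*$ gives separability of $\Gr^k(B)$, so one fixes a countable dense family $\{F_n\} \subset \Gr^k(B)$ and, for each $n$, a complement $E_n$ with $\|\pi_{E_n // F_n}\|_B \le \sqrt{k}$ (this is the Auerbach-type input, cited from Wojtaszczyk). One then partitions $X$ into measurable pieces $S_n$ according to which $F_n$ lies within $\epsilon := \tfrac{1}{2(\sqrt{k}+1)}$ of $F(x)$, and simply sets $E(x) := E_n$ on $S_n$. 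Measurability is then trivial. The norm bound $3\sqrt{k}+2$ comes from a perturbation estimate: for $x \in S_n$ one controls $\|\pi_{E_n // F(x)}|_{F_n}\|_B \le 2$ via the proximity of $F(x)$ to $F_n$, and then decomposes any $e_n + f_n \in E_n \oplus F_n$ to get $\|\pi_{E(x)//F(x)}\|_B \le \sqrt{k} + 2(\sqrt{k}+1) = 3\sqrt{k}+2$.

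Your approach via measurable selection of biorthogonal systems $(e_j(x), f_j(x))$ would also succeed, but requires more care: you need to verify the set-valued map has closed values and measurable graph, and you never actually derive the constant $3\sqrt{k}+2$ --- you assert the quantitative input as ``a standard fact'' and then speculate that the constant ``presumably already absorbs'' a perturbation into $V$ that, as noted, is not needed. The paper's piecewise-constant trick both avoids heavy selection machinery and makes the constant transparent.
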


\begin{lemma}[Corollary III.B.11 in \cite{wojtaszczyk1996banach}]\label{lem:boundedComplement}
Let $F \in \Gr^k(B)$ for $k \geq 1$, then there exists $E\in \Gr_k(B)$with $B= E\oplus F$ such that 
\[
\| \pi_{E // F}\|_B \leq \sqrt{k}\, , \quad \text{ and } \quad 
\| \pi_{F // E} \|_B \leq \sqrt{k} + 1 \, .
\]
\end{lemma}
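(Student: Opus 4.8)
The plan is to prove this by duality, reducing to a Kadec--Snobar--type construction on the $k$-dimensional annihilator $F^\perp \subset B^*$. First I would observe that producing $E \in \Gr_k(B)$ with $B = E \oplus F$ and $\|\pi_{E // F}\|_B \le C$ is equivalent to producing a \emph{weak${}^*$-continuous} projection $Q : B^* \to B^*$ with $\operatorname{range} Q = F^\perp$ and $\|Q\|_{B^*} \le C$. Indeed, a weak${}^*$-continuous $Q$ is the adjoint $Q = P^*$ of a bounded $P : B \to B$; $P$ is a projection since $Q$ is; $\ker P = {}^\perp(\operatorname{range} Q) = {}^\perp(F^\perp) = F$ because $F$ is closed and $F^\perp$ is finite-dimensional hence weak${}^*$-closed; $E := \operatorname{range} P$ has $\dim E = \operatorname{rank} P = \operatorname{rank} P^* = \dim F^\perp = k$; and $\|\pi_{E // F}\|_B = \|P\|_B = \|Q\|_{B^*}$. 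The bound on $\pi_{F // E}$ is then free, since $\pi_{F // E} = \operatorname{Id}_B - \pi_{E // F}$ gives $\|\pi_{F // E}\|_B \le 1 + \|\pi_{E // F}\|_B$.

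To build $Q$, set $G := F^\perp \subset B^*$, a $k$-dimensional normed space (isometrically $(B/F)^*$ with its dual norm). Apply F.\ John's theorem (see e.g.\ \cite{wojtaszczyk1996banach}) to $(G, \|\cdot\|_{B^*})$: let $|\cdot|$ be the Euclidean norm whose unit ball is the maximal-volume ellipsoid $\mathcal E \subseteq B_G$, so that $\|g\|_{B^*} \le |g| \le \sqrt{k}\,\|g\|_{B^*}$ for all $g \in G$, and obtain contact points $u_1,\dots,u_m$ with $\|u_j\|_{B^*} = |u_j| = 1$ together with weights $c_j > 0$ satisfying $\sum_j c_j\, u_j \otimes u_j = \operatorname{Id}_G$ and $\sum_j c_j = k$ (the latter by taking traces). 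Using John's theorem rather than an Auerbach basis is exactly what produces the factor $\sqrt{k}$ rather than $k$. The main obstacle is weak${}^*$-continuity: the Hahn--Banach extensions in the usual Kadec--Snobar argument live in $B^{**}$, not $B$. This is resolved by exploiting that $G^* = (F^\perp)^* = B/F$ is finite-dimensional, hence reflexive. For each $j$, the contact-point functional $\langle\,\cdot\,,u_j\rangle \in G^*$ has $G^*$-norm $1$ (it is the common supporting functional of $B_G$ and $\mathcal E$ at $u_j$); under $G^* = B/F$ it corresponds to $\phi_j \in B/F$ with $\|\phi_j\|_{B/F} = 1$. Fixing $\eta > 0$ and choosing a lift $x_j \in B$ with $q(x_j) = \phi_j$ and $\|x_j\|_B \le 1 + \eta$ (where $q : B \to B/F$ is the quotient map) gives $g^*(x_j) = \langle g^*, u_j\rangle$ for all $g^* \in F^\perp$. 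Now define $Q(g^*) := \sum_j c_j\, g^*(x_j)\, u_j$: this is weak${}^*$-continuous with range in $G = F^\perp$, and $Q|_G = \operatorname{Id}_G$ since $\sum_j c_j \langle g^*, u_j\rangle u_j = \operatorname{Id}_G(g^*)$ for $g^* \in G$, so $Q$ is a projection onto $F^\perp$; correspondingly $Px = \sum_j c_j\, u_j(x)\, x_j$.

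For the norm bound, take $g^* \in B^*$ and $\xi \in B^{**}$ with $\|\xi\| \le 1$. Cauchy--Schwarz gives
\[
|\xi(Q g^*)| = \Big|\sum_j c_j\, g^*(x_j)\, \xi(u_j)\Big| \le \Big(\sum_j c_j\, g^*(x_j)^2\Big)^{1/2}\Big(\sum_j c_j\, \xi(u_j)^2\Big)^{1/2}.
\]
The first factor is $\le (1+\eta)\sqrt{k}\,\|g^*\|_{B^*}$, using $|g^*(x_j)| \le \|x_j\|_B\|g^*\|_{B^*} \le (1+\eta)\|g^*\|_{B^*}$ and $\sum_j c_j = k$. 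For the second factor, write $\xi|_G = \langle\,\cdot\,,w\rangle$ with $w \in G$ the Riesz representative in $(G,|\cdot|)$; then $\sum_j c_j \xi(u_j)^2 = \langle (\sum_j c_j\, u_j\otimes u_j)\, w,\, w\rangle = |w|^2$, and $|w|$ is the Euclidean dual norm of $\xi|_G$, which is $\le \|\xi|_G\|_{G^*} \le \|\xi\| \le 1$ since $\mathcal E \subseteq B_G$ makes the Euclidean norm on $G$ dominate $\|\cdot\|_{B^*}$. Hence $\|Q g^*\|_{B^*} \le (1+\eta)\sqrt{k}\,\|g^*\|_{B^*}$, so $\|\pi_{E // F}\|_B = \|Q\| \le (1+\eta)\sqrt{k}$ and $\|\pi_{F // E}\|_B \le 1 + (1+\eta)\sqrt{k}$.

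It remains to remove the $\eta$. When $B$ is reflexive the quotient norm $\|\phi_j\|_{B/F}$ is attained, so $\|x_j\|_B = 1$ and one gets exactly $\sqrt{k}$ and $\sqrt{k}+1$; in the general (non-reflexive) case the bound $(1+\eta)\sqrt{k}$ for arbitrary $\eta > 0$ is all that is used downstream --- it is immediately absorbed into the constant $C_k = 3\sqrt{k} + 2$ of Lemma \ref{lem:measSelectCompl} --- and for the sharp constant one may appeal to \cite{wojtaszczyk1996banach}. The step I expect to require the most care is verifying the weak${}^*$-continuity reduction together with the identification $(F^\perp)^* \cong B/F$ and the contact-point normalization $\|\langle\,\cdot\,,u_j\rangle\|_{G^*} = 1$, which is what makes the lifts $x_j$ have $B$-norm close to $1$ and hence delivers the constant $\sqrt{k}$ rather than $k$.
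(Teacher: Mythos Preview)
The paper does not prove this lemma itself; it is quoted as Corollary~III.B.11 from \cite{wojtaszczyk1996banach} and used as a black box in the proof of Lemma~\ref{lem:measSelectCompl}. Your argument is essentially the standard Kadec--Snobar proof (John's ellipsoid contact points plus Cauchy--Schwarz) run on the $k$-dimensional annihilator $F^\perp\subset B^*$, with the additional care---which you correctly identify as the crux---of forcing the resulting projection on $B^*$ to be weak${}^*$-continuous by lifting the contact-point functionals through $G^*\cong B/F$ back into $B$ rather than into $B^{**}$. The reduction ``weak${}^*$-continuous projection onto $F^\perp$ $\Leftrightarrow$ bounded projection on $B$ with kernel $F$'' is set up cleanly, and the norm computation is correct.

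The only point to flag is the $\eta$: your construction yields $\|\pi_{E//F}\|_B\le(1+\eta)\sqrt{k}$ in the non-reflexive case because the quotient norm on $B/F$ need not be attained. You are right that this is harmless for the paper---it is absorbed into $C_k=3\sqrt{k}+2$ in Lemma~\ref{lem:measSelectCompl}---and you are also right to defer the sharp constant to the reference. In fact, removing the $\eta$ in full generality requires an extra step (e.g., carrying out the construction in $B^{**}$, where the quotient norm over the weak${}^*$-closed subspace $F^{\perp\perp}$ \emph{is} attained by Banach--Alaoglu, and then pulling the resulting projection back to $B$ using that its range is finite-dimensional and transversal to $F^{\perp\perp}$); this is presumably what \cite{wojtaszczyk1996banach} does, but it is not needed here.
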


The following is a version of Lemma \ref{lem:boundedComplement} for a measurable family $F(x), x \in X,$ of finite-codimensional spaces. 

\begin{proof}

Separability of $B^*$ implies separability of $\Gr^k(B)$ (Lemma B.12 in \cite{gonzalez2014semi}; see also 
Chapter IV, \S2.3 of \cite{kato2013perturbation}). 
Let $\{ F_n\} \subset \Gr^k(B)$ be a countable dense sequence and for each $n$ 
let $E_n$ be a $k$-dimensional subspace of $B$ such that $\| \pi_{E_n // F_n}\|_B \leq \sqrt{k}$ as in Lemma \ref{lem:boundedComplement}. Set $\epsilon = \frac{1}{2 (\sqrt{k }+1)}$ and recursively define
\[
S_1 = \{ x \in X : d_{Haus}^B(F(x), F_1) < \epsilon\} \, , \quad 
S_n = \{ x \in X : d_{Haus}^B(F(x), F_n) < \epsilon \} \setminus \bigcup_{m=1}^{n-1} S_m \,. 
\]
By the density of $\{ F_n\}$ and measurability of $x \mapsto F(x)$, it holds that $\{ S_n\}$ is a countable partition of $X$ by $\mathscr F$-measurable sets. Now, define
\[
E(x) := E_n \quad \text{ for } \quad  x \in S_n \,. 
\]
It is immediate that $x \mapsto E(x)$ is measurable. Measurability of $x \mapsto \pi_{E(x) // F(x)}$ follows from Lemma B.18 of \cite{gonzalez2014semi}. To estimate $\| \pi_{E(x) // F(x)}\|_B$, Proposition 2.7 of \cite{blumenthal2015volume} implies
that for $x \in S_n$, 
\[
\| \pi_{E_n // F(x)} |_{F_n}\|_B \leq \frac{2 d^B_{Haus}(F_n, F(x))}{\| \pi_{F_n // E_n}\|_B^{-1} - d^B_{Haus}(F_n, F(x))} \leq 2
\]
for our choice of $\epsilon$. Therefore, for $x \in S_n$, $e_n \in E_n, f_n \in F_n$, 
\begin{align*}
\| \pi_{E(x) // F(x)} (e_n + f_n)\|_B 
& = \| e_n\|_B + \| \pi_{E_n // F(x)} |_{F_n}\|_B \| f_n\|_B \\
& \leq (3 \sqrt{k} + 2) \| e_n + f_n\|_B \,. 
\end{align*}
\end{proof}

Below we record various additional measurability properties, used freely and without further mention in Section \ref{subsec:proofOfEqualLE} below. 
\begin{lemma}
Let $(B, \| \cdot\|_B)$ be a separable Banach space and let 
$M : X \to L(B)$ be a strongly measurable mapping. 
\begin{enumerate}
\item (Lemma B.16 of \cite{gonzalez2014semi}) If $F : X \to \Gr(B)$ is measurable, then $x \mapsto \| M(x)|_{F(x)}\|_B$ is measurable. Consequently, the function $x \mapsto \| M(x)\|_B$ is measurable.
\item (Lemma A.5 of \cite{gonzalez2014semi}) If $M' : (X, \mathscr F) \to L(B)$ is another strongly measurable mapping, then 
$M' \circ M : (X, \mathscr F) \to L(B), x \mapsto M'(x) \circ M(x)$ is also strongly measurable. 
\end{enumerate}
\end{lemma}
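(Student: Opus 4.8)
The final statement is a compendium of two standard measurability facts about strongly measurable operator-valued maps on a separable Banach space, and both are cited to \cite{gonzalez2014semi}; accordingly, my plan is to reconstruct the short arguments rather than invoke anything substantial. Throughout, fix a countable dense set $\{v_m\}_{m\geq 1} \subset B$ and recall that strong measurability of $M : X \to L(B)$ means $x \mapsto M(x) v$ is Borel for every $v \in B$; by density of $\{v_m\}$ and the fact that $x \mapsto \|M(x)v\|_B$ is then measurable for all $v$, together with $\|M(x)\|_B = \sup_m \|M(x) v_m\|_B / \|v_m\|_B$ (using homogeneity and density on the unit sphere), one already gets measurability of $x \mapsto \|M(x)\|_B$ as a byproduct; this handles the ``consequently'' clause of item (1) once item (1) itself is in hand.

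\textbf{Item (1): measurability of $x \mapsto \|M(x)|_{F(x)}\|_B$.} The plan is to exhibit, for $m$-a.e.\ $x$ (in fact every $x$), a countable set of vectors in $F(x)$ that is dense in the unit sphere $S_{F(x)}$, varying measurably in $x$, and then take a supremum. Concretely, I would use separability of $\Gr(B)$ in $d_{Haus}^B$ (as invoked already in the proof of Lemma~\ref{lem:measSelectCompl}) to reduce to a countable dense family $\{F_n\} \subset \Gr(B)$: partition $X$ into measurable pieces $S_{n,j}$ on which $d_{Haus}^B(F(x), F_n) < 1/j$, fix on each $F_n$ a countable dense sequence $\{w^n_\ell\}_\ell$ of unit vectors, and for $x \in S_{n,j}$ approximate each $w^n_\ell$ by a unit vector $\widehat w_{n,\ell}(x) \in F(x)$ within distance $1/j$ (such vectors exist by definition of $d_{Haus}^B$; a measurable selection of them can be made, e.g., by a further countable partition refining the distance estimate, or by citing a measurable selection theorem). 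Then $\{\widehat w_{n,\ell}(x)\}$ is dense in $S_{F(x)}$ up to error $\to 0$, so $\|M(x)|_{F(x)}\|_B = \sup_{n,\ell} \|M(x) \widehat w_{n,\ell}(x)\|_B$ (the reverse inequality follows from continuity of $M(x)$ and the density), and each summand $x \mapsto \|M(x) \widehat w_{n,\ell}(x)\|_B$ is measurable because it is piecewise (over the $S_{n,j}$) a composition of the Borel map $x \mapsto \widehat w_{n,\ell}(x)$, the jointly ``measurable in $x$, continuous in $v$'' evaluation $(x,v) \mapsto M(x) v$, and the norm. A countable sup of measurable functions is measurable.

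\textbf{Item (2): strong measurability of $x \mapsto M'(x) \circ M(x)$.} Fix $v \in B$. I would like to write $M'(x)(M(x)v)$ as a composition of the Borel map $g : x \mapsto M(x)v$ and the ``Carathéodory-type'' map $H : X \times B \to B$, $H(x,w) = M'(x)w$, which is Borel in $x$ for each fixed $w$ and continuous (in fact Lipschitz, with constant $\|M'(x)\|_B$) in $w$ for each fixed $x$. The standard lemma that such a Carathéodory map composed with a Borel section is Borel gives measurability of $x \mapsto M'(x)M(x)v$; the cleanest proof approximates $H(x,\cdot)$ on the separable range by using $\|M'(x)\|_B \leq$ (a measurable, hence a.e.-finite) bound on pieces $\{\|M'\|_B \leq N\}$, and on each such piece writes $M'(x)g(x) = \lim_k M'(x) v_{m_k(x)}$ where $v_{m_k(x)}$ is the nearest point of $\{v_m\}$ to $g(x)$ among the first $k$; each term is measurable since $x \mapsto M'(x)v_m$ is and $x \mapsto m_k(x)$ is. Taking the limit (which exists and is Borel) finishes item (2).

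\textbf{Main obstacle.} The only genuinely delicate point is the \emph{measurable selection} of unit vectors $\widehat w_{n,\ell}(x) \in S_{F(x)}$ approximating a fixed target in item~(1): one needs that, given a measurable family of closed subspaces $F(x)$, one can measurably choose near-optimal approximants. I would handle this either by a direct $\epsilon$-net refinement argument (at each dyadic scale, partition $X$ by which element of a fixed countable dense subset of $B$ lies within the required distance of $F(x)$ and realizes the approximation — measurability of $x \mapsto \dist^B(v, S_{F(x)})$ follows from $d_{Haus}^B$-measurability of $x \mapsto F(x)$), or by appealing to the Kuratowski–Ryll-Nardzewski measurable selection theorem applied to the measurable closed-valued multifunction $x \mapsto S_{F(x)} \cap \overline{B}(w^n_\ell, 1/j)$. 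Everything else is a routine assembly of countable sups, limits, and piecewise definitions over measurable partitions.
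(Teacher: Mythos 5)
First, a framing point: the paper does not prove this lemma at all --- both items are recorded as direct citations to Appendices A and B of \cite{gonzalez2014semi} --- so any proof you supply is necessarily ``a different route.'' Your argument for item (2) (approximating $g(x) = M(x)v$ by nearest points of a countable dense set and passing to the pointwise limit) and for the ``consequently'' clause of item (1) is correct and is the standard one.

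For the main part of item (1), however, there is a genuine gap in your primary construction. You partition $X$ according to proximity of $F(x)$ to a ``countable dense sequence $\{F_n\}\subset \Gr(B)$,'' but the full Grassmannian $\Gr(B)$ of \emph{all} closed subspaces is in general not separable in $d_{Haus}^B$, even when $B$ is separable: in $B=\ell^2$ the subspaces $E_S=\overline{\operatorname{span}}\{e_i : i\in S\}$, $S\subseteq\N$, are pairwise at Hausdorff distance at least $1$, and there are uncountably many of them. What the paper (via Lemma B.12 of \cite{gonzalez2014semi}) actually uses in the proof of Lemma \ref{lem:measSelectCompl} is separability of $\Gr^k(B)$, the $k$-codimensional subspaces; your parenthetical ``as invoked already in the proof of Lemma \ref{lem:measSelectCompl}'' overstates this, and since the statement allows arbitrary $F:X\to\Gr(B)$, your reduction fails at the first step (and your Kuratowski--Ryll-Nardzewski variant inherits the problem, because the targets $w^n_\ell$ are still drawn from the nonexistent dense family). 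The repair is to dispense with any dense family in $\Gr(B)$ and apply the Castaing representation directly to the closed-valued multifunction $x\mapsto S_{F(x)}$ into the Polish space $B$: Borel measurability of $x\mapsto F(x)$ in $d_{Haus}^B$ gives Effros measurability of this multifunction (for $U$ open, $\{E : S_E\cap U\neq\emptyset\}$ is $d_{Haus}^B$-open), so one obtains countably many measurable selections $x\mapsto w_\ell(x)\in S_{F(x)}$ whose values are dense in $S_{F(x)}$ for every $x$, whence $\|M(x)|_{F(x)}\|_B=\sup_\ell \|M(x)w_\ell(x)\|_B$ is a countable supremum of functions that are measurable by exactly your item-(2) argument. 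Alternatively, if one only needs the lemma for the finite-dimensional and finite-codimensional subspaces that actually occur in this paper, your $\epsilon$-net argument does go through verbatim upon replacing $\Gr(B)$ by $\Gr_k(B)$ or $\Gr^k(B)$, where separability holds.
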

For additional discussion of strong measurability, see, e.g.,  Appendices A, B of \cite{gonzalez2014semi}. 

\subsubsection{Quotient cocycle construction}

Assume $r^V \geq i$ for some fixed $i \geq 1$, and define $F(x) := F_{i+1}^V(x)$. By Lemma \ref{lem:measSelectCompl}, there exists a measurably-varying family $x \mapsto E(x)$ of closed, finite-dimensional complements in $V$ to $F(x)$, equipped with a measurably-varying family of projectors $x \mapsto \pi^\perp_x := \pi_{E(x) // F(x)}$ with $\| \pi_x^\perp\|_V \leq C_k := \sqrt{k} + 1$ for all $x \in X$, where $k = M_i^V = \codim F(x)$ is constant in $x$. 
 Note that $\dim E(x) = \codim F(x)  = k$. 

We define $\hat A_x : E(x) \to E(T x), \hat A^n_x : E(x) \to E(T^n x)$ as follows: 
\[
\hat A_x := \pi_{T x}^\perp A_x|_{E(x)} \, , \quad \hat A_x^n = \hat A_{T^{n-1} x} \circ \cdots \circ \hat A_x \,. 
\]
We note that invariance of $F(x)$ under $A_x$ as in \eqref{eq:equivariant} implies the identities
\begin{gather}\label{eq:formQuotCocycle}
\begin{gathered}
\hat A^n_x = \pi^\perp_{T^n x} A^n_x|_{E(x)} \, , \\
\hat A^{n+1}_x = \hat A_{T^n x} \circ A^n_x |_{E(x)} \,. 
\end{gathered}
\end{gather}

Below we write $\det{}_V(\hat A^n_x)$ below for the determinant of $\hat A^n_x : E(x) \to E(T^n x)$. In view of the expression \eqref{eq:formQuotCocycle}, we have that 
\begin{align}\label{detFormHatA}
\det{}_V(\hat A^n_x) = \det{}_V(\pi^\perp_{T^n x} \circ A^n_x | E(x)) \,. 
\end{align}

\begin{lemma}\label{lem:quotientProjVoLGrowh}
For $m$-a.e. $x \in X$ we have that
\[
\Sigma_{M_i^V}^V = \lim_{n \to \infty} \frac1n \log \det{}_V(\hat A^n_x) \,.
\]
\end{lemma}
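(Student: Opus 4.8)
The plan is to identify the growth rate of $\det_V(\hat A^n_x)$ with $\Sigma^V_{M^V_i}$ by using Proposition \ref{prop:volGrowAndLE}(b) applied to the cocycle on $V$, together with the observation that the oblique projections $\pi^\perp_{T^n x}$ are uniformly bounded (in $V$) and hence contribute nothing to the exponential rate. First I would record that, writing $k = M^V_i = \codim F^V_{i+1}$, we have $E(x) \in \Gr_k(V)$ with $E(x) \cap F^V_{i+1}(x) = \{0\}$ for every $x$, since $E(x)$ is by construction a complement to $F(x) = F^V_{i+1}(x)$ in $V$. Therefore Proposition \ref{prop:volGrowAndLE}(b), applied to the cocycle $A^n_x$ regarded on $(V,\|\cdot\|_V)$ with the distinguished index $i$ and with $k = M^V_i$ (so that indeed $M^V_{i-1} < k \le M^V_i$), gives that for $m$-a.e.\ $x$,
\[
\Sigma^V_{M^V_i} = \lim_{n\to\infty} \frac1n \log \det{}_V(A^n_x | E(x)) \,.
\]

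Next I would compare $\det_V(\hat A^n_x) = \det_V(\pi^\perp_{T^n x} \circ A^n_x | E(x))$ with $\det_V(A^n_x | E(x))$. By the multiplicativity of the determinant, Lemma \ref{lem:detProps}(a), we have
\[
\det{}_V(\pi^\perp_{T^n x} \circ A^n_x | E(x)) = \det{}_V\bigl(\pi^\perp_{T^n x} \big| A^n_x(E(x))\bigr) \cdot \det{}_V(A^n_x | E(x))\,,
\]
provided $A^n_x|_{E(x)}$ injects (for large $n$ this holds because $A^n_x$ restricted to any $k$-dimensional complement of $F^V_{i+1}$ is eventually injective with $k$-dimensional image, a consequence of the MET structure; in any case if it fails to inject both sides are zero and the $\log$ is $-\infty$, matching the convention when $\Sigma^V_{M^V_i} = -\infty$). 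Now I use Lemma \ref{lem:detProps}(b) on the finite-rank factor: since $A^n_x(E(x))$ is $k$-dimensional when $A^n_x|_{E(x)}$ injects, and $\pi^\perp_{T^n x}$ restricted to it is injective precisely when $A^n_x(E(x)) \cap F(T^n x) = \{0\}$, we have
\[
\mathfrak m_V\bigl(\pi^\perp_{T^n x}\big|_{A^n_x(E(x))}\bigr)^k \le \det{}_V\bigl(\pi^\perp_{T^n x}\big| A^n_x(E(x))\bigr) \le \|\pi^\perp_{T^n x}\|_V^k \le C_k^k \,.
\]
The upper bound is the easy side and gives $\frac1n\log\det_V(\hat A^n_x) \le \frac1n\log\det_V(A^n_x|E(x)) + \frac{k\log C_k}{n} \to \Sigma^V_{M^V_i}$.

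For the matching lower bound I need to control $\mathfrak m_V(\pi^\perp_{T^n x}|_{A^n_x(E(x))})$ from below, i.e.\ to show the $k$-dimensional image $A^n_x(E(x))$ does not become nearly parallel to $F(T^n x)$ too fast. The clean way to see this is that $A^n_x(E(x))$ is close (in $d^V_{Haus}$) to the fast space $H_i^V(T^n x)$ — the complement corresponding to exponents $\lambda^V_1,\dots,\lambda^V_i$ — which is uniformly transverse to $F^V_{i+1}(T^n x) = F(T^n x)$; this transversality is exactly controlled by the uniformity of the projector $\pi^\perp$, which by construction has $\|\pi^\perp_x\|_V \le C_k$ uniformly in $x$. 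Concretely, for $v \in A^n_x(E(x))$ of unit $V$-norm, $\|\pi^\perp_{T^n x} v\|_V \ge \|\pi^\perp_{T^n x}\|_V^{-1}\cdot(\text{something})$ is too crude; instead I would argue that if $w \in A^n_x(E(x))$ then $w = \pi^\perp_{T^n x} w + (I - \pi^\perp_{T^n x})w$ with the second term in $F(T^n x)$, so $\dist^V(w, F(T^n x)) \le \|\pi^\perp_{T^n x} w\|_V$, but I actually want the reverse; the correct route is: the image space $A^n_x(E(x))$ can be written as $\{\pi^\perp_{T^n x} u + (\text{correction in } F) : u \in E(T^n x)\}$ only if one tracks it carefully. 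The cleanest fix, and the one I would commit to, is to replace the \emph{ad hoc} family $E(x)$ by the MET fast subspaces $H^V_i(x)$ themselves at the level of the volume-growth computation: since $A^n_x(H^V_i(x)) \subset H^V_i(T^n x)$ by equivariance and $\pi^\perp_{T^n x}$ is bounded below on $H^V_i(T^n x)$ uniformly in $x$ (a consequence of $\|\pi^\perp\|_V \le C_k$ together with $H^V_i$ being the range of a complementary bounded projector), we get $\mathfrak m_V(\pi^\perp_{T^n x}|_{A^n_x(H^V_i(x))}) \ge c_k > 0$ uniformly in $n, x$, and then $\det_V(\hat A^n_x)$ has the same growth rate whether we start from $H^V_i(x)$ or $E(x)$ because both are $k$-dimensional complements of $F^V_{i+1}$ and Proposition \ref{prop:volGrowAndLE}(b) gives the rate independent of the choice. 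I expect this transversality-is-uniform step — making precise that the projection $\pi^\perp_{T^n x}$ loses at most a bounded factor on the relevant moving $k$-plane, uniformly in $n$ — to be the main obstacle, and it is exactly where the uniform bound $\|\pi^\perp_x\|_V \le C_k$ from Lemma \ref{lem:measSelectCompl} is used. Combining the two bounds gives $\lim_n \frac1n \log\det_V(\hat A^n_x) = \Sigma^V_{M^V_i}$ for $m$-a.e.\ $x$, which is the claim; the nonergodic case is identical with $\Sigma^V_{M^V_i}$ replaced by its $x$-dependent version, as noted in the paper.
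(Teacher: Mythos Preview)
Your setup through the upper bound is exactly right and matches the paper: you use Proposition \ref{prop:volGrowAndLE}(b) to get $\Sigma^V_{M^V_i} = \lim_n \frac1n \log \det_V(A^n_x|E(x))$, multiplicativity of $\det_V$, and the uniform bound $\|\pi^\perp_{T^n x}\|_V \le C_k$.

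The gap is in your lower bound. Your proposed fix --- passing to ``MET fast subspaces $H^V_i(x)$'' and invoking their equivariance $A^n_x(H^V_i(x)) \subset H^V_i(T^n x)$ --- does not work in this setting. The MET being applied (Theorem \ref{thm:nonInvMET}) is one-sided: the base $T$ need not be invertible, and the theorem provides only a \emph{filtration} $F_1 \supset F_2 \supset \cdots$, not an Oseledets splitting. There are no canonical, equivariant fast subspaces $H^V_i(x)$ available; the spaces $E(x)$ are produced by a measurable-selection argument (Lemma \ref{lem:measSelectCompl}) and are \emph{not} carried to one another by $A_x$. Since the paper makes a point of giving a proof that ``does not rely at all on invertibility of $T$'', this is not a detail that can be patched by strengthening hypotheses. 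Also note that $\hat A^n_x$ is by definition $\pi^\perp_{T^n x} A^n_x|_{E(x)}$, so you cannot ``start from $H^V_i(x)$'' without changing the object whose growth you must compute.

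The paper handles the lower bound differently. It first proves the elementary inequality $\|\pi_{E//F} v\|_V \ge \sin\angle^V(v,F)\,\|v\|_V$, which gives $\mathfrak m_V\bigl(\pi^\perp_{T^n x}|_{A^n_x(E(x))}\bigr) \ge \sin\angle^V_{\min}\bigl(A^n_x(E(x)), F(T^n x)\bigr)$. Then it invokes Corollary \ref{cor:anglesDegenerateSlowly}, a standard consequence of the one-sided MET, asserting that for any complement $E$ to $F(x)$ one has $\frac1n \log \sin\angle^V_{\min}(A^n_x(E), F(T^n x)) \to 0$ almost surely. So the angle \emph{does} degenerate, but only subexponentially, and that is exactly enough to make the projector determinant contribute zero to the exponential rate. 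This is the ``transversality-is-subexponentially-lost'' step you were looking for; it comes from the MET itself rather than from the uniform projector bound $\|\pi^\perp\|_V \le C_k$, which is only used for the upper side.
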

The proof uses the following corollary to the MET, which we recall here. Recall that $\angle^V(v, F) \in [0,\pi/2]$ is the \emph{angle} between $v \in V \setminus \{ 0 \}$ and a subspace $F \subset V$. For a nontrivial subspace $E \subset V$, we write
\[
\angle^V_{\rm min}(E, F) := \min\left\{ \angle^V(v, F) : v \in E \setminus \{ 0 \} \right\} \,. 
\]
Note that if $E$ and $F$ share a nontrivial subspace $E\cap F$, then $\angle^V_{\rm min}(E,F) = 0$.

\begin{corollary}\label{cor:anglesDegenerateSlowly}
	For $m$-a.e. $x \in X$ and for any complement $E$ to $F(x)$ in $V$, 
	we have that
	\[
	\lim_{n \to \infty} \frac1n \log \sin \angle_{\rm min}^V(A^n_x(E), F(T^n x)) = 0 \,. 
	\]
\end{corollary}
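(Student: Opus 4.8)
The plan is to argue by contradiction, exploiting Proposition \ref{prop:volGrowAndLE} and the subadditivity built into the MET. Fix a generic $x$ in the sense of the MET (so all of the conclusions of Theorem \ref{thm:nonInvMET} and Proposition \ref{prop:volGrowAndLE} hold at $x$, and the Lyapunov data are constant along the orbit), let $k = \codim F(x) = M_i^V$, and let $E$ be any $k$-dimensional complement to $F(x) = F_{i+1}^V(x)$ in $V$. Since $E \cap F_{i+1}^V(x) = \{0\}$ by construction, part (b) of Proposition \ref{prop:volGrowAndLE} (applied on the space $V$) gives
\[
\Sigma_{k}^V = \lim_{n \to \infty} \frac1n \log \det{}_V(A^n_x | E) \,.
\]
The idea is then to compare $\det{}_V(A^n_x|E)$ with $\det{}_V(A^n_x|E')$ for a fixed reference complement $E'$ — say one of the Oseledets-type subspaces, or more simply just note $\Sigma_k^V$ is the same for every such complement — and to show that a failure of the angle to decay subexponentially would force a genuine gap between these limits, contradicting that both equal $\Sigma_k^V$.

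Concretely, I would proceed as follows. Write $E_n := A^n_x(E) \subset V$, a $k$-dimensional subspace (injectivity of $A^n_x$ on $E$ holds eventually since the limit above is finite, i.e.\ $> -\infty$, because $i \le r^V$). Using the oblique projection $\pi^\perp_{T^n x} = \pi_{E(T^n x)//F(T^n x)}$ from Lemma \ref{lem:measSelectCompl} and the cocycle identity \eqref{eq:formQuotCocycle}, one has $\hat A^n_x = \pi^\perp_{T^n x} A^n_x|_{E(x)}$, so comparing $\det_V(A^n_x|E)$ on two complements $E, E'$ reduces to comparing the volume distortion of $\pi^\perp_{T^n x}$ restricted to $E_n$ versus $E'_n$. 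The key linear-algebra estimate is that for a $k$-dimensional subspace $G \subset V$ transverse to $F(T^n x)$, the determinant of $\pi^\perp_{T^n x}|_G$ (relative to the induced volumes) is controlled above and below by powers of $\sin \angle^V_{\min}(G, F(T^n x))$ and by $\|\pi^\perp_{T^n x}\|_V$; since $\|\pi^\perp_{T^n x}\|_V \le C_k$ is uniformly bounded, the upshot is a two-sided bound
\[
\bigl(\sin \angle^V_{\min}(G, F(T^n x))\bigr)^{k} \;\lesssim_k\; \frac{\det_V(\pi^\perp_{T^n x}|G)}{\text{(volume factors)}} \;\lesssim_k\; 1 \,.
\]
Combining this with the multiplicativity of the determinant (Lemma \ref{lem:detProps}(a)) applied to $A^n_x = A^{m}_{T^{n-m}x}\circ A^{n-m}_x$, one extracts that if $\frac1n \log \sin\angle^V_{\min}(E_n, F(T^n x))$ fails to tend to $0$ along a subsequence — it is always $\le 0$, so the only danger is a limit $< 0$ — then the orbit of $E$ spends a definite exponential fraction of ``determinant'' being lost to the near-intersection with $F$, which cannot be recovered, forcing $\lim \frac1n \log \det_V(A^n_x|E) < \Sigma_k^V$, a contradiction.

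The cleanest way to package the last step, which I expect to be the main obstacle, is the following. Let $\ell = \limsup_n \frac1n(-\log\sin\angle^V_{\min}(E_n, F(T^n x))) \in [0,\infty]$; we want $\ell = 0$. Pick $n_j \to \infty$ realizing this and a unit vector $v_j \in E_{n_j}$ nearly achieving the minimal angle; decompose $v_j = e_j + f_j$ with $e_j \in E(T^{n_j}x)$, $f_j \in F(T^{n_j}x)$, so $\|e_j\|_V = \|\pi^\perp_{T^{n_j}x} v_j\|_V \approx \sin\angle^V_{\min}$ is exponentially small while $\|f_j\|_V \le C_k + 1$. Now consider the $k$-dimensional space $\tilde E := \mathrm{span}(v_j, w_2, \dots, w_k)$ where $w_2,\dots,w_k$ span a complement of $v_j$ inside $E_{n_j}$ chosen with bounded ``conditioning'': applying $A^m_{T^{n_j}x}$ to $A^{n_j}_x(E) = E_{n_j}$ and using Lemma \ref{lem:detProps}(a) plus the fact that $A^m_{T^{n_j}x}$ on the near-$F$ direction $v_j$ contracts at rate governed by $\lambda^V_{i+1} < \lambda^V_i$, one sees the $k$-volume growth of $E$ over $[0, n_j + m]$ is suppressed by roughly $e^{-n_j \ell}$ relative to the optimum, uniformly in $m$; sending $m \to \infty$ first and then $j \to \infty$ contradicts $\lim \frac1n \log \det_V(A^n_x|E) = \Sigma_k^V$ unless $\ell = 0$. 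The delicate points are (i) keeping the auxiliary vectors $w_2,\dots,w_k$ well-conditioned so the determinant comparison is not polluted, which one handles by working modulo $F(T^{n_j}x)$ via $\pi^\perp$ and invoking the uniform bound $\|\pi^\perp\|_V \le C_k$; and (ii) justifying the ``uniformly in $m$'' contraction claim, for which one uses that the filtration $F^V_\bullet$ is $A$-equivariant \eqref{eq:equivariant} together with the definition of $\Sigma^V_k$ and $\Sigma^V_{k}$ versus $\Sigma^V_{k+1}$-type bounds (i.e.\ the gap $\lambda^V_i > \lambda^V_{i+1}$). This is essentially the standard argument that Oseledets angles decay subexponentially, adapted to the Banach setting and to the determinant characterization of $\Sigma_k$; I would cite \cite{blumenthal2015volume} or \cite{blumenthal2017entropy} for the analogous estimates where possible.
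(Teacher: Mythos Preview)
The paper does not supply its own proof of Corollary \ref{cor:anglesDegenerateSlowly}; it simply cites the paragraph ``Proof of (h)'' in the proof of Theorem~16 of \cite{gonzalez2015concise}, where the statement is established as one of the standard by-products of the MET construction. So there is no detailed argument in the paper to compare your proposal to, and your sketch must stand on its own.

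There is a genuine gap in your argument, and it lies exactly where you flagged it. You assert that ``$A^m_{T^{n_j}x}$ on the near-$F$ direction $v_j$ contracts at rate governed by $\lambda^V_{i+1}$.'' This is not correct: $v_j$ is \emph{close to} $F(T^{n_j}x)$ but not \emph{in} it (indeed $v_j\in E_{n_j}$, which is a complement), so by the MET its asymptotic growth rate under $A^m_{T^{n_j}x}$ is at least $\lambda_i$, not $\lambda_{i+1}$. In the decomposition $v_j=e_j+f_j$ with $\|e_j\|_V\lesssim e^{-n_j\ell}$, the component $A^m e_j$ eventually dominates $A^m f_j$ and restores the ``correct'' growth. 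Any suppression of $\|A^m v_j\|$ is therefore transient, confined to a range of $m$ comparable to $n_j$, and disappears as $m\to\infty$.

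This feeds into the second problem: your proposed order of limits does not yield a contradiction. For each fixed $j$, Proposition~\ref{prop:volGrowAndLE}(b) applied at the generic point $T^{n_j}x$ with the complement $E_{n_j}$ gives
\[
\lim_{m\to\infty}\frac{1}{n_j+m}\log\det{}_V(A^{n_j+m}_x\,|\,E)=\Sigma_k^V,
\]
regardless of how small the angle is at time $n_j$; the putative factor $e^{-n_j\ell}$ is absorbed into the subexponential prefactor that the MET does not control. Sending $j\to\infty$ afterwards adds nothing. To turn the transient suppression into an honest contradiction one must control the \emph{constants} in the growth estimates along the orbit --- i.e., one needs the tempered (slowly-varying) behaviour of the regularity functions $\overline D_\epsilon(T^n x),\,\underline D_\epsilon(T^n x)$ and of the quotient cocycle $\hat A^n_x$. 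That is precisely the machinery used in the cited proof, and it does not follow from the bare determinant characterization in Proposition~\ref{prop:volGrowAndLE} alone. If you want to salvage your approach, the cleanest route is to show directly that $\lim_n\frac1n\log\det_V(\hat A^n_x)=\Sigma_k^V$ by an argument independent of the angle (e.g., via the additive Birkhoff sum $\sum_j\log\det_V(\hat A_{T^jx})$ together with a matching lower bound from the finite-dimensional MET for $\hat A$), and then read off the angle estimate from the sandwich
\[
(\sin\angle_{\min}^V(E_n,F(T^nx)))^k\,\det{}_V(A^n_x|E)\ \le\ \det{}_V(\hat A^n_x)\cdot\det{}_V(\pi_x^\perp|E)^{-1}\ \le\ C_k^k\,\det{}_V(A^n_x|E).
\]
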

The proof of Corollary \ref{cor:anglesDegenerateSlowly} is contained in, e.g., paragraph ``Proof of (h)'' in the proof of Theorem 16 in \cite{gonzalez2015concise}. 

\begin{proof}[Proof of Lemma \ref{lem:quotientProjVoLGrowh}]
To start, by \eqref{detFormHatA} and multiplicativity of the determinant (Lemma \ref{lem:detProps}(a)), we have that 
\[
\det{}_V(\hat A^n_x) = \det{}_V (\pi^\perp_{T^n x} | A^n_x(E(x)) ) \det{}_V (A^n_x| E(x)) \,. 
\]
It suffices to check that 
\[
\lim_{n \to \infty} \frac1n \log \det{}_V ( \pi^\perp_{T^n x} | A^n_x(E(x)) = 0 \, .
\]
By Lemma \ref{lem:detProps}(b), we have
\[
\mathfrak m_V(\pi^\perp_{T^n x}|_{A^n_x(E(x))})^k \leq \det{}_V ( \pi^\perp_{T^n x} | A^n_x(E(x)) \leq \| \pi^\perp_{T^n x}\|_V^k \, , 
\]
where $k := M_i^V$ and $\mathfrak m_V(\pi^\perp_{T^n x}|_{A^n_x(E(x))})$ is the minimum norm defined by \eqref{eq:min-norm}. The RHS is uniformly bounded in $x, n$ from above by a constant in $k$, so it remains to bound the LHS from below. 
For this, we use the following estimate which related the norm of $\pi_{E//F}v$ to the distance between $v$ and $F$.
\begin{claim}
Let $E, F$ be complementary closed subspaces of the Banach space $V$. Then 
\[
\| \pi_{E // F}v\|_V \geq \sin \angle^V(v, F)\,\| v \|_V. 
\]
\end{claim}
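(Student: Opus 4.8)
The plan is short, because the claim reduces to the elementary fact that the distance from a vector to a subspace never exceeds the norm of the vector, once one has peeled off the part of the vector lying in that subspace.

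First I would unwind the definition \eqref{defn:angleHilbSpace} of the angle: for $v \in V \setminus \{0\}$,
\[
\sin\angle^V(v,F)\,\|v\|_V = \inf_{w\in F}\|v-w\|_V = \dist^V(v,F).
\]
Then I would use the splitting $V = E\oplus F$ to write $v = e + f$ with $e := \pi_{E // F}v \in E$ and $f := v - e \in F$. Since $f\in F$ and $F$ is a linear subspace, the map $w\mapsto w - f$ is a bijection of $F$ onto itself, so
\[
\dist^V(v,F) = \inf_{w\in F}\|e + f - w\|_V = \inf_{f'\in F}\|e - f'\|_V = \dist^V(e,F).
\]
Finally, taking $f' = 0 \in F$ in the last infimum gives $\dist^V(e,F) \leq \|e\|_V = \|\pi_{E // F}v\|_V$. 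Chaining the three displays yields $\|\pi_{E // F}v\|_V \geq \dist^V(v,F) = \sin\angle^V(v,F)\,\|v\|_V$, which is the claim.

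There is essentially no obstacle here: the argument uses only that $F$ is a linear subspace (so translation by $-f$ preserves it) and that $0 \in F$; neither closedness of $E,F$ nor boundedness of the oblique projection $\pi_{E // F}$ is needed. The degenerate situations are automatically handled: if $v\in F$ then $\pi_{E // F}v = 0$ and $\sin\angle^V(v,F) = 0$, so both sides vanish, and the case $v = 0$ is vacuous since $\angle^V$ is only defined for nonzero vectors. This pointwise bound is then exactly what is needed to lower-bound $\mathfrak m_V(\pi^\perp_{T^n x}|_{A^n_x(E(x))})$ in terms of $\sin\angle^V_{\rm min}(A^n_x(E(x)), F(T^n x))$ and hence, via Corollary \ref{cor:anglesDegenerateSlowly}, to finish the proof of Lemma \ref{lem:quotientProjVoLGrowh}.
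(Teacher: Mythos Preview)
Your proof is correct and essentially identical to the paper's: both write $v = e + f$ with $e \in E$, $f \in F$ and bound the infimum defining $\sin\angle^V(v,F)$ by the single choice $w = f$, yielding $\sin\angle^V(v,F)\,\|v\|_V \le \|v - f\|_V = \|e\|_V = \|\pi_{E//F}v\|_V$. You split this one step into a translation $\dist^V(v,F) = \dist^V(e,F)$ followed by the choice $f'=0$, which is exactly the same thing unpacked.
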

\begin{proof}[Proof of Claim]
Let $v \in E', \| v \|_V = 1$. Write $v = u + w$ where $u \in E, w \in F$. Then, 
\[
\sin \angle^V(v, F) = \inf_{\hat w \in F}\frac{\|v - \hat w\|_V}{\|v\|_V} \leq \frac{\| v - w\|_V}{\| v \|_V} =\frac{\| u \|_V}{\|v\|_V} \,. \qedhere
\]
\end{proof}
Consequently, the above claim implies that
\[
\mathfrak m_V(\pi^\perp_{T^n x}|_{A^n_x(E(x))} ) \geq \sin \angle_{\rm min}^V(A^n_x(E(x)), F(T^n x)) \,. 
\]
Applying Corollary \ref{cor:anglesDegenerateSlowly} completes the proof. 
\end{proof} 

\subsubsection*{Proof of Proposition \ref{prop:mainProof}(a)}

To start, we check unconditionally that 
\begin{align}\label{eq:uncondVolBd}
\Sigma_k^B \leq \Sigma_k^V \, \quad \text{for all} \quad k \geq 1 \, . 
\end{align}
Using Proposition \ref{prop:volGrowAndLE}(b) and the density of $V \subset B$, 
we can choose a $k$-dimensional $E \subset V$ such that $\Sigma_k^B = \lim_{n \to \infty} \frac1n \log \det{}_B(A^n_x|E)$. Applying now \eqref{prop:detComparisonBV}, we have
\begin{align*}
\Sigma_k^B & = \lim_{n \to \infty} \frac1n \log \det{}_B(A^n_x|E) \\
& \leq \liminf_{n \to \infty} \left( \frac{k}{n} \log \alpha(E) + \frac1n \log \det{}_V(A^n_x|E) \right) \\
& \leq \lim_{n \to \infty} \frac1n \log \Vc_k(A^n_x) = \Sigma_k^V \,. 
\end{align*}

It remains to check that $\Sigma_{M_i^V}^V \leq \Sigma_{M_i^V}^B$. For this, 
observe that with the measurable selection $x \mapsto E(x)$ as above, we have that $\alpha(E(x)) < \infty$ $m$-almost everywhere. Choose $C_0 > 0$ large enough so that 
\[
m (U) \geq \frac{99}{100} \, , \quad \text{ where}  \quad U := \{ \alpha(E(x)) \leq C_0\} \subset X \,. 
\]
Observe that $m(T^{-1} U \cap U) \geq \frac{98}{100} > 0$ by $T$-invariance of $m$. 
By the Poincar\'e Recurrence Theorem, for $m$-a.e. $x \in U \cap T^{-1} U$, there is a sequence of times $n_k \to \infty$ such that $T^{n_k} x \in U \cap T^{-1} U$ for all $k$. Fixing such an $x$ and sequence $(n_k)$, we estimate
\begin{equation}
\begin{aligned}
\Sigma^V_{M_i^V} & = \lim_{n \to \infty} \frac1n \log \det{}_V(\hat A^n_x)  \\
& \leq \liminf_{n \to \infty} \frac{M_i^V}{n+1} \log \alpha(E(T^{n+1}x)) + \liminf_{n \to \infty} \frac{1}{n+1} \log \det{}_B(\hat A^{n+1}_x) 
\end{aligned}
\end{equation}
where in the first line we used Lemma \ref{lem:quotientProjVoLGrowh} and in the second we used Proposition \ref{prop:detComparisonBV}. The first $\liminf$ goes to $0$, as $\alpha(E(T^{n+1} x)) \leq C_0$ along the sequence of times $n = n_k$. For the second term, we estimate
\begin{align*}
\det{}_B(\hat A^{n+1}_x) &= \det{}_B(\hat A_{T^n x}) \det{}_B(A^n_x|_{E(x)}) \\
& \leq \alpha(E(T^n x))^{M_i^V} \det{}_V(\hat A_{T^n x}) \Vc^B_{M_i^V}(A^n_x) \\
& \leq \left( \alpha(E(T^n x)) \| A_{T^n x}\|_V\right)^{M_i^V} \Vc^B_{M_i^V}(A^n_x)
\end{align*}
using Lemma \ref{lem:detProps}(a) and \eqref{eq:formQuotCocycle} in the first line, Proposition \ref{prop:detComparisonBV} in the second line, and 
Lemma \ref{lem:detProps}(b) in the third line. By construction, along $n = n_k$
we have $\alpha(E(T^nx)) \leq C_0$. To control the $\| A_{T^{n} x} \|_V$ term, define
$g(x) := \log^+ \|A_x\|_{V}$. By our assumptions, we have $\log^+ g \in L^1(m)$. By a standard corollary of the Birkhoff Ergodic Theorem (see, e.g., Theorem 1.14 in \cite{walters2000introduction}), it holds that
\begin{align}\label{eq:integrability3}
\lim_n \frac1n g(T^n x) = 0  \quad m\text{-a.e.} \, .
\end{align}
In all, we conclude
$\Sigma^V_{M_i^V} \leq \Sigma^B_{M_i^V}  = \Sigma^B_{M_i^V}$, which in conjunction with \eqref{eq:uncondVolBd} implies $\Sigma^V_{M_i^V} = \Sigma^B_{M_i^V}$. 

\subsubsection*{Proof of Proposition \ref{prop:mainProof}(b)}

Since \eqref{eq:uncondVolBd} holds for all $k$, it suffices to check that 
\[
\Sigma^V_{M_i^B} \leq \Sigma^B_{M_i^B} \,. 
\]
The proof below is parallel to that of Proposition \ref{prop:mainProof}(a), the most notable change being that we use a quotient cocycle parallel to the space $\tilde F(x) := F_{i + 1}^B(x)$. The following is an analogue of Lemma \ref{lem:measSelectCompl} above.  
\begin{claim}
There exists a measurable selection $x \mapsto \tilde E(x)$ of complement to $\tilde F(x)$ with the properties that for a.e. $x$, (a) $\tilde E(x) \subset V$; 
(b) we have that
\begin{align}\label{eq:correctGrowth12}
\Sigma_{M_i^B}^V = \lim_{n \to \infty} \frac1n \log \det{}_V(A^n_x | \tilde E(x)) \, \quad \text{ and } \quad\Sigma_{M_i^B}^B = \lim_{n \to \infty} \frac1n \log \det{}_B(A^n_x | \tilde E(x)) \, ; 
\end{align}
and (c) the projector $\tilde \pi^\perp_x := \pi_{\tilde E(x) // \tilde F(x)}$ 
satisfies $\| \tilde \pi^\perp_x\|_B \leq C_{M_i^B}$, where $C_k = \sqrt{k} + 1$ for $k \geq 1$. 
\end{claim}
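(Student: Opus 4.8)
The plan is to reduce the Claim to a \emph{measurable general‑position selection} and then carry that out with tools already developed. As in the proof of Proposition \ref{prop:mainProof}(a) I would assume $(T,m)$ ergodic, so $r^V, r^B$ and all codimensions are constants; write $k := M_i^B = \codim_B \tilde F(x)$. The one non‑geometric input I would isolate first is: since $i \le r^B$, the partial sum $\Sigma_{M_i^B}^B$ is a finite sum of finite numbers, so $\Sigma_{M_i^B}^B > -\infty$, and then \eqref{eq:uncondVolBd} gives $\Sigma_{M_i^B}^V \ge \Sigma_{M_i^B}^B > -\infty$. This forces $k$ to be \emph{resolved by the $V$-spectrum}: there is a (constant) index $i'$ with $M_{i'-1}^V < k \le M_{i'}^V$ (no constraint if $r^V=\infty$; the case $r^V<\infty$ and $k>M_{r^V}^V$ is excluded, as it would give $\Sigma_{M_i^B}^V=-\infty$). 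The point of $i'$ is that, by Proposition \ref{prop:volGrowAndLE}(b) applied to the $B$-cocycle with indices $i,k$ (note $M_{i-1}^B < k = M_i^B$) and to the $V$-cocycle with indices $i',k$, both limit identities in part (b) of the Claim hold \emph{automatically} at $m$-a.e.\ $x$ for any $\tilde E(x)\in\Gr_k(V)$ with $\tilde E(x)\cap F_{i+1}^B(x)=\{0\}$ and $\tilde E(x)\cap F_{i'+1}^V(x)=\{0\}$. So the Claim reduces to building a measurable family $x\mapsto\tilde E(x)\in\Gr_k(V)$ that (i) complements $\tilde F(x)=F_{i+1}^B(x)$ in $B$ with $\|\pi_{\tilde E(x)//\tilde F(x)}\|_B$ bounded by a constant depending only on $k$, and (ii) meets $F_{i'+1}^V(x)$ trivially.

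For the construction I would start from Lemma \ref{lem:measSelectCompl}, which already gives a measurable $E_0(\cdot)\in\Gr_k(B)$ complementing $\tilde F(\cdot)$ in $B$ with uniformly bounded $B$-projection, and then perturb $E_0(x)$ into $V$ while simultaneously imposing transversality to $F_{i'+1}^V(x)$. The geometric facts I would invoke are: (1) complementation‑with‑bounded‑projection defines an open subset of $\Gr_k(V)$, since $d^V_{Haus}$-small perturbations are $d^B_{Haus}$-small by Lemma \ref{lem:distComparison} (using that $\alpha$ is locally bounded, Lemma \ref{lem:equivNorm}), and the projection perturbation estimate (Proposition 2.7 of \cite{blumenthal2015volume}, used already in the proof of Lemma \ref{lem:measSelectCompl}) keeps the projection norm controlled; (2) $\{E\in\Gr_k(V): E\cap F_{i'+1}^V(x)=\{0\}\}$ is open and dense in $\Gr_k(V)$, because $\codim_V F_{i'+1}^V(x)=M_{i'}^V\ge k$ makes its complement nowhere dense and because $\angle^V_{\rm min}(E,F)>0$ persists under small perturbations of both $E$ and $F$; and (3) density of $(V,\|\cdot\|_V)$, so $E_0(x)$ can be approximated in $d^B_{Haus}$ by $k$-dimensional subspaces of $V$. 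Combining (1)--(3), for a.e.\ $x$ the set of $E\in\Gr_k(V)$ satisfying (i) and (ii) is nonempty and open (a nonempty open set meeting a dense open set).

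To make the selection measurable I would fix a countable $\{w_n\}_{n\ge1}\subset V$ dense in $(V,\|\cdot\|_V)$ (hence dense in $B$), so spans of $k$-tuples from $\{w_n\}$ are $d^V_{Haus}$-dense in $\Gr_k(V)$. Call a $k$-tuple $\vec n$ \emph{$x$-admissible} if $\operatorname{span}(w_{n_1},\dots,w_{n_k})$ is $k$-dimensional and satisfies (i) (with the projection bound relaxed by a fixed factor) and (ii), and define $\tilde E(x)$ to be the span of the first $x$-admissible tuple in a fixed enumeration of all $k$-tuples. An $x$-admissible tuple exists for a.e.\ $x$ by the previous paragraph plus density of $\{w_n\}$ and openness of conditions (i)--(ii); for each fixed tuple, $\{x : \vec n \text{ is } x\text{-admissible}\}$ is measurable, being cut out by continuous operations applied to the measurably varying subspaces $F_{i+1}^B(\cdot)$ and $F_{i'+1}^V(\cdot)$ (cf.\ the measurability lemmas recorded in Section \ref{subse:cmainProp2} and Appendices A--B of \cite{gonzalez2014semi}); hence $x\mapsto\tilde E(x)$ is measurable. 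On each level set of the ``first admissible index'' function $\tilde E(\cdot)$ equals a \emph{constant} subspace of $V$ while $\tilde F(\cdot)$ varies measurably, so $x\mapsto\pi_{\tilde E(x)//\tilde F(x)}$ is strongly measurable by Lemma B.18 of \cite{gonzalez2014semi}, exactly as in Lemma \ref{lem:measSelectCompl}. This $\tilde E(\cdot)$ satisfies (i), (ii), hence (a)--(c) of the Claim.

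The main obstacle I anticipate is not a single computation but the bookkeeping around condition (ii): one must identify correctly that transversality to $F_{i'+1}^V(x)$, for the index $i'$ pinned down in the first step, is exactly what makes $\det{}_V(A^n_x|\tilde E(x))$ grow at rate $\Sigma_{M_i^B}^V$, and one must know such an $i'$ exists — which is precisely where the a priori bound $\Sigma_{M_i^B}^V\ge\Sigma_{M_i^B}^B>-\infty$ from \eqref{eq:uncondVolBd} is essential. Once that is settled, the rest (openness and stability of the geometric conditions, the enumeration argument, strong measurability of the oblique projectors) runs closely parallel to the proof of Lemma \ref{lem:measSelectCompl} and is routine.
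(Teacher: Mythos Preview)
Your proposal is correct and follows essentially the same strategy as the paper: identify the $V$-index $j$ (your $i'$) with $M_{j-1}^V < k \le M_j^V$, then construct a measurable $\tilde E(\cdot) \subset V$ that complements $\tilde F(\cdot)$ in $B$ with uniformly bounded projection \emph{and} is transverse to $F_{j+1}^V(\cdot)$, so that Proposition \ref{prop:volGrowAndLE}(b) gives both growth identities in \eqref{eq:correctGrowth12}. Two execution differences are worth noting. First, the paper treats the case $r^V<\infty$, $M_{r^V}^V<k$ separately, whereas you correctly observe it is vacuous: \eqref{eq:uncondVolBd} and $\Sigma_{M_i^B}^B>-\infty$ force $\Sigma_{M_i^B}^V>-\infty$, which is impossible if $k>M_{r^V}^V$. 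Second, the paper builds the selection via countable dense sequences in $\Gr^{M_j^V}(V)$ and $\Gr^{M_i^B}(B)$ and invokes the simultaneous-complement Lemma \ref{lem:simultComplement} to hit both transversality conditions at once; your ``first admissible $k$-tuple from a dense $\{w_n\}\subset V$'' argument achieves the same end more directly, using instead that condition (i) is $d^V_{Haus}$-open (via Lemmas \ref{lem:equivNorm}, \ref{lem:distComparison}) and condition (ii) is open and dense. Both routes deliver a uniform bound on $\|\tilde\pi^\perp_x\|_B$ depending only on $k$, which is all the downstream argument needs; neither actually achieves the specific constant $C_k=\sqrt{k}+1$ in the Claim as stated (the paper's own Lemma \ref{lem:measSelectCompl} already gives $3\sqrt k+2$), but this is immaterial.
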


The proof requires the following. 
\begin{lemma}\label{lem:simultComplement}
For all $m, k \geq 1$ there exists $C_{m, k} > 0$ such that the following holds. Let $\{ V_i\}_{i = 1}^m \subset \Gr^k(B)$. Then, there exists a common complement $E \in \Gr_k(B)$ to each of the $V_i, 1 \leq i \leq m$, such that $\| \pi_{E // V_i} \|_B = \sin \angle_{\rm min}^B(E, V_i)^{-1} \leq C_{m, k}$. 
\end{lemma}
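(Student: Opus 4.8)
The plan is to reduce the statement to a finite-dimensional problem and then finish by a compactness argument. The point is that although $B$ may be infinite-dimensional and its Grassmannian non-compact, the common kernel $W := \bigcap_{i=1}^m V_i$ has finite codimension $d := \codim_B W \le mk$, so the whole question lives inside a $d$-dimensional subspace. Throughout I would use the elementary identity $\|\pi_{E//F}\|_B = \sin\angle^B_{\rm min}(E,F)^{-1}$ for finite-dimensional $E$ complementary to a closed $F$: the bound ``$\le$'' comes from writing $v = u+w$ with $u\in E$, $w\in F$ and using $\|v\|_B \ge \|u\|_B\sin\angle^B(u,F) \ge \|u\|_B\sin\angle^B_{\rm min}(E,F)$, and ``$\ge$'' from testing $\pi_{E//F}$ on $u_*-w_*$ for a near-minimizing unit vector $u_*\in E$ and a near-best approximant $w_*\in F$.

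For the reduction, I would pick, via Lemma \ref{lem:boundedComplement}, a complement $Z\in\Gr_d(B)$ to $W$ with $\|\pi_{Z//W}\|_B \le \sqrt d$. Since $W\subseteq V_i$ one checks that $V_i = W\oplus(V_i\cap Z)$ with $\dim(V_i\cap Z)=d-k$, and that any $k$-dimensional $E\subseteq Z$ complementary to $V_i\cap Z$ inside $Z$ is automatically a complement to $V_i$ in $B$, with $\pi^B_{E//V_i}=\pi^Z_{E//(V_i\cap Z)}\circ\pi^B_{Z//W}$ and hence $\|\pi^B_{E//V_i}\|_B \le \sqrt d\,\|\pi^Z_{E//(V_i\cap Z)}\|_B$. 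Thus it suffices to produce, in the $d$-dimensional normed space $(Z,\|\cdot\|_B)$, a single $k$-dimensional $E$ complementary to each of the $m$ subspaces $V_i\cap Z$ with oblique-projection norms bounded by a constant depending only on $d$ and $m$.

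For the finite-dimensional core, John's theorem supplies a Euclidean norm $|\cdot|$ on $Z$ with $|z|\le\|z\|_B\le\sqrt d\,|z|$, so a $|\cdot|$-projection bound $C_0$ upgrades to a $\|\cdot\|_B$-bound $\sqrt d\,C_0$, and it is enough to work in Euclidean $\R^d$. There I would argue by compactness: on the compact space $(\Gr^k(\R^d))^m$ define $f(V_1',\dots,V_m') := \max_{E\in\Gr_k(\R^d)}\min_{1\le i\le m}\sin\angle_{\rm min}(E,V_i')$. The map $(E,V')\mapsto\sin\angle_{\rm min}(E,V')$ is continuous and $\Gr_k(\R^d)$ is compact, so $f$ is continuous; and $f>0$ at every point, because the finite intersection of the open dense sets $\{E: E\cap V_i'=\{0\}\}$ is nonempty and any $E$ in it satisfies $\min_i\sin\angle_{\rm min}(E,V_i')>0$, forcing $f>0$. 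Hence $f\ge c_{d,k,m}>0$ on the whole compact domain, so for any configuration one selects $E$ attaining the max and obtains $\|\pi_{E//V_i'}\|_{|\cdot|}=\sin\angle_{\rm min}(E,V_i')^{-1}\le c_{d,k,m}^{-1}$ for all $i$. Tracing back through the two reductions yields the lemma with $C_{m,k}$ a constant depending only on $d\le mk$.

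I expect the real content to be the reduction step, namely arranging that the final constant depends only on $d$ (equivalently on $m,k$) and not on the geometry of $B$; this is exactly what passing to $W=\bigcap_i V_i$ and then to John position for $Z$ accomplishes, after which only a routine genericity-and-compactness argument in a fixed $\R^d$ remains. The genericity input --- that a generic $k$-plane meets a fixed codimension-$k$ plane only at $0$ --- is classical; alternatively one could make it quantitative by bounding the Haar measure of the $\theta$-nearly-degenerate $k$-planes and taking a union bound over $i=1,\dots,m$, which likewise produces an explicit $C_{m,k}$.
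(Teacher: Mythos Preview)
Your proof is correct and takes a genuinely different route from the paper's. The paper argues by induction on $k$: the base case $k=1$ is cited from an external reference, and the inductive step extends each $V_i$ to a codimension-$k$ space $\hat V_i$, applies the hypothesis to obtain a $k$-dimensional $E$, forms the hyperplanes $V_i' = E + V_i$, uses the base case to find a unit vector $v$ far from all the $V_i'$, and sets $E' = E + \langle v\rangle$; a short projection-norm computation then yields the explicit recursion $C_{m,k+1} = C_{m,k}C_{m,1} + C_{m,1}$. Your approach instead intersects the $V_i$ to collapse the problem into a fixed $d$-dimensional complement $Z$ with $d\le mk$, invokes John's ellipsoid to make the norm Euclidean up to $\sqrt d$, and finishes by compactness on $(\Gr^k(\R^d))^m$. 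The paper's argument buys an explicit recursive constant and stays entirely inside the Banach-space framework, while yours makes it conceptually transparent why the constant depends only on $(m,k)$---everything lives in $\R^{mk}$---at the price of a non-explicit compactness constant. One small wording point: your final constant a priori depends on the particular $d\in\{k,\dots,mk\}$ realized by the given $V_i$'s, so to obtain a single $C_{m,k}$ you should take the maximum over this finite range (which you implicitly acknowledge but do not quite say).
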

\begin{proof}[Proof of Lemma \ref{lem:simultComplement}]
We induct on the codimension $k$. The base case $k = 1$ is Corollary 2.5 in \cite{noethen2022well}. Assume the induction hypothesis for $k$ and fix $\{ V_1, \cdots, V_m\}$ of codimension $k + 1$. For each $i \leq m$, let $\hat V_i$ be an arbitrary extension of $V_i$ to a $k$-codimensional space. Using the induction hypothesis, fix $E \in \Gr_k(B)$ such that $\sin \angle_{\rm min}^B(E, V_i) \geq \sin \angle_{\rm min}^B(E, \hat V_i) \geq C_{m, k}^{-1}$. 
Define the hyperplanes $V_i' = E + V_i$ and, using the base case $k = 1$ let $v \in B$ be a unit vector with $\sin \angle^B(v, V_i') \geq C_{m, 1}^{-1}$ for all $i \leq m$. 
Let us now bound $\sin \angle_{\rm min}^B(E', V_i)$ from below, where $E' := E + \langle v \rangle$ and $\langle v \rangle$ is the line spanned by $v$. 
We compute:
\begin{align*}
\pi_{E' // V_i} & = \pi_{E // V_i \oplus \langle v \rangle} + \pi_{\langle v \rangle // E \oplus V_i}\\
& = \pi_{E // V_i}|_{V_i'} \circ \pi_{V_i' // \langle v \rangle} + 
\pi_{\langle v \rangle // V_i'} \\
\implies \| \pi_{E' // V_i} \|_B & \leq C_{m, k} C_{m, 1} + C_{m, 1} =: C_{m, k + 1} \,. 
\end{align*}
\end{proof}
\begin{proof}[Proof of Claim]
The proof is parallel to that of Lemma \ref{lem:measSelectCompl}.  
Assume for now that either $r^V = \infty$ or $r^V < \infty$ and $M_{r^V}^V \geq M_i^B$; we address the alternative case at the end. 
Fix $j \leq r^V$ so that $M_{j-1}^V < M_i^B \leq M_j^V$ (following the convention $M_0^V = 0$). 
For short, set $k := M_i^B, \ell := M_j^V$ and $F(x) := F_{j + 1}^V(x)$. 
 
 Set $F(x) := F_{j+1}^V(x) \subset V$. Let $(F_m), (\tilde F_n)$ denote countable dense sequences in $\Gr^{M_j^V}(V), \Gr^{M_i^B}(B)$, respectively. Set $\epsilon = \frac{1}{2 (C_{2, k} + 1)}$, with $C_{2, k}$ as in Lemma \ref{lem:simultComplement}, and 
\[
\tilde {S}_{m, n} = \{ x \in X : d_{Haus}^V (F(x) , F_m) < \epsilon \, \text{ and } \, 
d_{Haus}^B(\tilde F(x), \tilde F_n) < \epsilon \} \,. 
\]
Refine to a partition $S_{m, n}, m, n \geq 1$ of $X$ such that 
$S_{m, n} \subset \tilde S_{m, n}$ for all $m, n$. 

Form a $B$-closed $k$-codimensional extension $F_m'$ of $F_m$. 
Apply Lemma \ref{lem:simultComplement} to obtain a complement $\tilde E_{m, n}' \in \Gr_k(B)$ to both $\tilde F_n, F_m'$ with 
\[
\sin \angle_{\rm min}^B(\tilde E_{m, n}' , F_m) \geq \sin\angle_{\rm min}^B(\tilde E_{m, n}' , F_m')  \geq C_{2, k}^{-1} \, ,  \quad \sin\angle_{\rm min}^B(\tilde E_{m, n}' , \tilde F_n) \geq C_{2, k}^{-1} \,. 
\]
Finally, using density of $V \subset B$, fix $\tilde E_{m, n} \in \Gr_k(V)$ so that 
$\sin \angle_{\rm min}^B(\tilde E_{m, n}, F_m) \geq (C_{2, k} + 1)^{-1}$ and 
$\sin \angle_{\rm min}^B(\tilde E_{m, n}' , \tilde F_n) \geq (C_{2, k} + 1)^{-1}$. This step can be justified using, e.g., continuity of $E \mapsto \pi_{E // F}$ as $E$ ranges over the set of complements to $F\in \Gr^k(B)$ (Lemma B.18 in \cite{gonzalez2014semi}). 

One now sets
\[
\tilde E(x) = E_{m, n} \quad \text{ for } x \in S_{m, n} \,. 
\]
the estimate on the $B$-norm of $\pi^\perp_x = \pi_{\tilde E(x) //\tilde F(x)}$ 
is completely parallel to that in Lemma \ref{lem:measSelectCompl} and is omitted. 
That $\tilde E(x) \cap F(x) = \{ 0 \}$ follows from a similar argument. 

The proof is now complete when either $r^V = \infty$ or $r^V < \infty$ and $M_{r^V}^V \geq M_i^B$. When $r^V < \infty$ and $M_{r^V} < M_i^B$ the proof is simpler: every $E_0 \in \Gr_k(V)$ satisfies the right-hand equation in \eqref{eq:correctGrowth12} by Proposition \ref{prop:volGrowAndLE}(b). So, in this case it suffices to apply Lemma \ref{lem:measSelectCompl} directly to obtain a complement $\tilde E(x)$ to $\tilde F(x)$.
\end{proof}

Form the quotient cocycle
\[
\tilde A_x := \tilde \pi^\perp_{T x} A_x |_{\tilde E(x)} \, , \quad \tilde A^n_x = \tilde A_{T^{n-1} x} \circ \cdots \circ \tilde A_x \, , 
\]
noting as before that $\tilde A^n_x = \tilde \pi^\perp_{T^n x} A^n_x|_{\tilde E(x)}$ by invariance of $\tilde F(x)$ (equation \eqref{eq:equivariant}). The proof is now largely the same as before with opposite signs: one checks that 
\[
\Sigma^B_{M_i^B} = \lim_{n \to \infty} \frac1n \log \det{}_B(\tilde A^n_x) 
\]
as in Lemma \ref{lem:quotientProjVoLGrowh} (no changes needed), and estimates
\begin{align}
\Sigma^B_{M_i^B} & = \lim_{n \to \infty} \frac1n \log \det{}_B(\tilde A^n_x) \\
& \geq \limsup_{n \to \infty} \frac{1}{n+1} \log \det{}_V(\tilde A^{n+1}_x) -  \liminf_{n \to \infty} \frac{M_i^B}{n+1} \log \alpha(E(T^{n+1}))  
\end{align}
By a recurrence argument parallel to before, the $\liminf$ term is 0 for a positive $m$-measure set of $x \in X$, while the $\limsup$ term is bounded by
\begin{align}
\det{}_V(\tilde A^{n+1}_x) &= \det{}_V(\tilde A_{T^n x}) \det{}_V(A^n_x|E(x)) \\
& \geq \left( \alpha(\tilde E(T^{n } x))^{-1}  \mathfrak m_B(A_{T^n x} |_{\tilde E(T^n x)} ) \right)^{M_i^B} \det{}_V(A^n_x|E(x)) \, , 
\end{align}
using the analogue of \eqref{eq:formQuotCocycle} for $\tilde A^{n+1}_x$ in the first line, and Lemma \ref{lem:detProps}(b) and Proposition \ref{prop:detComparisonBV} in the second line. 
By another recurrence argument, we can bound the above parenthetical 
term from below by a fixed positive constant along an infinite sequence of times $n_k \to \infty$ for an $m$-positive measure set of $x$. Overall, we conclude
$\Sigma^B_{M_i^B} \geq \lim_{n \to \infty} \frac1n \log \det{}_V(A^n_x | \tilde E(x)) = \Sigma^V_{M_i^V}$.

\subsection{Completing the proof of Theorem \ref{thm:allEqualLE}}\label{subsec:proofOfEqualLE}

To prove Theorem \ref{thm:allEqualLE} we first establish the following claims. 

\begin{claim}
If $r^V = 0$, then $r^B = 0$. 
\end{claim}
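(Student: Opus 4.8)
The plan is to argue by contraposition: I will show that if $r^B \geq 1$, then $r^V \geq 1$. Recall that $r^W \geq 1$ means scenario (b) or (c) of Section \ref{subsubsec:LE2} holds for the cocycle on $W$, i.e., there is at least one finite Lyapunov exponent $\lambda_1^W > -\infty$; equivalently, the top exponent $\lambda_1^W$ is strictly greater than $-\infty$. In terms of volume growth, $r^B \geq 1$ is equivalent to $\Sigma_1^B = \chi_1^B = \lambda_1^B > -\infty$, since $\chi_1^B$ is precisely the top exponent counted with multiplicity. So the claim reduces to the implication $\Sigma_1^B > -\infty \implies \Sigma_1^V > -\infty$.

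The key step is to invoke the unconditional inequality \eqref{eq:uncondVolBd} established in the proof of Proposition \ref{prop:mainProof}, namely $\Sigma_k^B \leq \Sigma_k^V$ for all $k \geq 1$. Taking $k = 1$ gives $\Sigma_1^B \leq \Sigma_1^V$. Hence if $r^B \geq 1$, so that $\Sigma_1^B > -\infty$, then $\Sigma_1^V \geq \Sigma_1^B > -\infty$, which forces $r^V \geq 1$. Contrapositively, $r^V = 0$ implies $r^B = 0$, as claimed. I should be slightly careful to recall why $r^W = 0$ is equivalent to $\Sigma_1^W = -\infty$: by the conventions in Section \ref{subsubsec:LE2}, scenario (a) is exactly the case $\lambda_1^W = -\infty$, and $\lambda_1^W = \chi_1^W = \Sigma_1^W$ by definition of the exponents counted with multiplicity (with the convention $\chi_j^W = -\infty$ for all $j$ when $r^W = 0$). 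Conversely, if $r^W \geq 1$ then $\lambda_1^W \in \R$, so $\Sigma_1^W > -\infty$.

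I do not anticipate a real obstacle here — the entire content is already contained in \eqref{eq:uncondVolBd}, which was proven without any assumption on $r^B$ or $r^V$. The only thing to watch is bookkeeping with the $-\infty$ conventions so that the chain of equivalences $r^W = 0 \iff \Sigma_1^W = -\infty$ is spelled out cleanly; this is routine. One alternative, slightly more self-contained route that avoids even citing \eqref{eq:uncondVolBd} would be to argue directly at the level of a single vector: pick $v \in V \setminus \{0\}$ realizing $\lambda_1^B$ (possible by density of $V$ in $B$ together with the fact that $B \setminus F_2^B$ is open and dense, so it meets $V$), and then use $\|A_x^n v\|_B \leq \|A_x^n v\|_V$ to get $\lambda^B(x,v) \leq \lambda^V(x,v) \leq \lambda_1^V$; but invoking \eqref{eq:uncondVolBd} is cleaner and keeps the argument uniform with the rest of Section \ref{subsec:proofOfEqualLE}, so that is the route I would take.
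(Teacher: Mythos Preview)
Your argument is correct. Both routes you describe work, and amusingly the paper takes precisely the ``alternative'' you sketch at the end rather than the $\Sigma_1^B \leq \Sigma_1^V$ route you prefer.

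The paper argues directly: assuming $\lambda_1^B > -\infty$, by density of $V$ in $B$ one picks $v \in V \setminus \{0\}$ with $\frac1n \log \|A^n_x v\|_B \to \lambda_1^B$, and then the chain
\[
\liminf_n \tfrac1n \log \|A^n_x\|_V \geq \liminf_n \tfrac1n \log \|A^n_x v\|_V \geq \lim_n \tfrac1n \log \|A^n_x v\|_B = \lambda_1^B > -\infty
\]
contradicts $r^V = 0$. Your main proposal instead invokes the already-established inequality \eqref{eq:uncondVolBd} with $k=1$, reducing the claim to one line. The tradeoff: your version reuses the determinant and volume-growth machinery from Section \ref{subse:cmainProp2}, which is economical but makes the claim look less elementary than it is; the paper's version is self-contained and shows that this particular implication needs nothing more than density of $V$ and the norm comparison $\|\cdot\|_B \leq \|\cdot\|_V$. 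Either is perfectly acceptable here.
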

\begin{proof}[Proof of Claim]
Pursuing a contradiction, observe that if
$\lambda_1^B > -\infty$, then by the density of $V \subset B$, for $m$-a.e. $x$ there exists $v \in V \setminus \{ 0\}, \| v \|_V = 1$ so that $\frac1n \log\|A^n_x v\|_B \to \lambda_1^B$ as $n \to \infty$. Therefore,
\[
\liminf_n \frac1n \log\|A^n_x\|_V \geq \liminf_n \frac1n \log\|A^n_x v\|_V \geq
\lim_n \frac1n \log\|A^n_x v\|_B = \lambda_1^B > -\infty \,. 
\]
That $r^V = 0$ implies the LHS limit exists for $m$-a.e. $x$ and equals $-\infty$, a contradiction. 
\end{proof}

\begin{claim}\label{cla:LEequal} \
\begin{itemize}
\item[(a)] For all $i \geq 1$, we have that $r^V \geq i$ if and only if $r^B \geq i$. 
\item[(b)] If $r^V \geq i$, then $\lambda_j^V = \lambda_j^B$ and $d_j^V = d_j^B$ for all $1 \leq j \leq i$. 
\end{itemize}
\end{claim}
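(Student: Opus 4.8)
The plan is to deduce Claim~\ref{cla:LEequal} from Proposition~\ref{prop:mainProof} by induction on $i$, exploiting the unconditional bound \eqref{eq:uncondVolBd} together with the concavity of the partial-sum sequences $k \mapsto \Sigma^W_k$, $W \in \{B,V\}$. Recall that the increments $\chi^W_k = \Sigma^W_k - \Sigma^W_{k-1}$ (with $\Sigma^W_0 := 0$) are non-increasing in $k$, are equal to $\lambda^W_j$ for $M^W_{j-1} < k \le M^W_j$, and equal $-\infty$ once $k > M^W_{r^W}$ in the case $r^W < \infty$; since $\lambda^W_1 > \lambda^W_2 > \cdots$ strictly, $\Sigma^W$ is ``strictly concave across'' each breakpoint $M^W_j$. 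I would run the induction with the hypothesis at stage $i \ge 0$: \emph{$r^V \ge i \iff r^B \ge i$, and whenever $r^V \ge i$ one has $\lambda^V_j = \lambda^B_j$ and $d^V_j = d^B_j$ for all $j \le i$} --- so in particular $M^V_j = M^B_j$ for $j \le i$, and (by Proposition~\ref{prop:mainProof}(a)) $\Sigma^V_{M^V_i} = \Sigma^B_{M^V_i}$, a finite number. The base case $i = 0$ is vacuous; note the step $i=0 \to 1$ reproves the unlabelled Claim stated just before \ref{cla:LEequal}.

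For the inductive step, suppose first $r^V \ge i+1$, so $r^V \ge i$ and the hypothesis gives $r^B \ge i$, $M^V_i = M^B_i =: m$, $\Sigma^V_m = \Sigma^B_m$ finite. Proposition~\ref{prop:mainProof}(a) with $i+1$ gives $\Sigma^V_{M^V_{i+1}} = \Sigma^B_{M^V_{i+1}}$, hence
\[
\sum_{k=m+1}^{M^V_{i+1}} \chi^B_k \;=\; \Sigma^B_{M^V_{i+1}} - \Sigma^B_m \;=\; \Sigma^V_{M^V_{i+1}} - \Sigma^V_m \;=\; d^V_{i+1}\,\lambda^V_{i+1}.
\]
On the other hand \eqref{eq:uncondVolBd} at $k=m+1$, with $\Sigma^B_m = \Sigma^V_m$, gives $\chi^B_{m+1} \le \lambda^V_{i+1}$, so $\chi^B_k \le \chi^B_{m+1} \le \lambda^V_{i+1}$ for all $k > m$ by monotonicity. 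A sum of $d^V_{i+1}$ terms, each $\le \lambda^V_{i+1}$, that equals $d^V_{i+1}\lambda^V_{i+1}$ must have every term equal to $\lambda^V_{i+1}$ (in particular every term is finite); thus $\chi^B_k = \lambda^V_{i+1}$ for $m < k \le M^V_{i+1}$. In particular $\chi^B_{m+1} > -\infty$, so $r^B \ge i+1$, and reading off breakpoints, $\lambda^B_{i+1} = \chi^B_{m+1} = \lambda^V_{i+1}$ and $M^B_{i+1} \ge M^V_{i+1}$. For the reverse inequality on codimensions, now that $r^B \ge i+1$ we may apply Proposition~\ref{prop:mainProof}(b) with $i+1$: $\Sigma^V_{M^B_{i+1}} = \Sigma^B_{M^B_{i+1}} = \Sigma^B_m + d^B_{i+1}\lambda^B_{i+1}$. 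Were $M^B_{i+1} > M^V_{i+1}$, then every $\chi^V_k$ with $M^V_{i+1} < k \le M^B_{i+1}$ is $\le \lambda^V_{i+2} < \lambda^V_{i+1}$ (if $r^V \ge i+2$) or $=-\infty$ (if $r^V = i+1$), so
\[
\Sigma^V_{M^B_{i+1}} \;=\; \Sigma^V_m + d^V_{i+1}\lambda^V_{i+1} + \sum_{k=M^V_{i+1}+1}^{M^B_{i+1}} \chi^V_k \;<\; \Sigma^B_m + d^B_{i+1}\lambda^V_{i+1},
\]
contradicting the identity above. Hence $M^B_{i+1} = M^V_{i+1}$, i.e.\ $d^B_{i+1} = d^V_{i+1}$.

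It remains to handle the case $r^B \ge i+1$, after which the previous paragraph supplies the equalities of exponents and multiplicities. Then $r^B \ge i$, so the hypothesis gives $r^V \ge i$, $M^V_i = M^B_i =: m$, $\Sigma^V_m = \Sigma^B_m$ finite; Proposition~\ref{prop:mainProof}(b) with $i+1$ yields $\Sigma^V_{M^B_{i+1}} = \Sigma^B_{M^B_{i+1}} = \Sigma^B_m + d^B_{i+1}\lambda^B_{i+1} > -\infty$. Since $M^B_{i+1} > m$, finiteness of $\Sigma^V_{M^B_{i+1}}$ forces $\chi^V_{m+1} > -\infty$, i.e.\ $r^V \ge i+1$. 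This closes the induction, and Claim~\ref{cla:LEequal} follows.

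I expect the only real subtlety to be organizational rather than analytic: one must keep careful track of the $\{-\infty\}$ conventions, and --- the key point --- order the induction so that Proposition~\ref{prop:mainProof}(b) (valid only for $i \le r^B$) and Proposition~\ref{prop:mainProof}(a) (valid only for $i \le r^V$) are each invoked only after the current step has verified the relevant inequality. Conceptually, though, the argument is simply that two concave integer sequences, one ($\Sigma^B$) everywhere dominated by the other ($\Sigma^V$) and agreeing with it at each other's breakpoints, must coincide up to and including those breakpoints.
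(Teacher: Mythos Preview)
Your argument is correct and follows essentially the same route as the paper's: both proceed by induction on $i$, alternately invoking Proposition~\ref{prop:mainProof}(a) and (b) together with the strict monotonicity of the $\lambda_j^W$'s to pin down the exponents and multiplicities. The only (minor, cosmetic) difference is that the paper splits into the cases $M_{i+1}^B \ge M_{i+1}^V$ versus $M_{i+1}^B < M_{i+1}^V$, whereas you use the unconditional bound \eqref{eq:uncondVolBd} at $k=m+1$ to directly force $\chi^B_{m+1} \le \lambda^V_{i+1}$ and hence $M^B_{i+1}\ge M^V_{i+1}$ without a case split; this slightly streamlines the presentation but is otherwise the same idea.
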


\begin{proof}[Proof of Claim \ref{cla:LEequal}]
We will prove below, by induction on $i$, that $r^V \geq i$ implies $r^B \geq i$ and 
$\lambda_j^V = \lambda_j^B, d_j^V = d_j^B$ for all $1 \leq j \leq i$. 
The proof that $r^B \geq i$ implies $r^V \geq i$ is identical on exchanging the roles
of $V$ and $B$ below; further details are omitted. 

 Assume first that $r^V \geq 1$: we will show that 
$r^B \geq 1, \lambda_1^B = \lambda_1^V$ and $d_1^B = d_1^V$. 
To start, by Proposition \ref{prop:mainProof}(a) we have
\begin{align}\label{eq:m1V}
\Sigma_{d_1^V}^V = \Sigma_{d_1^V}^B \, , 
\end{align}
 hence $\Sigma_{d_1^V}^B > -\infty$ and $r^B \geq 1$. Applying Proposition \ref{prop:mainProof}(b), we see that
 \begin{align} \label{eq:m1B}
 \Sigma_{d_1^B}^V = \Sigma_{d_1^B}^B \,. 
 \end{align}
 To proceed, assume $d_1^B \geq d_1^V$. 
Then, \eqref{eq:m1V} implies $d_1^V \lambda_1^V = d_1^V \lambda_1^B$, hence $\lambda_1^V = \lambda_1^B = \lambda_1$, while combining with \eqref{eq:m1B} gives
\[
(d_1^B - d_1^V) \lambda_1 = \chi_{d_1^V + 1}^V + \cdots + \chi_{d_1^B}^V \, . 
\]
However, if $d_1^B > d_1^V$ this gives a contradiction since $\chi_j < \lambda_1 = \lambda_1^V$ for all $j > d_1^V$. One can similarly rule out the case $d_1^B < d_1^V$; we conclude $\lambda_1^B = \lambda_1^V$ and $d_1^B = d_1^V$.

Assume now the induction hypothesis that for some $i \geq 1$, we have that $r^V \geq i$ implies $r^B \geq i$ and 
$\lambda_j^B = \lambda_j^V, d_j^B = d_j^V$ for all $j = 1, \cdots, i$. 
If $r^V < i + 1$, there is nothing to prove. If $r^V \geq i + 1$, we proceed: Proposition \ref{prop:mainProof}(a) implies 
\begin{align}\label{eq:Mi1V}
\Sigma^V_{M_{i+1}^V} = \Sigma_{M_{i + 1}^V}^B \, ;
\end{align}
 since $M_i^B = M_i^V$, it follows that $r^B \geq i + 1$, hence by Proposition \ref{prop:mainProof}(b) we have
\begin{align}\label{eq:Mi1B}
\Sigma^V_{M_{i+1}^B} = \Sigma_{M_{i + 1}^B}^B \, .
\end{align}
If $M_{i+1}^B \geq M_{i+1}^V$, then \eqref{eq:Mi1V} implies $d_{i+1}^V \lambda_{i + 1}^V = d_{i + 1}^V \lambda_{i + 1}^B$, hence $\lambda_{i+1}^V = \lambda_{i +1}^B = \lambda_{i+1}$. Applying this to \eqref{eq:Mi1B}, we obtain
\[
(d_{i+1}^B - d_{i+1}^V) \lambda_{i + 1} = \chi^V_{M_i^V + 1} + \cdots + \chi^V_{M_i^B} \,. 
\]
If $d_{i +1}^B > d_{i+1}^V$ or $d_{i + 1}^V > d_{i + 1}^B$, then as before we obtain a contradiction, and conclude $d_{i+1}^V = d_{i+1}^B =: d_{i +1}$. 
\end{proof}

\begin{proof}[Completing the proof of Theorem \ref{thm:allEqualLE}]
We have already shown that if $r^V = 0$, then $r^B = 0$ and the proof is complete. 
We now consider the case $r^V = \infty$; the case $0 < r^V <\infty$ is handled similarly and is omitted. 

In this case, Claim \ref{cla:LEequal} implies $r^B = \infty$ and 
\[
\lambda^V_i = \lambda^B_i =:  \lambda_i \quad d_i^V = d_i^B =: d_i 
\]
 for all $i \geq 1$. It remains to check that the identity
\[
F_i^V(x) = F_i^B(x) \cap V
\]
holds for a.e. $x \in X$. To start, assume $v \in F_i^V(x) \setminus \{ 0 \}$. Then, 
\[
\lambda_i \geq \lim_{n \to \infty} \frac1n \log \| A^n_x v\|_V \geq \limsup_{n \to \infty} \frac1n \log \| A^n_x v\|_B \, , 
\]
hence $v \in F_i^B$. We conclude $F_i^V(x) \subset F_i^B(x) \cap V$. 

For the opposite inclusion, assume $v \in V \setminus F_i^V(x)$; we will show $v \in F_i^B(x)$. For this, let $E \subset V$ be a shared $M_{i-1}$-dimensional complement to both of $F_i^V(x)$ and $F_i^B(x)$, and write $v = v_\perp + v_\| $ where $v_\| \in F_i^V(x), v_\perp \in E$, noting that $v \notin F_i^V(x) \implies v_\perp \neq 0$. Since $v_\perp \in E$, it holds that $v_\perp \notin F_i^B(x)$, and so $\lim_{n\to\infty} \frac1n \log \| A^n_x v_\perp\|_B \geq \lambda_{i-1}$. Meanwhile, 
$\lim_{n \to \infty} \frac1n \log \| A^n_x v_\|\|_B \leq \lim_{n \to \infty} \frac1n \log \| A^n_x v_\|\|_V \leq \lambda_i$, and so
\[
\lim_{n \to \infty} \frac1n \log \| A^n_x v\|_B \geq \max\left\{  \lim_{n \to \infty} \frac1n \log \|A^n_x v_\perp\|_B , \lim_{n \to \infty} \frac1n \log \| A^n_x v_\|\|_B \right\} \geq \lambda_{i-1} \,, 
\]
hence $v \notin F_i^B(x)$. This completes the proof. 
\end{proof}

{\begin{remark}[The nonergodic case]\label{rmk:nonergCase3}
We provide here a list of changes needed in the case when 
$(T, m)$ is not ergodic. In the following steps, the main difficulty is to deal
 with the possibility that
the values $r^W, \lambda_i^W, d_i^W$, etc., all depend on $x \in X$. 

\begin{enumerate}

	\item To start, we can reduce to the case where $r^V, r^B$ are both constant in $x$ by restricting the measure $m$ to sets of the form  
\[
\mathcal S = \{x \in X : r^V(x) = k^V, r^B(x) = k^B\}
\]
for arbitrary pairs $k^B, k^V \in \{ 0, 1, 2, \dots, \infty\}$. Sets of this form are $T$-invariant ($T^{-1} \Sc = \Sc$ up to $m$-measure zero sets) and so the restrictions $m_\Sc(K) := m(K \cap \Sc) / m(\Sc)$ are $T$-invariant. Since there are at-most countably many such sets $\Sc$, it suffices to prove Theorem \ref{thm:allEqualLE} for each $m_\Sc$ separately. 
	\item Suppose one has already restricted to a set of the form $\Sc$ for some fixed $k^B, k^V$. The volume rates $\Sigma^W_k$ and multiplicities $M_i^W$ are now functions of $X$, and so the analogue of Proposition \ref{prop:mainProof} is to show that
\begin{gather*}
	k^V \geq i \quad \implies \quad \Sigma_{M^V_i(x)}^V(x) = \Sigma_{M^V_i(x)}^B(x) \, , \\
	k^B \geq i \quad \implies \quad \Sigma_{M^B_i(x)}^B(x) = \Sigma_{M^B_i(x)}^V(x) \,. 
	\end{gather*}
As in the ergodic case, one starts by showing that $k^V \geq i$ allows to construct a cocycle $\hat A^n_x, x \in \mathcal S,$ quotienting along $F(x) = F_{i + 1}^V(x)$. One can build the quotient spaces $E(x)$ using Lemma \ref{lem:measSelectCompl} on restricting to sets of the form $\{ M^V_i(x) = \text{Const.}\}$. The proof of Lemma \ref{lem:quotientProjVoLGrowh} now proceeds with no real changes. The only change to the remainder of the proof of Proposition \ref{prop:mainProof}(a) is that one shows 
$\Sigma_{M^V_i(x)}^V(x) = \Sigma_{M^V_i(x)}^B(x)$ on a set of the form $U \cap T^{-1} U$, where $m(U) > 1 - \delta$. Taking $\delta \to 0$ completes the proof of part (a); part (b) is treated similarly. 
\item One checks that the case $k^B \neq k^V$ leads to a contradiction, implying $m\{ r^V = r^B\} = 1$. The arguments in Section \ref{subsec:proofOfEqualLE} now carry over without substantive changes. 
\end{enumerate}
\end{remark}
}

\section{Applications}\label{sec:Applications}
%!TEX root = main.tex

Here we outline in detail our two main applications of our main theorem to the problems of advection diffusion and the 2d Navier-Stokes equations.

\subsection{Preliminaries}
 Let $\T^d$ denote the $d$-dimensional torus, $d = 2$ or 3, parametrized by $[0,2 \pi)^d$. For $f : \T^d \to \R$, we define the 
standard $L^2$ inner product $\langle f, g \rangle_{L^2} = \int_{\T^d} f g\,\dee x$ with corresponding norm $\| f \|_{L^2} = \langle f, f \rangle^{1/2}_{L^2}$. Recall the Fourier transform $\Fc f = \hat f$ of an integrable $f : \T^d \to \R$ is given by 
    \[
    \hat f : \Z^d \to \C \, , \quad \hat f (k) = \frac{1}{(2 \pi)^d} \int_{\T^d} f(x) e^{- i k \cdot x} \dee x,
    \]
and that $\hat f(0) = 0$ if $f$ has zero mean; in this case we view
    $\hat f : \Z^d_0 \to \C$ where $\Z^d_0 = \Z^d \setminus \{ 0 \}$. 
We define the homogeneous $H^s$ norms for $s\in \R$ 
\[
\| f \|_{H^s} := \left(\sum_{k\in\Z^d_0}|k|^{2s}|\hat{f}(k)|^2\right)^{1/2},
\]
where $|k| := |k|_{\ell^2} = \left(\sum_{i=1}^d k_i^2\right)^{1/2}$.
This norm is equivalent to the $H^s$ norms when restricted to spaces of mean-zero functions. Since we will only consider mean zero functions in this paper, we will not make a point to distinguish the homogeneous and non-homogeneous Sobolev spaces. 
Lastly, when it is clear from context, all the above constructions will be applied to divergence free vector-valued functions in the usual way, namely $\mathbf{H}^s$ corresponds to the Hilbert space of velocity fields $u$ whose Fourier transform $\hat{u}(k) \in \C^d$ satisfies $\hat{u}(k)\cdot k = 0$ for each $k\in \Z^d_0$ and $\|u\|_{H^s}$ is defined as in the scalar case above with $|\cdot|$ instead denoting the norm on $\C^d$. 

Fix $\gamma > \frac{d}{2} + 1$ and let $\mathbf{H} = \mathbf{H}^\gamma$ be the Sobolev space of $H^\gamma$-regular, mean-zero, divergence-free vector fields on $\T^d$ with norm $\| \cdot\|_{H^\gamma}$ defined above. Note that $\gamma > \frac{d}{2} + 1$ implies the Sobolev embedding $H^{\gamma}\embeds W^{1,\infty}$, so all such velocity fields are at least (globally) Lipschitz. 
When it is clear from context, we will not distinguish when a norm is being applied to a scalar valued function or vector valued one.

\subsubsection{Skew product formulation}\label{subsub:skewprodform}

In what follows, we will describe a class of time dependent velocity fields that are subordinate to an ergodic measure preserving flow. The formulation we present is in the general setting of a \emph{skew-product}, defined below, which providing a natural dynamical framework for systems evolving on $\mathbf{H}$ driven by an external forcing, either random or deterministic.

Let $(\Omega, \mathscr F, \P)$ be a probability space and let $\theta^t : \Omega \circlearrowleft$ be a flow of measurable ergodic, $\P$-preserving transformations on $\Omega$. 
Assume that for each $t \geq 0$, we have a mapping $\tau^t : \Omega \times {\bf H} \circlearrowleft$ of the form
\[
\tau^t(\omega, u) = (\theta^t \omega, \Phi^t_\omega(u)),
\]
where $\Phi^t_{\cdot} : \Omega\times {\bf H} \to \Hbf$ is measurable and $u \mapsto \Phi^t_\omega(u)$ is continuous for all $\omega \in \Omega$. Moreover, we will assume 
$\Phi^t_\omega$ satisfies $\Phi^0_\omega(u) = u$ and the \emph{cocycle property}
\begin{align}\label{eq:cocycle}
\Phi^{t+r}_\omega = \Phi^r_{\theta_t\omega}\circ \Phi^t_{\omega},\quad t,r\geq 0 \, , 
\end{align}
for all $\omega \in \Omega$. 
Mappings of the form $\tau^t$ are referred to as \emph{skew product flows over $\theta^t : \Omega \to \Omega$}. 

Conceptually, for each $\omega \in \Omega$ we view the (potentially non-invertible) map $\Phi^t_\omega : {\bf H} \circlearrowleft$ as describing the evolution of a time-varying 
incompressible velocity field 
\[
u_t := \Phi^t_\omega(u_0).
\]
From this perspective, the set $\Omega$ encodes the set of nonautonomous driving or forcing paths $\omega = (\omega(t))_{t\geq 0}$ and $\theta^t$ denotes the time shift flow.  Equation \eqref{eq:cocycle} reflects that the forcing path evolving from time $t$ to time $t + r$ is given by $\theta^t \omega$. 

This framework includes a variety of evolution equations on $\Hbf$, including the 2d Navier-Stokes equations with either {\em stochastic} (e.g. white in time forcing) or {\em deterministic} (e.g. time-periodic) driving terms as  (see \cite{KS} Section 2.4.4 for a construction of such an RDS in the case of white in time forcing). Higher dimensional $(d\geq 3)$ examples of fluid motion can be considered by adding hyperviscosity or by Galerkin truncations. 
Other models can be formulated in terms of a skew-product flow and don't necessarily need to solve a fluid equation, e.g., time-stationary fields, time-periodic fields, and time dependent linear combinations 
$u_t(x) = \sum_{k} u_k(x) z^k(t)$
of fixed, time-independent vector fields $\{u_k\}$, where $\{z^k(t)\}$ are a collection of processes on $\R$ (e.g., Ornstein Uhlenbeck processes).

% (see \cite{bedrossian2018lagrangian,bedrossian2019almost,BBPS19II} for a more indepth discussion of the range of models). We will describe the Navier-Stokes example in the following sections.

Lastly, throughout the following we will assume $\mathfrak m$ is a $\tau^t$-invariant measure on $(\Omega \times \Hbf, \mathscr F \times \operatorname{Bor}(\Hbf))$ such that 
\begin{align}\label{marginalEq}
\mathfrak m(A \times \mathbf{H}) = \P(A) \, ,  \quad A \in \mathscr F \, .
\end{align}
 The measure $\mathfrak m$ captures the statistics of typical velocity fields $(u_t)$ with respect to the model, while \eqref{marginalEq} reflects that $\P$ is the law of the 
 underlying driving. For additional discussion on ergodicity and invariance, see Section \ref{subsubsec:ergTheoryBackground}.

\subsection{Passive scalar advection linear cocycle}\label{subsec:passivescalar}

Let $\tau^t : \Omega \times \Hbf \to \Omega \times \Hbf$ be as above. 
Given a fixed initial $(\omega, u_0) \in \Omega \times \Hbf$ and $t > 0$, define $u_t := \Phi^t_\omega(u_0)$. For $\kappa > 0$, we are interested in solutions $(f_t)_{t \geq 0}$ to the passive scalar advection diffusion equation 
\begin{align}\label{eq:passiveScalarEq}
\pd_tf_t + u_t \cdot \nabla f_t = \kappa \Delta f_t \, 
\end{align}
for fixed initial mean-zero scalars $f_0 : \T^d \to \R$. 
Being a parabolic equation with Lipschitz velocity field, well-posedness of \eqref{eq:passiveScalarEq} $H^s$ for any $s\geq0$ is classical. One can extend to $H^{-s}$ by the density of $H^{s}$ in $H^{-s}$, linearity of the equation, and the $L^2$ duality of $H^s$ and $H^{-s}$. \footnote{A version of this argument is carried out in the proof of Lemma \ref{lem:regularityPSA2}.} 
 
\begin{proposition}\label{prop:well-posed}
For any $s \in \R$ and $(\omega, u_0) \in \Omega \times \Hbf$, there is a semiflow of compact linear operators $S^t_{\omega, u_0} : H^s \circlearrowleft, t \geq 0,$ such that $f_t := S^t_{\omega, u_0}f_0$ is a solution to \eqref{eq:passiveScalarEq} with initial data $f_0 \in H^s$. Moreover, the operators $S^t_{\omega, u_0}$ form a linear cocycle over $\tau^t$: for $r, t \geq 0$ we have
\[
S^{t + r}_{\omega, u_0} = S^r_{\theta^t \omega, u_t}\circ S^t_{\omega, u_0} 
\quad \text{ for all } \quad (\omega, u_0) \in \Omega \times \Hbf \,. 
\]
\end{proposition}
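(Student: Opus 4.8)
The plan is to build $S^t_{\omega, u_0}$ first on $H^s$ for $s \geq 0$ by classical parabolic theory, and then to pass to $s < 0$ by $L^2$-duality (the argument the footnote attributes to the proof of Lemma~\ref{lem:regularityPSA2}). Fix $(\omega, u_0)$ and write $u_t := \Phi^t_\omega(u_0)$; then $u_t \in \Hbf = \mathbf H^\gamma$ for all $t$, depending measurably on $t$, continuously on $u_0$ and measurably on $\omega$. Since $\gamma > \frac d2 + 1$, $H^\gamma$ is a Banach algebra and one has a product estimate $\| u_t \cdot \nabla f\|_{H^s} \leqs \| u_t\|_{H^\gamma}\| f\|_{H^{s+1}}$ for $|s| \leq \gamma$, and on $\T^d$ the heat semigroup obeys $\| e^{\kappa \sigma \Delta}\|_{H^a \to H^b} \leqs (\kappa \sigma)^{-(b-a)/2}$ for $b \geq a$. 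With these two ingredients the mild (Duhamel) formulation $f_t = e^{\kappa t \Delta} f_0 - \int_0^t e^{\kappa(t - r)\Delta}( u_r \cdot \nabla f_r )\, \dr$ of \eqref{eq:passiveScalarEq} is set up for a routine contraction argument, giving for each $f_0 \in H^s$ a unique solution $f \in C([0,\infty); H^s)$; I then set $S^t_{\omega,u_0} f_0 := f_t$. (Alternatively one may quote analytic-semigroup perturbation theory, treating $-u_t \cdot \nabla$ as a lower-order perturbation of $\kappa \Delta$, or run a Galerkin scheme: the energy identity $\frac{\dee}{\dt}\| f_t\|_{L^2}^2 = -2\kappa \| f_t\|_{H^1}^2 \leq 0$, using $\div u_t = 0$, together with its $H^s$ commutator analogue, yields the a priori bound $\| S^t_{\omega,u_0}\|_{H^s \to H^s} \leq \exp(C_s \kappa^{-1}\int_0^t \| u_r\|_{H^\gamma}^2\, \dr)$.) Linearity and continuous dependence on the velocity field also give the joint measurability of $(\omega, u_0, f_0) \mapsto S^t_{\omega,u_0} f_0$.

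For $s < 0$ I would argue by duality. Given $t$ and $g \in H^{-s}$, note $\tilde u_\sigma := - u_{t - \sigma}$ is again divergence-free in $H^\gamma$; if $h$ solves \eqref{eq:passiveScalarEq} driven by $\tilde u$ with datum $g$, then for smooth $f_0$ one checks (integrating by parts, using $\div u_t = 0$ and self-adjointness of $\Delta$) that $\frac{\dee}{\dr}\langle f_r, h_{t - r}\rangle_{L^2} \equiv 0$, so $\langle S^t_{\omega,u_0} f_0, g\rangle_{L^2} = \langle f_0, h_t\rangle_{L^2}$. Since $\| h_t\|_{H^{-s}}$ is controlled by $\| g\|_{H^{-s}}$ by the $s \geq 0$ case applied to $\tilde u$, the right-hand side is a bounded linear functional of $g \in H^{-s}$ for each $f_0 \in H^s$, which defines $S^t_{\omega,u_0} f_0 \in (H^{-s})^\ast = H^s$ and a bounded operator $S^t_{\omega,u_0} : H^s \circlearrowleft$ agreeing with the earlier construction on $L^2$. (Alternatively, extend $S^t$ from $L^2$ to $H^s$ by density using the $H^s$ commutator estimate, valid for $|s| \leq \gamma$.)

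The cocycle/semiflow identity follows from uniqueness. By the cocycle property \eqref{eq:cocycle} of $\Phi$ one has $u_{t + \sigma} = \Phi^\sigma_{\theta^t \omega}(u_t)$, so both $\sigma \mapsto S^\sigma_{\theta^t\omega, u_t}(S^t_{\omega,u_0} f_0)$ and $\sigma \mapsto S^{t + \sigma}_{\omega,u_0} f_0$ solve \eqref{eq:passiveScalarEq} with the same ($\sigma$-shifted) velocity field and the same datum at $\sigma = 0$, hence coincide for all $\sigma$; at $\sigma = r$ this reads $S^{t + r}_{\omega,u_0} = S^r_{\theta^t\omega, u_t}\circ S^t_{\omega,u_0}$, and together with $S^0_{\omega,u_0} = \Id$ it is the asserted linear cocycle structure over $\tau^t$.

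Finally, compactness, which I expect to be the part requiring the most care. Fix $t > 0$. Iterating the Duhamel formula and using $\int_0^t (\kappa(t - r))^{-1/2}\, \dr < \infty$ together with the a priori bound $\sup_{r \leq t}\| f_r\|_{H^s} \leqs \| f_0\|_{H^s}$ from the previous steps, a bootstrap yields $\| f_t\|_{H^{s+1}} \leq C(\omega, t, \kappa)\| f_0\|_{H^s}$; that is, $S^t_{\omega,u_0}$ is bounded as an operator $H^s \to H^{s+1}$. Composing with the compact Sobolev embedding $H^{s+1} \embeds H^s$ on $\T^d$ (Rellich--Kondrachov) shows $S^t_{\omega,u_0} : H^s \circlearrowleft$ is compact for every $t > 0$ (while $S^0 = \Id$). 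The genuine obstacle throughout is the uniform bookkeeping of the product and commutator estimates and of the smoothing bootstrap across the whole scale $s \in [-\gamma, \gamma]$ — especially in the negative-regularity range, where one must run the duality carefully or justify the low-regularity product estimates; this is exactly where the hypothesis $\gamma > \frac d2 + 1$ enters (it gives $u_t \in W^{1,\infty}$ and lets $H^\gamma$ multiply into $H^{s+1}$ for the relevant $s$).
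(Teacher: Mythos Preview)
Your approach matches the paper's: classical parabolic theory for $s \geq 0$, extension to $s < 0$ by $L^2$-duality (exactly the argument the paper carries out in the proof of Lemma~\ref{lem:regularityPSA2}), the cocycle identity from uniqueness, and compactness from parabolic smoothing plus Rellich--Kondrachov. The paper gives no formal proof beyond the sketch preceding the proposition, so your writeup supplies the missing detail; your closing caveat that the argument as written really covers $|s| \lesssim \gamma$ rather than literally all $s \in \R$ is accurate and consistent with how the proposition is actually used downstream.
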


Our first result says that for measure-typical velocity fields $(u_t)$, the
asymptotic exponential growth (or decay) rate of $(f_t)$ exists for any initial scalar $f_0$, and that `typical' initial scalars see only a single exponential growth rate $\lambda_1$ independent of the $H^s$ norm one uses.

\begin{atheorem}\label{thm:ratesTheSamePSA}
Let $\gamma^\prime \geq \gamma > 1+d/2$ and assume that there is an invariant measure $\mathfrak m$ for $\tau^t : \Omega \times \Hbf^\gamma \circlearrowleft$ that satisfies the following mild moment condition: 
\begin{align}\label{eq:momentCondition11}
\int \left(\int_0^1 \| u_s \|_{H^{\gamma^\prime}} \dee s \right) \dee \mathfrak m(\omega, u_0) < \infty \,. 
\end{align}
Let $\kappa > 0$ be fixed. Then the following hold:
\begin{itemize}
    \item[(a)] For $\mathfrak m$-a.e. $(\omega, u_0) \in \Omega \times \Hbf^\gamma$ and for all $s \in [-\gamma^\prime,\gamma^\prime]$ and mean-zero $f_0 \in H^s$, the global solution $f_t = S^t_{\omega, u_0} f_0$ to \eqref{eq:passiveScalarEq} has the property that the limit
    \begin{align}\label{eq:limitPSA123}
    \lambda(\omega, u_0; f_0) := \lim_{t \to \infty} \frac1t \log \| f_t \|_{H^s} \in [-\infty, \infty)
    \end{align}
    exists and is independent of $s\in [-\gamma^\prime,\gamma^\prime]$. 
    
    \item[(b)] If $\mathfrak{m}$ is also ergodic, then there exists $\lambda_1 \in \R \cup \{ -\infty\}$ and $N \in \Z_{\geq 0}$, each depending only on $\kappa$,
     with the following property: for all $s\in[-\gamma^\prime,\gamma^\prime]$, for $\mathfrak{m}$-a.e. $(\omega, u_0) \in \Omega \times \Hbf$, and for
     all $f_0$ chosen {\em off} of an $N$-codimensional subspace of $H^s$, we have
     \[
     \lambda_1 = \lim_{t \to \infty} \frac1t \log \| f_t \|_{H^s}.
     \]
\end{itemize}
\end{atheorem}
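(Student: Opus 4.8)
The plan is to deduce Theorem~\ref{thm:ratesTheSamePSA} from the abstract Theorem~\ref{thm:allEqualLE}, applied to the discrete-time cocycle generated by the time-one solution operators along the Sobolev scale $\{H^s\}_{s\in[-\gamma',\gamma']}$. Concretely, I would set $X=\Omega\times\Hbf^\gamma$, $m=\mathfrak m$, $T=\tau^1$, and $A(\omega,u_0)=S^1_{\omega,u_0}$, so that by the cocycle property in Proposition~\ref{prop:well-posed} one has $A^n_{(\omega,u_0)}=S^n_{\omega,u_0}$. Fix any pair $-\gamma'\le s< s'\le\gamma'$ and take $B=H^s$, $V=H^{s'}$. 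Since we work with mean-zero functions on $\T^d$ we have $|k|\ge 1$ for the relevant frequencies, so $\|\cdot\|_{H^s}\le\|\cdot\|_{H^{s'}}$ and $H^{s'}\embeds H^s$ densely; both are separable Hilbert spaces, so the structural hypotheses (a)--(c) of Section~\ref{subsec:statementOfResults} hold. Compactness of $S^1_{\omega,u_0}$ on $H^s$ and on $H^{s'}$ --- hence hypotheses~(2) and~(3) --- is exactly the content of Proposition~\ref{prop:well-posed}. Thus the only thing left to verify is hypothesis~(4): strong measurability of $(\omega,u_0)\mapsto S^1_{\omega,u_0}$ on $H^s$ and on $H^{s'}$, together with the two $\log^+$-integrability bounds.

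The analytic input is a single a priori estimate. Testing \eqref{eq:passiveScalarEq} against $(-\Delta)^\sigma f_t$, using incompressibility to cancel the top-order transport term and a standard commutator bound together with the Sobolev embedding $H^{\gamma'}\embeds W^{1,\infty}$ (valid because $\gamma'\ge\gamma>\frac{d}{2}+1$), one obtains for $|\sigma|\le\gamma'$ and $0\le t\le 1$ a Gr\"onwall estimate
\[
\big\|S^t_{\omega,u_0}\big\|_{H^\sigma\to H^\sigma}\;\leqs\;\exp\!\Big(C_{\gamma'}\int_0^t \|u_r\|_{H^{\gamma'}}\,\dr\Big),
\]
with the negative-regularity range $\sigma<0$ reached by the $L^2$-duality/density argument indicated after Proposition~\ref{prop:well-posed} (this uses $\gamma'\ge\gamma$ symmetrically, the adjoint equation being again a well-posed advection--diffusion equation). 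A parabolic bootstrap of the same estimate additionally gives, for $t>0$ and $-\gamma'\le s<s'\le\gamma'$,
\[
\big\|S^t_{\omega,u_0}\big\|_{H^s\to H^{s'}}\;\leqs\;(\kappa t)^{-(s'-s)/2}\exp\!\Big(C_{\gamma',s,s'}\int_0^t \|u_r\|_{H^{\gamma'}}\,\dr\Big).
\]
Strong measurability of $(\omega,u_0)\mapsto S^1_{\omega,u_0}$ in either norm follows from measurability of $(\omega,u_0)\mapsto (u_r)_{r\in[0,1]}$ and continuity of the solution map in the data. Taking $t=1$ and using $\sup_{t\in[0,1]}\int_0^t\|u_r\|_{H^{\gamma'}}\dr\le\int_0^1\|u_r\|_{H^{\gamma'}}\dr$, the moment condition \eqref{eq:momentCondition11} yields $\int\log^+\|S^1_{\omega,u_0}\|_{H^s\to H^s}\,\dee\mathfrak m<\infty$, and likewise for $\|S^1_{\omega,u_0}\|_{H^{s'}\to H^{s'}}$ and $\|S^1_{\omega,u_0}\|_{H^s\to H^{s'}}$; this is hypothesis~(4) (and also the stronger hypothesis~(5), which makes Corollary~\ref{cor:reguarlityFunctions} applicable here).

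Granting this, Theorem~\ref{thm:allEqualLE} gives, for each pair $-\gamma'\le s<s'\le\gamma'$ and $\mathfrak m$-a.e.\ $(\omega,u_0)$, that $r^{H^{s'}}=r^{H^s}$, $\lambda_i^{H^{s'}}=\lambda_i^{H^s}$, and $F_i^{H^{s'}}=F_i^{H^s}\cap H^{s'}$. For part~(a): fix $s\in[-\gamma',\gamma']$ and $f_0\in H^s\setminus\{0\}$. The MET (Theorem~\ref{thm:nonInvMET}) on $H^s$ shows $\lim_{n\to\infty}\frac1n\log\|S^n_{\omega,u_0}f_0\|_{H^s}$ exists, and the uniform-in-$[0,1]$ bound above upgrades this to the limit over real $t$ in \eqref{eq:limitPSA123}. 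To see independence of $s$, note that by the smoothing estimate $S^1_{\omega,u_0}f_0\in\bigcap_{|\sigma|\le\gamma'}H^\sigma$; since shifting time by $1$ does not affect the exponential rate, $\lambda(\omega,u_0;f_0)=\lim_t\frac1t\log\|S^t_{\omega,u_0}(S^1_{\omega,u_0}f_0)\|_{H^\sigma}$, and applying Theorem~\ref{thm:allEqualLE} with $B=H^\sigma$, $V=H^{\gamma'}$ to the vector $S^1_{\omega,u_0}f_0\in H^{\gamma'}$ shows this value is the same for every $\sigma\in[-\gamma',\gamma']$. For part~(b), if $\mathfrak m$ is ergodic then the ergodic case of Theorem~\ref{thm:nonInvMET}, applied on (say) $L^2$, makes $\lambda_1:=\lambda_1^{L^2}$ and $N:=\codim F_2^{L^2}$ deterministic; part~(a) shows neither depends on $s$, so $\lambda_1$ and $N$ depend only on $\kappa$ (with $\mathfrak m,\gamma$ fixed), and the exceptional set in $H^s$ is $F_2^{H^s}$, an $N$-codimensional subspace, off of which the common exponential rate equals $\lambda_1$.

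The main obstacle is the a priori estimate: one must check that the transport/commutator estimates close on the \emph{full symmetric range} $|\sigma|\le\gamma'$ --- using incompressibility to handle the top order and $L^2$-duality for the negative exponents --- and that all constants are measurable in $(\omega,u_0)$ and dominated by the single scalar $\int_0^1\|u_r\|_{H^{\gamma'}}\,\dr$, so that \eqref{eq:momentCondition11} alone suffices for the log-integrability hypotheses. Everything else is a bookkeeping application of Theorem~\ref{thm:allEqualLE} and the MET.
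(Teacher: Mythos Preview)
Your proposal is correct and follows essentially the same approach as the paper: verify hypotheses (1)--(4) of Theorem~\ref{thm:allEqualLE} for the time-one cocycle $S^1_{\omega,u_0}$ via the a~priori propagation and smoothing estimates (which are exactly Lemmas~\ref{lem:regularityPSA1} and~\ref{lem:regularityPSA2} in the paper, proved by the commutator bound plus $L^2$-duality you describe), then pass from integer to continuous times using the uniform-in-$[0,1]$ bound. Your explicit smoothing step $S^1_{\omega,u_0}f_0\in H^{\gamma'}$ to reduce $s$-independence for a rough $f_0\in H^s$ to the case $v\in V$ in Theorem~\ref{thm:allEqualLE} is a helpful detail the paper leaves implicit.
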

\begin{remark}
In contrast to classical parabolic regularity theory, which gives $H^s$ regularity of $f_t$ for all $s\geq 0$ as long as $u_t\in \Hbf$ locally uniformly in $t$, Theorem \ref{thm:ratesTheSamePSA} requires more quantitative regularity estimates in terms of $u_t$. Consequently, the range of $s$ to which equality of exponents applies is constrained by the regularity of $u$ where certain moments are available.
\end{remark}

\begin{proof}
As an immediate implication of Lemmas \ref{lem:regularityPSA1} and \ref{lem:regularityPSA2} below, we obtain that for any $-\gamma^\prime \leq s  \leq \gamma^\prime$, we have that $S^t_{\omega, u_0}$ is a semiflow of compact linear operators in $H^s$. By Lemmas \ref{lem:regularityPSA1} and \ref{lem:regularityPSA2}, equation \eqref{eq:momentCondition11} implies the logarithmic moment estimate
\begin{align}\label{eq:logMomEstPSA}
\int \log^+ \| S^1_{\omega, u_0} \|_{H^{ s}} \dee \mathfrak m (\omega, u_0) < \infty \, ,  \, , 
\end{align}
which implies that the MET (Theorem \ref{thm:nonInvMET}) applies to $S^t_{\omega, u_0}$ as a linear cocycle over $(\tau^t, \mathfrak{m})$ along integer times $t$. 
For the limits \eqref{eq:limitPSA123} taken along integer times, parts (a) and (b) now follow immediately from Theorem \ref{thm:allEqualLE}. To pass from discrete to continuous-time limits in part (a), it suffices\footnote{This sufficient condition for passing from discrete to continuous time Lyapunov exponents is classical; see, e.g., \cite{lian2010lyapunov}.} that the cocycle $S^t_{\omega, u_0}$ satisfies
\begin{align}\label{discToContinuousTime}
\log^+ \sup_{t \in [0,1]} \| S^t_{\omega, u_0} \|_{H^s} , \quad 
\log^+ \sup_{t \in [0,1]} \| S^{1-t}_{\theta^t \omega, u_t} \|_{H^s} \quad \in L^1(\mathfrak{m}) \,. 
\end{align}
This too follows from Lemmas \ref{lem:regularityPSA1} and \ref{lem:regularityPSA2}. 
\end{proof}

As discussed in Section \ref{subsubsec:passiveScalarIntro}, at $\kappa = 0$ 
	equality of exponents does not hold. This suggests that as $\kappa \to 0$ the ``rate'' at which the Lyapunov exponent is realized in $H^s$ depends heavily on $s$. For example, although the exponents in $H^1$ and $L^2$ agree and are negative as $t \to \infty$, there is a $\kappa$-dependent transient timescale along which the $H^1$ norm \emph{increases} before decay starts \cite{MC18}, while the $L^2$ norm can only decrease. 
The following provides a way of quantifying this $\kappa$-dependence. 
For simplicity we state the result in the case when $(\tau^t, \mathfrak m)$ is ergodic and the comparison between $L^2$ and $H^s, s > 0$. 

For $\epsilon > 0, s \in [0, \gamma']$, define the Lyapunov regularity functions\footnote{Recall the definition of the minimal angle $\angle^{H^s}$ in \eqref{defn:angleHilbSpace}.}
\begin{align}\label{eq:defnPSALEregularity}
\overline{D}_{\e, \kappa}^{H^s}(\omega, u_0) = \sup_{n \in \Z_{\geq 0}} \frac{\| S^n_{\omega, u_0} \|_{H^s}}{e^{n (\lambda_1 + \e)}} \, , \quad 
\underline{D}_{\e, \kappa}^{H^s}(\omega, u_0) = \sup_{n \in \Z_{\geq 0}} \frac{e^{n (\lambda_1 - \e) }\sin \angle^{H^s} (v, F_{i + 1}^{H^s}(x)) }{\| S^n_{\omega, u_0} \|_{H^s}} \, .
\end{align}

\begin{acorollary}
Assume the setting of Theorem \ref{thm:ratesTheSamePSA}, and in addition, that $(\tau^t, \mathfrak m)$ is ergodic. Fix $p > 3$ and $0 < q < \frac{p^2 - 3 p}{p-1}$, and assume the moment condition
\[
\mathcal I := \int \left( \int_0^1(1 +  \| u_\tau\|_{H^{\gamma}}) d \tau \right)^p \dee \mathfrak{m}(\omega, u_0) <\infty \,. 
 \]
Then, for any $\delta, \kappa > 0$ and $- \gamma' \leq s' < s \leq \gamma'$, there exists a function $K_{\delta, \kappa}^{s', s} : \Omega \times \Hbf \to [1,\infty)$ such that for any 
 \begin{align*}
\overline{D}^{H^s}_{\e, \kappa}   \leq K_{\delta, \kappa}^{s', s} \, \overline{D}^{H^{s'}}_{\e + \delta, \kappa} \, , \qquad  
\underline{D}^{H^{s'}}_{\e, \kappa}  \leq K_{\delta, \kappa}^{s', s} \, \underline{D}^{H^s}_{\e + \delta, \kappa}  \, , 
\end{align*}
and the following moment condition holds: 
\[
\int (\log^+ K_{\delta, \kappa}^{s', s})^q \dee m \lesssim_{p, q} \delta^{- (p-q)} \left(  1 + (s-s') | \log \kappa| + \mathcal I \right) \,. 
\]
\end{acorollary}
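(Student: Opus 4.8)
The final statement is a direct specialization of Corollary~\ref{cor:reguarlityFunctions} to the passive scalar cocycle, so the plan is to set up the abstract hypotheses and then track the $\kappa$-dependence. I would apply Corollary~\ref{cor:reguarlityFunctions} to the cocycle generated by $A(\omega,u_0):=S^1_{\omega,u_0}$ over the mpt $T:=\tau^1$ on the ergodic system $(\Omega\times\Hbf,\mathfrak m)$, taking the nested pair $B:=H^{s'}$, $V:=H^{s}$ (for $s'<s$ the space $H^s\subset H^{s'}$ is dense and $\|\cdot\|_{H^{s'}}\le\|\cdot\|_{H^s}$ on mean-zero functions, since every frequency satisfies $|k|\ge1$). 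The cocycle property in Proposition~\ref{prop:well-posed} gives $A^n_{(\omega,u_0)}=S^n_{\omega,u_0}$, so for each admissible index $i$ the discrete-time quantities $\overline{D}_{\e,\kappa}^{H^s},\underline{D}_{\e,\kappa}^{H^s}$ of \eqref{eq:defnPSALEregularity} coincide with the abstract $\overline{D}_\e^V,\underline{D}_\e^V$ of \eqref{eq:overlineD}--\eqref{underlineD}; hence the two displayed norm comparisons and the moment bound on $K_{\delta,\kappa}^{s',s}$ are exactly the conclusion of Corollary~\ref{cor:reguarlityFunctions}, once its hypotheses are checked.

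Hypotheses (1)--(4) of Theorem~\ref{thm:allEqualLE} --- compactness of $S^1_{\omega,u_0}$ on each $H^r$ and the logarithmic-moment bounds --- were already established in the proof of Theorem~\ref{thm:ratesTheSamePSA} via Lemmas~\ref{lem:regularityPSA1} and~\ref{lem:regularityPSA2}, so the only new ingredient is the regularization hypothesis~(5): that $S^1_{\omega,u_0}$ maps $H^{s'}$ boundedly into $H^s$ with $\log^+\|S^1_{\omega,u_0}\|_{H^{s'}\to H^s}\in L^p(\mathfrak m)$, together with quantitative control of this $L^p$-norm in $\kappa$. For this I would invoke the parabolic smoothing estimate underlying Lemmas~\ref{lem:regularityPSA1}--\ref{lem:regularityPSA2} (or a direct Duhamel--Gronwall argument, splitting $[0,1]$ near $\tau=0$ to absorb the loss from $f_0\notin H^s$): there is $C=C(d,s,s')$ with
\[
\|S^1_{\omega,u_0}\|_{H^{s'}\to H^s}\ \le\ C\,\kappa^{-(s-s')/2}\,\exp\!\Big(C\!\int_0^1\!\big(1+\|u_\tau\|_{H^{\gamma}}\big)\,\dee\tau\Big),
\]
the diffusion alone producing the gain of $s-s'$ derivatives (hence the factor $\kappa^{-(s-s')/2}$) and the transport term contributing only the exponential correction because $H^{\gamma}\embeds W^{1,\infty}$ for $\gamma>\tfrac{d}{2}+1$. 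Taking $\log^+$ yields
\[
\log^+\|S^1_{\omega,u_0}\|_{H^{s'}\to H^s}\ \lesssim\ 1+(s-s')\,|\log\kappa|+\int_0^1\big(1+\|u_\tau\|_{H^{\gamma}}\big)\,\dee\tau,
\]
hence, with $\varphi:=\|S^1_{\omega,u_0}\|_{H^{s'}\to H^s}$, raising to the $p$-th power and integrating against $\mathfrak m$,
\[
\|\log^+\varphi\|_{L^p(\mathfrak m)}^p\ \lesssim_p\ 1+\big((s-s')|\log\kappa|\big)^p+\mathcal I\ <\ \infty,
\]
which gives~(5) with the required exponent $p>3$.

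With~(5) verified, Corollary~\ref{cor:reguarlityFunctions} applies for every $\delta>0$, producing $K_{\delta,\kappa}^{s',s}$, the two norm comparisons, and the estimate $\int(\log^+K_{\delta,\kappa}^{s',s})^q\,\dee\mathfrak m\lesssim_{p,q}\delta^{-(p-q)}\big(1+\|\log^+\varphi\|_{L^p(\mathfrak m)}^p\big)$ valid for $q<p(p-3)/(p-1)$; substituting the bound for $\|\log^+\varphi\|_{L^p(\mathfrak m)}^p$ gives the claimed moment estimate (up to the way the $(s-s')|\log\kappa|$ term is displayed). The main obstacle is the regularization estimate itself: establishing the scaling $\kappa^{-(s-s')/2}$ with the exponential correction controlled by $\|u_\tau\|_{H^{\gamma}}$ \emph{alone}, uniformly over the full range $-\gamma'\le s'<s\le\gamma'$, requires the product and commutator estimates for $u\cdot\nabla$ on $H^s$ (which is where the Sobolev threshold $\gamma>\tfrac{d}{2}+1$ is used) and, for large $s$, a short bootstrap in the regularity index --- precisely the content packaged in Lemmas~\ref{lem:regularityPSA1} and~\ref{lem:regularityPSA2}. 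Everything else is a mechanical transcription of the abstract corollary.
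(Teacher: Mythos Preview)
Your proposal is correct and matches the paper's approach exactly: the paper simply states that the result ``is a straightforward consequence of Lemmas~\ref{lem:regularityPSA1}, \ref{lem:regularityPSA2} and Corollary~\ref{cor:reguarlityFunctions}'', and you have supplied precisely the verification of hypothesis~(5) via the $H^{s'}\to H^{s}$ regularization bound with factor $\kappa^{-(s-s')/2}$ that this entails. Your observation about the exponent on the $|\log\kappa|$ term is also accurate: tracing through Corollary~\ref{cor:reguarlityFunctions} with $\|\log^+\varphi\|_{L^p}^p\lesssim_p 1+\big((s-s')|\log\kappa|\big)^p+\mathcal I$ yields a $p$-th power on $(s-s')|\log\kappa|$ rather than the first power displayed in the statement.
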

The proof is a straightforward consequence of Lemmas \ref{lem:regularityPSA1},  \ref{lem:regularityPSA2} and Corollary \ref{cor:reguarlityFunctions}. 

\subsection{2d Navier-Stokes and its linearization cocycle}\label{subsec:NavierStokes}

We turn attention now to linearization along solutions to evolution equations governing the dynamics of the velocity field $u_t$ itself. While much of what we say here can be extended to different evolution equations, we focus in this manuscript on trajectories of the 2d incompressible Navier-Stokes equations on $\T^2$: 
\begin{equation}\label{eq:Navier-Stokes-general}
    \partial_t u + (u\cdot \nabla) u  =\nu \Delta u -\nabla p + F,\quad \Div u = 0.
\end{equation}
Here $p$ is the pressure enforcing the divergence free constraint, $\nu >0$ is the kinematic viscosity and $F$ is a spatially smooth body forcing which we will take to be either stochastic and white-in-time or periodic in time. We will assume throughout that the forcing $F$ and solutions $u_t$ are mean-zero on $\T^2$.

In what follows, we present two cases where the 2d Navier-Stokes equations give rise to a skew-product flow $\tau^t$ in the sense of Section \ref{subsub:skewprodform}:  periodic forcing and white in time stochastic forcing (both of which we assume to be additive). We will then study the cocyle associated to its linearization in vorticity form and present Theorem \ref{thm:ratesTheSameNSE} concerning Lyapunov exponents taken in $H^s$ as $s$ varies. 

\subsubsection{Periodic forcing}
Below, we formulate evolution by the Navier-Stokes equations in the skew product formulation of Section \ref{subsub:skewprodform} in the case of additive, time-periodic, spatially regular forcing. 
In this case, $\Omega = \Sb^1$, where the circle $\Sb^1$ is parametrized by $[0,1)$ with the endpoints identified. The time shift $\theta^t : \Omega\circlearrowleft$ is given by $\theta^t\omega = \omega + t \mod 1$, while the measure $\P$ is normalized Lebesgue measure. 

The following well-posedness and regularity results on Sobolev spaces are classical (see for instance \cite{Temam1997-xm},\cite{Robinson2001-tk},\cite{KS}).
\begin{proposition}[\cite{KS} Theorem 2.1.19] \label{prop:NSEwellPosedDet}\ 
\begin{itemize}
\item[(i)] Fix an integer $m \geq 2$ and $F \in L^2([0,\infty), \Hbf^{m-1})$. For each fixed initial $u_0\in \Hbf^0$ and for all $\e > 0$  there exists a unique solution $u \in C([\ep,T];H^m)\cap L^2([\ep,T];H^{m+1})$ for each $\ep >0$ and $T\geq 0$.
%\item[ii)] 
Moreover, there exists a constant $C_m$ such that the following inequality holds for each $0\leq t\leq T$: 
\begin{equation}\label{eq:NEreg-estimate}
\begin{aligned}
    t^m\|u_t\|_{H^m}^2 + &\int_0^t s^m\|u_s\|_{H^{m+1}}\ds \leq \int_0^t s^m\|F_s\|_{H^{m-1}}\ds\\
    & + C_m\left(\|u_0\|_{L^2} + \|u_0\|_{L^2}^{4m+2} + \|F\|_{L^2([0,T];L^2)}^2 + \|F\|^{4m+2}_{L^2([0,T];L^2)}\right)
    \end{aligned}
\end{equation}

\item[(ii)] For each $0 \leq r \leq m$, there exists a continuous mapping $\Phi^t_{\cdot}\,:\, \Omega\times \Hbf^r\to \Hbf^r$, over $\theta^t$, $(\Omega,\mathcal{F},\P)$ such that $u_t = \Phi^t_\omega(u_0)$ is the unique solution to \eqref{eq:Navier-Stokes-general} with initial data $u_0$ and forcing $F_{t+\omega}$.  Moreover, $\Phi^t_\omega:\Hbf^r\to \Hbf^r$ in injective and $C^1$ Fr\'ech\'et differentiable for all $\omega \in \Omega, t \geq 0$. 
\end{itemize}
\end{proposition}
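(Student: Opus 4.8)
\medskip
\noindent\textbf{Proof plan.} Since this is a classical statement, the plan is to assemble standard $2$d Navier--Stokes theory rather than to prove anything genuinely new; I sketch the main steps. For part (i), I would first recall global well-posedness in $\Hbf^0 = L^2$: for $u_0 \in L^2$ and $F \in L^2_{\loc}([0,\infty); L^2)$, a Galerkin approximation together with the basic energy identity
\[
\frac{\dee}{\dt}\| u_t\|_{L^2}^2 + 2\nu\| u_t\|_{H^1}^2 = 2\langle F_t, u_t\rangle_{L^2}
\]
(using the cancellation $\langle (u\cdot\nabla)u, u\rangle_{L^2} = 0$) and Gronwall's inequality produce a unique solution $u \in C([0,\infty); L^2)\cap L^2_{\loc}([0,\infty); H^1)$ together with the a priori bound on $\sup_{t\leq T}\| u_t\|_{L^2}^2 + \int_0^T\| u_s\|_{H^1}^2\,\ds$ that is the $m=0$ case of \eqref{eq:NEreg-estimate}.

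The higher-regularity estimates I would obtain by a weighted bootstrap in $m$. Testing the equation against $(-\Delta)^m u$ and controlling the nonlinear term with the $2$d Ladyzhenskaya and Sobolev inequalities yields a differential inequality of the schematic form
\[
\frac{\dee}{\dt}\| u_t\|_{H^m}^2 + \nu\| u_t\|_{H^{m+1}}^2 \leqc \big(1 + \| u_t\|_{H^m}^2\big)\| u_t\|_{H^m}^2 + \| F_t\|_{H^{m-1}}^2\,.
\]
Multiplying by the weight $s^m$, which vanishes at $s=0$ and thereby absorbs the merely $L^2$ regularity of $u_0$, and integrating, one upgrades an $H^{m-1}$-level bound to an $H^m$-level bound after interpolation against the already-controlled lower Sobolev norms. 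Iterating this $m$ times, where each step converts an $L^2_t H^{j+1}_x$ bound into an $L^\infty_t H^{j+1}_x \cap L^2_t H^{j+2}_x$ bound at the cost of raising the polynomial degree in $\| u_0\|_{L^2}$, produces the stated inequality with the exponent $4m+2$. Uniqueness in $C([\ep,T]; H^m)$ follows from the same energy estimate applied to the difference of two solutions.

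For part (ii), I would set $\Phi^t_\omega(u_0) := u_t$, where $u$ solves \eqref{eq:Navier-Stokes-general} with the shifted forcing $s\mapsto F_{s+\omega}$; this is well-defined on $\Hbf^r$ for $0\leq r\leq m$ by part (i) and the inclusion $\Hbf^r\subset L^2$. The cocycle property over $\theta^t\omega = \omega + t$ is immediate from uniqueness and the identity $F_{(s+t)+\omega}=F_{s+(\omega+t)}$. Continuity of $(\omega,u_0)\mapsto \Phi^t_\omega(u_0)$ in $\Hbf^r$ follows from Gronwall estimates on the difference of two solutions (continuity in $u_0$) together with continuity of $\omega\mapsto F_{\cdot+\omega}$ and stability of the solution map under perturbation of the forcing (continuity in $\omega$). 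Injectivity of $\Phi^t_\omega$ is the backward-uniqueness property of $2$d Navier--Stokes, which I would derive from a log-convexity (Agmon--Nirenberg) argument for $\| u_t - v_t\|_{L^2}^2$. Finally, for $C^1$ Fr\'echet differentiability I would show that the linearized equation \eqref{eq:linearized-ns} is well-posed in $\Hbf^r$, its linear drift being a lower-order perturbation of the Stokes operator with $u$ entering only as a given coefficient, that its solution operator $J^t_{\omega,u_0}$ is the candidate derivative, and that the second-order remainder $\Phi^t_\omega(u_0+h)-\Phi^t_\omega(u_0)-J^t_{\omega,u_0}h$ is $o(\| h\|_{\Hbf^r})$ by a Gronwall estimate on the difference-of-differences equation; continuity of $u_0\mapsto J^t_{\omega,u_0}$ then upgrades this to $C^1$.

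I expect the main obstacle to be the quantitative weighted bootstrap in part (i): tracking precisely how each induction step raises the polynomial degree in $\|u_0\|_{L^2}$, so that after $m$ steps one recovers exactly the exponent $4m+2$ in \eqref{eq:NEreg-estimate}. The other nontrivial ingredient is backward uniqueness, used for injectivity in part (ii); though classical, it is the only place where a soft argument does not suffice. Complete details may be found in \cite{KS, Temam1997-xm, Robinson2001-tk}.
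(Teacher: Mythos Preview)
Your sketch is a correct outline of the classical argument, but note that the paper does not supply its own proof of this proposition: it is stated with the citation \cite{KS} Theorem 2.1.19 and prefaced by the remark that these well-posedness and regularity results are classical (with further references to \cite{Temam1997-xm, Robinson2001-tk}). So there is nothing to compare against beyond the literature you already cite at the end of your plan.

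That said, your plan is faithful to the standard proof in those references: the weighted $s^m$ bootstrap is exactly the mechanism behind the smoothing estimate \eqref{eq:NEreg-estimate} in \cite{KS}, and the ingredients you identify for part (ii) --- stability estimates for the cocycle property and continuity, backward uniqueness for injectivity, and the linearized equation plus a difference-of-differences Gronwall argument for $C^1$ Fr\'echet differentiability --- are the ones used there. Your flagged obstacle, tracking the exponent $4m+2$ through the induction, is indeed the only place requiring careful bookkeeping, but it is purely computational.
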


Since $F_t$ is periodic, it is natural to consider the time-one map $\Phi^1_0 : L^2 \circlearrowleft$ since $\theta^1\omega = \omega$ and therefore $\Phi^n_0 = \Phi^1_0\circ\cdots\circ \Phi^1_0$ for $n \in \Z_{\geq 0}$. 
This mapping admits a compact \emph{global attractor} $\mathcal A$ to which solutions converge: 
\begin{corollary}
Assume $F_t \in \Hbf = \Hbf^{\gamma}$ for all $t \in [0,1]$. 
\begin{itemize}
\item[(a)] The mapping $\Phi^1_0$ admits a compact \emph{global attractor} $\mathcal A \subset \Hbf$. Precisely, (i) $\Phi^1_0(\mathcal A) = \mathcal A$, (ii) $\Phi^1_0 |_{\mathcal A} : \mathcal A \circlearrowleft$ is a homeomorphism, and (iii) for all $u_0 \in \Hbf$ and $\omega \in \Omega$, any subsequential limit $u_* = \lim u_{n_k}$ of the trajectory $(u_n)_{n \geq 0}$ belongs to $\mathcal A$. 
\item[(b)] There exist invariant probability measures $\mu$ for $\Phi_0^1$. 
Moreover, all such invariant measures are supported on $\mathcal A$. 
\end{itemize}
\end{corollary}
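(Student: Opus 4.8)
The plan is to carry out the classical construction of a global attractor for the discrete dynamical system generated by $\Phi^1_0$ on $\Hbf = \Hbf^\gamma$: first produce a bounded, \emph{precompact} absorbing set, then take $\mathcal A$ to be the associated $\omega$-limit set, and finally read off (i), (ii), (iii) and part (b) from standard properties of attractors (see, e.g., \cite{temam2012infinite, Robinson2001-tk}). First I would establish dissipativity in $L^2 = \Hbf^0$. The usual $2$d Navier--Stokes energy identity $\tfrac{\dee}{\dt}\|u_t\|_{L^2}^2 + 2\nu\|u_t\|_{H^1}^2 = 2\langle F_t, u_t\rangle_{L^2}$ for $u_t = \Phi^t_0(u_0)$, together with the Poincar\'e inequality on $\T^2$ (to bound $\|u_t\|_{L^2}^2 \leq C\|u_t\|_{H^1}^2$) and Young's inequality, yields a radius $R_0$ depending only on $\nu$ and $\|F\|_{L^2([0,1];L^2)}$ and, for every bounded $\mathcal D \subset \Hbf$, an integer $n_0(\mathcal D)$ with $\Phi^n_0(\mathcal D) \subset B_{L^2}(0, R_0)$ for all $n \geq n_0(\mathcal D)$; here I use that $\Hbf^\gamma \subset L^2$ and that $F$ is time-periodic.

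Next I would upgrade this to a precompact absorbing set using instantaneous smoothing. Fix an integer $m$ with $\gamma < m \leq \gamma + 1$, which is possible since $\gamma > d/2 + 1 = 2$ (take $m = \lceil \gamma \rceil$ when $\gamma \notin \Z$ and $m = \gamma + 1$ otherwise); then $m - 1 \leq \gamma$, so $F_t \in \Hbf^{m-1}$ for $t \in [0,1]$ by hypothesis. Applying the parabolic regularity estimate \eqref{eq:NEreg-estimate} at $t = 1$ to $u_0 \in B_{L^2}(0, R_0)$ bounds $\|u_1\|_{H^m}$ by a constant $R_1$ depending only on $R_0$, $m$ and $F$, so the closed ball $\mathcal B := \{ u \in \Hbf^m : \|u\|_{H^m} \leq R_1 \}$, viewed as a subset of $\Hbf^\gamma$, satisfies $\Phi^1_0(B_{L^2}(0, R_0)) \subset \mathcal B$; since $\Hbf^m$ is compactly embedded in $\Hbf^\gamma$ (as $m > \gamma$), $\mathcal B$ is compact in $\Hbf^\gamma$. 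Combining with the previous paragraph, $\mathcal B$ is a compact absorbing set for $\Phi^1_0$ on $\Hbf^\gamma$. Setting $\mathcal A := \bigcap_{n \geq 0} \overline{\bigcup_{k \geq n} \Phi^k_0(\mathcal B)}$ (closures in $\Hbf^\gamma$), the continuity of $\Phi^1_0$ on $\Hbf^\gamma$ and compactness of $\mathcal B$ give, by the standard argument, that $\mathcal A$ is nonempty, compact, satisfies $\Phi^1_0(\mathcal A) = \mathcal A$, and attracts every bounded set: $\dist_{\Hbf^\gamma}(\Phi^n_0 \mathcal D, \mathcal A) \to 0$ as $n \to \infty$. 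This is (i). For (ii), $\Phi^1_0|_{\mathcal A}$ is a continuous self-map of the compact set $\mathcal A$, onto by invariance and one-to-one since $\Phi^1_0$ is injective on $\Hbf^0 \supset \Hbf^\gamma$ (Proposition \ref{prop:NSEwellPosedDet}(ii)); a continuous bijection of a compact Hausdorff space onto itself is a homeomorphism. For (iii), a trajectory $u_n = \Phi^n_0(u_0)$ issues from the bounded set $\{u_0\}$, so $\dist_{\Hbf^\gamma}(u_n, \mathcal A) \to 0$, and since $\mathcal A$ is closed every subsequential limit lies in $\mathcal A$ (the general-$\omega$ orbit is treated the same way, using that $\theta^1$ fixes $\Sb^1$ pointwise so $\Phi^n_\omega = (\Phi^1_\omega)^n$ and that the cocycle identity relates $\Phi^1_\omega$- and $\Phi^1_0$-orbits).

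For part (b), existence of an invariant probability measure is the Krylov--Bogolyubov argument of Example \ref{exam:simpleInvMeasSetup} applied to the continuous map $\Phi^1_0$ and its compact invariant set $\mathcal A$: any weak-$*$ subsequential limit of $\tfrac1n \sum_{i=0}^{n-1} \delta_{(\Phi^1_0)^i x_0}$, $x_0 \in \mathcal A$, is $\Phi^1_0$-invariant and supported on $\mathcal A$. To see that \emph{every} invariant Borel probability $\mu$ on $\Hbf^\gamma$ is supported on $\mathcal A$, I would argue: $\Hbf^\gamma$ is Polish, so $\mu$ is tight; given $\delta > 0$ choose a compact $K$ with $\mu(K) > 1 - \delta$, and given $\e > 0$ use the attraction property to find $N$ with $\Phi^N_0(K) \subset \mathcal A^\e$, the open $\e$-neighborhood of $\mathcal A$; invariance then gives $\mu(\mathcal A^\e) = \mu((\Phi^N_0)^{-1} \mathcal A^\e) \geq \mu(K) > 1 - \delta$. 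Letting $\delta \to 0$ yields $\mu(\mathcal A^\e) = 1$ for every $\e > 0$, and since $\mathcal A$ is closed, $\mathcal A = \bigcap_{\e > 0} \mathcal A^\e$, so $\mu(\mathcal A) = 1$.

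The only genuinely analytic input is the instantaneous-smoothing step that turns the $L^2$ absorbing ball into a precompact one, and even that is essentially furnished by the regularity estimate \eqref{eq:NEreg-estimate}; the rest is routine attractor bookkeeping. The points requiring mild care are the choice of the integer exponent $m \in (\gamma, \gamma+1]$ when $\gamma \notin \Z$ (so that $\Hbf^{m-1}$-regularity of the forcing is available) and, cosmetically, the reduction of the general-$\omega$ statement in (iii) to iteration of $\Phi^1_0$.
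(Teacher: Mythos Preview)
Your proposal is correct and follows essentially the same approach as the paper: use the regularity estimate \eqref{eq:NEreg-estimate} to produce a compact absorbing set in $\Hbf^\gamma$, invoke the standard global attractor machinery (the paper cites Robinson, Theorems 10.5 and 10.6, rather than constructing $\mathcal A$ by hand), deduce the homeomorphism from injectivity of $\Phi^1_0$ plus compactness, and then apply Krylov--Bogolyubov for (b). Your version is simply more explicit about the two-step bootstrap ($L^2$ absorbing ball first, then smoothing to $H^m$ with $m\in(\gamma,\gamma+1]$) and about the tightness argument showing every invariant measure is supported on $\mathcal A$, whereas the paper compresses these into references; the mathematical content is the same.
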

\begin{proof}
Equation \eqref{eq:NEreg-estimate} can be easily seen via a Gr\"onwall argument to show that $\Phi^1_0$ is \emph{dissipative} on $\mathbf{H} := H^\gamma$ in the sense that it admits a \emph{compact absorbing ball} $B = \{\|Du\|_{\mathbf{H}} \leq c\}\subseteq \mathbf{H}$. Specifically, there exists $c = c_\nu > 0$ and for each bounded subset $S\subseteq \Hbf$ a time $n_0 = n_0(S) \in \Z_{\geq 0}$ such that
\[
    \Phi^n(S) \subset B \quad \text{for all}\quad n \geq n_0(S).
\]
By standard theory of global attractors (see, e.g. \cite{Robinson2001-tk} Theorem 10.5), this implies that there is a unique, maximal, and compact global attractor $\mathcal{A}\subseteq \Hbf$ left invariant under $\Phi_0^1$ (i.e., $\Phi_0^1(\mathcal A) = \mathcal A)$. That $\Phi_0^1|_{\mathcal A}$ is a homeomorphism now follows from compactness of $\mathcal A$ and the (classical) fact that $\Phi_0^1$ is injective (see \cite{Robinson2001-tk} Theorem 10.6). 

The Krylov-Bogoliubov argument (c.f. Example \ref{exam:simpleInvMeasSetup}) implies 
existence of invariant measures for $\Phi_0^1$. That all such invariant measures are supported on $\mathcal A$ follows from the item (a)(iii); further details omitted. 
\end{proof}

 Given a $\Phi_0^1$-invariant measure $\mu$, we define an associated measure $\mathfrak{m}$ on $\Omega\times \mathbf{H}$ via
\begin{equation}\label{eq:disintigration-def}
    \dee \mathfrak{m}(\omega,u) = \dee \mu_{\omega}(u)\dee \P(\omega),\quad \text{where}\quad \mu_{\omega} := (\Phi^\omega_0)_*\mu,
\end{equation}
and $(\Phi^\omega_0)_*\mu := \mu \circ (\Phi^\omega_0)^{-1}$ is the pushforward of the measure $\mu$ under the map $\Phi^\omega_0$. We see that the measure $\mathfrak{m}$ is an ergodic invariant measure for the skew-product flow
$
    \tau^t(\omega,u) = (\theta^t\omega,\Phi^t_\omega(u))
$, 
and therefore we are in the general setup of section \ref{subsub:skewprodform}. The proof of this straightforward, but does not appear to be in the literature. We include it below for completeness.
\begin{proposition}
Let $\mu$ be a probability measure on $\mathbf{H}$, and define $\mathfrak{m}$ and $\mu_\omega$ as in \eqref{eq:disintigration-def}
\begin{enumerate}
\item If $\mu$ is $\Phi^1_0$-invariant, then $\mathfrak{m}$ is $\tau^t$-invariant.
\item If $\mu$ is $\Phi^1_0$-ergodic, then $\mathfrak{m}$ is $\tau^t$-ergodic
\end{enumerate}
\end{proposition}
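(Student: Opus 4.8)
The plan is to reduce both claims to a single \emph{equivariance} property of the disintegration, namely that
\[
(\Phi^t_\omega)_*\,\mu_\omega \;=\; \mu_{\theta^t\omega}\qquad\text{for all }t\ge 0\text{ and }\omega\in\Omega .
\]
To prove this, recall from \eqref{eq:disintigration-def} that $\mu_\omega=(\Phi^\omega_0)_*\mu$, where the circle coordinate $\omega\in[0,1)$ is read as a time-duration. Hence $(\Phi^t_\omega)_*\mu_\omega=(\Phi^t_\omega\circ\Phi^\omega_0)_*\mu$, and the cocycle identity \eqref{eq:cocycle} applied with base point $0$ (using $\theta^\omega 0=\omega$) gives $\Phi^t_\omega\circ\Phi^\omega_0=\Phi^{t+\omega}_0$. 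Writing $t+\omega=n+s$ with $n:=\lfloor t+\omega\rfloor\in\Z_{\ge 0}$ and $s:=(t+\omega)\bmod 1=\theta^t\omega$, a second application of \eqref{eq:cocycle} together with iteration (using $\theta^k 0=0$ for integer $k$) gives $\Phi^{t+\omega}_0=\Phi^s_0\circ(\Phi^1_0)^n$. Since $\mu$ is $\Phi^1_0$-invariant we have $((\Phi^1_0)^n)_*\mu=\mu$, so $(\Phi^t_\omega)_*\mu_\omega=(\Phi^s_0)_*\mu=\mu_s=\mu_{\theta^t\omega}$, as required. (A short remark that $\omega\mapsto\mu_\omega$ is measurable, which follows from continuity of $(\omega,u)\mapsto\Phi^\omega_0u$, is needed to know that $\mathfrak m$ is well defined; this also gives \eqref{marginalEq}.)

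For part (1), I would then test $\mathfrak m$ against a bounded measurable $f$ on $\Omega\times\Hbf$: by Fubini and the equivariance identity,
\[
\int f\circ\tau^t\,\dee\mathfrak m=\int_\Omega\!\int_\Hbf f\big(\theta^t\omega,\Phi^t_\omega u\big)\,\dee\mu_\omega(u)\,\dee\P(\omega)=\int_\Omega\!\int_\Hbf f\big(\theta^t\omega,v\big)\,\dee\mu_{\theta^t\omega}(v)\,\dee\P(\omega),
\]
and since $\P$ is rotation-invariant on $\Sb^1$, the change of variables $\omega'=\theta^t\omega$ turns the right-hand side into $\int f\,\dee\mathfrak m$. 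Thus $\tau^t_*\mathfrak m=\mathfrak m$ for all $t\ge 0$.

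For part (2), let $g$ be a bounded measurable function with $g\circ\tau^t=g$ $\mathfrak m$-a.e.\ for every $t\ge0$; after replacing $g$ by a strictly invariant version (standard for flows) I would restrict to the fiber over $\omega=0$: taking $\omega=0$, $t=1$ in the identity $g(\theta^t\omega,\Phi^t_\omega u)=g(\omega,u)$ and using $\theta^1 0=0$ shows $h:=g(0,\cdot)$ is $\Phi^1_0$-invariant, hence $\mu$-a.e.\ equal to a constant $c$ by ergodicity of $\mu$. Then, for each $\omega$, taking base point $0$ and duration $\omega$ gives $g(\omega,\Phi^\omega_0u)=g(0,u)=c$ for $\mu$-a.e.\ $u$; since $\mu_\omega=(\Phi^\omega_0)_*\mu$ this says $g(\omega,\cdot)=c$ $\mu_\omega$-a.e., and integrating over $\omega$ yields $g=c$ $\mathfrak m$-a.e. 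Hence every bounded $\tau^t$-invariant function is constant, i.e.\ $\mathfrak m$ is $\tau^t$-ergodic.

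The only delicate point is the bookkeeping in the equivariance step: the cocycle identity alone produces the propagator $\Phi^{t+\omega}_0$ for a duration $t+\omega$ that can exceed $1$, and one must use $\Phi^1_0$-invariance of $\mu$ to absorb the integer number of full periods and land on the correct fiber $\mu_{\theta^t\omega}$. Everything else --- the Fubini/change-of-variables in part (1) and the lift-from-the-base-fiber argument in part (2) --- is routine, modulo the standard remark that an a.e.-invariant function for a (measurable) flow admits a strictly invariant version.
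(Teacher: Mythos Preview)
Your proof is correct and, for part (1), follows the paper exactly: both establish the equivariance identity $(\Phi^t_\omega)_*\mu_\omega=\mu_{\theta^t\omega}$ by decomposing $t+\omega$ into its integer and fractional parts and absorbing the integer part via $\Phi^1_0$-invariance of $\mu$, after which invariance of $\mathfrak m$ is immediate.

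For part (2) your argument differs slightly from the paper's. The paper first proves an intermediate lemma---that $\mu_\omega$ is $\Phi^1_\omega$-ergodic for every $\omega$---by conjugating back to the base via $\Phi^\omega_0$; it then shows that any $\tau^t$-invariant $\psi$ is $\mu_\omega$-a.e.\ constant on \emph{each} fiber, and finally uses ergodicity of the circle rotation to conclude these fiberwise constants agree. You instead work only at the distinguished fiber $\omega=0$: ergodicity of $\mu$ under $\Phi^1_0$ makes $g(0,\cdot)$ constant, and you then push this constant out to every other fiber by flowing forward a time $\omega$. Your route is shorter and avoids the auxiliary ergodicity statement for $\mu_\omega$, at the cost of needing the ``strictly invariant version'' argument to justify evaluating the a.e.\ invariance identity on the measure-zero set $\{\omega=0\}$. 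The paper's approach, by contrast, only needs the invariance identity to hold for $\P$-a.e.\ $\omega$, but pays for this with the extra lemma. Both are sound; the trade-off is between a mild measurability technicality and an additional (though easy) intermediate result.
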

\begin{proof}

For 1: Let $\mu$ be the $\Phi^1_0$-invariant and let $\mathfrak{m}$ be as in \ref{eq:disintigration-def}. We first show that $(\Phi^t_\omega)_* \mu_\omega = \mu_{\theta^t \omega}$:
\[
(\Phi^t_\omega)_* \mu_\omega = (\Phi^{t + \omega}_0)_* \mu = (\Phi^{\{ t + \omega\}}_0)_* \mu = \mu_{\{ t + \omega\}} = \mu_{\theta^t \omega}
\]
where here $\{ r\} = r - \lfloor r \rfloor$ is the fractional part of $r \in [0,\infty)$. Invariance of $\mathfrak{m}$ with respect to $\tau^t$ is now standard (see \cite{KS} Prop 1.2.27).

For 2: Assume $\mu$ is $\Phi^1_0$-ergodic. We begin by checking that $\mu_\omega$ is ergodic for the map $\Phi^1_\omega:\Hbf\circlearrowleft$ for all $\omega \in \Omega$. For this, let $\varphi : X \to \R$ be $\Phi^1_\omega$-invariant $\mu_\omega$-a.s. Then, 
$\hat \varphi := \varphi \circ \Phi^\omega_0$ is $\Phi^1_0$-invariant $\mu$-a.s., since
\[
\hat \varphi \circ \Phi^1_0 = \varphi \circ \Phi^\omega_0 \circ \Phi^1_0 
= \varphi \circ \Phi^1_\omega \circ \Phi^\omega_0
\]
and $\varphi$ is $\Phi^1_\omega$-invariant $\mu_\omega$-a.e.. We conclude $\hat \varphi$ is $\mu$-a.s. constant, hence $\varphi$ is $\mu_\omega$-a.s. constant. 

We now check that $\mathfrak{m}$ is $\tau^t$-ergodic. To start, fix a $\tau^t$-invariant $\psi : \Omega \times \mathbf{H} \to \R$. We see that for each $\omega \in \Omega$,
\[
\psi(\omega, u) = \psi(\theta^1 \omega, \Phi^1_\omega (u)) = \psi(\omega, \Phi^1_\omega(u)) \quad \mu_\omega\text{ -a.e.}
\]
We conclude that for each fixed $\omega$, the function $\varphi_\omega(u) = \psi(\omega, u)$ is $\Phi^1_\omega$-invariant, hence $m_\omega$-a.e. equivalent to a constant $\varphi_\omega$. Now, $\psi(\omega, u) = \varphi_\omega$ holds $\mathfrak{m}$-a.e., and as $\psi$ is $\tau^t$-invariant we have that $\omega \mapsto \varphi_\omega$ is $\theta^t$-invariant. By ergodicity of $\P$ for $\theta^t$, we obtain that $\varphi_\omega$ is $\P$-a.s. constant. In all, we conclude $\psi$ is $\mathfrak{m}$-a.e. constant and therefore $\mathfrak{m}$  is $\tau^t$-ergodic.
\end{proof}

\subsubsection{White-in-time forcing}

Next we consider the white-in-time stochastically forced case. Specifically, we will assume that the forcing $F$ is the time derivative of a Brownian process in $\mathbf{H}$: 
\begin{equation}\label{eq:white-noise-force}
    F = \partial_t\xi \, , \quad \xi(t,x) = \sum_{j=1}^\infty \sigma_j e_j(x) \beta_j(t) \, ,
\end{equation}
where $\{\beta_j\}$ are independent canonical 1d Wiener processes, $\{e_j\}$ forms an orthonormal basis for $\mathbf{H}$, and the coefficients ${\bf \sigma} = \{\sigma_j\}$ satisfy $ \|{\bf \sigma}\|_{\ell^2}^2 = \sum_{j=1}^\infty \sigma_j^2 < \infty$. This last condition ensures that the process $\xi_t = \xi(t,\cdot)$ is a continuous process in $\mathbf{H}$. Since $\xi_t$ is not differentiable in time, the Navier-Stokes equations must be interpreted in a time-integrated sense: 
\[
    u_t - u_0 + \int_0^t\left((u_s\cdot\nabla)u_s - \nu \Delta u_s + \nabla p_s\right)\ds = \xi_t,
\]
where equality holds in $H^{-1}$ with probability $1$.

In this setting we take $\Omega$ to be the space $C_0(\R_+;\Hbf)$ of continuous one-sided paths $\omega :\R_+ \to \Hbf$ vanishing at $0$, with the standard Borel sigma algebra $\mathscr{F}$ and equipped with a Gaussian measure $\P$ whose projection onto basis elements $e_j$ through the map $\omega \mapsto \langle e_j,\omega\rangle_{\Hbf}$ is the canonical Wiener measure. We define the semiflow $\theta^t:\Omega\circlearrowleft$ to be the shift map
\[
    (\theta^t\omega)_s = \omega_{t+s} - \omega_{t},\quad t,s\in \R_+,
\]
which is easily seen to leave the measure $\P$ invariant.

The following well-posedness, regularity and construction of an RDS is well-known (see e.g. \cite{KS} \S 2.4 ).
\begin{proposition}\label{prop:SNS-wellpossed}
Let $d = 2$, $\gamma> 1+ \frac{d}{2}=2$, and suppose that $F$ is of the form \eqref{eq:white-noise-force} where $\sum_{j} j^{2 (\gamma - 1)} \sigma_j^{2} < \infty$ (hence $\xi_t \in \Hbf^{\gamma - 1}$ with probability 1). Then, there exists a measurable mapping $\Phi^t_{\cdot}\,:\, \Omega\times \Hbf\to \Hbf$ such that $u_t = \Phi^t_\omega(u_0)$ is a strong pathwise solution to \eqref{eq:Navier-Stokes-general} (in the integral sense) with initial data $u_0$ and noise path $\omega = (\xi_t)$. Moreover, for $\P$-a.e. $\omega$, the mapping $\Phi^t_\omega: \Hbf \to \Hbf$ is injective and $C^1$ Fr\'ech\'et differentiable, and satisfies the cocycle property
\[
\Phi^{t + r}_\omega = \Phi^r_{\theta^t \omega} \circ \Phi^t_\omega \quad \text{ for all } r, t \geq 0, \text{and for all } \omega \in \Omega \, .
\] 
Lastly,  $\omega \mapsto \Phi^t_\omega$ only depends on $\omega|_{[0,t]}$.
\end{proposition}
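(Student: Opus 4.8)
The plan is to reduce the stochastic problem to a pathwise one by subtracting off the stochastic Stokes convolution, after which everything reduces to classical deterministic 2d Navier--Stokes theory; this is exactly the program of \cite{KS}, \S 2.4 (see also \cite{HM06}), so I would present the argument as a sketch, elaborating only the points where the stated regularity of the noise is used.

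First I would record the needed properties of the stochastic convolution $z_t := \int_0^t e^{\nu (t-s)\Delta}\,\dee \xi_s$, the mild solution of $\partial_t z = \nu \Delta z + \partial_t \xi$, $z_0 = 0$. Under the hypothesis $\sum_j j^{2(\gamma-1)}\sigma_j^2 < \infty$ (so that $\xi_t \in \Hbf^{\gamma-1}$ a.s.), a standard factorization-method estimate gives that for $\P$-a.e.\ $\omega$ the path $t \mapsto z_t$ lies in $C([0,\infty);\Hbf^{\gamma-1})$, that $\omega \mapsto z_{\cdot}$ is measurable into this path space, and that $z|_{[0,t]}$ depends only on $\omega|_{[0,t]}$. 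Writing $u_t = z_t + v_t$, the remainder $v$ solves, pathwise for each fixed $\omega$,
\[
\partial_t v + \big((v+z)\cdot\nabla\big)(v+z) = \nu \Delta v - \nabla q \, , \qquad \Div v = 0 \, , \qquad v_0 = u_0 \, ,
\]
a deterministic 2d Navier--Stokes-type equation with a continuous-in-time, lower-order forcing assembled from $z$.

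Second, for each fixed $\omega$ in the full-measure set above I would run the standard 2d energy method: $L^2$ a priori bounds from the cancellation $\langle (w\cdot\nabla)w, w\rangle_{L^2} = 0$ (together with a Ladyzhenskaya inequality to absorb the $z$-terms), followed by a parabolic bootstrap to $\Hbf^\gamma$ using the embedding $\Hbf^\gamma \embeds W^{1,\infty}$ (valid since $\gamma > d/2 + 1 = 2$) and the fact that $H^\gamma(\T^2)$ is an algebra. This produces a unique global solution $v \in C([0,\infty);\Hbf^\gamma) \cap L^2_{\loc}([0,\infty);\Hbf^{\gamma+1})$ depending continuously on $u_0 \in \Hbf$, locally uniformly in $t$. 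Setting $\Phi^t_\omega(u_0) := z_t + v_t$ on this set (and, say, $\Phi^t_\omega := \Id$ off it) defines the solution map; the cocycle identity is immediate from uniqueness of $v$, joint measurability of $(\omega, u_0) \mapsto \Phi^t_\omega(u_0)$ follows from the measurability of $\omega \mapsto z_\cdot$ and continuous dependence of $v$ on the data $(z, u_0)$ (approximating, e.g., via Galerkin), and the claimed dependence of $\Phi^t_\omega$ on $\omega|_{[0,t]}$ is read off from the construction.

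It remains to record injectivity and differentiability. Injectivity of $\Phi^t_\omega$ on $\Hbf$ is the backward-uniqueness property for 2d Navier--Stokes: the difference $w$ of two solutions with $w_T = 0$ satisfies a linear parabolic equation with coefficients bounded in $W^{1,\infty}$ locally uniformly in $t$, and the classical Agmon--Nirenberg log-convexity argument then forces $w \equiv 0$ (see \cite{Robinson2001-tk}, Theorem 10.6, or \cite{temam2012infinite}). For $C^1$ Fr\'echet differentiability I would first solve the linearized equation \eqref{eq:linearized-ns} around $u_t = \Phi^t_\omega(u_0)$ --- a linear parabolic system whose coefficients lie in $W^{1,\infty}$ locally uniformly in $t$ --- obtaining a bounded linear cocycle $v_0 \mapsto (D_{u_0}\Phi^t_\omega)v_0$, and then bound the second-order Taylor remainder in $\Hbf$ by an energy estimate for the (inhomogeneous, linear) equation it satisfies, exploiting the bilinearity of the Navier--Stokes nonlinearity; iterating this argument gives $C^\infty$. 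The main obstacle, apart from bookkeeping, is to carry out the $\Hbf^\gamma$ bootstrap and the differentiability estimates using only the stated summability of $\sigma$: one must keep $z$ exactly one derivative below the solution space and verify that $(v+z)\cdot\nabla(v+z)$ --- and its linearization --- are controlled in $H^{\gamma-1}$ by the available norms, which is precisely where $\gamma > 2$ and the algebra property of $H^\gamma(\T^2)$ come in.
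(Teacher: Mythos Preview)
Your sketch is correct and follows exactly the approach the paper has in mind: the paper does not prove this proposition at all but simply cites it as well-known from \cite{KS}, \S 2.4, and your outline (subtract the stochastic Stokes convolution, reduce to a deterministic 2d Navier--Stokes problem with a lower-order forcing, then run classical energy estimates and backward uniqueness) is precisely the program carried out there.
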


That $\Phi^t_\omega$ depends only on $\omega|_{[0,t]}$ implies that it is \emph{Markovian}, in the sense that $u_t = \Phi^t_{\omega}$ is a Markov process.
We say that a probability measure $\mu$ on $\mathbf{H}$ is a \emph{stationary measure} for this Markov process if
\[
    \E(\Phi^t_\omega)_*\mu = \mu \, .
\]
Existence of such a measure for dissipative RDS is generally guaranteed by a simple Krylov Bogoliubov argument (see, e.g. \cite{KS}). However, in contrast to the deterministic forcing case, it is often the case that this measure is in fact unique under fairly mild conditions on the noise (see e.g. \cite{Flandoli1995-mo,HM06}). It is well known that stationary measures $\mu$ are in one-to-one correspondence with the invariant measure $\mathfrak{m} = \P\times\mu$ on $\Omega\times\mathbf{H}$ for the skew product flow $\tau^t(\omega,u) = (\theta^t\omega,\Phi^t_\omega(u))$.

\begin{theorem} \
\begin{itemize} 
\item[(a)] (Theorem 4.2.9 \cite{KS}) Let $\mu$ be a stationary probability measure for $(u_t)$. Then, 
$\mathfrak m = \P \times \mu$ is an invariant measure for the semiflow $\tau^t : \Omega \times \Hbf$. 
\item[(b)] (Theorem I.2.1 \cite{kifer2012ergodic}) If $\mu$ is the \emph{unique} stationary measure for $(u_t)$, then $\P\times\mu$ is ergodic for $\tau^t$. 
\end{itemize}
\end{theorem}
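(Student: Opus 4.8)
\textbf{Part (a).} The plan is to verify the invariance identity $(\tau^t)_*\mathfrak m = \mathfrak m$ on product observables $g(\omega,u) = a(\omega)b(u)$ with $a,b$ bounded measurable, and then extend to all bounded measurable $g$ by a monotone class argument, since such products generate $\mathscr F\times\operatorname{Bor}(\Hbf)$. Three ingredients enter: (i) the shift $\theta^t$ preserves $\P$; (ii) since the noise has independent increments, under $\P$ the restriction $\omega|_{[0,t]}$ --- which alone determines $\Phi^t_\omega$, by the last assertion of Proposition \ref{prop:SNS-wellpossed} --- is independent of the shifted path $\theta^t\omega$, and $\theta^t\omega$ again has law $\P$; and (iii) stationarity of $\mu$, which says exactly $\int P_t b\,\dee\mu = \int b\,\dee\mu$ for the Markov semigroup $P_t b(u) := \E\,b(\Phi^t_\omega u)$. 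Given these,
\[
\int g\circ\tau^t\,\dee\mathfrak m = \E_\omega\!\left[\,a(\theta^t\omega)\int_{\Hbf} b(\Phi^t_\omega u)\,\dee\mu(u)\right];
\]
conditioning on $\theta^t\omega$ and using (ii) lets one replace the noise driving $\Phi^t$ by an independent fresh copy, so the inner integral becomes $\int P_t b\,\dee\mu$, which by (iii) equals the constant $\int b\,\dee\mu$; since $\theta^t\omega\sim\P$ by (i), the whole expression is $\bigl(\int a\,\dee\P\bigr)\bigl(\int b\,\dee\mu\bigr) = \int g\,\dee\mathfrak m$.

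\textbf{Part (b).} The plan has two steps. \emph{Step 1: uniqueness forces ergodicity of $\mu$ as a stationary measure.} The set of stationary probability measures is convex, its extreme points are precisely the ergodic ones, and (in this dissipative setting) it is nonempty; a unique stationary measure is therefore extreme, hence ergodic. \emph{Step 2: an ergodic stationary $\mu$ makes $\mathfrak m = \P\times\mu$ ergodic for $\tau^t$.} The heart of the matter is to show that every bounded $\tau^t$-invariant function $\psi:\Omega\times\Hbf\to\R$ is $\mathfrak m$-a.e.\ constant. Writing $\varphi(u) := \int_\Omega \psi(\omega,u)\,\dee\P(\omega)$, one combines the cocycle property $\psi = \psi\circ\tau^t$ with the independence in (ii) to get $P_t\varphi = \varphi$ for every $t$; ergodicity of $\mu$ then forces $\varphi\equiv c$ for a constant $c$. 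To upgrade ``$\varphi$ constant'' to ``$\psi$ constant'', condition on the noise filtration $\mathscr F_t := \sigma(\omega|_{[0,t]})$: for each $t$ one has $\E[\psi\mid\mathscr F_t\otimes\operatorname{Bor}(\Hbf)] = \varphi(\Phi^t_\omega u) = c$ $\mathfrak m$-a.e.\ (again using (ii) and stationarity), and since $\mathscr F_t\uparrow\mathscr F$, L\'evy's upward martingale convergence theorem yields $\psi = c$ $\mathfrak m$-a.e. A structurally parallel computation was carried out by hand for the periodic case in the proposition above, and the full argument is Theorem I.2.1 of \cite{kifer2012ergodic}.

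The main obstacle is Step 2 of part (b): passing from the $\omega$-average $\varphi$ of a $\tau^t$-invariant function to the function $\psi$ itself. This is where one must exploit the Markovian filtration structure of the one-sided noise space and carefully justify the conditional-expectation identities, using perfection of the cocycle $\Phi^t_\omega$ and the independence of increments. Part (a) and Step 1 are comparatively routine.
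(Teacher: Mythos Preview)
The paper provides no proof of this theorem at all: both parts are stated as direct citations (Theorem 4.2.9 of \cite{KS} and Theorem I.2.1 of \cite{kifer2012ergodic}), and the text moves on immediately. So there is nothing in the paper to compare your argument against beyond the references themselves.

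That said, your sketch is a correct outline of the standard proofs found in those sources. Part (a) is exactly the product-observable / monotone-class verification one finds in \cite{KS}, hinging on the independence of $\omega|_{[0,t]}$ from $\theta^t\omega$ and the definition of stationarity. For part (b), your two-step strategy (uniqueness $\Rightarrow$ extremality $\Rightarrow$ ergodicity of $\mu$; then lift ergodicity to $\mathfrak m$ via the filtration $\mathscr F_t\otimes\operatorname{Bor}(\Hbf)$ and L\'evy's upward theorem) is precisely Kifer's argument. The one point worth making explicit in Step~2 is why $\varphi(\Phi^t_\omega u)=c$ holds $\mathfrak m$-a.e.\ once $\varphi=c$ holds $\mu$-a.e.: this uses that under $\mathfrak m=\P\times\mu$ the law of $\Phi^t_\omega u$ is again $\mu$, by stationarity. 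Your identification of the filtration step as the only genuinely nontrivial point is accurate.
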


Moreover depending on the regularity of $F_t$ one can obtain moment estimates of higher Sobolev norms with respect the the stationary measure $\mu$. The following estimate is a consequence of \cite{Kuksin2003-st} (also c.f. Exercise 2.5.8 \cite{KS}).

\begin{proposition}\label{prop:moment-est}
Suppose $\sum_j j^{2r} \sigma_j^2 < \infty$ for some $r \geq \gamma$. Then, any  stationary measure $\mu$ for \eqref{eq:Navier-Stokes-general} satisfies the estimate
\[
    \int \left( \sup_{t \in [0,1]} \|u_t\|_{H^r}\right)^p\dee \mathfrak m(\omega, u_0) < \infty \, .
\]
for all $p \geq 0$. 
\end{proposition}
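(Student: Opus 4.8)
The plan is to reduce the claim to a priori estimates for the stochastic $2$d Navier--Stokes equations in vorticity form and then bootstrap in regularity, following \cite{Kuksin2003-st} and Exercise 2.5.8 of \cite{KS}. Since $\mathfrak m = \P \times \mu$, the integral in question equals $\E\big[\,\E_{u_0 \sim \mu} \sup_{t \in [0,1]} \| \Phi^t_\omega(u_0)\|_{H^r}^p\,\big]$, with the outer expectation taken over the noise path, so it suffices to bound, for every $p \geq 1$, the path moment $\E \sup_{t \in [0,1]} \| w_t\|_{H^{r-1}}^p$, where $w_t = \mathrm{curl}\, u_t$ solves $\partial_t w + (u \cdot \nabla) w = \nu \Delta w + \mathrm{curl}\, F$, the initial data $u_0$ has law $\mu$ independent of the noise, and I use that $\| u\|_{H^{s+1}} \simeq \| w\|_{H^s}$ on mean-zero fields. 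The single most useful structural fact is that $\mu$ is stationary, so the law of $u_t$ equals $\mu$ for every $t \geq 0$; this turns otherwise circular-looking estimates into genuine bounds after integrating in time.

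First I would record the baseline (low-regularity) estimates. It\^o's formula applied to $\| u_t\|_{L^2}^2$, using $\langle (u \cdot \nabla) u, u\rangle_{L^2} = 0$, gives
\[
\dee \| u_t\|_{L^2}^2 = \big( -2\nu \| u_t\|_{H^1}^2 + \| \sigma\|_{\ell^2}^2 \big)\, \dt + 2 \langle u_t, \dee \xi_t\rangle_{L^2} \, ,
\]
from which one extracts exponential moments $\E_\mu e^{\eta \| u\|_{L^2}^2} < \infty$ for small $\eta > 0$ in the standard way; applying It\^o to $\| w_t\|_{L^2}^2$, using $\langle (u \cdot \nabla) w, w\rangle_{L^2} = 0$ in two dimensions and square-summability of $\mathrm{curl}\, \sigma$, then yields $\E_\mu \| w\|_{L^2}^{2k} < \infty$ for all $k$, together with the associated path-supremum bounds on $[0,1]$ via Doob's and the Burkholder--Davis--Gundy (BDG) inequalities for the martingale terms.

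The heart of the argument is an inductive bootstrap in $m$. Fix an integer $m \geq 1$, as far as the noise regularity permits (the final, possibly fractional, step to order $r-1$ is handled by the same computation at non-integer order), and assume $\E_\mu \| w\|_{H^{m-1}}^q$ and $\E \sup_{t \in [0,1]} \| w_t\|_{H^{m-1}}^q$ are finite for all $q \geq 1$. Applying It\^o to $\| w_t\|_{H^m}^2$ produces a drift $-2\nu \| w_t\|_{H^{m+1}}^2 - 2 \langle (-\Delta)^{m/2}((u_t \cdot \nabla) w_t), (-\Delta)^{m/2} w_t\rangle_{L^2} + \| \mathrm{curl}\, \sigma\|_{H^m}^2$ plus a martingale $M_t$ with $\dee\langle M\rangle_t \lesssim \| w_t\|_{H^m}^2\, \dt$. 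Two-dimensional commutator and Sobolev estimates, exploiting the enstrophy control from the previous step and incompressibility, bound the nonlinear term by $\nu \| w_t\|_{H^{m+1}}^2 + C(1 + \| w_t\|_{H^m}^2) P(\| w_t\|_{H^{m-1}})$ for a fixed polynomial $P$; absorbing the dissipative part, $Y_t := \| w_t\|_{H^m}^2$ obeys $\dee Y_t \leq (g_t Y_t + h_t)\, \dt + \dee M_t$, where $g_t, h_t$ are built from $P(\| w_t\|_{H^{m-1}})$ and $\| \mathrm{curl}\, \sigma\|_{H^m}^2$ and hence, after integration over $[0,1]$, have finite moments of all orders by the inductive hypothesis. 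A stochastic Gr\"onwall argument --- using that $\E_\mu Y_t$ is $t$-independent by stationarity, together with BDG/Doob for $M_t$ --- upgrades both the fixed-time and path-supremum moment bounds to order $m$. Iterating from the $L^2$-enstrophy bound of the previous paragraph up through order $r-1$ for $w$ (equivalently order $r$ for $u$) then yields the assertion.

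I expect the main difficulty to be twofold. First, the nonlinear estimate that closes the bootstrap is genuinely two-dimensional: one must commute $(-\Delta)^{m/2}$ past $u \cdot \nabla$ and control the resulting commutator using that the enstrophy $\| w\|_{L^2}$ (and all of its moments) is bounded, so that $\nabla u$ is ``almost bounded'' --- this is exactly where two-dimensionality is used. Second, arranging that the stochastic Gr\"onwall step deliver finite moments of \emph{every} order, and the supremum over $t \in [0,1]$ rather than merely fixed-time or time-averaged bounds, requires care with the martingale terms; the standard device --- which I would follow --- is to carry out all of the above estimates first for Galerkin truncations, which are finite-dimensional SDEs for which each moment is manifestly finite, and then pass to the limit, as in \cite{Kuksin2003-st,KS}.
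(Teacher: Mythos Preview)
The paper does not give its own proof of this proposition: it is stated as a consequence of \cite{Kuksin2003-st} (with a pointer to Exercise 2.5.8 of \cite{KS}) and no further argument is supplied. Your proposal is a faithful outline of the standard argument from those references --- It\^o on $\|u\|_{L^2}^2$ and $\|w\|_{L^2}^2$ for the base case, then an inductive bootstrap in $H^m$ via commutator estimates, stochastic Gr\"onwall with BDG/Doob for the martingale terms, and passage through Galerkin truncations --- so it is consistent with what the paper is invoking.
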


\subsubsection{Linearized Navier-Stokes}

In either the time-periodically forced or stochastically forced setting, we will assume below that $u_t = \Phi^t_\omega(u_0)$ on $\mathbf{H}$. Our main goal is to study the linearized Navier-Stokes cocycle given by the Fr\'ech\'et derivative $D_{u_0}\Phi^t_\omega$ which acts as a linear operator on divergence free velocity fields. Specifically, given an initial divergence free velocity $v_0 \in \mathbf{H}$ (viewed as an infinitesimal perturbation), the trajectory $v_t = (D_{u_0}\Phi^t_{\omega})v_0$ satisfies the {\em linearized} or {\em first variation equation}
\begin{equation}\label{eq:linns}
    \partial_t v + (u\cdot \nabla)v + (v\cdot\nabla)u = \nu \Delta v - \nabla q \, ,  \quad \Div v = 0 \, , 
\end{equation}
where $q$ is the pressure enforcing the divergence-free constraint on $v$. Hence $D_{u_0}\Phi^t_\omega$ is the solution operator to the above linear equation and defines a compact linear co-cycle on $\mathbf{H}$. By uniqueness of solutions to \eqref{eq:linns}, the $D_{u_0} \Phi^t_\omega$ satisfy the cocycle property
\begin{align}\label{cocyclePropNSE4}
D_{u_0}\Phi^{t + r}_{\omega} = D_{u_t}\Phi^r_{\theta^t \omega}\circ D_{u_0}\Phi^t_{\omega} \quad \text{ for all } r, t \geq 0 \,. 
\end{align}

To apply Theorem \ref{thm:allEqualLE} in this setting, we want to treat $D_{u_0} \Phi^t_\omega$ as a cocycle over $\Hbf^s$ for a range of $s$, given a \emph{fixed} base $\Phi^t_\omega$ on $\Hbf = \Hbf^\gamma$, where $\gamma > 2$ is fixed. 
 The following summarizes what is needed. 
 
\begin{proposition}\label{prop:lin-well-posed}
Fix $\gamma' \geq \gamma, \omega \in \Omega, u_0 \in \Hbf$ and suppose
 $u_t = \Phi^t_\omega (u_0)$ is a solution 
to the 2d Navier-Stokes equations \eqref{eq:Navier-Stokes-general} 
in the setting of either Proposition \ref{prop:NSEwellPosedDet} or 
\ref{prop:SNS-wellpossed}. Assume that for $\mathfrak m$-a.e. $(\omega, u_0) \in \Omega \times \Hbf$, the solution $u_t = \Phi^t_\omega(u_0)$ satisfies $(u_t) \in L_{\rm loc}^1([0,\infty), \Hbf^{\gamma' + 2})$. Then:  
\begin{itemize}
	\item[(a)] The mapping $D_{u_0}\Phi^t_\omega:\mathbf{H}\circlearrowleft$ extends\footnote{When there is no confusion, we will abuse notation somewhat and write $D_{u_0} \Phi^t_\omega : \Hbf^s \to \Hbf^s$ for the extended operator.} to a compact bounded linear operator on $\Hbf^s$ such that $v_t := D_{u_0}\Phi^t_\omega v_0$ is a solution to \eqref{eq:linns} with initial data $v_0 \in \Hbf^s$. Equation  \eqref{cocyclePropNSE4} is satisfied as a cocycle on $\Hbf^s$. 
	\item[(b)] The mapping $(\omega, u_0) \mapsto D_{u_0} \Phi^t_\omega$ is strongly measurable in $\Hbf^s$. 
\end{itemize}
\end{proposition}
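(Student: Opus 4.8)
\emph{Proof strategy.} The plan is to construct the solution operator of the linearized equation \eqref{eq:linns} directly on each space $\Hbf^s$ with $s$ in the admissible range $[-\gamma'+1,\gamma'+1]$ by means of a priori energy estimates, to deduce compactness from parabolic smoothing together with Rellich's theorem, and to obtain strong measurability by a density argument. Throughout one writes $\mathbb{P}$ for the Leray projection onto divergence-free fields, which on $\T^2$ is a matrix Fourier multiplier, hence bounded on every $\Hbf^s$ and commuting with $\Delta$ and with $\Lambda^s := (-\Delta)^{s/2}$; then \eqref{eq:linns} becomes $\partial_t v = \nu\Delta v - \mathbb{P}[(u_t\cdot\nabla)v] - \mathbb{P}[(v\cdot\nabla)u_t]$. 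One shows that the operators so constructed are consistent across the scale of spaces (each the restriction of any lower-index one, by uniqueness for this linear equation), and that on $\Hbf^\gamma$ the construction recovers the Fréchet derivative $D_{u_0}\Phi^t_\omega$ of Propositions \ref{prop:NSEwellPosedDet} and \ref{prop:SNS-wellpossed}; one then writes $D_{u_0}\Phi^t_\omega$ for all of them, as in the footnote.

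For $s\in[0,\gamma'+1]$ I would test the equation against $\Lambda^{2s}v$. Using $\Div u_t = 0$ to kill $\langle (u_t\cdot\nabla)\Lambda^s v,\Lambda^s v\rangle_{L^2}$ and that $\mathbb{P}$ is self-adjoint and fixes $v$, one obtains
\[
\tfrac{\dee}{\dt}\|v_t\|_{H^s}^2 = -2\nu\|v_t\|_{H^{s+1}}^2 - 2\langle[\Lambda^s,u_t\cdot\nabla]v_t,\Lambda^s v_t\rangle_{L^2} - 2\langle\Lambda^s((v_t\cdot\nabla)u_t),\Lambda^s v_t\rangle_{L^2}.
\]
Since $\gamma'+2 > \tfrac d2 + 1$, the standard Kato--Ponce commutator and product estimates give $\|[\Lambda^s,u\cdot\nabla]v\|_{L^2}\lesssim\|u\|_{H^s}\|v\|_{H^s}$ and $\|(v\cdot\nabla)u\|_{H^s}\lesssim\|v\|_{H^s}\|u\|_{H^{s+1}}$ for $0\le s\le\gamma'+1$; the product estimate is exactly where the hypothesis $u\in\Hbf^{\gamma'+2}$ is consumed, as $s+1$ may be as large as $\gamma'+2$. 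Hence $\tfrac{\dee}{\dt}\|v_t\|_{H^s}^2 \le -2\nu\|v_t\|_{H^{s+1}}^2 + C\|u_t\|_{H^{\gamma'+2}}\|v_t\|_{H^s}^2$, and Gr\"onwall together with $(u_t)\in L^1_{\rm loc}([0,\infty),\Hbf^{\gamma'+2})$ yields both $\|v_t\|_{H^s}\le\exp(C\int_0^t\|u_\tau\|_{H^{\gamma'+2}}\,\dee\tau)\|v_0\|_{H^s}$ and $\nu\int_0^t\|v_\tau\|_{H^{s+1}}^2\,\dee\tau<\infty$; a routine parabolic bootstrap upgrades the latter to the smoothing bound $\|v_t\|_{H^{s+1}}\lesssim t^{-1/2}\exp(C\int_0^t\|u_\tau\|_{H^{\gamma'+2}}\,\dee\tau)\|v_0\|_{H^s}$ for $t>0$. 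This defines the bounded operator $D_{u_0}\Phi^t_\omega:\Hbf^s\circlearrowleft$ and the cocycle identity \eqref{cocyclePropNSE4} passes to $\Hbf^s$ by density.

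For $s\in[-\gamma'+1,0)$ I would argue by duality, as in the passive scalar case. The $L^2$-adjoint of $D_{u_0}\Phi^t_\omega$ is the backward adjoint solution operator $R^t_\omega$: if $L_r$ denotes the generator of \eqref{eq:linns} at time $r$, then $L_r^\ast w = \nu\Delta w + \mathbb{P}[(u_r\cdot\nabla)w] - \mathbb{P}[(\nabla u_r)^{T}w]$, and if $v_r$ solves $\partial_r v_r = L_r v_r$ and $w_r$ solves $-\partial_r w_r = L_r^\ast w_r$ on $[0,t]$ then $\tfrac{\dee}{\dr}\langle v_r,w_r\rangle_{L^2}=0$, whence $\langle D_{u_0}\Phi^t_\omega v_0,\psi\rangle_{L^2} = \langle v_0, R^t_\omega\psi\rangle_{L^2}$ with $R^t_\omega\psi := w_0$. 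Run in reversed time, $R^t_\omega$ is itself a forward parabolic solution operator of the same structure as \eqref{eq:linns} (a transport term plus a zeroth-order coefficient built from $\nabla u$, with $\nu\Delta$ dissipation), so the previous estimates apply to it and bound it on $\Hbf^\sigma$ for $0\le\sigma\le\gamma'+1$. Consequently, for $v_0\in L^2$, $\|D_{u_0}\Phi^t_\omega v_0\|_{H^s} = \sup_{\|\psi\|_{H^{-s}}\le1}\langle v_0, R^t_\omega\psi\rangle_{L^2}\le\|v_0\|_{H^s}\,\|R^t_\omega\|_{H^{-s}\to H^{-s}}$, so $D_{u_0}\Phi^t_\omega$ extends continuously from $L^2$ (dense in $\Hbf^s$) to a bounded operator on $\Hbf^s$, and the cocycle identity again passes to the limit; the restriction $s\ge-\gamma'+1$ is exactly the condition $\gamma'+(s-1)>0$ needed for the remainder term in the paraproduct decomposition of the transport commutator.

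Compactness for $t>0$ then follows from smoothing: for $|s|<\gamma'+1$ choose $\delta\in(0,1]$ with $|s|+\delta\le\gamma'+1$, so $D_{u_0}\Phi^t_\omega:\Hbf^s\to\Hbf^{s+\delta}$ is bounded, and compose with the compact Rellich embedding $\Hbf^{s+\delta}\hookrightarrow\Hbf^s$ on $\T^2$; at the endpoints $s=\pm(\gamma'+1)$ one instead writes $D_{u_0}\Phi^t_\omega = D_{u_0}\Phi^{t/2}_{\theta^{t/2}\omega}\circ\iota\circ D_{u_0}\Phi^{t/2}_\omega$ with a compact embedding $\iota$ one regularity unit in. For strong measurability (part (b)): for $v_0$ in the dense set $\bigcup_m\Hbf^m$ the map $(\omega,u_0)\mapsto D_{u_0}\Phi^t_\omega v_0$ is measurable into $\Hbf^\gamma$ (inherited from measurability of $(\omega,u_0)\mapsto\Phi^t_\omega(u_0)$ via the difference-quotient characterization of the Fr\'echet derivative), hence into $\Hbf^s$ by continuity of the relevant inclusion; for arbitrary $v_0\in\Hbf^s$ one approximates in $\Hbf^s$ and invokes the uniform operator bound from part (a), pointwise limits of measurable maps being measurable. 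The main obstacle is the a priori energy estimate at general — fractional and, via duality, negative — $s$: it requires the precise commutator and product estimates and careful accounting of the regularity of $u$ that they consume (this is what fixes the hypothesis $u\in\Hbf^{\gamma'+2}$ and the range $[-\gamma'+1,\gamma'+1]$), together with the clean identification of the $L^2$-adjoint with the backward adjoint flow so that the negative-$s$ case reduces to a forward estimate.
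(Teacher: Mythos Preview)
Your proposal is correct and follows the same overall strategy as the paper: a priori energy estimates for $s\ge 0$ via commutator bounds, extension to $s<0$ by $L^2$-duality with the backward adjoint equation, and compactness from parabolic smoothing plus Rellich. The paper's own proof is only a sketch, deferring the substance to Lemmas \ref{lem:LNS1} and \ref{lem:LNS2}; the one genuine technical difference is that the paper passes to \emph{vorticity form} and rewrites the stretching term as $\Delta u\cdot\nabla\Lambda^{-2}\eta$, then proves the bespoke commutator and product estimates Lemmas \ref{lem:commutator} and \ref{lem:preEst2} by direct Fourier decomposition (splitting on $|\ell|\lessgtr |k|/2$), whereas you stay in velocity form and invoke Kato--Ponce. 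The vorticity route explains the shift by one in the admissible range $[-\gamma'+1,\gamma'+1]$ via the $\curl$ isomorphism $\Hbf^{s}\simeq H^{s-1}$, while you recover the same range from the paraproduct remainder condition; both are legitimate.

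One point to correct: the commutator bound you write, $\|[\Lambda^s,u\cdot\nabla]v\|_{L^2}\lesssim\|u\|_{H^s}\|v\|_{H^s}$, is false as stated for small $s$ (e.g.\ $s$ just above $0$ in $d=2$). The correct Kato--Ponce form, or the paper's Lemma \ref{lem:commutator}, places $\|u\|_{H^r}$ with some fixed $r>\tfrac d2+1$ on the right-hand side. This does not affect your argument, since you immediately absorb everything into $\|u\|_{H^{\gamma'+2}}$, but the intermediate inequality should be stated with the correct exponent on $u$.
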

\begin{proof}[Proof sketch]
Since \eqref{eq:linns} is parabolic (up to a compact perturbation), 
well-posedness in $\Hbf^s, s \geq 0$ is standard (see \cite{I_Udovich1989-jm}), while well-posedness in $\Hbf^{-s}$ can be proved via a duality argument and using linearity of the equation. 
A priori estimates sufficient to deduce these statements for a range of $s$ related to the regularity of $u$ are presented in Section \ref{subsubsec:linns-est}.
\end{proof}

We are now in position to state our results on the Lyapunov exponents of 
2d Navier-Stokes. 

\begin{atheorem}\label{thm:ratesTheSameNSE}
Let $\nu > 0$ be fixed. Let $\tau^t:\Hbf \times \Omega \circlearrowleft$ be the skew product semiflow associated to the Navier-Stokes equations and let $\mathfrak{m}$ be a $\tau^t$-invariant probability measure that satisfies the following moment condition for some $\gamma^\prime 
\geq \gamma > 1+ d/2 = 2$: 
\begin{equation}\label{eq:velocity-moment-est}
\int \left( \int_0^1 \|u_t\|_{H^{\gamma^\prime + 2}} \dee t\right) \, \dee \mathfrak m(\omega, u_0) < \infty \,. 
\end{equation}
Then: 
\begin{itemize}
    \item[(a)] For $\mathfrak m$-a.e. $(\omega, u_0) \in \Omega \times \Hbf$ and for all  $s \in [-\gamma^\prime + 1,\gamma^\prime + 1]$ and mean-zero divergence free velocity fields $v_0 \in \Hbf^s$ , the global solution $v_t = D_{u_0}\Phi^t_\omega v_0$ to \eqref{eq:passiveScalarEq} has the property that the limit
    \begin{align}\label{eq:LEequalNSE}
    \lambda(\omega, u_0; v_0) := \lim_{t \to \infty} \frac1t \log \| v_t \|_{H^s}
    \end{align}
    exists (note the limit $-\infty$ is possible) and is independent of $s\in [-\gamma^\prime + 1,\gamma^\prime + 1]$. 
    
    \item[(b)] If $\mathfrak{m}$ is also $\tau^t$-ergodic, then there exists $\lambda_1 \in \R \cup \{ -\infty\}$ and $d \in \Z_{\geq 0}$, each depending only on $\mathfrak m$ and $\nu$, with the following property: for all $s \in [-\gamma^\prime + 1,\gamma^\prime + 1]$, for $\mathfrak m$-a.e. $(\omega, u_0) \in \Omega \times \Hbf$, and for
     all $v_0$ chosen {\em off} of a $d$-codimensional subspace of $\Hbf^s$, we have
     \[
     \lambda_1 = \lim_{t \to \infty} \frac1t \log \| v_t \|_{H^s} \,. 
     \]
\end{itemize}
\end{atheorem}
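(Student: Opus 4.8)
The plan is to derive Theorem~\ref{thm:ratesTheSameNSE} from the abstract Theorem~\ref{thm:allEqualLE}, applied along integer times to the linearized cocycle $A(\omega,u_0):=D_{u_0}\Phi^1_\omega$ over the skew product $(\tau^1,\mathfrak m)$, regarded simultaneously on the scale of Sobolev spaces $\Hbf^s$ with $s\in[-\gamma'+1,\gamma'+1]$. Essentially all of the real content lies in the quantitative a priori estimates for the linearized equation \eqref{eq:linns} announced in Proposition~\ref{prop:lin-well-posed} and established in Section~\ref{subsubsec:linns-est}; granted those, the dynamical argument is short.

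First I would fix the extreme pair $B=\Hbf^{-\gamma'+1}$, $V=\Hbf^{\gamma'+1}$ and check the hypotheses of Section~\ref{subsec:statementOfResults}. Conditions (a)--(c) hold: $\Hbf^{\gamma'+1}$ is dense in $\Hbf^{-\gamma'+1}$; the map $s\mapsto\|v\|_{H^s}$ is non-decreasing on mean-zero fields since every frequency satisfies $|k|\ge 1$, so $\|\cdot\|_B\le\|\cdot\|_V$; and both spaces, being Hilbert, have separable dual. For (1)--(4) one takes $(X,\mathscr F,m)=(\Omega\times\Hbf,\ \mathscr F\times\Bor(\Hbf),\ \mathfrak m)$ and $T=\tau^1$. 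Since \eqref{eq:velocity-moment-est} and $\tau^t$-invariance of $\mathfrak m$ supply $(u_t)\in L^1_{\loc}([0,\infty),\Hbf^{\gamma'+2})$ for $\mathfrak m$-a.e.\ $(\omega,u_0)$, Proposition~\ref{prop:lin-well-posed} gives that $A(\omega,u_0)$ is a compact operator on $\Hbf^{-\gamma'+1}$ whose restriction to $\Hbf^{\gamma'+1}$ is again a compact operator on $\Hbf^{\gamma'+1}$, and that $(\omega,u_0)\mapsto A(\omega,u_0)$ is strongly measurable on each $\Hbf^s$; the chain rule \eqref{cocyclePropNSE4} identifies $A^n_{(\omega,u_0)}$ with $D_{u_0}\Phi^n_\omega$. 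The log-integrability \eqref{eq:regularizingCond} follows from Section~\ref{subsubsec:linns-est}, which bounds $\|A(\omega,u_0)\|_{H^s}$ by a polynomial in $\int_0^1(1+\|u_t\|_{H^{\gamma'+2}})\,\dt$, so that $\log^+\|A(\omega,u_0)\|_{H^s}\lesssim 1+\int_0^1\|u_t\|_{H^{\gamma'+2}}\,\dt$ and \eqref{eq:velocity-moment-est} makes the integral finite. Theorem~\ref{thm:allEqualLE} then yields $r^{\Hbf^{-\gamma'+1}}=r^{\Hbf^{\gamma'+1}}$, equality of the corresponding Lyapunov exponents and multiplicities, and $F_i^{\Hbf^{\gamma'+1}}=F_i^{\Hbf^{-\gamma'+1}}\cap\Hbf^{\gamma'+1}$.

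To obtain part (a), observe that for $\mathfrak m$-a.e.\ $(\omega,u_0)$ and every $v\in\Hbf^{\gamma'+1}$, Theorem~\ref{thm:allEqualLE} gives $\lim_n\frac1n\log\|A^n_{(\omega,u_0)}v\|_{H^{-\gamma'+1}}=\lim_n\frac1n\log\|A^n_{(\omega,u_0)}v\|_{H^{\gamma'+1}}$, and since $\|\cdot\|_{H^{-\gamma'+1}}\le\|\cdot\|_{H^s}\le\|\cdot\|_{H^{\gamma'+1}}$, a squeeze forces $\lim_n\frac1n\log\|A^n_{(\omega,u_0)}v\|_{H^s}$ to exist and equal this common value for every $s$ in the interval. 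For a general datum $v_0\in\Hbf^{s_0}$ with $s_0\in[-\gamma'+1,\gamma'+1]$, the instantaneous parabolic smoothing for \eqref{eq:linns} (also from Section~\ref{subsubsec:linns-est}) gives $v_1=D_{u_0}\Phi^1_\omega v_0\in\Hbf^{\gamma'+1}$, so applying the previous observation to $v_1$ at the shifted generic point $\tau^1(\omega,u_0)$ shows that $\lambda(\omega,u_0;v_0)=\lim_t\frac1t\log\|v_t\|_{H^s}$ exists along integer times and is independent of $s$. The passage to continuous time is standard once $\log^+\sup_{t\in[0,1]}\|D_{u_0}\Phi^t_\omega\|_{H^s}$ and $\log^+\sup_{t\in[0,1]}\|D_{u_t}\Phi^{1-t}_{\theta^t\omega}\|_{H^s}$ are seen to lie in $L^1(\mathfrak m)$ (again from Section~\ref{subsubsec:linns-est} and $\tau^t$-invariance), cf.\ the footnote at \eqref{discToContinuousTime}. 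For part (b), ergodicity of $\mathfrak m$ makes $r$, the top exponent $\lambda_1$ and the multiplicity $d:=d_1=\codim F_2$ deterministic (functions of $\mathfrak m$ and $\nu$ only), and by Theorem~\ref{thm:allEqualLE} the same in every $\Hbf^s$; the MET (Theorem~\ref{thm:nonInvMET}) then gives $\lambda(\omega,u_0;v_0)=\lambda_1$ for all $v_0\in\Hbf^s\setminus F_2^{\Hbf^s}(\omega,u_0)$, so $F_2^{\Hbf^s}(\omega,u_0)$ is the asserted $d$-codimensional exceptional subspace.

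The principal obstacle is the analytic input of Section~\ref{subsubsec:linns-est}: showing that $D_{u_0}\Phi^1_\omega$ is bounded and compact on the whole scale $\Hbf^s$, $s\in[-\gamma'+1,\gamma'+1]$, with the stated polynomial control and the $\Hbf^{-\gamma'+1}\to\Hbf^{\gamma'+1}$ smoothing. In the positive-regularity range one must absorb the transport term $(u\cdot\nabla)v$ and, crucially, the derivative-losing but zeroth-order-in-$v$ term $(v\cdot\nabla)u$---the source of the two-derivative gap between the regularity $\gamma'+2$ demanded of $u$ and the top index $\gamma'+1$ reached for $v$---with compactness coming from Stokes-semigroup smoothing together with these bounds; the negative-regularity range is then recovered by a duality argument exploiting linearity of \eqref{eq:linns} and the $L^2$-pairing of $\Hbf^s$ with $\Hbf^{-s}$, parallel to the passive-scalar case (cf.\ the proof of Lemma~\ref{lem:regularityPSA2}). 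It is worth noting that in the white-in-time setting $\tau^t$ is genuinely non-invertible, which is exactly why the one-sided Theorem~\ref{thm:allEqualLE} established here is needed rather than the invertible-base version of \cite{gonzalez2021stability}.
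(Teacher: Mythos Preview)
Your proposal is correct and follows essentially the same route as the paper: verify the hypotheses (1)--(4) of Theorem~\ref{thm:allEqualLE} for the time-one linearized cocycle on the scale $\Hbf^s$ using the a priori estimates of Section~\ref{subsubsec:linns-est} (Lemmas~\ref{lem:LNS1}--\ref{lem:LNS2}) together with the moment condition \eqref{eq:velocity-moment-est}, then invoke the MET and Theorem~\ref{thm:allEqualLE}, and finally pass from integer to continuous time via the analogue of \eqref{discToContinuousTime}. One small slip: the estimates in Section~\ref{subsubsec:linns-est} bound $\|A(\omega,u_0)\|_{H^s}$ \emph{exponentially} (not polynomially) in $\int_0^1(1+\|u_t\|_{H^{\gamma'+2}})\,\dt$, which is precisely what gives the linear bound on $\log^+\|A\|$ you state; and note that those lemmas are phrased in vorticity, so the index shift $\|\eta\|_{H^{s-1}}\approx\|v\|_{H^s}$ is what produces the range $[-\gamma'+1,\gamma'+1]$.
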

\begin{proof}
Lemmas \ref{lem:LNS1} and \ref{lem:LNS2} and the estimate \eqref{eq:velocity-moment-est} imply immediately that $\| D_{u_0} \Phi^1_\omega\|_{H^s}$ satisfies the analogue of the logarithmic moment estimate \eqref{eq:logMomEstPSA} for any $s \in [- \gamma' + 1, \gamma' + 1]$.  
The MET (Theorem \ref{thm:nonInvMET}) and Theorem \ref{thm:allEqualLE} apply, implying convergence of the limits \eqref{eq:LEequalNSE} and independence from $s$ when taken along integer times. Passing from discrete to continuous time follows similarly, using the analogue of \eqref{discToContinuousTime}. 
\end{proof}

\begin{remark}
Note that in light of the regularizing properties of Navier-Stokes the moment condition \eqref{eq:velocity-moment-est} for solutions to Navier stokes is ultimately a condition on the regularity of the force. 
In the periodically forced case, it is sufficient for $F_t \in \Hbf^{\gamma+1}$ for all $t$,  due the to fact that there is an absorbing ball in $\Hbf^{\gamma+2}$.
However, in the stochastically forced case, we require that $F_t$ belongs to $\Hbf^{\gamma + 2}$, so that the moment bound \eqref{eq:velocity-moment-est} follows from Proposition \ref{prop:moment-est} under the condition that $\sum_j j^{2 ( \gamma^\prime + 2)} \sigma_j^2 < \infty$. 
\end{remark}

\bigskip

We now apply Corollary \ref{cor:reguarlityFunctions} concerning Lyapunov regularity functions to the 
Navier-Stokes cocycle. 
For $\epsilon, \nu > 0$ and $s \in [- \gamma' + 1, \gamma' + 1]$, let 
$\overline{D}^{H^s}_{\epsilon, \nu}, \underline{D}^{H^s}_{\epsilon, \nu} : \Omega \times \Hbf \to [1,\infty)$ denote the Lyapunov regularity functions for the cocycle $D_{u_0}\Phi^t_\omega$, defined analogously to \eqref{eq:defnPSALEregularity}. 

\begin{acorollary}
Assume the setting of Theorem \ref{thm:ratesTheSameNSE}, and in addition, that $(\tau^t, \mathfrak m)$ is ergodic. Fix $p > 3$ and $0 < q < \frac{p^2 - 3 p}{p-1}$, and assume the moment condition
\[
\mathcal I_p := \int \left( \int_0^1(1 +  \| u_\tau\|_{H^{\gamma + 2}}) \dee \tau \right)^p \dee \mathfrak m(\omega, u_0) <\infty \,. 
 \]
Then, for any $\delta, \nu > 0$ and $- \gamma' + 1 \leq s' < s \leq \gamma' + 1$, there exists a function $K_{\delta, \nu}^{s', s} : \Omega \times \Hbf \to [1,\infty)$ such that 
 \begin{align*}
\overline{D}^{H^s}_{\e, \kappa}   \leq K_{\delta, \kappa}^{s', s} \, \overline{D}^{H^{s'}}_{\e + \delta, \kappa} \, , \qquad  
\underline{D}^{H^{s'}}_{\e, \kappa}  \leq K_{\delta, \kappa}^{s', s} \, \underline{D}^{H^s}_{\e + \delta, \kappa}  \, , 
\end{align*}
and the following moment condition holds: 
\[
\int (\log^+ K_{\delta, \kappa}^{s', s})^q \dee \mathfrak m \lesssim_{p, q} \delta^{- (p-q)} \left(  1 + (s - s') | \log \nu| + \mathcal I_p \right) \,. 
\]

\end{acorollary}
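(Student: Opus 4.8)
The plan is to deduce the statement directly from the abstract comparison of Lyapunov regularity functions in Corollary~\ref{cor:reguarlityFunctions}, applied to the linearized Navier--Stokes cocycle $A_{(\omega,u_0)} := D_{u_0}\Phi^1_\omega$ over the time-one base map $\tau^1 : \Omega \times \Hbf \circlearrowleft$. Since $s' < s$ one has $\Hbf^s \embeds \Hbf^{s'}$ with $\|\cdot\|_{H^{s'}} \leq \|\cdot\|_{H^s}$, so in the abstract framework we take $B = \Hbf^{s'}$ and $V = \Hbf^s$; with these identifications the two inequalities produced by Corollary~\ref{cor:reguarlityFunctions}, namely $\overline{D}^V_\e \leq K_\delta\,\overline{D}^B_{\e+\delta}$ and $\underline{D}^B_\e \leq K_\delta\,\underline{D}^V_{\e+\delta}$, are exactly the two claimed comparisons, with $K^{s',s}_{\delta,\nu} := K_\delta$. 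Hypotheses (1)--(4) of Theorem~\ref{thm:allEqualLE} (strong measurability, compactness on each $\Hbf^s$, and the logarithmic moment condition) are supplied for this cocycle by Proposition~\ref{prop:lin-well-posed} together with the moment bound \eqref{eq:velocity-moment-est} already in force in the setting of Theorem~\ref{thm:ratesTheSameNSE}. Thus the only point to verify is hypothesis (5): that the range of $A_{(\omega,u_0)}$ on $\Hbf^{s'}$ lies in $\Hbf^s$, that it is bounded $\Hbf^{s'} \to \Hbf^s$, and that $\log^+\|A_{(\omega,u_0)}\|_{H^{s'}\to H^s} \in L^p(\mathfrak m)$.

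The heart of the matter is the parabolic smoothing estimate for the first variation equation \eqref{eq:linns}. Over a unit time interval the heat semigroup $e^{\nu t\Delta}$ gains $s-s'$ derivatives at the cost of a factor $(\nu t)^{-(s-s')/2}$; treating the transport and stretching terms $(u\cdot\nabla)v$ and $(v\cdot\nabla)u$ perturbatively via Duhamel and a Gr\"onwall argument, with the coefficients controlled through the Sobolev embeddings available for $\gamma > 1 + d/2$, one obtains for $\mathfrak m$-a.e. $(\omega, u_0)$ a pointwise bound of the form
$$
\|D_{u_0}\Phi^1_\omega v_0\|_{H^s} \leqs \nu^{-(s-s')/2}\,\exp\!\Big(C\!\int_0^1 \big(1+\|u_\tau\|_{H^{\gamma+2}}\big)\,\dee\tau\Big)\,\|v_0\|_{H^{s'}}\,,
$$
hence $\log^+\|D_{u_0}\Phi^1_\omega\|_{H^{s'}\to H^s} \leqs 1 + (s-s')|\log\nu| + \int_0^1(1+\|u_\tau\|_{H^{\gamma+2}})\,\dee\tau$. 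These are precisely the a priori estimates assembled in Lemmas~\ref{lem:LNS1} and \ref{lem:LNS2} (and Section~\ref{subsubsec:linns-est}); the negative-regularity range $s,s' < 0$ is reached, as in Proposition~\ref{prop:lin-well-posed}, by the duality argument together with linearity of \eqref{eq:linns}. Taking $L^p(\mathfrak m)$ norms and using the triangle inequality, $\big\|\log^+\|A_\cdot\|_{H^{s'}\to H^s}\big\|_{L^p(\mathfrak m)}^p \leqs_p 1 + \big((s-s')|\log\nu|\big)^p + \mathcal I_p < \infty$, so hypothesis (5) holds with the prescribed $p > 3$.

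It then remains to invoke Corollary~\ref{cor:reguarlityFunctions}: for each $\delta > 0$ it furnishes $K_\delta = K^{s',s}_{\delta,\nu}$ satisfying the two stated comparison inequalities together with $\int(\log^+ K_\delta)^q\,\dee\mathfrak m \leqs_{p,q} \delta^{-(p-q)}\big(1 + \big\|\log^+\|A_\cdot\|_{H^{s'}\to H^s}\big\|_{L^p(\mathfrak m)}^p\big)$ for every $q < \tfrac{p(p-3)}{p-1} = \tfrac{p^2-3p}{p-1}$. Substituting the estimate from the previous step and simplifying (for the fixed parameters $s, s', \nu$, absorbing the power of $(s-s')|\log\nu|$ as permitted) yields the asserted moment bound $\int(\log^+ K^{s',s}_{\delta,\nu})^q\,\dee\mathfrak m \leqs_{p,q} \delta^{-(p-q)}\big(1 + (s-s')|\log\nu| + \mathcal I_p\big)$. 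I expect the main obstacle to be the middle step: isolating the clean pathwise $\nu$-dependence $\nu^{-(s-s')/2}$ of the $\Hbf^{s'}\to\Hbf^s$ operator norm of the linearized flow while keeping the accompanying exponential factor controlled by the \emph{fixed} regularity level $H^{\gamma+2}$ rather than the ambient $H^{\gamma'+2}$, and doing so uniformly across the full range $s, s' \in [-\gamma'+1,\gamma'+1]$ including negative indices — this is exactly the content of Lemmas~\ref{lem:LNS1} and \ref{lem:LNS2}, after which the conclusion is bookkeeping on top of the already-established abstract Corollary~\ref{cor:reguarlityFunctions}.
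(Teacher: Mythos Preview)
Your proposal is correct and follows essentially the same route as the paper: the paper gives no explicit proof for this corollary, but by direct analogy with the one-line justification of the passive-scalar version (``a straightforward consequence of Lemmas~\ref{lem:regularityPSA1}, \ref{lem:regularityPSA2} and Corollary~\ref{cor:reguarlityFunctions}''), the intended argument is exactly to feed the smoothing estimates of Lemmas~\ref{lem:LNS1}--\ref{lem:LNS2} into the abstract Corollary~\ref{cor:reguarlityFunctions} with $B=\Hbf^{s'}$, $V=\Hbf^s$, precisely as you describe. Your observation about needing the exponential factor controlled at level $H^{\gamma+2}$ (matching $\mathcal I_p$) rather than $H^{\gamma'+2}$, and the slight looseness in passing from $((s-s')|\log\nu|)^p$ to $(s-s')|\log\nu|$ under $\lesssim_{p,q}$, are genuine wrinkles in the paper's statement itself rather than defects in your argument.
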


\subsection{Verifying the moment conditions}
%!TEX root = main.tex

In this section we record and prove the estimates needed to verify the moment condition \eqref{eq:regularizingCond} to apply Theorem \ref{thm:allEqualLE} to advection diffusion and the 2d linearized Navier-Stokes equations described above. The techniques are straightforward, employing tools from Fourier multipliers and paradifferential calculus. It is likely that the stability estimates \eqref{eq:regPropPSA1}, \eqref{eq:regPropPSA2} and \eqref{eq:NSE-Stability-1s}, \eqref{eq:NSE-Stability-2s} are not sharp and could be improved with more work.

\medskip

\subsubsection{Preliminary estimates}

For each $s\in\R$ we define the fractional derivative operator $\Lambda^{s}$ 
to be the Fourier multiplier
\[
    \mathcal{F}[\Lambda^{s}f](k) := |k|^{s}\hat{f}(k).
\]

We begin by proving a fundamental commutator estimate for the advection operator $u\cdot \nabla$.

\begin{lemma}\label{lem:commutator}
Let $\gamma > \frac{d}{2} + 1$, $s\in [0,\gamma]$. Then, there exists a constant $C$ depending on $\gamma,d$ such that for all mean-zero vector fields $u\in \Hbf = \Hbf^{\gamma}$ and mean-zero scalars $f\in H^s$,  we have  
\[
	\|[\Lambda^s,u\cdot \nabla]f\|_{L^2}\leq C \|u\|_{H^{\gamma}}\|f\|_{H^s} \, .
\]
Here, $[A, B] = A B - B A$ denotes the commutator of two operators $A, B$. 
\end{lemma}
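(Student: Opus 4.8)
The plan is to estimate the commutator $[\Lambda^s, u\cdot\nabla]f$ directly on the Fourier side. Writing everything out, we have
\[
\mathcal{F}\big([\Lambda^s, u\cdot\nabla]f\big)(k) = \sum_{j \in \Z^d_0} \big(|k|^s - |j|^s\big)\, \big(i\, \hat u(k-j)\cdot j\big)\, \hat f(j),
\]
using that $u\cdot\nabla f$ has Fourier coefficient $\sum_j i(\hat u(k-j)\cdot j)\hat f(j)$ and that $\hat u$ is supported on $\Z^d_0$ (since $u$ is mean-zero), as is $\hat f$. Thus the whole problem reduces to controlling the multiplier $\big||k|^s - |j|^s\big|\,|j|$ against something that is absorbed by $\|u\|_{H^\gamma}$ and $\|f\|_{H^s}$. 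The key elementary inequality I would use is: for $s \in [0,1]$, $\big||k|^s - |j|^s\big| \lesssim |k-j|^s$, while for $s \geq 1$ one has $\big||k|^s - |j|^s\big| \lesssim |k-j|\,\big(|k|^{s-1} + |j|^{s-1}\big) \lesssim |k-j|\big(|k-j|^{s-1} + |j|^{s-1}\big)$. In either regime one obtains, for $s \in [0,\gamma]$, the bound
\[
\big||k|^s - |j|^s\big|\,|j| \;\lesssim\; |k-j|^{\,s}\,|j| \;+\; |k-j|\,|j|^{\,s},
\]
where I'm being slightly cavalier — one should split according to whether $|k-j| \geq |j|$ or not, but in all cases the right-hand side dominates (when $|k-j|$ is small relative to $|j|$, the gain $|k-j|^s$ vs $|k-j|$ for $s<1$ is what makes the first term work; for $s\ge 1$ the second term $|k-j|\,|j|^{s-1}|j| = |k-j|\,|j|^s$ is the relevant one).

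With this pointwise multiplier bound, the commutator becomes (up to constants) controlled by a sum of two bilinear convolution expressions: one with kernel $|k-j|^s\,|\hat u(k-j)|\cdot |j|\,|\hat f(j)|$ and one with kernel $|k-j|\,|\hat u(k-j)|\cdot |j|^s\,|\hat f(j)|$. I would estimate the $L^2$ norm (i.e. the $\ell^2_k$ norm of the above) of each by Young's convolution inequality $\ell^2 = \ell^1 * \ell^2$: the first term is bounded by $\big\|\,|\cdot|^s\hat u\,\big\|_{\ell^1}\,\big\|\,|\cdot|\,\hat f\,\big\|_{\ell^2} = \big\|\,|\cdot|^s\hat u\,\big\|_{\ell^1}\,\|f\|_{H^1}$ and the second by $\big\|\,|\cdot|\,\hat u\,\big\|_{\ell^1}\,\|f\|_{H^s}$. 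Now $\big\|\,|\cdot|^a \hat u\,\big\|_{\ell^1} \lesssim \|u\|_{H^{a + d/2 + \epsilon}}$ by Cauchy–Schwarz against $\sum_k (1+|k|)^{-d-2\epsilon} < \infty$; since $a = s \le \gamma$ or $a = 1 \le \gamma$, and $\gamma > d/2 + 1$, we get $a + d/2 + \epsilon \le \gamma$ for suitable small $\epsilon$ (using $s \le \gamma$ and $s + d/2 < \gamma + d/2$; more carefully one wants $s + d/2 \le \gamma$, which holds when $s \le \gamma - d/2$, and for $s \in (\gamma - d/2, \gamma]$ one instead pairs the top $s$-derivatives onto $f$ and the remaining onto $u$ — this is where a Littlewood–Paley / paraproduct decomposition is cleaner). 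Finally $\|f\|_{H^1} \le \|f\|_{H^s}$ when $s \ge 1$ and one handles $s \in [0,1)$ by noting the first kernel already had the favorable structure. Collecting, $\|[\Lambda^s, u\cdot\nabla]f\|_{L^2} \lesssim \|u\|_{H^\gamma}\|f\|_{H^s}$.

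\textbf{Main obstacle.} The bookkeeping on exactly how many derivatives land on $u$ versus on $f$ is the delicate point: naively using Young's inequality as above is wasteful near $s = \gamma$, and the clean way to make the condition $\gamma > d/2+1$ exactly sufficient is to run a paraproduct decomposition $[\Lambda^s, u\cdot\nabla]f = [\Lambda^s, T_u\nabla \cdot]f + \text{(remainder terms)}$, where in the low-high piece the commutator structure produces the cancellation $|k|^s - |j|^s \approx s|j|^{s-1}(k-j)$ gaining one derivative, the high-low and high-high pieces put all the regularity on $u$ and need only $u \in W^{1,\infty} \hookleftarrow H^\gamma$. I would state the elementary multiplier inequality as a sub-claim, prove it by the case split on $s \le 1$ versus $s > 1$ and on $|k-j|$ versus $|j|$, and then either (a) push through the crude Young's inequality argument restricting to $s \le \gamma - d/2$ and note the general case follows by the same paraproduct split, or (b) do the Littlewood–Paley argument in full. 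Given the paper's stated tolerance for non-sharp estimates, option (a) with a remark suffices, but I expect the cleanest exposition uses the paraproduct decomposition throughout.
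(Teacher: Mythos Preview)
Your diagnosis is exactly right: the crude global bound $\big||k|^s - |j|^s\big|\,|j| \lesssim |k-j|^s|j| + |k-j|\,|j|^s$ followed by Young's inequality does not close for $s$ near $\gamma$, and a frequency splitting (what you call the paraproduct) is needed. Where your proposal falls short is that ``option~(a)'' does not in fact suffice --- the crude argument genuinely fails for $s > \gamma - d/2$, and saying ``the general case follows by the same paraproduct split'' without carrying it out leaves the lemma unproved in the range that matters.

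The paper does precisely the split you gesture at, but much more concretely and without any Littlewood--Paley machinery. Writing the convolution as $\sum_\ell (|k|^s - |k-\ell|^s)\,\hat f(k-\ell)\,(k-\ell)\cdot\hat u(\ell)$, it splits into $|\ell| < |k|/2$ and $|\ell| \ge |k|/2$. In the first region $|k-\ell|\approx|k|$, so the mean value theorem gives $||k|^s - |k-\ell|^s| \lesssim |k-\ell|^{s-1}|\ell|$ (your commutator gain), and Young's inequality yields $\||\cdot|\hat u\|_{\ell^1}\|f\|_{H^s} \lesssim \|u\|_{H^\gamma}\|f\|_{H^s}$ since $\gamma > d/2+1$. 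In the second region $|k|,|k-\ell| \lesssim |\ell|$, so $||k|^s - |k-\ell|^s| \lesssim |\ell|^s$; the key step you are missing is then to write $|\ell|^s|k-\ell| = |\ell|^\gamma \cdot |\ell|^{s-\gamma}|k-\ell| \lesssim |\ell|^\gamma |k-\ell|^{1+s-\gamma}$ (using $s \le \gamma$ and $|k-\ell| \lesssim |\ell|$), after which Young's inequality gives $\|u\|_{H^\gamma}\||\cdot|^{1+s-\gamma}\hat f\|_{\ell^1} \lesssim \|u\|_{H^\gamma}\|f\|_{H^s}$ because $\gamma - 1 > d/2$. This single transfer $|\ell|^{s-\gamma} \lesssim |k-\ell|^{s-\gamma}$ is what makes the hypothesis $\gamma > d/2+1$ exactly sufficient over the full range $s\in[0,\gamma]$, and it replaces the Littlewood--Paley apparatus you were anticipating.
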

\begin{proof} By an approximation argument, it suffices to consider the case when $f, u$ are both $C^\infty$. Fixing such $f, u$, note first that the Fourier transform of $[\Lambda^s,u\cdot \nabla]f$ is given by
\begin{align*}
	\mathcal{F}[\Lambda^s,u\cdot \nabla]f(k) & = i\sum_{\ell\in\Z^d_0} \left( (|k|^s - |k-\ell|^s)\hat{f}(k-\ell) \right)  (k-\ell)\cdot\hat{u}(\ell) \, . 
\end{align*}
By Parseval's identity, it suffices to bound this in $\ell^2(\Z^d_0)$. For this, we split this sum up into two regions $|\ell|< |k|/2$ and $|\ell|\geq |k|/2$; we label the $\sum_{|\ell| < |k|/2}$ term $I(k)$ and the $\sum_{|\ell| \geq |k|/2}$ term $II(k)$. 

When $|\ell|<|k|/2$, it holds that $|k - \ell| \approx |k|$. It then follows from the mean value theorem that $||k|^s - |k-\ell|^s|\lesssim |k-\ell|^{s-1}|\ell|$, hence
\[
|I(k)| \leqc \sum_{\ell\in \Z^d_0} |\ell||\hat{u}(\ell)||k-\ell|^s|\hat{f}(k-\ell)| \, .
\]
By Young's inequality, it follows that the $\ell^2$ norm of $(I(k))_{k \in \Z^d_0}$ is  bounded by
\[
	\Big\||\ell| |\widehat{u}(\ell)|\Big\|_{\ell^1}\Big\| |k|^s |\widehat{f}(k)|\Big\|_{\ell^2} \leq \Big\||\ell|^{-r}\Big\|_{\ell^2}\Big\||\ell|^{r+1} |\widehat{u}(\ell)|\Big\|_{\ell^2}\Big\||k|^s|\widehat{f}(k)|\Big\|_{\ell^2} \leqc \|u\|_{H^{\gamma}}\|f\|_{H^{s}},
\]
where we used the fact that $|\ell|^{-r}$ belongs to $\ell^2(\Z^d_0)$ if $r>d/2$ and $\gamma \geq r+1$ so that $\|u\|_{H^{r+1}}\leq \|u\|_{H^{\gamma}}$.

When $|\ell|\geq |k|/2$, we instead have that $|k - \ell| \lesssim |\ell|$. Therefore, $||k|^s - |k-\ell|^s|\lesssim |\ell|^s$ and $|\ell|^{s-\gamma} \lesssim |k-\ell|^{s-\gamma}$ since $s \leq \gamma$. 
This gives, 
\[
|II(k)| \leqc \sum_{\ell\in \Z^d_0} |\ell|^{\gamma}|\widehat{u}(\ell)||k-\ell|^{1-\gamma+s}|\widehat{f}(k-\ell)|.
\]
Again by Young's inequality this implies that the $\ell^2$ norm of $(II(k))_{k \in \Z^d_0}$ is bounded by
\[
	\Big\||\ell|^{\gamma} |\widehat{u}(\ell)|\Big\|_{\ell^2}\Big\||k|^{1-\gamma+s}|\widehat{f}(k)|\Big\|_{\ell^1} \leq \|u\|_{H^{s+\gamma}}\Big\||k|^{1-\gamma}\Big\|_{\ell^2}\Big\||k|^s|\widehat{f}(k)|\Big\|_{\ell^2} \leqc \|u\|_{H^{\gamma}}\|f\|_{H^{s}},
\]
where we used the fact that $|k|^{1-\gamma}\in \ell^2(\Z^d_0)$ since $\gamma -1 > d/2$.
\end{proof}
When dealing with the compact term that arises in linearized Navier-Stokes equation and its adjoint, we will also require the following Lemma.

\begin{lemma}\label{lem:preEst2}
Let $d = 2, \gamma > 1 + d/2 = 2$, and $s\in [0,\gamma]$. Then, there exists a constant $C$ depending on $\gamma$ such that for all $u\in \Hbf$ and mean-zero $f\in H^s$, the following estimates hold: 
\begin{equation}\label{eq:est-LNs1}
	\|\Lambda^{s}(\Delta u\cdot \nabla) \Lambda^{-2}f\|_{L^2}\leq C \|u\|_{H^{\gamma+2}}\|f\|_{H^{s}},
\end{equation}
and
\begin{equation}\label{eq:est-LNs2}
	\|\Lambda^{s-2}(\Delta u\cdot \nabla) f\|_{L^2}\leq C \|u\|_{H^{\gamma+2}}\|f\|_{H^{s}}.
\end{equation}

\end{lemma}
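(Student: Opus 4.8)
The plan is to prove both estimates by the same frequency-space scheme used for Lemma \ref{lem:commutator}. By Parseval it suffices to bound, in $\ell^{2}(\Z^d_0)$, the convolution sequences
\[
\mathcal{F}\big[\Lambda^{s}(\Delta u\cdot\nabla)\Lambda^{-2}f\big](k) = -i\,|k|^{s}\sum_{\ell\in\Z^d_0}|\ell|^{2}\big(\hat{u}(\ell)\cdot(k-\ell)\big)\,|k-\ell|^{-2}\,\hat{f}(k-\ell)
\]
and
\[
\mathcal{F}\big[\Lambda^{s-2}(\Delta u\cdot\nabla)f\big](k) = -i\,|k|^{s-2}\sum_{\ell\in\Z^d_0}|\ell|^{2}\big(\hat{u}(\ell)\cdot(k-\ell)\big)\,\hat{f}(k-\ell).
\]
Because $u$ and $f$ are mean-zero, the terms with $\ell=0$ and with $k-\ell=0$ vanish, so $|\ell|\ge1$ and $|k-\ell|\ge1$ everywhere in the sums; in particular the factor $|k-\ell|^{-2}$ in the first sum causes no singularity. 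As in Lemma \ref{lem:commutator} I would split each convolution into the low-high region $\{|\ell|<|k|/2\}$, where $|k-\ell|\approx|k|$, and the high-high region $\{|\ell|\ge|k|/2\}$, where $|k|\lesssim|\ell|$ and $|k-\ell|\lesssim|\ell|$, redistributing the powers of $|k|,|\ell|,|k-\ell|$ using these comparisons (and $|\ell|\gtrsim|k-\ell|$, which lets one trade a nonpositive power of $|\ell|$ for the same power of $|k-\ell|$).

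Each sum is then closed with Young's inequality $\ell^{1}\ast\ell^{2}\hookrightarrow\ell^{2}$ and the elementary bound $\||k|^{-r}\|_{\ell^{2}(\Z^d_0)}<\infty$ for $r>d/2$. In the low-high region both estimates reduce to the same thing: there $|k|^{s}|k-\ell|^{-1}\approx|k|^{s-2}|k-\ell|\approx|k-\ell|^{s-1}$, so one must bound $\sum_{\ell}\big(|\ell|^{2}|\hat{u}(\ell)|\big)\big(|k-\ell|^{s-1}|\hat{f}(k-\ell)|\big)$ in $\ell^{2}_{k}$; putting the $u$-factor in $\ell^{1}$ (with a weight $|\ell|^{-r}$, $r\in(d/2,\gamma]$, which costs at most $\|u\|_{H^{\gamma+2}}$) and $f$ in $\ell^{2}$ gives $\lesssim\|u\|_{H^{\gamma+2}}\|f\|_{\dot H^{s-1}}\le\|u\|_{H^{\gamma+2}}\|f\|_{H^{s}}$, the last inequality because $|k|\ge1$. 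In the high-high region I instead keep a factor $|k-\ell|^{s}|\hat{f}(k-\ell)|$ and dump everything else onto $u$. For the first estimate, using $|k|^{s}\lesssim|\ell|^{s}$ and $|\ell|^{s+2}=|\ell|^{\gamma+2}|\ell|^{s-\gamma}$ with $|\ell|^{s-\gamma}\lesssim|k-\ell|^{s-\gamma}$ produces $\sum_{\ell}\big(|\ell|^{\gamma+2}|\hat{u}(\ell)|\big)\big(|k-\ell|^{s-\gamma-1}|\hat{f}(k-\ell)|\big)$, where $s\le\gamma$ makes the $f$-factor negative enough to be placed in $\ell^{1}$ against the summable weight. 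For the second estimate one reaches, after also using $|k-\ell|\lesssim|\ell|$, a sum $\sum_{\ell}\big(|\ell|^{b}|\hat{u}(\ell)|\big)\big(|k-\ell|^{s}|\hat{f}(k-\ell)|\big)$ with $f$ in $\ell^{2}$, where the exponent $b$ (split according to whether $s$ lies in $[0,1)$, $[1,2]$, or $(2,\gamma]$) always satisfies $d/2<b\le\gamma+2$ by a degree count of the form $b+a=s+1$, $a\le s$.

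The main obstacle is the high-high interaction in the second estimate when $s$ is small (down to $s=0$), where $f$ carries essentially no regularity to spend: there one is forced to absorb a weight $|\ell|^{-r}$ with $r$ up to $\gamma-1+s$ into the $u$-factor --- so $r$ close to $\gamma-1$ at $s=0$ --- and the argument works precisely because the hypothesis $\gamma>1+d/2$ gives $\gamma-1>d/2$, hence $|\ell|^{-(\gamma-1+s)}\in\ell^{2}(\Z^d_0)$. Beyond this, the only care required is the routine bookkeeping over the ranges of $s$ above, checking in each case that every exponent placed on $u$ stays $\le\gamma+2$ and every summability exponent exceeds $d/2$; none of these sub-cases is difficult once the correct split is fixed, and all constants depend only on $\gamma$ and $d$.
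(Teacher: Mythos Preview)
Your approach is correct and is essentially the paper's: the same Fourier convolution expression, the same low-high/high-high split at $|\ell|=|k|/2$, the same redistribution of powers using $|k|\approx|k-\ell|$ in the first region and $|k|,|k-\ell|\lesssim|\ell|$ in the second, and closure via Young's inequality together with $\||k|^{-r}\|_{\ell^2(\Z^d_0)}<\infty$ for $r>d/2$. The paper in fact only writes out \eqref{eq:est-LNs1} and declares \eqref{eq:est-LNs2} similar, so your sketch of the second estimate---including the correct identification of the tight case $s=0$, where the hypothesis $\gamma>2$ is exactly what makes $|\ell|^{-(\gamma-1)}\in\ell^2$---goes beyond what the paper records; the only slip is the stated constraint on $b$ (placing $|\ell|^{b}\hat u$ in $\ell^1$ against $\|u\|_{H^{\gamma+2}}$ needs $b<\gamma+2-d/2=\gamma+1$, not merely $b\le\gamma+2$), but the actual values of $b$ your case split produces all satisfy this.
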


\begin{proof}
We first prove the estimate on $\|\Lambda^{s}(\Delta u\cdot \nabla \Lambda^{-2} \xi)\|_{L^2}$. As in the previous Lemma, we consider the Fourier transform
\[
\Fc[ \Lambda^s ( \Delta u \cdot \nabla \Lambda^{-2} f)] (k) = i  \sum_{\ell \in \Z^d_0} |k|^s |\ell|^2 |k-\ell|^{-2} \hat f(k-\ell)\, (k-\ell) \cdot \hat u (\ell)
\]
and decompose the sum into $I(k) + II(k)$, corresponding $|\ell| < |k|/2$ and $|\ell| > |k|/2$.  As we had earlier, for the $I(k)$ terms we have that $|\ell|\leqc |k| \approx |k - \ell|$, hence
\[
|I(k)| \lesssim \sum_{\ell \in \Z^d_0}  |\ell| |k-\ell|^{s} |\hat f(k-\ell)|\, |\hat u (\ell)|,
\]
and so by Young's inequality, 
\[
\| (I(k))_{k \in \Z^d_0} \|_{\ell^2} \lesssim \| |\ell| u(\ell) \|_{\ell^1} \left\| |\ell|^2 |\hat f(\ell)| \right\|_{\ell^2}
\leqc \|u \|_{H^\gamma} \| f \|_{H^s}.
\]
For the $II(k)$ term, we have $|k|\leqc |\ell|$ and $|k - \ell|\leqc |\ell|$, and so 
\[
|II(k)| \lesssim \sum_{\ell \in \Z^d_0} |\ell|^{s + 2} |k-\ell|^{-1}|\hat f(k - \ell)| | \hat u(\ell)| \lesssim \sum_{\ell \in \Z^d_0} |\ell|^{\gamma+2}  |k-\ell|^{s-\gamma -1} |\hat f(k - \ell)||\hat u(\ell)| \, ,
\]
where we used that $|\ell|^{s-\gamma}\leqc |k-\ell|^{s-\gamma}$ since $\gamma \geq s$. Again using Young's inequality, we conclude
\[
\| (II(k))_{k \in \Z^d_0}\|_{\ell^2} \lesssim \| u\|_{H^{\gamma+2}} \| f \|_{H^{s-2}}\leq\| u\|_{H^{\gamma+2}} \| f \|_{H^{s}}  \,,
\]
thereby proving \eqref{eq:est-LNs1}.

The proof of \eqref{eq:est-LNs2} follows along similar lines and is omitted for brevity.
\end{proof}

\subsubsection{Advection diffusion}

Lets first consider the advection diffusion equation \eqref{eq:passiveScalarEq} on $\T^d$ associated to some arbitrary time dependent velocity field $u\in L^\infty([0,1];\mathbf{H})$.
Our first step is to prove the following quantitative $L^2\to H^s$ regularity estimate:
\begin{lemma}\label{lem:regularityPSA1}
Let $u : [0,1] \times \T^d \to \R^d$ be a time-varying, divergence-free vector field with $u \in L^\infty([0,1], \Hbf)$. Let $(f_t)_{t\in[0,1]}$ be the solution to \eqref{eq:passiveScalarEq} with $\kappa \in (0,1)$ and mean-zero $f_0 \in L^2$. Then, for all $s\in[0,\gamma]$, we have that
\begin{align}\label{eq:regImprovePSA1}
	 \|f_1\|_{H^s} \leq \kappa^{-s/2}\exp\left(c \int_0^1 (1+\|u_\tau \|_{H^{\gamma}})\,\dee \tau\right)\|f_0\|_{L^2} \,. 
\end{align}
If $f_0 \in H^s$, then 
\begin{align}\label{eq:regPropPSA1}
\sup_{t \in [0,1]} \| f_t\|_{H^s} \leq \exp \left( c \int_0^1 (1 + \| u_\tau\|_{H^{\gamma}} ) \dee \tau \right) \| f \|_{H^s} \,. 
\end{align}
\end{lemma}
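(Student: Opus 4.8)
\textbf{Proof plan for Lemma \ref{lem:regularityPSA1}.}

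The plan is to run energy estimates for the quantity $\|\Lambda^s f_t\|_{L^2}^2$ directly, pulling out the diffusive gain $\kappa$. First I would establish \eqref{eq:regPropPSA1}, the easier no-gain bound. Apply $\Lambda^s$ to \eqref{eq:passiveScalarEq}, pair with $\Lambda^s f_t$ in $L^2$, and compute
\[
\frac12 \frac{\dee}{\dt} \|\Lambda^s f_t\|_{L^2}^2 + \kappa \|\Lambda^{s+1} f_t\|_{L^2}^2 = - \langle \Lambda^s (u_t \cdot \nabla f_t), \Lambda^s f_t \rangle_{L^2} \,.
\]
Writing $\Lambda^s(u_t \cdot \nabla f_t) = u_t \cdot \nabla \Lambda^s f_t + [\Lambda^s, u_t \cdot \nabla] f_t$, the first term integrates to zero by incompressibility ($\int (u_t \cdot \nabla g) g \, \dx = 0$ for $\Div u_t = 0$), and the commutator term is bounded using Lemma \ref{lem:commutator} by $C \|u_t\|_{H^\gamma} \|f_t\|_{H^s} \cdot \|f_t\|_{H^s}$. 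Dropping the favorable $\kappa \|\Lambda^{s+1} f_t\|_{L^2}^2$ term, this yields $\frac{\dee}{\dt}\|f_t\|_{H^s}^2 \leq 2 C \|u_t\|_{H^\gamma}\|f_t\|_{H^s}^2$, and Gr\"onwall's inequality gives \eqref{eq:regPropPSA1} (the extra $1$ in the exponent is harmless slack).

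For the regularizing estimate \eqref{eq:regImprovePSA1}, the idea is to interpolate in time: the $L^2$ norm is non-increasing (the case $s=0$ of the above, where the commutator vanishes), and we gain regularity from the diffusion. The cleanest route is to introduce the time-weighted quantity $t^s \|\Lambda^s f_t\|_{L^2}^2$, mimicking the parabolic smoothing estimate \eqref{eq:NEreg-estimate}. Differentiating,
\[
\frac{\dee}{\dt}\left( t^s \|\Lambda^s f_t\|_{L^2}^2 \right) = s t^{s-1}\|\Lambda^s f_t\|_{L^2}^2 + t^s \frac{\dee}{\dt}\|\Lambda^s f_t\|_{L^2}^2 \,,
\]
and using the energy identity to replace the last term by $-2\kappa t^s \|\Lambda^{s+1}f_t\|_{L^2}^2 + 2 t^s |\langle [\Lambda^s, u_t\cdot\nabla]f_t, \Lambda^s f_t\rangle|$. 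The point is that the bad term $s t^{s-1}\|\Lambda^s f_t\|_{L^2}^2$ is controlled by the diffusive term after interpolation: by the interpolation inequality $\|\Lambda^s f\|_{L^2} \leq \|\Lambda^{s+1} f\|_{L^2}^{s/(s+1)} \|f\|_{L^2}^{1/(s+1)}$ and Young's inequality, $s t^{s-1} \|\Lambda^s f_t\|_{L^2}^2 \leq \kappa t^s \|\Lambda^{s+1} f_t\|_{L^2}^2 + C_s \kappa^{-s} t^{-1}\|f_t\|_{L^2}^2$. Since $\|f_t\|_{L^2} \leq \|f_0\|_{L^2}$, integrating from $0$ to $1$ and using that the commutator contributes $\leq C\|u_t\|_{H^\gamma}(t^s\|\Lambda^s f_t\|_{L^2}^2) + (\text{lower order})$, a Gr\"onwall argument on $g(t) := t^s\|\Lambda^s f_t\|_{L^2}^2$ yields $g(1) \lesssim \kappa^{-s}\|f_0\|_{L^2}^2 \exp(c\int_0^1(1+\|u_\tau\|_{H^\gamma})\dee\tau)$, which upon taking square roots is \eqref{eq:regImprovePSA1} (absorbing the $\kappa^{-s}$ inside as $\kappa^{-s/2}$ after the square root, and the $C_s$ into the exponential constant).

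The main obstacle I anticipate is handling the commutator term \emph{together with} the time-weight without losing the sharp $\kappa$-power: one must be careful that the interpolation used to absorb $st^{s-1}\|\Lambda^s f_t\|_{L^2}^2$ produces exactly $\kappa^{-s}$ and not a worse negative power, and that the commutator estimate (which naturally wants $\|f_t\|_{H^s}^2 = t^{-s} g(t)$, a potentially singular factor as $t\to 0$) is instead applied after noting $t^s \cdot \|f_t\|_{H^s}^2 = g(t)$ so the weight cancels cleanly; the residual $t^{s}\|f_t\|_{H^s}\|f_t\|_{L^2}$-type cross terms from splitting the commutator pairing must be dominated using Young's inequality so as to be absorbed into $g(t)$ plus an integrable-in-time remainder. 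A secondary point is that one should first justify all manipulations for smooth data and pass to the limit, and handle the borderline $s=0$ (trivial) and $s=\gamma$ (covered by Lemma \ref{lem:commutator}) cases within the stated range.
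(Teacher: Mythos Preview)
Your argument for the propagation bound \eqref{eq:regPropPSA1} is fine and matches the paper's approach. The issue is in your plan for the regularizing bound \eqref{eq:regImprovePSA1}.

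With the weight $t^s$ you propose, the interpolation--Young step gives exactly what you wrote,
\[
s\,t^{s-1}\|\Lambda^s f_t\|_{L^2}^2 \leq \kappa t^s\|\Lambda^{s+1}f_t\|_{L^2}^2 + C_s\,\kappa^{-s}\,t^{-1}\|f_t\|_{L^2}^2,
\]
but the forcing term $t^{-1}\|f_t\|_{L^2}^2$ is \emph{not} integrable on $[0,1]$, so the Gr\"onwall does not close. Working with smooth data and starting the integration at $\epsilon>0$ does not help: $g(\epsilon)=\epsilon^s\|f_\epsilon\|_{H^s}^2\to 0$, but $\int_\epsilon^1 t^{-1}\,\dee t=\log(1/\epsilon)\to\infty$. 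This is a genuine gap. (It is repairable: using weight $t^{s'}$ with any $s'>s$ turns the remainder into $t^{s'-s-1}$, which is integrable and still yields the correct $\kappa^{-s}$ power, at the cost of a harmless $s$-dependent constant.)

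The paper takes a different and cleaner route that sidesteps this singularity entirely: instead of a time-weight on a fixed $H^s$ norm, it tracks the time-dependent norm $\kappa^{st}\|\Lambda^{st}f_t\|_{L^2}^2$, i.e.\ the fractional exponent itself interpolates from $0$ to $s$. Differentiating $\Lambda^{st}$ in $t$ produces the multiplier $s\log(\sqrt{\kappa}\,|k|)$, which is absorbed \emph{pointwise in frequency} into the diffusive multiplier $-\kappa|k|^2$ via the elementary bound $s\log y\leq y^2+C(s)$. What remains is a clean differential inequality
\[
\frac{\dee}{\dee t}\bigl(\kappa^{st}\|f_t\|_{H^{st}}^2\bigr)\lesssim (1+\|u_t\|_{H^\gamma})\,\kappa^{st}\|f_t\|_{H^{st}}^2,
\]
with no time-singular forcing, and Gr\"onwall gives \eqref{eq:regImprovePSA1} directly.
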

\begin{proof}
We prove below the regularization bound \eqref{eq:regImprovePSA1}; the propagation 
bound \eqref{eq:regPropPSA1} follows similarly and its proof is omitted. 

By an approximation argument, it suffices to prove the above estimate when $f_0$ is $C^\infty$ and $u_t$ is a $C^\infty$ vector field for all $t \in [0,1]$. 
For each $t \in [0,1]$, we consider the time-dependent operator $\Lambda^{rt}$ and note that $\kappa^{st/2}\Lambda^{st}f_t$ satisfies
\[
	\partial_t (\kappa^{st/2}\Lambda^{st} f_t) = -\kappa^{st/2}\Lambda^{st}(u_t\cdot\nabla f_t) + \kappa^{st/2}(s\log(\sqrt{\kappa}\Lambda^{1}) + \kappa \Delta) \Lambda^{st} f_t,
\]
where the operator $\log(\sqrt{\kappa}\Lambda^1)$ is defined by the Fourier multiplier
\[
	\mathcal{F}[\log(\sqrt{\kappa}\Lambda^1)f](k) = \log(\sqrt{\kappa}|k|)\widehat{f}(k) \,,
\]
which is defined on the space of mean-zero $f$. Note that there exists a $C(s) > 0$, independent of $k$ or $\kappa$, such that $s\log(\sqrt{\kappa}|k|) \leq \kappa |k|^2 + C(s)$, so that for $t\in [0,1]$ we have by Parseval's identity
\[
	\langle \Lambda^{rt} f_t,(s\log(\sqrt{\kappa}\Lambda^{1}) + \kappa \Delta) \Lambda^{st} f_t\rangle_{L^2} = \sum_{k\in\Z^d_0}(s\log(\sqrt{\kappa}|k|) - \kappa |k|^2)|k|^{2st}|\widehat{f_t}(k)|^2\lesssim \|f_t\|_{H^{st}}^2.
\]
Using this and that $f_t$ is smooth and $u_t$ is divergence-free gives the following energy estimate
\begin{equation}
\begin{aligned}
	\frac{\dee}{\dt}\left(\tfrac{1}{2}\kappa^{st}\|f_t\|_{H^{st}}^2\right) &= -\kappa^{st}\langle \Lambda^{st} f_t, \Lambda^{st}(u_t\cdot\nabla f_t)\rangle_{L^2} + \kappa^{st}\langle \Lambda^{st} f_t,(r\log(\sqrt{\kappa}\Lambda^{1}) + \kappa \Delta) \Lambda^{st} f_t\rangle_{L^2}\\
	&\leqc -\kappa^{st}\langle \Lambda^{st} f_t, [\Lambda^{st},u_t\cdot\nabla] f_t\rangle_{L^2} + \kappa^{st}\|f_t\|_{H^{st}}^2\\
	&\leqc \kappa^{st}\left(\|f_t\|_{H^{st}}\|[\Lambda^{st},u_t\cdot\nabla] f_t\|_{L^2} + \|f_t\|_{H^{st}}^2\right).
\end{aligned}
\end{equation}
Above, we have used the fact that since $u$ is divergence-free, $u \cdot \nabla$ is skew-symmetric in the $L^2$ inner product, hence $\langle g, u \cdot \nabla g \rangle = 0$ for all smooth $g$. 
Applying the commutator Lemma \ref{lem:commutator} with $s = r t$, assuming $t\in [0,1]$ and using that $\gamma > \frac{d}{2} + 1$, we conclude that
\[
\begin{aligned}
	\frac{\dee}{\dt}\left(\kappa^{st}\|f_t\|_{H^{st}}^2\right) &\leqc (1+\|u_t\|_{H^{\gamma}})\left(\kappa^{st}\|f_t\|_{H^{st}}^2\right).
	\end{aligned}
\]
In particular, 
\[
\frac{\dee}{\dee t} \log \left( \kappa^{s t} \| f_t\|_{H^{s t}}^2 \right) \lesssim 1 + \| u_t\|_{H^\gamma} \,,
\]
and integrating $t$ from $0$ to $1$ completes the proof.
\end{proof}

\begin{lemma}\label{lem:regularityPSA2}
Let $u : [0,1] \times \T^d \to \R^d$ be a time-varying, divergence-free vector field with $u \in L^\infty([0,1], \Hbf)$. Let $f\in C([0,1];H^{-s})$ be a solution to \eqref{eq:passiveScalarEq} with $\kappa \in (0,1]$ and initial $f_0 \in H^{-s}$ for some $s \in [0,\gamma]$. Then, 
$f_1 \in L^2$, and
\begin{align}\label{eq:regImprovePSA2}
	\|f_1\|_{L^2} \leq \kappa^{-s/2}\exp\left(c \int_0^1 (1+\|u_\tau \|_{H^{\gamma}})\,\dee \tau\right)\|f_0\|_{H^{-s}}
\end{align}
and
\begin{align}\label{eq:regPropPSA2}
\sup_{t \in [0,1]} \| f_t\|_{H^{-s} }\leq \exp \left( c \int_0^1 \| u_\tau\|_{H^{\gamma}} \dee \tau \right) \| f \|_{H^{-s}} \,. 
\end{align}
\end{lemma}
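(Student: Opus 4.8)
The plan is to deduce both inequalities from Lemma~\ref{lem:regularityPSA1} via an $L^2$-duality argument, exploiting the fact that the $L^2$-adjoint of the advection-diffusion propagator is again an advection-diffusion propagator for a divergence-free velocity field of the same spatial regularity. Write $L(\tau):=-u_\tau\cdot\nabla+\kappa\Delta$ for the time-dependent generator in \eqref{eq:passiveScalarEq}. Since $u_\tau$ is divergence-free, $u_\tau\cdot\nabla$ is skew-adjoint on $L^2$ and $\Delta$ is self-adjoint, so $L(\tau)^*=u_\tau\cdot\nabla+\kappa\Delta$. Fix $t\in(0,1]$ and, given a mean-zero test function $h$, let $g=(g_\sigma)_{\sigma\in[0,t]}$ solve the backward adjoint problem $-\partial_\sigma g_\sigma=L(\sigma)^*g_\sigma$ with terminal datum $g_t=h$. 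Setting $\tilde g_\sigma:=g_{t-\sigma}$, a direct computation shows that $\tilde g$ solves \eqref{eq:passiveScalarEq} on $[0,t]$ with velocity field $v_\sigma:=-u_{t-\sigma}$, which is again mean-zero and divergence-free, satisfies $\|v_\sigma\|_{H^\gamma}=\|u_{t-\sigma}\|_{H^\gamma}$, and has initial datum $\tilde g_0=h$; thus $g_0=\tilde g_t$ is obtained from $h$ by the advection-diffusion flow associated to $v$, and in particular $g$ exists because after time reversal it solves a standard advection-diffusion equation.

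For the core estimate I would record the duality identity: whenever $f$ is the given solution of \eqref{eq:passiveScalarEq} and $g$ is as above, $\tfrac{d}{d\sigma}\langle f_\sigma,g_\sigma\rangle_{L^2}=\langle L(\sigma)f_\sigma,g_\sigma\rangle_{L^2}-\langle f_\sigma,L(\sigma)^*g_\sigma\rangle_{L^2}=0$, so $\langle f_t,h\rangle_{L^2}=\langle f_0,g_0\rangle_{L^2}$. To prove \eqref{eq:regImprovePSA2} take $t=1$ and $h\in L^2$ mean-zero: bounding $|\langle f_0,g_0\rangle_{L^2}|\leq\|f_0\|_{H^{-s}}\|g_0\|_{H^s}$ and applying the regularization bound \eqref{eq:regImprovePSA1} of Lemma~\ref{lem:regularityPSA1} to $\tilde g$ on $[0,1]$ (using $\tilde g_0=h\in L^2$ and $\int_0^1\|v_\sigma\|_{H^\gamma}\,\dee\sigma=\int_0^1\|u_\tau\|_{H^\gamma}\,\dee\tau$) yields
\[
|\langle f_1,h\rangle_{L^2}|\leq\kappa^{-s/2}\exp\Big(c\int_0^1(1+\|u_\tau\|_{H^\gamma})\,\dee\tau\Big)\,\|f_0\|_{H^{-s}}\,\|h\|_{L^2}.
\]
Taking the supremum over $h$ in a dense set of mean-zero $L^2$ functions shows $f_1\in L^2$ with the claimed bound. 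For \eqref{eq:regPropPSA2} I would run the same argument for general $t\in[0,1]$ with $h\in H^s$, using $\|f_t\|_{H^{-s}}=\sup\{|\langle f_t,h\rangle_{L^2}|:\|h\|_{H^s}\leq1\}$, the propagation bound \eqref{eq:regPropPSA1} applied to $\tilde g$ on $[0,t]$, and $\int_0^t\|v_\sigma\|_{H^\gamma}\,\dee\sigma\leq\int_0^1\|u_\tau\|_{H^\gamma}\,\dee\tau$, and then taking the supremum over $t\in[0,1]$.

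The only delicate point — the main, if mild, obstacle — is to justify the pairing identity $\langle f_t,g_t\rangle_{L^2}=\langle f_0,g_0\rangle_{L^2}$ rigorously when $f_0$ lies only in $H^{-s}$ and $u$ is merely $L^\infty([0,1],\Hbf)$, since then neither $f_\sigma$ nor $g_\sigma$ need be classically differentiable. I would handle this exactly as in the proof of Lemma~\ref{lem:regularityPSA1}: first establish everything for $C^\infty$ initial data and $C^\infty$-in-space velocity fields, for which the solutions are smooth on $(0,t]$ and the computation above is classical; then approximate $f_0$ in $H^{-s}$ by smooth mean-zero functions $f_0^{(n)}$ and $u$ by spatial mollifications, use linearity of \eqref{eq:passiveScalarEq} together with the already-established smooth-data version of \eqref{eq:regPropPSA2} to see that $(f_1^{(n)})$ is Cauchy in $L^2$, and identify the limit with $f_1$. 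All remaining steps are routine.
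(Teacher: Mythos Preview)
Your proposal is correct and follows essentially the same route as the paper: both argue by $L^2$-duality with the time-reversed adjoint equation (which is again advection--diffusion with velocity $-u_{1-t}$) and then invoke Lemma~\ref{lem:regularityPSA1} on the dual solution. Your write-up is slightly more detailed about the approximation step and explicitly carries out the argument for the propagation bound \eqref{eq:regPropPSA2}, which the paper omits, but the underlying idea is identical.
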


\begin{proof}
As before, we focus below on the proof of \eqref{eq:regImprovePSA2} and omit that of  \eqref{eq:regPropPSA2}. 
By an approximation argument, we can assume $f_0, u_t$ are $C^\infty$. Our proof will use the $L^2$ duality of $H^{-s}$ with $H^s$. To see this, let $g_0$ be a smooth, mean-zero function and let $(g_t)$ solve the time reversed equation
\begin{equation}\label{eq:dualPassiveAdvection}
\partial_t g_t - u_{1 - t} \cdot \nabla g_t = \kappa \Delta g_t \,. 
\end{equation}
We compute
\begin{align*}
\frac{\dee}{\dee t} \langle g_t, f_{1 - t} \rangle 
& = \langle \partial_t g_t , f_{1 - t} \rangle = - \langle g_t, \partial_t f_{1 - t} \rangle  = \langle  u_{1 - t} \cdot \nabla g_t , f_{1 - t} \rangle = - \langle g_t, - u_{1 - t} \cdot \nabla f_{1 - t} \rangle = 0
\end{align*}
using (i) that $\Delta$ is self adjoint in the $L^2$ inner product and (ii) $u \cdot \nabla$ is skew-adjoint when $u$ is divergence free. We conclude that
\[
\langle f_1, g_0 \rangle = \langle f_0, g_1 \rangle \,. 
\]
Now, 
\begin{align*}
\| f_1 \|_{L^2} &= \sup_{\| g_0\|_{L^2} = 1} \langle f_1, g_0 \rangle  = \sup_{\| g_0\|_{L^2} = 1} \langle f_0, g_1 \rangle  \leq \| f_0\|_{H^{-s}} \sup_{\| g_0\|_{L^2} = 1} \| g_1 \|_{H^s} \,, 
\end{align*}
treating the $g_0$ under the $\sup$ as an initial condition for \eqref{eq:dualPassiveAdvection}. By Lemma \ref{lem:regularityPSA1}, it holds that
\[
\| g_1 \|_{H^s} \lesssim \kappa^{- s / 2} \exp \left( c \int_0^1 (1 + \| u_\tau\|_{H^\gamma}) \dee \tau \right) 
\]
and so 
\[
\| f_1\|_{L^2} \leq \kappa^{- s / 2} \exp \left( c \int_0^1 (1 + \| u_\tau\|_{H^\gamma}) \dee \tau \right) \| f_0 \|_{H^{-s}}
\]
as desired. 
\end{proof}

\subsubsection{Linearized Navier-Stokes}\label{subsubsec:linns-est}

It is convenient to work with Navier-Stokes in vorticity form
\[
\partial_t w + u \cdot \nabla w = \nu \Delta w + \mathrm{curl}\, F
\]
where $w = \mathrm{curl}\, u$, and the velocity $u$ is recovered by the Biot-Savart law $u = \Lambda^{-2} ( \nabla^\perp w) =: Kw$, where 
here $\nabla^\perp= (- \partial_y, \partial_x)$ denotes the skew gradient. In this form, the first variation equation becomes
\begin{equation}\label{eq:LSN-vort}
\partial_t \eta + u \cdot \nabla \eta + v \cdot \nabla w = \nu \Delta \eta
\end{equation}
where $v_t = K \eta_t$. 
\begin{lemma}\label{lem:LNS1}
Let $u \in L^\infty([0,1], \Hbf) \cap 
L^1([0,1], \Hbf^{\gamma + 2})$, and let $\eta \in L^\infty([0,1], L^2)$ be the solution to \eqref{eq:LSN-vort} with initial $\eta_0\in L^2$. Then for each $s\in[0,\gamma]$, $\eta_1 \in H^s$, and satisfies
\[
\| \eta_1 \|_{H^s} \lesssim \nu^{- s / 2} \exp \left( c \int_0^1 (1 + \| u_\tau \|_{H^{\gamma+2}}) \dee \tau \right) \| \eta_0\|_{L^2} \,. 
\]
If $\eta_0 \in H^s$, then 
\begin{equation}\label{eq:NSE-Stability-1s}
\sup_{t \in [0,1]} \| \eta_t \|_{H^s} \lesssim \exp \left( c \int_0^1 (1 + \| u_\tau \|_{H^{\gamma+2}}) \dee \tau \right) \| \eta_0\|_{H^s} \,. 
\end{equation}
\end{lemma}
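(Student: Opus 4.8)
\textbf{Proof plan for Lemma \ref{lem:LNS1}.}

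The plan is to run a weighted energy estimate on $\Lambda^{st}\eta_t$, exactly mirroring the argument for Lemma \ref{lem:regularityPSA1}, but now carrying the extra compact term $v\cdot\nabla w$ coming from the linearization. As before, by an approximation argument it suffices to work with smooth $\eta_0$ and smooth-in-space $u_t$. First I would introduce the time-dependent multiplier $\Lambda^{st}$ and compute the evolution of $\nu^{st/2}\Lambda^{st}\eta_t$. Differentiating $\tfrac12\nu^{st}\|\eta_t\|_{H^{st}}^2$ produces four types of terms: (i) the logarithmic derivative term $s\log(\sqrt{\nu}\Lambda^1)$ together with the dissipation $\nu\Delta$, which combine into an $O(\|\eta_t\|_{H^{st}}^2)$ contribution using the elementary inequality $s\log(\sqrt{\nu}|k|)\leq \nu|k|^2 + C(s)$ as in the passive scalar case; (ii) the transport term $\langle \Lambda^{st}\eta_t,\Lambda^{st}(u_t\cdot\nabla\eta_t)\rangle$, which after passing to the commutator $[\Lambda^{st},u_t\cdot\nabla]$ (the diagonal piece vanishes by skew-symmetry of $u\cdot\nabla$) is controlled by Lemma \ref{lem:commutator} by $\|u_t\|_{H^\gamma}\|\eta_t\|_{H^{st}}^2$; and (iii) the new stretching term $\langle\Lambda^{st}\eta_t,\Lambda^{st}(v_t\cdot\nabla w_t)\rangle$, where $v_t = K\eta_t = \Lambda^{-2}\nabla^\perp\eta_t$ and $w_t = \mathrm{curl}\,u_t$.

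The main obstacle — and the only genuinely new ingredient beyond Lemma \ref{lem:regularityPSA1} — is bounding term (iii). The key observation is that $v_t\cdot\nabla w_t = (\Delta u_t\cdot\nabla)(\Lambda^{-2}\eta_t)$ up to the Biot-Savart bookkeeping: since $w_t=\mathrm{curl}\,u_t$ and $v_t = \Lambda^{-2}\nabla^\perp\eta_t$, one has $\nabla w_t$ expressible through $\Delta u_t$ and $v_t$ through $\Lambda^{-2}\eta_t$, so that $\Lambda^{st}(v_t\cdot\nabla w_t)$ is exactly of the form handled by estimate \eqref{eq:est-LNs1} of Lemma \ref{lem:preEst2} with $f=\eta_t$ and $s\rightsquigarrow st$. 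Thus
\[
\big|\langle \Lambda^{st}\eta_t,\Lambda^{st}(v_t\cdot\nabla w_t)\rangle\big| \leq \|\eta_t\|_{H^{st}}\,\|\Lambda^{st}(\Delta u_t\cdot\nabla)\Lambda^{-2}\eta_t\|_{L^2} \lesssim \|u_t\|_{H^{\gamma+2}}\|\eta_t\|_{H^{st}}^2,
\]
which is why the hypothesis needs $u\in L^1([0,1],\Hbf^{\gamma+2})$ rather than merely $\Hbf^\gamma$. (One must be slightly careful that the stretching term is not skew-symmetric, so there is no cancellation to exploit — the full Cauchy–Schwarz bound above is what is used.)

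Collecting (i)–(iii) gives the differential inequality
\[
\frac{\dee}{\dt}\left(\nu^{st}\|\eta_t\|_{H^{st}}^2\right) \lesssim \big(1 + \|u_t\|_{H^{\gamma+2}}\big)\left(\nu^{st}\|\eta_t\|_{H^{st}}^2\right),
\]
so that $\frac{\dee}{\dt}\log\big(\nu^{st}\|\eta_t\|_{H^{st}}^2\big)\lesssim 1+\|u_t\|_{H^{\gamma+2}}$; integrating from $0$ to $1$ and using $\eta_0\in L^2$ (so the $t=0$ value is $\|\eta_0\|_{L^2}^2$) yields $\nu^{s}\|\eta_1\|_{H^s}^2 \lesssim \exp\big(c\int_0^1(1+\|u_\tau\|_{H^{\gamma+2}})\dee\tau\big)\|\eta_0\|_{L^2}^2$, which is the claimed regularization bound after taking square roots. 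For the propagation bound \eqref{eq:NSE-Stability-1s} one repeats the computation with the stationary multiplier $\Lambda^s$ in place of $\Lambda^{st}$ (dropping the logarithmic term entirely, and with no $\nu^{-s/2}$ loss since there is no gain of derivatives), starting from $\eta_0\in H^s$; this is routine and I would omit it for brevity. A remark worth inserting: the approximation argument requires knowing \eqref{eq:LSN-vort} is well-posed in $L^2$ with the stated time-integrability of $u$, which is classical parabolic theory for the perturbed heat equation, the perturbation $v\cdot\nabla w$ being lower-order relative to $\nu\Delta$.
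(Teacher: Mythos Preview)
Your proposal is correct and follows essentially the same approach as the paper: the paper also rewrites the stretching term as $\Delta u_t\cdot\nabla\Lambda^{-2}\eta_t$ (via the Biot--Savart/stream-function identities), applies estimate \eqref{eq:est-LNs1} of Lemma \ref{lem:preEst2} to bound it by $\|u_t\|_{H^{\gamma+2}}\|\eta_t\|_{H^{st}}^2$, and otherwise reuses the weighted-energy argument from Lemma \ref{lem:regularityPSA1} verbatim. The paper likewise omits the propagation bound as routine.
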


\begin{proof}
Note that since $w = \Delta\psi$ and $\nabla^\perp \psi = u$, we can also rewrite the second term in \ref{eq:LSN-vort} in the following more useful form
\begin{equation}\label{eq:LSN-vort-alt}
\partial_t \eta + u \cdot \nabla \eta + \Delta u \cdot \nabla \Lambda^{-2} \eta = \nu \Delta \eta.
\end{equation}
Repeating previous computations, we have
\begin{align*}
\frac{\dee}{\dee t} \frac12 \nu^{s t} \| \eta_t\|_{H^{s t}}^2 & =  
\nu^{s t} \langle \Lambda^{s t} \eta_t, \big(s \log ( \sqrt{\nu} \Lambda ) + \nu \Delta\big) \Lambda^{s t} \eta_t \rangle \\
& - \nu^{s t} \langle \Lambda^{s t} \eta_t, [\Lambda^{s t} , u_t \cdot \nabla ] \eta_t \rangle \\
& - \nu^{s t} \langle \Lambda^{s t} \eta_t, \Lambda^{s t} (\Delta u_t \cdot \nabla) \Lambda^{-2} \eta_t \rangle 
\end{align*}
The first and second terms are bounded $\lesssim \nu^{s t} (1 + \| u_t \|_{H^\gamma}) \| \eta_t\|_{H^{s t}}^2$ as before, while 
by Lemma \ref{lem:preEst2} we have that the third term is
\[
\lesssim \nu^{s t}\|\eta\|_{H^{s t}}^2 \| u_t\|_{H^{\gamma+2}} \,. 
\]
Applying these inequalities and combining all terms, we have shown that 
\[
\frac{\dee}{\dee t} \nu^{s t} \| \eta_t\|_{H^{s t}}^2 \lesssim 
\nu^{s t} \| \eta_t\|_{H^{s t}}^2 \| u_t\|_{H^{\gamma+2}} \,. 
\]
The desired conclusion follows as before. The estimate when $\eta_0 \in H^s$ is similar and omitted. 
\end{proof}

We also have the analogue of Lemma \ref{lem:regularityPSA2}. \begin{lemma}\label{lem:LNS2}
Let $u \in L^\infty([0,1], H^\gamma)$ for $\gamma > 1+ d/2$ and let $\eta \in L^\infty([0,1], L^2)$ be the solution to \eqref{eq:LSN-vort} with initial $\eta_0\in H^{-s}$ for some $s\in[0,\gamma]$. Then, $\eta_1 \in L^2$, and satisfies the estimate
\[
\| \eta_1 \|_{L^2} \leq \nu^{- s / 2} \exp \left( c \int_0^1 (1 + \| u_\tau \|_{H^{\gamma+2}}) \dee \tau \right) \| \eta_t\|_{H^{-s}} \,. 
\]
If $\eta_0 \in H^{-s}$, then 
\begin{equation}\label{eq:NSE-Stability-2s}
\sup_{t \in [0,1]} \| \eta_t \|_{H^{-s}} \leq \exp \left( c \int_0^1 (1 + \| u_\tau \|_{H^{\gamma+2}}) \dee \tau \right) \| \eta_t\|_{H^{-s}} \,. 
\end{equation}
\end{lemma}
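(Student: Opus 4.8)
The proof follows the template of Lemma \ref{lem:regularityPSA2}, using $L^2$ duality together with an $L^2 \to H^s$ smoothing estimate for the \emph{adjoint} of the linearized vorticity equation. By an approximation argument we may assume $\eta_0$ and $u_t$ are $C^\infty$. Writing the linearized vorticity equation in the form \eqref{eq:LSN-vort-alt}, its generator is $L_t := \nu \Delta - u_t \cdot \nabla - (\Delta u_t \cdot \nabla)\Lambda^{-2}$. Since $u_t$ is divergence free, $u_t \cdot \nabla$ is skew-adjoint on $L^2$, while a single integration by parts (using $\nabla \cdot \Delta u_t = \Delta(\nabla\cdot u_t) = 0$, so that $\nabla\cdot(\Delta u_t\, g) = \Delta u_t \cdot \nabla g$) shows the $L^2$-adjoint of $(\Delta u_t \cdot \nabla)\Lambda^{-2}$ is $-\Lambda^{-2}(\Delta u_t \cdot \nabla)$; hence $L_t^* = \nu \Delta + u_t \cdot \nabla + \Lambda^{-2}(\Delta u_t \cdot \nabla)$, which is again a heat operator plus a compact perturbation.

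Next is the duality step. Let $g_0$ be smooth and mean-zero and let $(g_t)_{t \in [0,1]}$ solve the forward problem $\partial_t g_t = L_{1-t}^* g_t$, which is well-posed on every $H^s$. Setting $h_t := \eta_{1-t}$, a direct computation parallel to the one in the proof of Lemma \ref{lem:regularityPSA2} gives $\frac{\dee}{\dt}\langle g_t, h_t\rangle = \langle L_{1-t}^* g_t, h_t\rangle - \langle g_t, L_{1-t} h_t\rangle = 0$, so that $\langle \eta_1, g_0\rangle = \langle \eta_0, g_1\rangle \leq \|\eta_0\|_{H^{-s}}\|g_1\|_{H^s}$. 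Taking the supremum over $\|g_0\|_{L^2} = 1$ yields $\|\eta_1\|_{L^2} \leq \|\eta_0\|_{H^{-s}} \sup_{\|g_0\|_{L^2}=1}\|g_1\|_{H^s}$.

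The remaining point is an $L^2 \to H^s$ regularization bound for the adjoint flow, namely $\|g_1\|_{H^s} \lesssim \nu^{-s/2}\exp(c\int_0^1 (1 + \|u_\tau\|_{H^{\gamma+2}})\,\dee\tau)\|g_0\|_{L^2}$ for $s \in [0,\gamma]$. This is proved exactly as Lemma \ref{lem:LNS1}: apply the time-dependent multiplier $\nu^{st/2}\Lambda^{st}$, estimate the transport commutator $[\Lambda^{st}, u_t \cdot \nabla]$ via Lemma \ref{lem:commutator}, absorb $s\log(\sqrt{\nu}\Lambda)$ into $\nu\Delta$ up to an additive constant, and control the nonlocal term using $\Lambda^{st}\Lambda^{-2}(\Delta u_t \cdot \nabla)g_t = \Lambda^{st-2}(\Delta u_t \cdot \nabla)g_t$ together with estimate \eqref{eq:est-LNs2} of Lemma \ref{lem:preEst2} (with $s$ there replaced by $st$); Gr\"onwall then closes the estimate. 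Combining with the previous paragraph gives the claimed bound on $\|\eta_1\|_{L^2}$. The propagation estimate \eqref{eq:NSE-Stability-2s} follows the same way, as the dual of the $H^s \to H^s$ propagation bound \eqref{eq:NSE-Stability-1s} for the adjoint equation (proved as in Lemma \ref{lem:LNS1}).

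The only genuinely new difficulty compared with the passive-scalar case is that the nonlocal term $(\Delta u \cdot \nabla)\Lambda^{-2}$ is \emph{not} skew-adjoint, so identifying the correct adjoint equation and verifying that it still enjoys parabolic smoothing is where care is needed; this is precisely what the second estimate \eqref{eq:est-LNs2} in Lemma \ref{lem:preEst2} is tailored to deliver, and everything else is a routine repetition of the energy estimates already carried out above.
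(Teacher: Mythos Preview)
Your proposal is correct and follows essentially the same approach as the paper: pass to the time-reversed adjoint equation $\partial_t \zeta - u_{1-t}\cdot\nabla\zeta - \Lambda^{-2}(\Delta u_{1-t}\cdot\nabla)\zeta = \nu\Delta\zeta$, use $L^2$ duality to reduce to an $L^2\to H^s$ smoothing bound for this adjoint, and prove that bound via the time-dependent multiplier $\nu^{st/2}\Lambda^{st}$, the commutator Lemma~\ref{lem:commutator}, and the estimate \eqref{eq:est-LNs2} of Lemma~\ref{lem:preEst2} for the nonlocal term $\Lambda^{st-2}(\Delta u_{1-t}\cdot\nabla)\zeta_t$. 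Your identification of the adjoint and of \eqref{eq:est-LNs2} as the tool tailored to it is exactly what the paper does; the only slip is that in your regularization paragraph the velocity should read $u_{1-t}$ rather than $u_t$.
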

\begin{proof}

As in Lemma \ref{lem:regularityPSA2} we will also find it useful to consider the associated time-reversed adjoint equation
\begin{equation}\label{eq:LSN-vort-alt-adj}
\partial_t \zeta - u_{1-t} \cdot \nabla \zeta - \Lambda^{-2}(\Delta u_{1-t} \cdot \nabla) \zeta = \nu \Delta \zeta.
\end{equation}
with smooth initial data satisfying $\|\zeta_0\|_{L^2} = 1$. Since 
\[
	\langle \eta_1,\zeta_0\rangle = \langle \eta_0,\zeta_1\rangle,
\]
it suffices to prove the following estimate: 
\[
	\|\zeta_{1}\|_{H^s}\leq \nu^{-s/2}\exp \left( c \int_0^1 (1 + \| u_\tau \|_{H^{\gamma+2}}) \dee \tau \right) \,. 
\]
We will prove this in a manner similar to Lemma \ref{lem:LNS1}. Specifically, by repeating these computations, we obtain that
\begin{align*}
\frac{\dee}{\dee t} \nu^{s t} \| \zeta_t\|_{H^{s t}}^2 & =  
\nu^{s t} \langle \Lambda^{s t} \zeta_t, \big(s \log ( \sqrt{\nu} \Lambda ) + \nu \Delta\big) \Lambda^{s t} \zeta_t \rangle \\
& + \nu^{s t} \langle \Lambda^{s t} \zeta_t, [\Lambda^{s t} , u_{1-t} \cdot \nabla ] \zeta_t \rangle \\
& + \nu^{s t} \langle \Lambda^{s t} \zeta_t, \Lambda^{st-2}(\Delta u_{1-t} \cdot \nabla)  \zeta_t \rangle.
\end{align*}
Again the first two terms are $\leqc \nu^{st} (1 + \|u_{1-t}\|_{H^\gamma})\|\zeta_t\|_{H^{st}}^2$, while by Lemma \ref{lem:preEst2} the third term is
\[
	\leqc \nu^{st}\|\eta_{t}\|_{H^{st}}^2\|u_{1-t}\|_{H^{\gamma + 2}}.
\]
This gives
\[
	\frac{\dee}{\dt} \nu^{st}\|\zeta_{t}\|_{H^{st}}^2\leqc \nu^{st}\|\zeta_t\|^2_{H^{st}}\|u_{1-t}\|_{H^{\gamma+2}}.
\]
The estimate follows by integrating the differential inequality from $0$ to $1$ exactly as in previous computations. The estimate on propagation of $H^{-s}$ regularity is similar and omitted. 
\end{proof}

%% Bibliography
\bibliographystyle{abbrv}
\bibliography{bibliography}

\begin{thebibliography}{10}

\bibitem{barreira2002lyapunov}
L.~Barreira and Y.~B. Pesin.
\newblock {\em Lyapunov exponents and smooth ergodic theory}, volume~23.
\newblock American Mathematical Soc., 2002.

\bibitem{beck2013metastability}
M.~Beck and C.~E. Wayne.
\newblock Metastability and rapid convergence to quasi-stationary bar states
  for the two-dimensional navier--stokes equations.
\newblock {\em Proceedings of the Royal Society of Edinburgh Section A:
  Mathematics}, 143(5):905--927, 2013.

\bibitem{BBPS19II}
J.~Bedrossian, A.~Blumenthal, and S.~Punshon-Smith.
\newblock Almost-sure enhanced dissipation and uniform-in-diffusivity
  exponential mixing for advection-diffusion by stochastic {N}avier-{S}tokes.
\newblock {\em arXiv preprint arXiv:1911.01561}, 2019.

\bibitem{bedrossian2019almost}
J.~Bedrossian, A.~Blumenthal, and S.~Punshon-Smith.
\newblock Almost-sure exponential mixing of passive scalars by the stochastic
  navier-stokes equations.
\newblock {\em arXiv preprint arXiv:1905.03869}, 2019.

\bibitem{bedrossian2018lagrangian}
J.~Bedrossian, A.~Blumenthal, and S.~Punshon-Smith.
\newblock Lagrangian chaos and scalar advection in stochastic fluid mechanics.
\newblock {\em J. Eur. Math. Soc.}, Jan. 2022.

\bibitem{BCZ2017enhanced}
J.~Bedrossian and M.~Coti~Zelati.
\newblock Enhanced dissipation, hypoellipticity, and anomalous small noise
  inviscid limits in shear flows.
\newblock {\em Archive for Rational Mechanics and Analysis}, 224(3):1161--1204,
  2017.

\bibitem{bedrossian2016invariant}
J.~Bedrossian, M.~Coti~Zelati, and N.~Glatt-Holtz.
\newblock Invariant measures for passive scalars in the small noise inviscid
  limit.
\newblock {\em Communications in Mathematical Physics}, 348(1):101--127, 2016.

\bibitem{bernoff1994rapid}
A.~J. Bernoff and J.~F. Lingevitch.
\newblock Rapid relaxation of an axisymmetric vortex.
\newblock {\em Physics of Fluids}, 6(11):3717--3723, 1994.

\bibitem{blumenthal2015volume}
A.~Blumenthal.
\newblock A volume-based approach to the multiplicative ergodic theorem on
  banach spaces.
\newblock {\em Discrete \& Continuous Dynamical Systems}, 36(5):2377, 2016.

\bibitem{blumenthal2017lyapunov}
A.~Blumenthal, J.~Xue, and L.-S. Young.
\newblock Lyapunov exponents for random perturbations of some area-preserving
  maps including the standard map.
\newblock {\em Annals of Mathematics}, 185(1):285--310, 2017.

\bibitem{blumenthal2017entropy}
A.~Blumenthal and L.-S. Young.
\newblock Entropy, volume growth and {SRB} measures for {B}anach space
  mappings.
\newblock {\em Inventiones mathematicae}, 207(2):833--893, 2017.

\bibitem{bowen2021multiplicative}
L.~Bowen, B.~Hayes, and Y.~F. Lin.
\newblock A multiplicative ergodic theorem for von neumann algebra valued
  cocycles.
\newblock {\em Communications in Mathematical Physics}, 384(2):1291--1350,
  2021.

\bibitem{busemann1947intrinsic}
H.~Busemann.
\newblock Intrinsic area.
\newblock {\em Annals of Mathematics}, pages 234--267, 1947.

\bibitem{chandrasekhar2013hydrodynamic}
S.~Chandrasekhar.
\newblock {\em Hydrodynamic and hydromagnetic stability}.
\newblock Courier Corporation, 2013.

\bibitem{Chicone1999-mi}
C.~Chicone and Y.~Latushkin.
\newblock {\em Evolution Semigroups in Dynamical Systems and Differential
  Equations}.
\newblock American Mathematical Soc., 1999.

\bibitem{chirikov1979universal}
B.~V. Chirikov.
\newblock A universal instability of many-dimensional oscillator systems.
\newblock {\em Physics reports}, 52(5):263--379, 1979.

\bibitem{constantin1983global}
P.~Constantin and C.~Foias.
\newblock Global lyapunov exponents, kaplan-yorke formulas and the dimension of
  the attractors for 2d navier-stokes equations.
\newblock 1983.

\bibitem{constantin2008diffusion}
P.~Constantin, A.~Kiselev, L.~Ryzhik, and A.~Zlato{\v{s}}.
\newblock Diffusion and mixing in fluid flow.
\newblock {\em Annals of Mathematics}, pages 643--674, 2008.

\bibitem{CrisantiEtAl1991}
A.~Crisanti, M.~Falcioni, A.~Vulpiani, and G.~Paladin.
\newblock Lagrangian chaos: transport, mixing and diffusion in fluids.
\newblock {\em La Rivista del Nuovo Cimento (1978-1999)}, 14(12):1--80, 1991.

\bibitem{crisanti1993intermittency}
A.~Crisanti, M.~Jensen, A.~Vulpiani, and G.~Paladin.
\newblock Intermittency and predictability in turbulence.
\newblock {\em Physical review letters}, 70(2):166, 1993.

\bibitem{crovisier2019problem}
S.~Crovisier and S.~Senti.
\newblock A problem for the 21st/22nd century.
\newblock {\em EMS Newsletter}, (114):8--13, 2019.

\bibitem{doering2006multiscale}
C.~R. Doering and J.-L. Thiffeault.
\newblock Multiscale mixing efficiencies for steady sources.
\newblock {\em Physical Review E}, 74(2):025301, 2006.

\bibitem{dragivcevic2018spectral}
D.~Dragi{\v{c}}evi{\'c}, G.~Froyland, C.~Gonzalez-Tokman, and S.~Vaienti.
\newblock A spectral approach for quenched limit theorems for random expanding
  dynamical systems.
\newblock {\em Communications in Mathematical Physics}, pages 1--67, 2018.

\bibitem{dragivcevic2020spectral}
D.~Dragi{\v{c}}evi{\'c}, G.~Froyland, C.~Gonz{\'a}lez-Tokman, and S.~Vaienti.
\newblock A spectral approach for quenched limit theorems for random hyperbolic
  dynamical systems.
\newblock {\em Transactions of the American Mathematical Society},
  373(1):629--664, 2020.

\bibitem{drazin2004hydrodynamic}
P.~G. Drazin and W.~H. Reid.
\newblock {\em Hydrodynamic stability}.
\newblock Cambridge university press, 2004.

\bibitem{dubrulle1994scaling}
B.~Dubrulle and S.~Nazarenko.
\newblock On scaling laws for the transition to turbulence in uniform-shear
  flows.
\newblock {\em EPL (Europhysics Letters)}, 27(2):129, 1994.

\bibitem{Dyatlov2015-uj}
S.~Dyatlov and M.~Zworski.
\newblock Stochastic stability of {Pollicott--Ruelle} resonances.
\newblock {\em Nonlinearity}, 28(10):3511, Sept. 2015.

\bibitem{eckmann1985ergodic}
J.-P. Eckmann and D.~Ruelle.
\newblock Ergodic theory of chaos and strange attractors.
\newblock {\em The theory of chaotic attractors}, pages 273--312, 1985.

\bibitem{Feng_2019}
Y.~Feng and G.~Iyer.
\newblock Dissipation enhancement by mixing.
\newblock {\em Nonlinearity}, 32(5):1810--1851, apr 2019.

\bibitem{Flandoli1995-mo}
F.~Flandoli and B.~Maslowski.
\newblock Ergodicity of the {$2$-D} {Navier-Stokes} equation under random
  perturbations.
\newblock {\em Commun. Math. Phys.}, 172(1):119--141, 1995.

\bibitem{foias2001navier}
C.~Foias, O.~Manley, R.~Rosa, and R.~Temam.
\newblock {\em Navier-Stokes equations and turbulence}, volume~83.
\newblock Cambridge University Press, 2001.

\bibitem{froyland2010semi}
G.~Froyland, S.~Lloyd, and A.~Quas.
\newblock A semi-invertible oseledets theorem with applications to transfer
  operator cocycles.
\newblock {\em arXiv preprint arXiv:1001.5313}, 2010.

\bibitem{froyland2010coherent}
G.~Froyland, S.~Lloyd, and N.~Santitissadeekorn.
\newblock Coherent sets for nonautonomous dynamical systems.
\newblock {\em Physica D: Nonlinear Phenomena}, 239(16):1527--1541, 2010.

\bibitem{froyland2013metastability}
G.~Froyland and O.~Stancevic.
\newblock Metastability, lyapunov exponents, escape rates, and topological
  entropy in random dynamical systems.
\newblock {\em Stochastics and Dynamics}, 13(04):1350004, 2013.

\bibitem{furstenberg1963noncommuting}
H.~Furstenberg.
\newblock Noncommuting random products.
\newblock {\em Transactions of the American Mathematical Society},
  108(3):377--428, 1963.

\bibitem{furstenberg1960products}
H.~Furstenberg and H.~Kesten.
\newblock Products of random matrices.
\newblock {\em The Annals of Mathematical Statistics}, 31(2):457--469, 1960.

\bibitem{gonzalez2014concise}
C.~Gonz{\'a}lez-Tokman and A.~Quas.
\newblock A concise proof of the multiplicative ergodic theorem on banach
  spaces.
\newblock {\em arXiv preprint arXiv:1406.1955}, 2014.

\bibitem{gonzalez2014semi}
C.~Gonz{\'a}lez-Tokman and A.~Quas.
\newblock A semi-invertible operator oseledets theorem.
\newblock {\em Ergodic Theory and Dynamical Systems}, 34(4):1230--1272, 2014.

\bibitem{gonzalez2015concise}
C.~Gonz{\'a}lez-Tokman and A.~Quas.
\newblock A concise proof of the multiplicative ergodic theorem on banach
  spaces.
\newblock {\em Journal of Modern Dynamics}, 9(1):237--255, 2015.

\bibitem{gonzalez2021stability}
C.~Gonz{\'a}lez-Tokman and A.~Quas.
\newblock Stability and collapse of the lyapunov spectrum for perron--frobenius
  operator cocycles.
\newblock {\em Journal of the European Mathematical Society},
  23(10):3419--3457, 2021.

\bibitem{HM06}
M.~Hairer and J.~C. Mattingly.
\newblock Ergodicity of the 2{D} {N}avier-{S}tokes equations with degenerate
  stochastic forcing.
\newblock {\em Ann. of Math.}, 164(3):993--1032, 2006.

\bibitem{I_Udovich1989-jm}
V.~I. I\_Udovich.
\newblock {\em The Linearization Method in Hydrodynamical Stability Theory}.
\newblock American Mathematical Soc., Dec. 1989.

\bibitem{kaimanovich1989lyapunov}
V.~A. Kaimanovich.
\newblock Lyapunov exponents, symmetric spaces, and a multiplicative ergodic
  theorem for semisimple lie groups.
\newblock {\em Journal of Soviet Mathematics}, 47(2):2387--2398, 1989.

\bibitem{karlsson1999multiplicative}
A.~Karlsson and G.~A. Margulis.
\newblock A multiplicative ergodic theorem and nonpositively curved spaces.
\newblock {\em Communications in mathematical physics}, 208(1):107--123, 1999.

\bibitem{kato2013perturbation}
T.~Kato.
\newblock {\em Perturbation theory for linear operators}, volume 132.
\newblock Springer Science \& Business Media, 2013.

\bibitem{kifer2012ergodic}
Y.~Kifer.
\newblock {\em Ergodic theory of random transformations}, volume~10.
\newblock Springer Science \& Business Media, 2012.

\bibitem{kryloff1937theorie}
N.~Kryloff and N.~Bogoliouboff.
\newblock La th{\'e}orie g{\'e}n{\'e}rale de la mesure dans son application
  {\`a} l'{\'e}tude des syst{\`e}mes dynamiques de la m{\'e}canique non
  lin{\'e}aire.
\newblock {\em Annals of mathematics}, pages 65--113, 1937.

\bibitem{Kuksin2003-st}
S.~Kuksin and A.~Shirikyan.
\newblock Some limiting properties of randomly forced two-dimensional
  {Navier--Stokes} equations.
\newblock {\em Proceedings of the Royal Society of Edinburgh Section A:
  Mathematics}, 133(4):875--891, Aug. 2003.

\bibitem{KS}
S.~Kuksin and A.~Shirikyan.
\newblock {\em Mathematics of two-dimensional turbulence}, volume 194.
\newblock Cambridge University Press, 2012.

\bibitem{latini2001transient}
M.~Latini and A.~J. Bernoff.
\newblock Transient anomalous diffusion in poiseuille flow.
\newblock {\em Journal of Fluid Mechanics}, 441:399--411, 2001.

\bibitem{lian2010lyapunov}
Z.~Lian and K.~Lu.
\newblock {\em Lyapunov exponents and invariant manifolds for random dynamical
  systems in a Banach space}.
\newblock American Mathematical Soc., 2010.

\bibitem{lin2011optimal}
Z.~Lin, J.-L. Thiffeault, and C.~R. Doering.
\newblock Optimal stirring strategies for passive scalar mixing.
\newblock {\em Journal of Fluid Mechanics}, 675:465--476, 2011.

\bibitem{lu2013strange}
K.~Lu, Q.~Wang, and L.-S. Young.
\newblock {\em Strange attractors for periodically forced parabolic equations},
  volume 224.
\newblock American Mathematical Soc., 2013.

\bibitem{lundgren1982strained}
T.~Lundgren.
\newblock Strained spiral vortex model for turbulent fine structure.
\newblock {\em The Physics of Fluids}, 25(12):2193--2203, 1982.

\bibitem{mackay1991appraisal}
R.~MacKay.
\newblock An appraisal of the ruelle-takens route to turbulence.
\newblock In {\em The Global Geometry of Turbulence}, pages 233--246. Springer,
  1991.

\bibitem{mane1983lyapounov}
R.~Man{\'e}.
\newblock Lyapounov exponents and stable manifolds for compact transformations.
\newblock In {\em Geometric dynamics}, pages 522--577. Springer, 1983.

\bibitem{mathew2005multiscale}
G.~Mathew, I.~Mezi{\'c}, and L.~Petzold.
\newblock A multiscale measure for mixing.
\newblock {\em Physica D: Nonlinear Phenomena}, 211(1-2):23--46, 2005.

\bibitem{mierczynski2020lyapunov}
J.~Mierczy{\'n}ski, S.~Novo, and R.~Obaya.
\newblock Lyapunov exponents and oseledets decomposition in random dynamical
  systems generated by systems of delay differential equations.
\newblock {\em Communications on Pure \& Applied Analysis}, 19(4):2235, 2020.

\bibitem{MC18}
C.~J. Miles and C.~R. Doering.
\newblock Diffusion-limited mixing by incompressible flows.
\newblock {\em Nonlinearity}, 31(5):2346, 2018.

\bibitem{noethen2022well}
F.~Noethen.
\newblock Well-separating common complements for sequences of subspaces of the
  same codimension are generic in hilbert spaces.
\newblock {\em Analysis Mathematica}, pages 1--21, 2022.

\bibitem{oakley2021mix}
B.~W. Oakley, J.-L. Thiffeault, and C.~R. Doering.
\newblock On mix-norms and the rate of decay of correlations.
\newblock {\em Nonlinearity}, 34(6):3762, 2021.

\bibitem{oseledets1968multiplicative}
V.~I. Oseledets.
\newblock A multiplicative ergodic theorem. {C}haracteristic {L}japunov
  exponents of dynamical systems.
\newblock {\em Trudy Moskovskogo Matematicheskogo Obshchestva}, 19:179--210,
  1968.

\bibitem{pesin2010open}
Y.~Pesin and V.~Climenhaga.
\newblock Open problems in the theory of non-uniform hyperbolicity.
\newblock {\em Discrete Contin. Dyn. Syst}, 27(2):589--607, 2010.

\bibitem{pesin1977characteristic}
Y.~B. Pesin.
\newblock Characteristic lyapunov exponents and smooth ergodic theory.
\newblock {\em Uspekhi Matematicheskikh Nauk}, 32(4):55--112, 1977.

\bibitem{raghunathan1979proof}
M.~S. Raghunathan.
\newblock A proof of oseledec’s multiplicative ergodic theorem.
\newblock {\em Israel Journal of Mathematics}, 32(4):356--362, 1979.

\bibitem{rhines1983rapidly}
P.~B. Rhines and W.~R. Young.
\newblock How rapidly is a passive scalar mixed within closed streamlines?
\newblock {\em Journal of Fluid Mechanics}, 133:133--145, 1983.

\bibitem{Robinson2001-tk}
J.~C. Robinson.
\newblock {\em {Infinite-Dimensional} Dynamical Systems: An Introduction to
  Dissipative Parabolic {PDEs} and the Theory of Global Attractors}.
\newblock Cambridge University Press, Apr. 2001.

\bibitem{ruelle1979ergodic}
D.~Ruelle.
\newblock Ergodic theory of differentiable dynamical systems.
\newblock {\em Publications Math{\'e}matiques de l'Institut des Hautes
  {\'E}tudes Scientifiques}, 50(1):27--58, 1979.

\bibitem{ruelle1982characteristic}
D.~Ruelle.
\newblock Characteristic exponents and invariant manifolds in hilbert space.
\newblock {\em Annals of Mathematics}, pages 243--290, 1982.

\bibitem{ruelle1971nature}
D.~Ruelle and F.~Takens.
\newblock On the nature of turbulence.
\newblock {\em Les rencontres physiciens-math{\'e}maticiens de
  Strasbourg-RCP25}, 12:1--44, 1971.

\bibitem{schaumloffel1991multiplicative}
K.-U. Schauml{\"o}ffel and F.~Flandoli.
\newblock A multiplicative ergodic theorem with applications to a first order
  stochastic hyperbolic equation in a bounded domain.
\newblock {\em Stochastics: An International Journal of Probability and
  Stochastic Processes}, 34(3-4):241--255, 1991.

\bibitem{schmid2002stability}
P.~J. Schmid, D.~S. Henningson, and D.~Jankowski.
\newblock Stability and transition in shear flows. applied mathematical
  sciences, vol. 142.
\newblock {\em Appl. Mech. Rev.}, 55(3):B57--B59, 2002.

\bibitem{shaw2007stirring}
T.~A. Shaw, J.-L. Thiffeault, and C.~R. Doering.
\newblock Stirring up trouble: multi-scale mixing measures for steady scalar
  sources.
\newblock {\em Physica D: Nonlinear Phenomena}, 231(2):143--164, 2007.

\bibitem{Temam1997-xm}
R.~Temam.
\newblock {\em {Infinite-Dimensional} Dynamical Systems in Mechanics and
  Physics}.
\newblock Springer New York, Apr. 1997.

\bibitem{temam2012infinite}
R.~Temam.
\newblock {\em Infinite-dimensional dynamical systems in mechanics and
  physics}, volume~68.
\newblock Springer Science \& Business Media, 2012.

\bibitem{thieullen1987fibres}
P.~Thieullen.
\newblock Fibr{\'e}s dynamiques asymptotiquement compacts exposants de
  lyapounov. entropie. dimension.
\newblock In {\em Annales de l'Institut Henri Poincar{\'e} C, Analyse non
  lin{\'e}aire}, volume~4, pages 49--97. Elsevier, 1987.

\bibitem{van2012origin}
E.~Van~Sebille, M.~H. England, and G.~Froyland.
\newblock Origin, dynamics and evolution of ocean garbage patches from observed
  surface drifters.
\newblock {\em Environmental Research Letters}, 7(4):044040, 2012.

\bibitem{varzaneh2021oseledets}
M.~G. Varzaneh and S.~Riedel.
\newblock Oseledets splitting and invariant manifolds on fields of banach
  spaces.
\newblock {\em Journal of Dynamics and Differential Equations}, pages 1--31,
  2021.

\bibitem{viana2016foundations}
M.~Viana and K.~Oliveira.
\newblock {\em Foundations of ergodic theory}.
\newblock Number 151. Cambridge University Press, 2016.

\bibitem{vukadinovic2015averaging}
J.~Vukadinovic, E.~Dedits, A.~C. Poje, and T.~Sch{\"a}fer.
\newblock Averaging and spectral properties for the 2d advection--diffusion
  equation in the semi-classical limit for vanishing diffusivity.
\newblock {\em Physica D: Nonlinear Phenomena}, 310:1--18, 2015.

\bibitem{walters1993dynamical}
P.~Walters.
\newblock A dynamical proof of the multiplicative ergodic theorem.
\newblock {\em Transactions of the American Mathematical Society},
  335(1):245--257, 1993.

\bibitem{walters2000introduction}
P.~Walters.
\newblock {\em An introduction to ergodic theory}, volume~79.
\newblock Springer Science \& Business Media, 2000.

\bibitem{wilkinson2017lyapunov}
A.~Wilkinson.
\newblock What are lyapunov exponents, and why are they interesting?
\newblock {\em Bulletin of the American Mathematical Society}, 54(1):79--105,
  2017.

\bibitem{wojtaszczyk1996banach}
P.~Wojtaszczyk.
\newblock {\em Banach spaces for analysts}.
\newblock Number~25. Cambridge University Press, 1996.

\bibitem{yaglom2012hydrodynamic}
A.~M. Yaglom.
\newblock {\em Hydrodynamic instability and transition to turbulence}, volume
  100.
\newblock Springer Science \& Business Media, 2012.

\bibitem{yamada1988lyapunov}
M.~Yamada and K.~Ohkitani.
\newblock Lyapunov spectrum of a model of two-dimensional turbulence.
\newblock {\em Physical review letters}, 60(11):983, 1988.

\bibitem{young2002srb}
L.-S. Young.
\newblock What are srb measures, and which dynamical systems have them?
\newblock {\em Journal of Statistical Physics}, 108(5):733--754, 2002.

\bibitem{zelati2020relation}
M.~C. Zelati, M.~G. Delgadino, and T.~M. Elgindi.
\newblock On the relation between enhanced dissipation timescales and mixing
  rates.
\newblock {\em Communications on Pure and Applied Mathematics},
  73(6):1205--1244, 2020.

\bibitem{zlatovs2010diffusion}
A.~Zlato{\v{s}}.
\newblock Diffusion in fluid flow: dissipation enhancement by flows in 2d.
\newblock {\em Communications in Partial Differential Equations},
  35(3):496--534, 2010.

\end{thebibliography}

\end{document}